\documentclass{article}
\usepackage{amsmath, amsthm, amssymb}
\usepackage{epsfig}
\usepackage{url}
\usepackage{latexsym}
\usepackage[colorlinks, bookmarks=true]{hyperref}
\usepackage{amsmath}
\usepackage{bm}           

\def\R{\mathbb R}

\def\Ca{\mathcal C}

\def\tr{\mbox{tr}}
\def\ts{\times}

\def\R{{\mathbb R}}
\def\ra{\rightarrow}

\usepackage{mathrsfs}

\flushbottom
\usepackage{color}

\makeatletter
\def\thm@space@setup{%
  \thm@preskip=\parskip \thm@postskip=0pt
}
\makeatother


\oddsidemargin=0in
\evensidemargin=0in
\textwidth=6.5in
\setlength{\unitlength}{1cm}
\setlength{\parindent}{0.6cm}

\numberwithin{equation}{section}

\renewcommand{\cal}{\mathcal}
\newcommand\cA{{\mathcal A}}

\newcommand{\cF}{{\cal F}}

\newcommand\cH{{\mathcal H}}
\newcommand\cI{{\mathcal I}}

\newcommand{\cL}{{\cal L}}
\newcommand{\cM}{{\cal M}}

\newcommand{\fc}{{\frak c}}

\newcommand{\fK}{{\frak K}}
\newcommand{\fR}{{\frak R}}
\newcommand{\bma}{{\bm{a}}}

\newcommand{\bmx}{{\bm{x}}}
\newcommand{\bmy}{{\bm{y}}}

\newcommand{\rd}{{\rm d}}

\newcommand{\ri}{\mathrm{i}}

\newcommand{\bB}{{\mathbb B}}
\newcommand{\bC}{{\mathbb C}}
\newcommand{\bD}{\mathbb{D}}
\newcommand{\bE}{\mathbb{E}}

\newcommand{\bP}{\mathbb{P}}

\newcommand{\bR}{{\mathbb R}}

\newcommand{\bY}{\mathbb{Y}}

\newcommand{\sfa}{{\sf a}}
\newcommand{\sfb}{{\sf b}}

\newcommand{\sfc}{{\sf c}}
\newcommand{\sfy}{{\sf y}}
\newcommand{\al}{\alpha}

\newcommand{\la}{\lambda}

\DeclareMathOperator{\Tr}{Tr}

\DeclareMathOperator{\supp}{supp}

\DeclareMathOperator{\diag}{diag}

\DeclareMathOperator{\OO}{O}
\DeclareMathOperator{\oo}{o}

\renewcommand{\Re}{\mathop{\mathrm{Re}}}
\renewcommand{\Im}{\mathop{\mathrm{Im}}}

\def\tr{{\rm Tr}}

\newcommand{\deq}{\mathrel{\mathop:}=} 
\newcommand{\eqd}{=\mathrel{\mathop:}} 
\renewcommand{\leq}{\leqslant}
\renewcommand{\geq}{\geqslant}

\newcommand{\del}{\partial}


\newcommand{\qq}[1]{[\![{#1}]\!]}

\newcommand{\beq}{\begin{equation}}
\newcommand{\eeq}{\end{equation}}

\theoremstyle{plain} 
\newtheorem{theorem}{Theorem}[section]
\newtheorem*{theorem*}{Theorem}
\newtheorem{lemma}[theorem]{Lemma}
\newtheorem*{lemma*}{Lemma}
\newtheorem{corollary}[theorem]{Corollary}
\newtheorem*{corollary*}{Corollary}
\newtheorem{proposition}[theorem]{Proposition}
\newtheorem*{proposition*}{Proposition}

\newtheorem*{assumption*}{Assumption}
\newtheorem{claim}[theorem]{Claim}

\newtheorem*{definition*}{Definition}

\newtheorem*{example*}{Example}
\newtheorem{remark}[theorem]{Remark}

\newtheorem*{remark*}{Remark}
\newtheorem*{remarks*}{Remarks}


\def\author#1{\par
    {\centering{\authorfont#1}\par\vspace*{0.05in}}
}

\def\titlefont{\fontsize{13}{15}\bfseries\boldmath\selectfont\centering{}}
\def\authorfont{\fontsize{13}{15}}

\let\affiliationfont\rhfont

\def\address#1{\par
    {\centering{\affiliationfont#1\par}}\par\vspace*{11pt}
}

\def\body{
\setcounter{footnote}{0}
\def\thefootnote{\alph{footnote}}
\def\@makefnmark{{$^{\rm \@thefnmark}$}}
}

\def\title#1{
    \thispagestyle{plain}
    \vspace*{-14pt}
    \vskip 79pt
    {\centering{\titlefont #1\par}}%
    \vskip 1em
}


\newcommand{\bmla}{{\bm{\lambda}}}
\newcommand{\bmmu}{{\bm{\mu}}}
\newcommand{\bmeta}{{\bm{\eta}}}
\newcommand{\bmkappa}{{\bm{\kappa}}}

\newcommand{\unif}{{\rm unif}}


\begin{document}
\title{Large Deviation Principles via Spherical Integrals}

\vspace{1.2cm}

 \begin{minipage}[c]{0.31\textwidth}
 \author{Serban Belinschi}
\address{CNRS-Universit\'e de Toulouse\\
 Serban.Belinschi@math.univ-toulouse.fr}
    \end{minipage}
 \begin{minipage}[c]{0.31\textwidth}
 \author{Alice Guionnet}
\address{CNRS-ENS Lyon\\
aguionne@ens-lyon.fr}
      \end{minipage}
 \begin{minipage}[c]{0.31\textwidth}
 \author{Jiaoyang Huang}
\address{NYU\\
 jh4427@nyu.edu}
    \end{minipage}

~\vspace{0.3cm}

\abstract
In this article, we develop a framework to study the large deviation principle for matrix models and their quantized versions, by tilting the measures using the limits of  spherical integrals obtained in \cite{GZ3,GZei1add}. As examples, we obtain 
\begin{enumerate}
\item the large deviation principle for the empirical distribution of the diagonal entries of $UB_NU^*$, for a sequence of $N\times N$ diagonal matrices $B_N$ and unitary or orthogonal  Haar distributed matrices $U$,
\item a large deviation upper bound for the empirical eigenvalue distribution of $A_N+UB_NU^*$, for two sequences of $N\times N$ diagonal matrices $A_N, B_N$, and their complementary lower bounds at measures which are described by the free product with amalgamation,
\item a large deviation principle for the Kostka number $K_{\bm\la_N \bmeta_N}$, for two sequences of partitions $\bm\la_N, \bmeta_N$ with at most $N$ rows,
\item  a large deviation upper bound for the Littlewood-Richardson coefficients $c_{\bm\la_N \bmeta_N}^{\bmkappa_N}$, for three sequences of partitions $\bm\la_N, \bmeta_N, \bmkappa_N$ with at most $N$ rows, and their complementary lower bounds at nice  measures.
\end{enumerate} 
\date{\today}

\section{Introduction}
During the last thirty years, 
  random matrix theory  has  
  grown  into a sophisticated branch of mathematics, interacting profoundly with physics  and other areas of mathematics such as statistics,  probability and operator algebra. Following the initial breakthroughs by
 Wishart \cite{wishart} and Wigner \cite{Wig58},  the convergence of the empirical measure of the eigenvalues and the extreme eigenvalues could be established for many  models of random matrices. The 
  fluctuations of the eigenvalues both in the local and global scale  were investigated. However, the understanding of the probabilities that the spectrum has an unlikely behavior, as measured  by large deviations principles,  is still very scarce, even at a conjectural level.

  For instance, let us consider  Wigner matrices , that are  self-adjoint matrices with independent  (modulo the symmetry constraint) centered entries with covariance given  by the inverse of the dimension. It has been shown that the empirical measure of such random matrices converges towards a non-random limit and that the extreme eigenvalues  ``stick'' to the bulk in the sense that they converge towards the boundary of the support of this limiting distribution as soon as their fourth moment is finite \cite{BZ,BY,furedi}. Fluctuations around this limit could be studied. It was shown that generically, the empirical eigenvalue distribution has small fluctuations. The central limit theorem for the empirical measure holds without the celebrated normalization by the square root of the dimension required for sums of independent random variables \cite{SS98, DE01, BS04, PaLy08}. Local fluctuations of the eigenvalues were first proven for Gaussian ensembles \cite{TW,TW94b} and more recently generalized to Wigner matrices \cite{MR1727234, erdy,ESY11,ESY,EPRY, MR3914908,MR3541852, LY,TV, TVun}. Large deviation principles, which allow to estimate the probability to deviate from the almost sure asymptotic behavior, are still much less understood. They were first derived for random matrices with Gaussian entries thanks to the explicit joint law of the eigenvalues \cite{BAG, BADG01}.  Large deviation principles were then obtained  for random matrices with entries with tails  heavier than the Gaussians \cite{BCC,fanny}, by using the fact that the matrix can more easily create deviations by having a few large entries. The case of sub-Gaussian entries thus stayed open until recently.
 F. Augeri, J. Husson and one of the author tackled the large deviations for the largest eigenvalue \cite{HuGu,AGH}. They showed that the  large deviation rate function is the same as in the Gaussian case if the entries have a Laplace transform which is bounded above by the Laplace transform of the centered Gaussian variable with the same variance, but is otherwise different. The large deviations for the eigenvalue empirical measure is still open in the general sub-Gaussian case.

 The proof of the large deviation estimates  in \cite{HuGu,AGH}  is  based on a tilt of the distribution by spherical integrals. Let us remind the reader that one 
 approach to the large deviation principles is to tilt the measure by an exponential moment generating function. This technique for instance allows to prove the celebrated Cramer's theorem for the empirical distribution of independent variables taking their values in Polish spaces \cite{DZ,DS}. For instance, consider 
 a sequence of vector-valued random variables $\{X_N\}_{N\geq 1}$ in $\bR^d$, and  let $\bP_\theta$ be the tilted measure by  $e^{a_{N}\langle \theta, X_N\rangle}$ if $a_{N}\rightarrow \infty$ is the expected speed for the large deviation principle. The idea is that  rare events for the original measure become typical events under the tilted measure for an appropriate choice of $\theta$, and can be studied through the following relation
\begin{align*}
\bP(X_N\in x+\rd x)=\bP_\theta (X_N\in  x+\rd x) \exp\left\{-a_{N}\{\langle \theta, x\rangle+\frac{1}{a_{N}}\log \bE\left[\exp\{a_{N}\langle \theta, X_N\rangle\}\right]\}\right\},
\end{align*}
where the asymptotics of the last term on the right hand side can eventually be computed (for instance if it is the sum of independent i.i.d random vectors). 
Unfortunately, if $X_N$ is a sequence of random matrices and one considers the law of its empirical measure or its largest eigenvalue $\la_{X_N}$, it is not clear how to make such a computation. Because  in general, either the tilt is a function of the largest eigenvalue and then we do not know how to compute  its Laplace transform, or it is for instance a tilt on each individual entry but then we do not know how it is related with the largest eigenvalue. The idea in \cite{HuGu,AGH} to study the large deviation for the largest eigenvalue was to tilt the probability measure by spherical integrals, because we know \cite{GuMa05} it becomes a function of the largest eigenvalue when the dimension goes to infinity, but also it produces independent (but random) tilt on each entry with a Laplace transform which can eventually be evaluated. It amounts to tilt the law of the matrix in 
 a random direction to make the desired deviation more likely. The tilted measure $\mathbb P_\theta$ is given by
$$\mathbb P_\theta \left(\lambda_{X_N}\in \la+\rd \la \right) =\frac{1}{\int I_N(\theta,X_N) \rd\mathbb P(X_N)}\int {\bf 1}_{\{\lambda_{X_N}\in \la+\rd \la\}} I_N(\theta, X_N) \rd\mathbb P(X_N),$$
where $I_N$ is the one dimensional spherical integral
$$I_N(\theta,X_N)=\mathbb E_u[e^{\theta N\langle u, X_N u\rangle}],$$
and $u$ follows the uniform law on the unit sphere in $\mathbb R^N$. It was proven that given a real number $\la$, one can often (for instance when $\la$ is sufficiently large)  find a tilt $\theta_\la$ so that under
$\mathbb P_{\theta _\la}$, the largest eigenvalue $\lambda_{X_N}$ is close to $\la$, whereas the cost of this tilt can be  computed thanks to \cite{GuMa05} and the expectation of the spherical integral over $X_{N}$ can be estimated \cite{HuGu,AGH}. 
Yet, deriving  the large deviations for the distribution of the empirical eigenvalue density of Wigner matrices is still an open problem.  Even though the large dimensional limit of spherical integrals can also be computed \cite{GZ3}, its average over matrices was not yet estimated.

In this article, we study the large deviations for the empirical measures of different models of random matrices. The first model concerns the diagonal entries of Hermitian matrices with given eigenvalues.
Let $B_N$ be an $N\times N$ diagonal matrix with eigenvalues 
$b_1\geq b_2\geq \cdots\geq  b_N$. The classical Schur-Horn theorem \cite{MR0063336} states that
the diagonal entries of $UB_NU^*$ as a vector is in the permutation polytope  generated by $(b_1,b_2,\cdots, b_N)$. More precisely, if we arrange the diagonal entries of $UB_NU^*$ in nonincreasing order $(UB_NU^*)_{\sigma(1)\sigma(1)}\geq (UB_NU^*)_{\sigma(2)\sigma(2)}\geq \cdots\geq (UB_NU^*)_{\sigma(N)\sigma(N)}$ for some permutation of $\sigma$, then
\begin{align}\label{e:schur-horndis}
b_1+b_2+\cdots+b_k\geq (UB_NU^*)_{\sigma(1)\sigma(1)}+ (UB_NU^*)_{\sigma(2)\sigma(2)}+\cdots+ (UB_NU^*)_{\sigma(k)\sigma(k)},\quad 1\leq k\leq N.
\end{align}
A more challenging problem of the same flavor is Horn's problem: Given two $N\times N$ diagonal matrices $A_N, B_N$ with eigenvalues 
$a_1\geq a_2\geq\cdots\geq a_N$ and $b_1\geq b_2,\cdots\geq b_N$, what can be said about the eigenvalues of $A_N+UB_NU^*$? Besides the trivial relation $\Tr A_N +\Tr B_N =\Tr (A_N+UB_NU^*)$, Horn \cite{MR0140521} had conjectured the form of a set of necessary and sufficient inequalities to be satisfied for the eigenvalues of $A_N+UB_N U^*$. After contributions by several authors, see in particular
\cite{MR1654578}, these conjectures were proven by Knutson and Tao \cite{MR1671451,MR1811121}. See \cite{MR1754641} for a nice survey of this problem. This result however  do not say anything about the probability that the spectrum of $A_N+UB_NU^*$ has some given distribution, if $U$ is random.

Let $U$ follow the Haar measure on the orthogonal group when $\beta=1$ and on the unitary group when $\beta=2$. The randomized Schur-Horn theorem and Horn's problem ask the distribution of the diagonal entries of $UB_NU^*$ and the empirical eigenvalue distribution of $A_N+UB_NU^*$:
\begin{equation}\label{em}
\mu_N=\frac{1}{N}\sum_{i=1}^N \delta_{(UB_NU^*)_{ii}},\quad 
\tilde \mu_N=\frac{1}{N}\sum_{i=1}^N \delta_{\lambda_i(A_N+UB_NU^*)}\,.
\end{equation}
The randomized Schur-Horn theorem is equivalent to computing the
Duistermaat--Heckman measures of the coadjoint orbit $UB_NU^*$. More generally, they are defined using the push-forward of the Liouville measure on a symplectic manifold along the moment map \cite{MR696693,MR674406,MR664117,MR364552,MR642416}.
The randomized version of Horn's problem has
been first studied in \cite{MR1090007} for $N=3$. Recent years, there has seen a surge of interest
in this problem: \cite{MR3939342, MR3835548, MR3892253, MR4009590,Zhang_2019} for general dimension; \cite{CMZ} for an extension to other Lie groups; \cite{FZ} for the multiplicative version of randomized Horn's problem. However, those distribution densities of randomized Schur-Horn theorem and Horn's problem are complicated and hard to analyze directly. 

For the large $N$ limit, we will assume that the spectral measures of $A_N,B_N$ converge towards $\mu_A$ and $\mu_B$ respectively. It is well known that $\mu_N$ converges weakly almost surely towards a delta mass at $\int x\rd\mu_B(x)$. On the other hand, Voiculescu \cite{BV93, voi91} proved that $\tilde\mu_N$ converges towards the free convolution of $\mu_A$ and $\mu_B$. For the second model, fluctuations of the empirical measure where studied in \cite{CGM} and large deviations for the largest eigenvalue in \cite{GuMa20}.
To study the large deviations for $\mu_{N}$ and $\tilde \mu_{N}$ from this asymptotic behavior,  we propose to tilt the original measure using the spherical integrals, which are Fourier transforms over Unitary/Orthogonal groups. We recall that 
given two sequences $A_{N},B_{N}$ of deterministic self-adjoint matrices, the $N$ dimensional spherical integral is defined as
\begin{align*}
I_N(A_N,B_N)=\int e^{\frac{\beta N}{2}\Tr(A_NUB_NU^{*})} \rd U,
 \end{align*}
where $U$ follows the Haar measure on the unitary group (resp. orthogonal group) when $\beta=2$ (resp. $\beta=1$). If the spectral measures $\mu_{A_N},\mu_{B_N}$ of $A_N, B_N$ converge weakly towards $\mu_{A}$ and $\mu_{B}$ respectively, under mild assumptions, it was proven in \cite{GZ3, GZei1add}, see also \cite{guionnet2004large,MR2095566}, that the spherical integral converges
\begin{align}
 \label{e:defI}I(\mu_{A},\mu_{B})=\lim_{N\rightarrow\infty}\frac{1}{\beta N^{2}}\log\int e^{\frac{\beta N}{2}\Tr(A_NUB_NU^{*})} \rd U.
 \end{align}
The density of the spectrum of the eigenvalues of $A+UBU^*$ and of the diagonal entries of $UBU^*$ was expressed in terms of spherical integrals in \cite{zuber,CoZu,MR3939342,CoMZu} by using Fourier analysis. This however requires to take  spherical integrals evaluated at complex entries and use Fourier analysis. It is hard to see how to use such an approach to obtain asymptotics,  as spherical integrals need to be evaluated at complex matrices
 for which asymptotics are only known when the norms $A$ and $B$ are sufficiently small \cite{novak2020complex}.
Our approach by tilting the original measure using the spherical integrals, requires only the derivatives for the limiting spherical integral, which we derive in Section \ref{sec:sp}. As a consequence, it gives new understandings of the Schur-Horn theorem and Horn's problem, as well as the evaluation of the asymptotics of Kostka numbers and Littlewood-Richardson coefficients.

As the first application of our spherical integral approach, we study large deviations of the randomized Schur-Horn theorem and Horn's problem. We obtain the large deviation principle for the empirical distribution of the diagonal entries of $UB_NU^*$ and
the large deviation upper bound for the empirical eigenvalue distribution of $A_N+UB_NU^*$. 
In a recent work \cite{narayanan2021large} by Narayanan and Sheffield, a large deviation for the empirical measure of $A_N+UB_NU^*$ is proven, under the assumption that the partition function of the limiting empirical measures of $A_N, B_N$  are strongly concave. Their approach is totally different, being based on random hives, and understanding the relation between their rate function and the one of the present article should be enlightening.

The quantized versions of the Horn-Schur theorem and Horn's problem ask when a Kostka number or a Littlewood-Richardson coefficient is non-zero. We denote by $\bY_N$ the set of partitions with at most $N$ rows. We recall that given a partition $\bmla_N\in \bY_N$, the Kostka numbers $K_{\bmla_N \bmeta_N}$ are the coefficients that arise when one expresses the Schur symmetric polynomial $S_{\bmla_N}$ as a linear combination of monomial symmetric functions $m_{\bmeta_N}$:
\begin{align}\label{e:defKostka}
S_{\bmla_N}(x_1, x_2, \cdots, x_N)=\sum_{\bmmu_N\in \bY_N}K_{\bmla_N\bmeta_N}m_{\bmeta_N}(x_1, x_2, \cdots, x_N).
\end{align}
The Kostka numbers $K_{\bmla_N \bmmu_N}$ are nonnegative, and are positive if and only if $\bmla_N$ and $\bmeta_N$ are of the same size (have the same number of boxes), and $\bmla_N$ is larger than $\bmeta_N$ in the dominance order:
\begin{align*}
\lambda_1+\la_2+\cdots+\la_i\geq \eta_1+\eta_2+\cdots+\eta_i, \quad 1\leq i\leq N.
\end{align*}
Given a pair of  partitions $\bmla_N, \bmeta_N\in \bY_N$, the Littlewood-Richardson coefficients $c_{\bmla_N \bmeta_N}^{\bm\kappa_N}$ are the coefficients that arise when one expresses the product of the Schur symmetric polynomials $S_{\bmla_N}S_{\bmeta_N}$ as a linear combination of Schur symmetric polynomials $S_{\bm\kappa_N}$:
\begin{align}\label{e:defLR}
S_{\bmla_N}(x_1, x_2, \cdots, x_N)S_{\bmmu_N}(x_1, x_2, \cdots, x_N)=\sum_{\bmkappa_N\in \bY_N}c_{\bmla_N\bmeta_N}^{\bmkappa_N}S_{\bmkappa_N}(x_1, x_2, \cdots, x_N).
\end{align}
The Littlewood-Richardson coefficients $c_{\bmla_N, \bmeta_N}^{\bmkappa_N}$ are nonnegative. 
Horn's problem is equivalent to deciding the conditions on the triples $(\bmla_N, \bmeta_N, \bmkappa_N)$, such that the Littlewood-Richardson coefficient $c_{\bmla_N, \bmeta_N}^{\bmkappa_N}$ is positive.  This result has previously been obtained by Klyachko using geometric invariant
theory \cite{MR1654578}.

As the second application of our spherical integral approach, we derive the large deviation principle of Kostka numbers and the large deviation upper bound for the Littlewood-Richardson coefficients.
The asymptotics of certain extreme Kostka numbers and the Littlewood-Richardson coefficients were derived in \cite{MR3906648,MR1676282,MR2868112}. 

\smallskip
\noindent \textbf{Acknowledgement.} The authors heartily thank Denis Serre for his help to guess  the derivative of spherical  integrals  at the early stage of this work. A.G. thanks Ofer Zeitouni  for enlightening discussions. H.J. wants to thank Vadim Gorin for the references on Duistermaat--Heckman measures, and  the Institute for Advanced Study for its support. The authors are very grateful to Mikael de la Salle for providing a shorter proof of Proposition \ref{c:continuityfa}. Part of this work was done while STB was supported by the program Simons CRM Scholar-in-Residence during the program {\em New Developments in Free Probability and Applications} at the Centre de Recherches Math\'ematiques in Montreal, March 2019. 
A.G. was partially  supported by the LABEX MILYON (ANR-10-LABX-0070) of Universit\'e de Lyon, within the program ``Investissements d'Avenir" (ANR-11-IDEX- 0007) operated by the French National Research Agency (ANR) and by the ERC Project LDRAM : ERC-2019-ADG Project 884584.

\subsection{Main Results}


Before stating our main results, we need to introduce some notations and definitions. 
In this paper, we fix a large constant $\fK>0$. We denote by  $\cM$(resp. $\cM([-\fK,\fK])$) the space of probability measures on $\bR$ (resp. $[-\mathfrak K,\mathfrak K]$), with bounded first moment:
\begin{align*}
\cM=\{\nu:\supp \nu \subset \bR, \nu(|x|)<\infty\},\quad \cM([-\fK, \fK])=\{\nu:\supp \nu \subset [-\fK,\fK]\}.
\end{align*}
We equip $\cM$ and  $\cM([-\fK,\fK])$, with the weak topology. The weak topology is compatible with the following distance
\begin{align}\label{e:dd}
\rd(\mu, \nu)\deq\sup_{\|f\|_\infty+\|f\|_\cL\leq 1}\left|\int f(x) \rd\mu-\int f(x) \rd\nu\right|,
\end{align}
where $\|f\|_\cL$ is the Lipschitz constant of $f$, see \cite[Appendix C]{AGZ}.
For probability measures with bounded first moment, a more natural distance is the Wasserstein distance:
\begin{align}\label{e:wd}
\rd_W(\mu, \nu)\deq\sup_{\|f\|_\cL\leq 1}\left|\int f(x) \rd\mu-\int f(x) \rd\nu\right|.
\end{align}
We remark the convergence in the Wasserstein distance $\rd_W$ is equivalent to weak convergence and the convergence of the first moment, see \cite{clement2008elementary}. On the set of uniformly compactly supported measures, i.e. $\cM([-\fK, \fK])$, convergences in the Wasserstein distance and weak convergence are equivalent.
We also denote by $\cM^{\rm b}([0,\fK])$ the set of probability measures on $[0,\fK]$ with density bounded by $1$, i.e. the set of probabilities $\rd\nu=\rho(x)\rd x$, such that $\rho(x)$ is supported on $[0,\fK]$ and $\rho(x)\leq 1$.

Given a probability measure $\mu$, let $T_\mu\colon(0,1)\mapsto (-\infty, \infty)$  be the right continuous increasing function, such that $\mu$ is the push-forward of the uniform distribution on $(0,1)$ by $T_\mu$. In other words, for all bounded continuous function $f$, we have 
\begin{align}\label{e:defT}
\int_{0}^{1} f(T_{\mu}(x))\rd x=\int f(x)\,{\rm d}\mu(x).
\end{align}
More explicitly, $T_\mu$ is the functional inverse of the cumulative density function $F_\mu$ of $\mu$. With this notation, we can rewrite the Wasserstein distance \eqref{e:wd} as
\begin{align}\label{e:wd2}
\rd_W(\mu, \nu)\deq\sup_{\|f\|_\cL\leq 1}\left|\int f(x) \rd\mu-\int f(x) \rd\nu\right|
=\int |T_\mu(x)-T_{\nu}(x)|\rd x.
\end{align}

\begin{theorem}\label{t:UBU}
Let $B_N$ be a sequence of deterministic self-adjoint matrices such that the spectral measures $\mu_{B_N}$ of $B_N$ converge weakly towards $\mu_B$ as $N\to\infty$. Assume there exists a constant $\fK>0$, such that $\supp \mu_{B_N}\subset [-\fK,\fK]$. 
\begin{enumerate}
\item Let $\mu\in \cM([-\fK,\fK])$, $\nu\in \cM$ and set
\begin{align}\label{e:defHD}
H^{\mathsf D}_\mu(\nu)=\frac{1}{2}\int T_\nu(x) T_\mu(x) \rd x-I(\nu,\mu_{B})\,,
\end{align}
where {$I(\cdot, \cdot)$ is defined in \eqref{e:defI} and} $T_\mu, T_\nu$ are as defined in \eqref{e:defT}. 
Then, the functional $\mathcal I^{\mathsf D}(\cdot)$
 \begin{align} \label{e:defID}
 \mathcal I^{\mathsf D}(\mu):=\sup_{\nu\in\mathcal M}H^{\mathsf D}_{\mu}(\nu),
 \end{align}
  is non-negative, lower semicontinuous on $\cM([-\fK,\fK])$ and vanishes only at the Dirac mass at $\int x\rd\mu_B$. $\mathcal I^{\mathsf D}(\mu)=+\infty$ unless $\int x\rd\mu=\int x\rd \mu_B$ and $\mu$ satisfies the limiting Schur-Horn inequalities:
\begin{equation}\label{limSH} \int_y^1 (T_{\mu}(x)-T_{\mu_B}(x))\rd x\leq 0\quad\text{for all }y\in [0,1],\end{equation}
which is the continuum limit of \eqref{e:schur-horndis}.
\item 
The distribution of the empirical measure of the diagonal entries of $UB_N U^*$,
\begin{align*}
\mu_N=\frac{1}{N}\sum_{i=1}^N \delta_{(UB_NU^*)_{ii}},
\end{align*}
satisfies a large deviation principle with good rate function $\mathcal I^{\mathsf D}$.  In other words, for any $\mu\in \cM([-\fK, \fK])$, if $ \bB_{\delta}(\mu)$ denotes the open ball $\{\nu\in \cM([-\fK,\fK]): \rd(\nu,\mu)<\delta\}$,
\begin{align}\label{e:UBU}
\lim_{\delta\rightarrow 0}\liminf_{N\rightarrow \infty}\frac{1}{\beta N^2}\log \bP(\mu_N\in \bB_{\delta}(\mu))=\lim_{\delta\rightarrow 0}\limsup_{N\rightarrow \infty}\frac{1}{\beta N^2}\log \bP(\mu_N\in \bB_{\delta}(\mu))
= - \mathcal I^{\mathsf D}(\mu).
\end{align}
\end{enumerate}
\end{theorem}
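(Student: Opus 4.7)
The plan is to tilt the Haar measure by the spherical integral $I_N(A_N,B_N) = \int e^{(\beta N/2)\Tr(A_N U B_N U^*)}\,dU$ for well-chosen deterministic diagonal matrices $A_N$, using that $\log I_N(A_N,B_N)/(\beta N^2) \to I(\nu,\mu_B)$ by \eqref{e:defI} whenever $\mu_{A_N}\to\nu$ weakly. The central identity is $\Tr(A_NUB_NU^*) = \sum_i a_i d_i$, with $a_i$ the diagonal of $A_N$ and $d_i = (UB_NU^*)_{ii}$. Since $U$ is Haar, the joint law of $(d_1,\dots,d_N)$ is exchangeable; combining this with the rearrangement inequality, which identifies $\max_\sigma \sum_i a_{\sigma(i)} d_i = N\int_0^1 T_{\mu_{A_N}}(x)T_{\mu_N}(x)\,dx$, yields
\begin{align*}
I_N(A_N,B_N) \;\leq\; \bE\Bigl[\exp\Bigl(\tfrac{\beta N^2}{2}\int_0^1 T_{\mu_{A_N}}(x)T_{\mu_N}(x)\,dx\Bigr)\Bigr] \;\leq\; N!\cdot I_N(A_N,B_N).
\end{align*}
Since $\log N! = o(N^2)$, the middle quantity is the scale-$\beta N^2$ Laplace transform of $\tfrac12\int T_\nu T_{\mu_N}$, with asymptotic logarithm $I(\nu,\mu_B)$.

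For the upper bound, a Chernoff estimate gives, for each $\nu\in\cM$,
\begin{align*}
\frac{1}{\beta N^2}\log\bP(\mu_N\in\bB_\delta(\mu)) \;\leq\; -\tfrac12 \inf_{\mu'\in\bB_\delta(\mu)}\int T_\nu(x) T_{\mu'}(x)\,dx \;+\; \frac{1}{\beta N^2}\log \bE\bigl[e^{(\beta N^2/2)\int T_\nu T_{\mu_N}}\bigr].
\end{align*}
The first term is continuous in $\mu'$ on $\cM([-\fK,\fK])$ by dominated convergence (since $|T_{\mu'}|\leq\fK$ and $T_\nu\in L^1(0,1)$). Sending $N\to\infty$ and then $\delta\to 0$ yields $\limsup \leq -H_\mu^{\mathsf D}(\nu)$; optimizing over $\nu\in\cM$ produces the $-\mathcal I^{\mathsf D}(\mu)$ upper bound.

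For the matching lower bound, fix $\mu$ with $\mathcal I^{\mathsf D}(\mu)<\infty$ and $\veps>0$, pick an $\veps$-near maximizer $\nu^*$ of $H_\mu^{\mathsf D}$, and choose a sorted deterministic diagonal $A_N$ with $\mu_{A_N}\to\nu^*$. Write
\begin{align*}
\bP(\mu_N\in\bB_\delta(\mu)) \;=\; I_N(A_N,B_N)\cdot \tilde\bE^{A_N}\Bigl[\mathbf{1}_{\mu_N\in\bB_\delta(\mu)}\,e^{-(\beta N/2)\Tr(A_NUB_NU^*)}\Bigr],
\end{align*}
where $\tilde\bE^{A_N}$ denotes expectation under the spherical tilt $dU\mapsto e^{(\beta N/2)\Tr(A_NUB_NU^*)}\,dU / I_N(A_N,B_N)$. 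Two facts close the argument: (i) under $\tilde\bE^{A_N}$ the sorted diagonal of $UB_NU^*$ aligns with that of $A_N$, so $\Tr(A_NUB_NU^*) \approx N\int T_{\nu^*}(x)T_{\mu_N}(x)\,dx$; (ii) under $\tilde\bE^{A_N}$, $\mu_N$ concentrates on a unique $\mu^*$ characterized by the first-order condition $\tfrac12 T_{\mu^*}(x) = \partial_\nu I(\nu,\mu_B)\big|_{\nu=\nu^*}(x)$, and the variational duality built into $H_\mu^{\mathsf D}$ identifies this $\mu^*$ with our target $\mu$ up to the $\veps$-slack. The derivatives of $I(\cdot,\mu_B)$ derived in Section~\ref{sec:sp} are the essential input, and an exponential-equivalence argument approximating $\nu^*$ by piecewise-constant measures reduces the tilted LLN to a finite-dimensional statement.

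Regarding the properties of $\mathcal I^{\mathsf D}$: non-negativity ($\nu=0$) and lower semi-continuity (sup of functionals continuous in $\mu$) are immediate, and the unique zero at $\delta_{\int x\,d\mu_B}$ follows from the classical LLN $\mu_N\to \delta_{\int x\,d\mu_B}$ under Haar $U$. Testing $H_\mu^{\mathsf D}$ against $\nu=\delta_c$ (whence $T_\nu\equiv c$ and $I(\delta_c,\mu_B)=(c/2)\int x\,d\mu_B$) forces $\int x\,d\mu = \int x\,d\mu_B$ when $\mathcal I^{\mathsf D}(\mu)<\infty$; testing against two-point probability measures with quantile $T_\nu(x) = c\mathbf{1}_{[y,1]}(x)$ for $c>0$, and invoking the finite-$N$ Schur--Horn theorem $\int T_\nu T_{\mu_N}\leq \int T_\nu T_{\mu_B}$ almost surely together with the matching linear growth of $I(\nu,\mu_B)$ in $c$, produces \eqref{limSH}. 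The hardest step, I expect, is the measure-valued LLN under the spherical tilt stated in (i)--(ii): unlike the one-dimensional tilts handled in \cite{HuGu,AGH}, one needs a full limit-shape theorem for $\mu_N$ under a tilt driven by the entire spectrum of $A_N$, which is precisely why the derivative formulas from Section~\ref{sec:sp} are essential.
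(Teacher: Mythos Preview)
Your upper bound is essentially the paper's: reduce to sorted diagonals via exchangeability (paying the harmless $N!$), tilt by $e^{(\beta N/2)\Tr(Y_NUB_NU^*)}$, and use the asymptotic $I(\nu,\mu_B)$ of the spherical integral together with continuity of $\mu'\mapsto\int T_\nu T_{\mu'}$.

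The gap is in the lower bound, at your step (ii). You assert that under the $\nu^*$-tilt $\mu_N$ concentrates at the $\mu^*$ given by the derivative formula, and that ``variational duality'' then forces $\mu^*=\mu$. Neither claim is justified, and the paper does not prove a tilted LLN by the direct/finite-dimensional route you sketch. Its logic runs the other way: for any fixed $\mu_Y$, Proposition~\ref{p:unique1} uses the derivative formulas of Section~\ref{s:derSI} to show that there is a \emph{unique} $\mu$ for which $\mu_Y$ is an exact maximizer of $H^{\mathsf D}_\mu$, namely $T_\mu=\tau(\mathsf b|\mathsf y)\circ T_{\mu_Y}$. Then for every $\mu'\neq\mu$ the strict inequality $\sup_\nu H^{\mathsf D}_{\mu'}(\nu)>H^{\mathsf D}_{\mu'}(\mu_Y)$ holds, and feeding this back into the \emph{already proven upper bound} (Proposition~\ref{c:expbound1}) shows the tilted measure gives $\bB_\delta(\mu')$ mass at most $e^{-c(\delta)N^2}$; a finite cover of $\bB_\delta(\mu)^c$ yields the concentration. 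So the tilted LLN is a \emph{corollary} of uniqueness plus the upper bound, not a separate analytic input. There is a second, related issue: you work with an $\varepsilon$-near maximizer $\nu^*$, but the first-order identification $\mu^*=\mu$ needs an exact one. The paper handles this by showing (Proposition~\ref{p:domain1}, Item~3) that an exact maximizer exists whenever $\mu$ satisfies the strict Schur--Horn inequalities~\eqref{e:domain2}, and then (Item~5) that every admissible $\mu$ is approximable by such $\mu^\varepsilon$ with $\mathcal I^{\mathsf D}(\mu^\varepsilon)\to\mathcal I^{\mathsf D}(\mu)$. Your $\varepsilon$-slack argument does not replace this interior-plus-approximation scheme. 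Finally, deducing the unique zero of $\mathcal I^{\mathsf D}$ from the Haar-measure LLN is circular for Item~1; the paper gives a direct argument by testing with $\nu_\varepsilon=\varepsilon_\#\nu$ and expanding $I(\nu_\varepsilon,\mu_B)$ to first order in $\varepsilon$.
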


\begin{remark}
Since $\mu_{B_{N}}$ is supported on $[-\fK, \fK]$, then deterministically we have
$|(UB_NU^*)_{ii}|\leq \fK$ for all $1\leq i\leq N$. Therefore, for our study of the large deviation principle of the empirical measure $\mu_N$ in Theorem \ref{t:UBU},  we have restricted ourselves in the set of measures supported on $[-\fK, \fK]$, which is compact in the weak topology. If we do not restrict ourselves to the set of measures supported on $[-\fK, \fK]$, the function $H_\mu^{\mathsf D}(\cdot)$ as in \eqref{e:defHD} may not be well-defined. However, if one simply set $H_\mu^{\mathsf D}(\cdot)$ to be $+\infty$ when it is not well-defined, then the large deviation principle \eqref{e:UBU} holds for any probability $\mu$.
We also notice that since $\cM([-\fK,\fK])$ is compact, the above weak large deviation principle is equivalent to a full large deviations principle.
\end{remark}
%
%
%
%
Before stating the large deviation results for the empirical eigenvalue distributions of $A_N+UB_NU^*$, we need to introduce a notation. 
Take $\beta=2$. Let $C_{N},Y_{N}$ be two sequences of deterministic self-adjoint matrices, such that their spectral measures $\mu_{C_N}$ and $\mu_{Y_N}$ converge in Wasserstein distance \eqref{e:wd} towards $\mu_{C}$ and $\mu_{Y}$ respectively. We assume that  there exists a constant $\fK>0$, such that $ \supp\mu_{C_N}\subset [-\fK,\fK]$ and $\mu_{Y_N}(|x|)\leq \fK$.  
Then the joint distribution of $(Y_N,UC_N U^*)$ converges towards a non-commutative law $\tau_{\mu_C,\mu_Y}$  under the tilted measure :
\begin{align}
\rd\mu_{C_N, Y_{N}}(U)=\frac{e^{\frac{\beta N }{2}\Tr (Y_N U C_N U^*)}\rd U}{\int e^{\frac{\beta N}{2}\Tr (Y_N V C_N V^*)}\rd V}\label{e:lawU0}.
\end{align}
In other words, for any $k\geq 0$, any choice of $z_1,\ldots,z_k\in\mathbb C\backslash\mathbb R$ and integer numbers $n_1,\ldots,n_k\geq 0$, let  
 $F(\sfy,\sfc)=\frac{1}{z_1-\mathsf y}\sfc^{n_1} \frac{1}{z_2-\mathsf y}\sfc^{n_2} \cdots \frac{1}{z_k-\mathsf y} \sfc^{n_k}$, we have 
\begin{align}\label{e:tauF0}
\lim_{N\rightarrow \infty}\int \frac{1}{N}\Tr(F(Y_N,UC_NU^{*}))\rd\mu_{C_N, Y_N}(U)=\tau_{\mu_C, \mu_Y}(F(\mathsf y,\mathsf c)).
\end{align}
A proof of \eqref{e:tauF0} will be given in Theorem \ref{t:convl1} statement \eqref{e:tauF}. Although, the convergence holds for $\beta=2$, the limiting object $\tau_{\mu_C, \mu_Y}$ is independent of $\beta$.

As many other results from random matrix theory, the above statement and (parts of) the following theorem benefit from notions and 
results from noncommutative, and especially free, probability. In order to keep this section concise, we postpone the introduction of notions and results from 
noncommutative probability to Sections \ref{s:prel} (noncommutative probability spaces and freeness) and \ref{Subsection:4.3} (freeness with amalgamation).
\begin{theorem}\label{t:AUBU}
Let $A_N, B_N$ be a sequence of deterministic self-adjoint matrices, such that their spectral measures $\mu_{A_N}, \mu_{B_N}$ converge weakly towards $\mu_A, \mu_B$ respectively. Assume there exists a constant $\fK$, such that $\supp\mu_{A_N}, \supp\mu_{B_N}\subset [-\fK,\fK]$. Then
\begin{enumerate}
\item
Let  $\mu\in \cM([-2\fK,2\fK])$, $\nu\in \mathcal M$ and set
\begin{align}\label{e:defHA+B}
H^{\mathsf{A+B}}_\mu(\nu)=I(\nu, \mu)-I(\nu,\mu_A)-I(\nu,\mu_B)\,,
\end{align}
where $I(\cdot, \cdot)$ is defined in \eqref{e:defI}. 
The functional $\mathcal I^{\mathsf{A+B}}(\cdot)$
\begin{align}\label{e:defIAB}
\mathcal I^{\mathsf{A+B}}(\mu):=\sup_{\nu\in\mathcal M}H^{\mathsf{A+B}}_{\mu}(\nu)
\end{align}
 is non-negative and lower semicontinuous on $\cM([-2\fK,2\fK])$. $\mathcal I^{\mathsf{A+B}}(\mu)=\infty$ 
unless $\int x\rd \mu=\int x\rd \mu_A+\int x\rd \mu_B$ and the limiting Ky Fan inequalities hold:
\begin{align*}
\int_y^1  T_{\mu_A}(x)\rd x
+\int_y^1 T_{\mu_B}(x)\rd x
\geq 
\int_y^1 T_{\mu}(x)\rd x,\quad \text{for all $y\in [0,1]$},
\end{align*}
where is the continuum limit of the Ky Fan inequality \eqref{e:KyFan}.
\item 
Let $ \bB_{\delta}(\mu)$ denote the open ball $\{\nu\in \cM([-2\fK,2\fK]): \rd(\nu,\mu)<\delta\}$.
The empirical eigenvalue distribution $\mu_N$ of $A_N+UB_NU^{*}$ satisfies the large deviation upper bound 
\begin{align}\begin{split}\label{e:AUBU}
&\limsup_{\delta\rightarrow 0}\limsup_{N\rightarrow \infty}\frac{1}{\beta N^2}\log \bP(\mu_N\in \bB_{\delta}(\mu))
\leq - \mathcal I^{\mathsf{A+B}}(\mu).
\end{split}\end{align}
and the complementary lower bound  holds  if $\mu$ is in  the set  $\cH^{\mathsf{A+B}}$ of measures obtained by free product 
with amalgamation defined as follows: $\cH^{\sf A+B}$ is the set of probability measures given as the law of $\sfa+\sfb$
\begin{align}\label{e:defHAB}
\cH^{\sf A+B}=\{\text{law of } \sfa+\sfb\}
\end{align}
where $\sfa, \sfb$ are non-commutative variables with laws $\mu_A, \mu_B$, and such that  there exists another non-commutative variable $\sfy\in L^1$ with law $\mu_Y$ and $\mu_{Y}(|x|)<\infty$, so that $\sfa$ and $\sfb$ are free with amalgamation over $\sfy$ with marginal distributions $\tau_{\mu_A,\mu_Y}$ and $\tau_{\mu_B,\mu_Y}$ (as in \eqref{e:tauF0} by taking $\mu_C=\mu_A, \mu_B$).
\end{enumerate}
\end{theorem}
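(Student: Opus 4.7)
Following the tilting paradigm of the introduction applied to the single random matrix $M_N:=A_N+UB_NU^*$, for every auxiliary probability measure $\nu\in\cM$ I pick deterministic diagonal $Y_N$ with spectral measure converging weakly to $\nu$. Since $I_N(Y_N,\cdot)$ depends only on the spectrum of its second argument, $I_N(Y_N,M_N)$ is a legitimate function of $\mu_N$, and I tilt $\bP$ by $I_N(Y_N,M_N)/Z_N(Y_N)$, where $Z_N(Y_N):=\bE[I_N(Y_N,M_N)]$. Writing $I_N$ as an integral over an auxiliary Haar $V$ and swapping the order of integration (using conjugation invariance of $I_N$ in its first argument) produces the crucial factorisation
\begin{align*}
Z_N(Y_N)=I_N(Y_N,A_N)\,I_N(Y_N,B_N),
\end{align*}
so that by \eqref{e:defI}, $\tfrac{1}{\beta N^2}\log Z_N(Y_N)\to I(\nu,\mu_A)+I(\nu,\mu_B)$.

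\textbf{Upper bound.} A Chernoff-style estimate gives
\begin{align*}
\bP(\mu_N\in\bB_\delta(\mu))\leq\frac{Z_N(Y_N)}{\inf_{\mu_N\in\bB_\delta(\mu)}I_N(Y_N,M_N)}.
\end{align*}
Continuity of the limiting spherical integral $I(\nu,\cdot)$ on $\cM([-2\fK,2\fK])$, developed in Section~\ref{sec:sp}, together with a uniform convergence $\tfrac{1}{\beta N^2}\log I_N\to I$, yields $\liminf_{\delta\to 0}\liminf_{N\to\infty}\tfrac{1}{\beta N^2}\log\inf_{\mu_N\in\bB_\delta(\mu)}I_N(Y_N,M_N)\geq I(\nu,\mu)$. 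Passing to the limit and taking an infimum over $\nu\in\cM$ yields \eqref{e:AUBU} with rate $-\sup_\nu H^{\mathsf{A+B}}_\mu(\nu)=-\mathcal I^{\mathsf{A+B}}(\mu)$.

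\textbf{Lower bound on $\cH^{\mathsf{A+B}}$.} Fix $\mu\in\cH^{\mathsf{A+B}}$ with amalgamating measure $\mu_Y$ as in \eqref{e:defHAB}, and choose $Y_N$ diagonal whose spectrum converges to $\mu_Y$ in Wasserstein distance. Explicit unfolding of the tilted law $d\bP^{(Y_N)}=I_N(Y_N,M_N)\,d\bP/Z_N(Y_N)$ via the auxiliary Haar $V$ gives a joint $(U,V)$-measure weighted by $\exp\{\tfrac{\beta N}{2}\Tr(VY_NV^*(A_N+UB_NU^*))\}/\{I_N(Y_N,A_N)I_N(Y_N,B_N)\}$. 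Marginally $(VY_NV^*,A_N)$ and $(VY_NV^*,UB_NU^*)$ each follow the tilted law \eqref{e:lawU0} governed respectively by $\mu_A$ and $\mu_B$; and conditionally on $VY_NV^*$ the matrices $A_N$ and $UB_NU^*$ are independent, since $A_N$ is deterministic while $U$ is coupled to $V$ only through $VY_NV^*$. Theorem~\ref{t:convl1} then lifts this matrix-level conditional independence to asymptotic freeness with amalgamation over the non-commutative variable $\sfy$ of law $\mu_Y$, identifying the limiting spectral distribution of $M_N$ under the tilt as the law of $\sfa+\sfb=\mu$. Consequently $\bP^{(Y_N)}(\mu_N\in\bB_\delta(\mu))\to 1$, and the dual Jensen bound $\bP(\mu_N\in\bB_\delta(\mu))\geq\bP^{(Y_N)}(\mu_N\in\bB_\delta(\mu))\,Z_N(Y_N)/\sup_{\mu_N\in\bB_\delta(\mu)}I_N(Y_N,M_N)$ gives $\liminf\tfrac{1}{\beta N^2}\log\bP(\mu_N\in\bB_\delta(\mu))\geq -H^{\mathsf{A+B}}_\mu(\mu_Y)$. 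Combined with the upper bound this forces $\mu_Y$ to realise the supremum in \eqref{e:defIAB}, completing the complementary lower bound.

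\textbf{Rate function and main obstacle.} The properties in part (1) follow by testing the variational problem: $\nu=\delta_0$ gives $\mathcal I^{\mathsf{A+B}}\geq 0$; lower semicontinuity is inherited from the supremum over $\nu$ of the lower-semicontinuous functionals $\mu\mapsto H^{\mathsf{A+B}}_\mu(\nu)$; testing with $\nu=\delta_t$ for $t\in\RR$, using $I(\delta_t,\mu)=\tfrac{t}{2}\int x\,d\mu$, enforces the first-moment identity; step profiles $T_\nu=t\mathbf 1_{[y,1]}$ with $t\to+\infty$ produce the limiting Ky Fan inequalities. The principal technical obstacle is the passage from matrix-level conditional independence of $A_N$ and $UB_NU^*$ given $VY_NV^*$ to asymptotic freeness with amalgamation under the joint $(U,V)$-tilted measure, together with the identification of the limit as the prescribed $\sfa+\sfb$; this is where Theorem~\ref{t:convl1} and uniform-in-spectrum continuity of $I_N\to I$ are indispensable.
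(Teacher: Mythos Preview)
Your upper bound is essentially the paper's argument: tilt by the spherical integral $I_N(Y_N,M_N)$, observe that after unfolding the inner Haar integral and changing variables $(V,U)\mapsto(V,U^*V)$ the partition function factorises as $I_N(Y_N,A_N)I_N(Y_N,B_N)$, then use continuity of $I(\nu,\cdot)$ to control the numerator on $\bB_\delta(\mu)$. Likewise your treatment of part~(1) --- testing with $\nu=\delta_t$ for the first moment, dilated step profiles for the Ky Fan inequalities, lower semicontinuity as a supremum of continuous functionals --- matches the paper's Proposition on the rate function.

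The genuine gap is in the lower bound. You correctly unfold the tilted law into the product measure $\rd\mu_{A_N,B_N,Y_N}(U,V)\propto e^{\frac{\beta N}{2}\Tr(A_NUY_NU^*)+\frac{\beta N}{2}\Tr(B_NVY_NV^*)}\rd U\rd V$ under which $U$ and $V$ are \emph{independent}, and Theorem~\ref{t:convl1} does give the marginal limits $\tau_{\mu_A,\mu_Y}$ and $\tau_{\mu_B,\mu_Y}$. But Theorem~\ref{t:convl1} says nothing about the \emph{joint} non-commutative law of $(U^*A_NU,V^*B_NV,Y_N)$, and classical independence of $U$ and $V$ does not by itself yield asymptotic freeness with amalgamation over $\sfy$: the tilted laws are not unitarily invariant (only invariant under block-diagonal conjugation commuting with $Y_N$), so Voiculescu's standard ``independence $\Rightarrow$ asymptotic freeness'' does not apply. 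Your sentence ``conditionally on $VY_NV^*$ the matrices $A_N$ and $UB_NU^*$ are independent'' is vacuous ($A_N$ is deterministic) and does not substitute for the missing argument.

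The paper fills this gap in two steps. First (Proposition~\ref{discr}), for \emph{discrete} $\mu_Y=\sum_i\alpha_i\delta_{y_i}$ it exploits the invariance of $\rd\mu_{A_N,B_N,Y_N}$ under right multiplication by block-diagonal unitaries $\hat U=\diag(U_1,\dots,U_p)$, then uses loop equations in the $\hat U_i$ together with concentration to show that alternating centered words in $(U^*A_NU,Y_N)$ and $(V^*B_NV,Y_N)$ vanish conditionally on $Y_N$ --- precisely the definition \eqref{freeam} of freeness with amalgamation. Second (Lemma~\ref{c:continuityfa}), a short ultraproduct argument shows that freeness with amalgamation is preserved under limits of the amalgamating variable, which extends the result to general $\mu_Y\in L^1$. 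These two ingredients are what your invocation of Theorem~\ref{t:convl1} is standing in for; without them the lower bound is incomplete.
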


Freeness with amalgamation is described by \eqref{freeam}. If we take $\mu_Y$ to be the delta mass at $0$, i.e. $\mu_Y=\delta_0$, then the law of $\sfa+\sfb$ is the free convolution of  $\mu_A$ and $\mu_B$.
The set 
 $\cH^{\mathsf{A+B}}$ is closed for the weak topology and probably not dense 
  in the set of measures which are characterized by the limiting Schur-Horn inequalities. When the supremum defining $\mathcal I^{\mathsf{A+B}}(\mu)$ in \eqref{e:defIAB} is achieved at a probability measure $\nu$ which is compactly supported
and such that 
all connected components of $\supp\nu$ are infinite sets (that is, $\nu$ has no isolated point masses), one can see as in Theorem \ref{t:LR} that the large deviation lower bound holds at $\mu$ and in fact that then $\mu\in \cH^{\sf A+B}$. 

\begin{remark}
If $\mu_{A_N}$ and $\mu_{B_N}$ are supported on $[-\fK, \fK]$, then deterministically the spectral measure of $A_N+UB_NU^*$ is supported on $[-2\fK, 2\fK]$. Therefore, for our study of the large deviation principle of the empirical measure of $A_N+UB_NU^*$,  we can restrict ourselves to the set of measures supported on $[-2\fK, 2\fK]$, which is compact in the weak topology. 

\end{remark}

The Harish-Chandra-Itzykson-Zuber integral formula, originated from the work of Harish-Chandra \cite{Harish}, and Itzykson and Zuber \cite{BIPZ,Itzyksonzuber},  exactly computes the spherical integral for $\beta=2$
\begin{align}\label{e:HCIZ}
\int e^{N\Tr(A_NUB_NU^{*})} \rd U
=\left(\prod_{i=1}^{N-1}i! \right)\frac{\det [e^{a_i b_j}]_{1\leq i,j\leq N}}{\prod_{1\leq i<j\leq N}(a_i-a_j)(b_i-b_j)}.
\end{align}
The determinantal structure on the right hand side of \eqref{e:HCIZ} resembles the formula for the Schur symmetric polynomials. In fact, we can use the Harish-Chandra-Itzykson-Zuber integral formula to give an integral representation of Schur symmetric polynomials. Before stating the formula, we need to introduce more notations. Given a partition $\bmla_N=(\la_1\geq \la_2\geq \cdots\geq \la_N)$, we encode it through the counting measure $m[\bmla_N]$ as
\begin{align}\label{e:defm}
m[\bmla_N]=\frac{1}{N}\sum_{i=1}^N \delta\left(\frac{\lambda_i +N-i}{N}\right).
\end{align}
To get the Schur symmetric polynomial $S_{\bmla_N}$ parametrized by the partition $\bmla_N$ from the Harish-Chandra-Itzykson-Zuber integral formula \eqref{e:HCIZ}, we take
\begin{align*}
D_N=\diag\left\{\frac{\la_1+N-1}{N}, \frac{\la_2+N-2}{N}, \cdots, \frac{\la_N}{N}\right\},
\quad Y_N=\diag\{y_1,y_2,\cdots, y_N\},
\end{align*}
and denote $e^{Y_N}=(e^{y_1}, e^{y_2}, \cdots, e^{y_N})$, then it follows
\begin{align}\begin{split}\label{e:schur}
&\phantom{{}={}}S_{\bmla_N}(e^{Y_N})
=\frac{\det [e^{y_i (\la_j+N-1)}]_{1\leq i,j\leq N}}{\prod_{1\leq i<j\leq N}(e^{y_i}-e^{y_j})}\\
&=\left(\prod_{i=1}^{N-1}i! \right)^{-1}
\frac{\prod_{1\leq i,j\leq N}(y_i-y_j)(\lambda_i-\lambda_j-i+j)}{\prod_{1\leq i,j\leq N}(e^{y_i}-e^{y_j}) }\int e^{N\Tr(Y_NUD_NU^{*})} \rd U.
\end{split}\end{align}
Let $\bmla_N\in \bY_N$ be a sequence of deterministic partitions, such that its counting measure $m[\bmla_N]$ as defined in \eqref{e:defm} converges weakly to $m_{\bmla}$, and the spectral measure $\mu_{Y_N}$ of $Y_N$ converges weakly towards $\mu_{Y}$, then \eqref{e:defI} implies the following asymptotics of Schur symmetric polynomials
\begin{align}\begin{split}\label{e:logS}
&\lim_{N\rightarrow\infty}\frac{1}{N^2}\log S_{\bmla_N}(e^{Y_N})
=J(\mu_Y, m_\bmla),\\
&J( \mu_Y,m_\bmla)= 2I(\mu_Y, m_\bmla)+\frac{1}{2}\int \log|x-y|\rd m_\bmla(x)\rd m_{\bmla}(y)\\
&\phantom{J(\mu_Y,m_\bmla)=}-\frac{1}{2}\int\int \log \left(\frac{e^x-e^y}{x-y}\right)\rd \mu_Y(x)\rd \mu_Y(y){ +\frac{3}{4}}.
\end{split}\end{align}

\begin{theorem}\label{t:Kostka}
Let $\bmla_N\in \bY_N$ be a sequence of deterministic partitions, such that its counting measure $m[\bmla_N]$ as defined in \eqref{e:defm}, converges weakly towards $m_{\bmla}$. Assume there exists a constant $\fK>0$, such that $\supp m[\bmla_N]\subset [0,\fK]$ for all $N\in\mathbb N$.
\begin{enumerate}
\item Let $\mu\in \cM^{\rm b}([0,\fK])$, $\nu\in \cM$ and set
\begin{align}\label{e:defHK}
H^{\mathsf{K}}_\mu(\nu):=\int (T_\mu(x)-x) T_\nu(x) \rd x-J(\nu, m_\bmla)\,,
\end{align}
where the functional $J(\cdot, \cdot)$ has been defined in \eqref{e:logS} and $T_\mu, T_\nu$ are as defined in \eqref{e:defT}.
The functional $\mathcal I^{\mathsf K}(\cdot)$
 \begin{align} \label{e:defIK}
 \mathcal I^{\mathsf K}(\mu):=\sup_{\nu\in\mathcal M}H^{\mathsf K}_{\mu}(\nu),
 \end{align}
is lower semicontinuous on $\cM^{\rm b}([0, \fK])$ and achieves its maximum only at the uniform measure $\unif[\int x\rd m_\bmla-1/2, \int x\rd m_\bmla+1/2]$. $\mathcal I^{\mathsf K}(\mu)=+\infty$ unless $\int x\rd\mu=\int x\rd m_{\bmla}$ and $\mu$ satisfies the following inequalities :
\begin{equation}\label{limSHK} \int_y^1 (T_{\mu}(x)-T_{m_\bmla}(x))\rd x\leq 0\quad\text{for all }y\in [0,1]\,.\end{equation}

\item
The Kostka numbers $K_{\bmla_N\bmeta_N}$ in \eqref{e:defKostka} satisfy, for any $\mu\in \cM^{\rm b}([0,\fK])$,
\begin{align}\label{e:Kostka}
\lim_{\delta\rightarrow 0}\limsup_{N\rightarrow \infty}\frac{1}{N^2}\log \sup_{m[\bmeta_N]\in \bB_\delta(\mu)} K_{\bmla_N\bmeta_N}=\lim_{\delta\rightarrow 0}\liminf_{N\rightarrow \infty}\frac{1}{N^2}\log \sup_{m[\bmeta_N]\in \bB_\delta(\mu)} K_{\bmla_N\bmeta_N}
=-\cI^{\mathsf{K}}(\mu),
\end{align}
where $ \bB_{\delta}(\mu)$ is the ball $\{\nu\in \cM^{\rm b}([0,\fK]): \rd(\nu,\mu)<\delta\}$.
\end{enumerate}

\end{theorem}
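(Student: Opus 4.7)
The proof will closely parallel that of Theorem \ref{t:UBU}, with the Schur polynomial $S_{\bmla_N}(e^{Y_N})$ playing the role of the spherical integral $I_N(A_N,B_N)$ and the Kostka expansion \eqref{e:defKostka} serving as the analogue of the Haar average. Concretely, for any diagonal $Y_N$ with spectral measure $\mu_{Y_N}\to \mu_Y$, I consider the probability distribution on partitions
\begin{equation*}
P_{Y_N}(\bmeta_N) = \frac{K_{\bmla_N \bmeta_N}\, m_{\bmeta_N}(e^{Y_N})}{S_{\bmla_N}(e^{Y_N})},
\end{equation*}
normalized by \eqref{e:defKostka}. The denominator is controlled by the Schur asymptotic \eqref{e:logS}, and the numerator's two factors $K_{\bmla_N\bmeta_N}$ and $m_{\bmeta_N}(e^{Y_N})$ will be analyzed separately.

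For the upper bound in \eqref{e:Kostka}, I use the tautological inequality $K_{\bmla_N \bmeta_N}\, m_{\bmeta_N}(e^{Y_N}) \leq S_{\bmla_N}(e^{Y_N})$, which is valid because each Kostka number is a non-negative integer and $m_{\bmeta_N}$ is non-negative at positive arguments. Taking logarithms and dividing by $N^2$, the key step is the monomial-symmetric-function asymptotic
\begin{equation*}
\frac{1}{N^2}\log m_{\bmeta_N}(e^{Y_N}) \longrightarrow \int_0^1 T_{\mu_Y}(x)(T_\mu(x)-x)\,\rd x,
\end{equation*}
uniformly for $m[\bmeta_N] \in \bB_\delta(\mu)$. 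This follows by writing $m_{\bmeta_N}(e^{Y_N}) = |\mathrm{stab}(\bmeta_N)|^{-1}\sum_{\sigma} \exp(\sum_i y_i\eta_{\sigma(i)})$ and observing that (i) the rearrangement inequality forces the maximum over $\sigma$ to correspond to identically ordered sequences $(y_i)$ and $(\eta_{\sigma(i)})$, (ii) this maximum equals $N^2\int T_{\mu_Y}(x)(T_\mu(x)-x)\rd x+o(N^2)$ by Riemann summation together with the relation $\eta_j/N\to T_\mu(1-j/N)-(1-j/N)$ built into the encoding \eqref{e:defm}, and (iii) both the multinomial prefactor and the sum over $N!$ permutations contribute only $O(N\log N) = o(N^2)$. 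Combined with \eqref{e:logS}, optimizing over $\mu_Y$ yields $\limsup_{N\to\infty}\frac{1}{N^2}\log \sup_{m[\bmeta_N]\in \bB_\delta(\mu)} K_{\bmla_N\bmeta_N} \leq -\mathcal I^{\mathsf K}(\mu)$.

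For the complementary lower bound, the number of partitions $\bmeta_N$ with $m[\bmeta_N]\in\cM^{\rm b}([0,\fK])$ is bounded by $e^{O(N\log N)} = e^{o(N^2)}$. Combined with $\sum_{\bmeta_N} K_{\bmla_N\bmeta_N} m_{\bmeta_N}(e^{Y_N}) = S_{\bmla_N}(e^{Y_N})$ and \eqref{e:logS}, this forces
\begin{equation*}
\sup_{\bmeta_N}\frac{1}{N^2}\bigl[\log K_{\bmla_N\bmeta_N} + \log m_{\bmeta_N}(e^{Y_N})\bigr] = J(\mu_Y,m_\bmla) + o(1).
\end{equation*}
Denote by $\bmeta_N^\ast(Y_N)$ a near-maximizer; I would show by compactness and Euler-Lagrange analysis of the variational problem \eqref{e:defIK} that $m[\bmeta_N^\ast]$ converges to a measure $\mu^\ast(\mu_Y)$, and that the map $\mu_Y\mapsto\mu^\ast(\mu_Y)$ surjects onto the set of $\mu\in\cM^{\rm b}([0,\fK])$ with first moment $\int x\,\rd m_\bmla$ satisfying \eqref{limSHK}. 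Choosing $\mu_Y^\ast$ with $\mu^\ast(\mu_Y^\ast)=\mu$ (which by Fenchel duality will automatically satisfy $H^{\mathsf K}_\mu(\mu_Y^\ast)=\mathcal I^{\mathsf K}(\mu)$) yields
\begin{equation*}
\frac{1}{N^2}\log K_{\bmla_N\bmeta_N^\ast(Y_N^\ast)} \geq J(\mu_Y^\ast,m_\bmla)-\int T_{\mu_Y^\ast}(x)(T_\mu(x)-x)\,\rd x + o(1) = -\mathcal I^{\mathsf K}(\mu) + o(1).
\end{equation*}

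Part (1) is more standard: lower semicontinuity of $\mathcal I^{\mathsf K}$ follows from its definition as a supremum of continuous functionals, using the Wasserstein form \eqref{e:wd2} of $\int T_\mu T_\nu \,\rd x$ and continuity of $I$ established in Section \ref{sec:sp}; the first-moment constraint $\int x\,\rd\mu = \int x\,\rd m_\bmla$ and the inequalities \eqref{limSHK} emerge by testing $H^{\mathsf K}_\mu(\nu)<\infty$ against the choices $\nu = c\,\mathrm{Leb}$ and $\nu = c\cdot \mathbf 1_{[y,1]}\,\mathrm{Leb}$ and sending $c\to\pm\infty$, exactly as the limiting Schur-Horn inequalities are derived in Theorem \ref{t:UBU}; the extremal measure is identified as the uniform distribution on $[\int x\,\rd m_\bmla-1/2, \int x\,\rd m_\bmla + 1/2]$ by solving the corresponding Euler-Lagrange equation for \eqref{e:defIK}, using the explicit form \eqref{e:logS} of $J$. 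The main technical obstacle I anticipate is the surjectivity of $\mu_Y\mapsto \mu^\ast(\mu_Y)$ onto all admissible $\mu$, which requires the explicit derivatives of the limiting spherical integral derived in Section \ref{sec:sp} and is the quantized analogue of the concentration-plus-surjectivity argument in Theorem \ref{t:UBU}; uniform control of the $\log m_{\bmeta_N}(e^{Y_N})/N^2$ asymptotic over neighborhoods of $\mu$ and $\mu_Y$ (possibly requiring truncation when the extremal $\mu_Y^\ast$ is not compactly supported) is the accompanying technical difficulty.
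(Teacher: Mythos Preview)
Your proposal is correct and follows the paper's approach: the upper bound via $K_{\bmla_N\bmeta_N}\,m_{\bmeta_N}(e^{Y_N})\le S_{\bmla_N}(e^{Y_N})$ together with the monomial asymptotic is exactly \eqref{e:LDPu3}, and your lower-bound mechanism (near-maximizer convergence plus surjectivity of $\mu_Y\mapsto\mu^*(\mu_Y)$) is the content of Propositions \ref{p:unique3}--\ref{p:lowerbound3}, driven by the concentration estimate of Proposition \ref{c:expbound3}. The one point you do not address is that this map only surjects onto the \emph{strict} interior \eqref{e:domainK} of the admissible region, where the supremum in \eqref{e:defIK} is actually attained at some $\nu^*\in\cM$; the paper reaches boundary $\mu$ via an explicit approximation $\mu^\varepsilon\to\mu$ from the interior with $\mathcal I^{\mathsf K}(\mu^\varepsilon)\to\mathcal I^{\mathsf K}(\mu)$ (Item 5 of Proposition \ref{p:rateIK}), and without this step the lower bound at such $\mu$ remains unproven.
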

\begin{remark}
If $m[\bmla_N]$ are supported on $[0, \fK]$, and  $K_{\bmla_N, \bmeta_N}\neq 0$, then deterministically
$m[\bmeta_N]$ is supported on $[0,\fK]$.  Moreover, from our construction of $m[\bmeta_N]$ as in \eqref{e:defm}, the limit of $m[\bmeta_N]$ necessarily has a density bounded by $1$. Therefore in Theorem \ref{t:LR},  we have restricted ourselves in the set of measures $\cM^{\rm b}([0,\fK])$: supported on $[0, \fK]$ with density bounded by $1$. 
\end{remark}

We can also derive the following asymptotic formulas for the Littlewood-Richardson coefficients.
\begin{theorem}\label{t:LR}
Let $\bmla_N,\bmeta_N\in \bY_N$ be two sequences of deterministic partitions, such that their counting measures $m[\bmla_N], m[\bmeta_N]$, as defined in \eqref{e:defm}, converge weakly towards $m_{\bmla}, m_{\bmeta}$ respectively. Assume there exists a constant $\fK>0$, such that $\supp m[\bmla_N], \supp m[\bmeta_N]\subset [0,\fK]$. Then

\begin{enumerate}
\item
Let  $\mu\in \cM^{\rm b}([0,2\fK])$, $\nu\in \mathcal M$ and set
$$H^{\mathsf{LR}}_\mu(\nu)=J(\nu, \mu)-J( \nu,m_\bmla)-J( \nu,m_\bmeta)\,,$$
where the functional $J(\cdot, \cdot)$ has been defined in \eqref{e:logS} .
The functional $\mathcal I^{\mathsf{LR}}(\cdot)$
\begin{align}\label{e:defILR}
\mathcal I^{\mathsf{LR}}(\mu):=\sup_{\nu\in\mathcal M}H^{\mathsf{LR}}_{\mu}(\nu)
\end{align}
 is lower semicontinuous on $\cM^{\rm b}([0,2\fK])$. $\mathcal I^{\mathsf{LR}}(\mu)=+\infty$ 
unless $\int x\rd \mu=\int x\rd m_{\bmla}+\int x\rd m_{\bmeta}$ and the following inequalities hold:
\begin{align*}
\int_y^1  T_{m_\bmla}(x)\rd x
+\int_y^1 T_{m_\bmeta}(x)\rd x
\geq 
\int_y^1 T_{\mu}(x)\rd x,\quad \text{for all $y\in [0,1]$}.
\end{align*}

\item
Let  $ \bB_{\delta}(\mu)$ denote the ball $\{\nu\in \cM^{\rm b}([0,2\fK]): \rd(\nu,\mu)<\delta\}$.
The Littlewood-Richardson coefficients $c_{\bmla_N\bmeta_N}^{\bmkappa_N}$ \eqref{e:defLR} are asymptotically bounded above as follows
\begin{equation}\label{e:LR}
\limsup_{\delta\rightarrow 0}\limsup_{N\rightarrow \infty}\frac{1}{N^2}\log \sup_{m[\bmkappa_N]\in \bB_\delta(\mu)} c_{\bmla_N\bmeta_N}^{\bmkappa_N}
\leq -\cI^{\mathsf{LR}}(\mu),\end{equation}
and the complementary lower bound holds if the $\sup$ in \eqref{e:defILR} is achieved at a probability measure $\nu$ which is compactly supported
and
all components of $\supp\nu$ are infinite sets.
\end{enumerate}
\end{theorem}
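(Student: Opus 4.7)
The plan is to model the proof of Theorem \ref{t:LR} on that of Theorem \ref{t:AUBU}, with the starring role of the spherical integral $\int e^{\beta N \Tr(Y_N U B_N U^*)/2}\,\rd U$ taken over by the multiplicative identity
\begin{equation*}
S_{\bmla_N}(e^{Y_N})\,S_{\bmeta_N}(e^{Y_N}) = \sum_{\bmkappa_N \in \bY_N} c_{\bmla_N\bmeta_N}^{\bmkappa_N}\,S_{\bmkappa_N}(e^{Y_N}),
\end{equation*}
and with the asymptotic \eqref{e:logS} serving as the analogue of \eqref{e:defI}. Throughout, the tilting parameter is a deterministic diagonal $Y_N$ whose spectral measure $\mu_{Y_N}$ converges to a chosen $\nu\in \cM$.

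For the upper bound \eqref{e:LR}, all summands are positive because $e^{y_i}>0$ forces each Schur polynomial evaluation to be positive, so dropping every term but one gives
\begin{equation*}
c_{\bmla_N\bmeta_N}^{\bmkappa_N} \leq \frac{S_{\bmla_N}(e^{Y_N})\,S_{\bmeta_N}(e^{Y_N})}{S_{\bmkappa_N}(e^{Y_N})}.
\end{equation*}
Inserting \eqref{e:logS} into each of the three factors and using joint continuity of $J(\nu,\cdot)$ (inherited from continuity of the limiting spherical integral, Section \ref{sec:sp}) on $\bB_\delta(\mu)$ yields, for every $\bmkappa_N$ with $m[\bmkappa_N]\in \bB_\delta(\mu)$,
\begin{equation*}
\frac{1}{N^2}\log c_{\bmla_N\bmeta_N}^{\bmkappa_N} \leq J(\nu,m_\bmla) + J(\nu,m_\bmeta) - J(\nu,\mu) + \eps(\delta) + o_N(1).
\end{equation*}
Taking $\sup$ over $\bmkappa_N\in \bB_\delta(\mu)$, then $\limsup_N$, then $\delta\downarrow 0$, and finally $\inf$ over $\nu$ converts the right-hand side to $-\sup_\nu H^{\mathsf{LR}}_\mu(\nu) = -\mathcal I^{\mathsf{LR}}(\mu)$.

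For the matching lower bound, assuming the supremum in \eqref{e:defILR} is attained at a compactly supported $\nu^*$ whose support has only infinite connected components, I introduce on $\bY_N$ the tilted probability
\begin{equation*}
p_N(\bmkappa_N) = \frac{c_{\bmla_N\bmeta_N}^{\bmkappa_N}\,S_{\bmkappa_N}(e^{Y_N^*})}{S_{\bmla_N}(e^{Y_N^*})\,S_{\bmeta_N}(e^{Y_N^*})},\qquad \mu_{Y_N^*}\to\nu^*,
\end{equation*}
the combinatorial counterpart of the matrix tilt \eqref{e:lawU0}. The critical step is to show that under $p_N$ the random counting measure $m[\bmkappa_N]$ concentrates at $\mu$; the infinite-component hypothesis plays the same role here as membership in $\cH^{\mathsf{A+B}}$ in Theorem \ref{t:AUBU}, namely it guarantees that the variational problem has a smooth maximizer with no lattice resonances, so the combinatorial tilt selects a single asymptotic profile. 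Granting the concentration, since $|\{\bmkappa_N\in \bY_N : m[\bmkappa_N]\in \bB_\delta(\mu)\}| = e^{O(N)}\ll e^{N^2}$, a pigeonhole argument produces some $\bmkappa_N\in \bB_\delta(\mu)$ with $c_{\bmla_N\bmeta_N}^{\bmkappa_N}\,S_{\bmkappa_N}(e^{Y_N^*})\geq e^{-O(N)}\,S_{\bmla_N}(e^{Y_N^*})S_{\bmeta_N}(e^{Y_N^*})$. Dividing by $S_{\bmkappa_N}(e^{Y_N^*}) = \exp(N^2 J(\nu^*,\mu) + o(N^2))$ produces the required lower bound $\exp(-N^2 \mathcal I^{\mathsf{LR}}(\mu) + o(N^2))$. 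The properties in Part (1) are then soft consequences: lower semicontinuity of $\mathcal I^{\mathsf{LR}}$ follows because $\mu\mapsto J(\nu,\mu)$ is continuous for each fixed $\nu$, making $\mu\mapsto H^{\mathsf{LR}}_\mu(\nu)$ continuous and its supremum over $\nu$ lower semicontinuous; testing with Dirac masses $\nu=\delta_t$ as $|t|\to\infty$ forces $\int x\,\rd\mu = \int x\,\rd m_\bmla + \int x\,\rd m_\bmeta$, and testing with measures concentrated on far-out rays recovers the Ky-Fan-type inequalities exactly as in Theorem \ref{t:AUBU}.

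The main obstacle will be establishing the concentration of $p_N$ at $\mu$, which amounts to a law of large numbers for a combinatorial probability on partitions whose weights mix Littlewood-Richardson coefficients with Schur polynomial values. I expect to handle it by introducing a quantized analogue of the operator-valued law $\tau_{\mu_C,\mu_Y}$ of \eqref{e:tauF0}, identifying it with the unique maximizer of the relevant variational problem, and invoking the infinite-connected-components assumption precisely to exclude pathological maximizers; the derivatives of the limiting spherical integral computed in Section \ref{sec:sp} will be what makes the uniqueness argument run.
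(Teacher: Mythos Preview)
Your overall architecture is correct and matches the paper: the upper bound via $c_{\bmla_N\bmeta_N}^{\bmkappa_N}\leq S_{\bmla_N}(e^{Y_N})S_{\bmeta_N}(e^{Y_N})/S_{\bmkappa_N}(e^{Y_N})$ plus \eqref{e:logS}, the rate-function properties by testing with dilated measures, and the lower bound via the tilted weights $p_N(\bmkappa_N)$ followed by concentration and a pigeonhole over the $e^{O(N\log N)}$ partitions in $\bB_\delta(\mu)$ are exactly what the paper does.

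The one place where you misdiagnose the mechanism is the concentration step. You propose to obtain it through a ``quantized analogue of the operator-valued law $\tau_{\mu_C,\mu_Y}$''; the paper explicitly remarks, immediately after the statement of Theorem~\ref{t:LR}, that such a quantized freeness-with-amalgamation is \emph{not} available and is left to future work. Instead the paper argues indirectly: it first proves (Proposition~\ref{t:unique4}) that, given the maximizer $\nu^*$, there is a \emph{unique} $\mu$ with $\nu^*\in\arg\sup H^{\mathsf{LR}}_\mu$. Then for every $\mu'\neq\mu$ the tilted mass of $\bB_\delta(\mu')$ is $e^{-c(\delta)N^2}$-small (Proposition~\ref{c:expbound4}), and a finite compactness cover of $\cM^{\rm b}([0,2\fK])\setminus\bB_\delta(\mu)$ forces $p_N(\bB_\delta(\mu))\to 1$.

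You are right that the derivatives of Section~\ref{s:derSI} drive the uniqueness proof: differentiating $H^{\mathsf{LR}}_\mu$ at $\nu^*$ shows that the boundary datum $f^{\nu^*\to\mu}(0,\cdot)$ of the associated complex Burgers solution is a fixed explicit function of $\nu^*,m_\bmla,m_\bmeta$ alone. But the decisive last step is Corollary~\ref{c:uniquemu}, proved in Section~\ref{s:f} by rewriting the Burgers equation as a Beltrami equation, applying Sto\"ilow factorization, and a reflection argument: it states that $f^{\nu^*\to\mu}(0,\cdot)$ determines $\mu$ uniquely. The hypothesis that every component of $\supp\nu^*$ is infinite is used precisely there --- it is what allows the identity principle for analytic functions to bite once the singular part of the boundary data has been subtracted off --- and has nothing to do with ``lattice resonances'' or smoothness of maximizers. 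So your plan is sound, but the tool you need for the uniqueness step is Corollary~\ref{c:uniquemu}, not a quantized amalgamated-freeness structure.
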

We would have liked to describe the set of measures at which we can get the large deviation lower bound in more explicit way, similar to the set $\cH^{\sf A+B}$ of Theorem \ref{t:AUBU}. We believe that this could be done with an appropriate  quantized version for the freeness with amalgamation which we hope to investigate in future work.

\begin{remark}
If $m[\bmla_N]$ and $m[\bmeta_N]$ are supported on $[0, \fK]$, and  $c^{\bmkappa_N}_{\bmla_N, \bmeta_N}\neq 0$, then deterministically
$m[\bmkappa_N]$ is supported on $[0,2\fK]$.  Moreover, from our construction of $m[\bmeta_N]$ as in \eqref{e:defm}, the limit of $m[\bmkappa_N]$ necessarily has a density bounded by $1$. Therefore in Theorem \ref{t:LR},  we have restricted ourselves in the set of measures supported on $[0, 2\fK]$ with density bounded by $1$. 
\end{remark}

\section{Spherical Integral}
\label{sec:sp}

In this section we study the spherical integral and the limit function $I(\mu_A, \mu_B)$ as in \eqref{e:defI}. In Section \ref{s:prel}, we collect some estimates of the spherical integral and its limit $I(\mu_A, \mu_B)$ from \cite{GZ3,GZei1add, guionnet2004large,MR2095566}, where it was shown that $I(\mu_A, \mu_B)$ is related to a variational problem. The results in \cite{GZ3,GZei1add} requires that one of $\{\mu_A, \mu_B\}$ is compactly supported and the other has bounded second moment and free energy. However, by a continuity argument, it is easy to see that $I(\mu_A, \mu_B)$ is well-defined when one of $\{\mu_A, \mu_B\}$ is compactly supported and another has bounded first moment.
In Section \ref{s:L1spherical}, we extend the results in \cite{GZ3} for the setting that one measure is compactly supported and the other has bounded first moment. We remark this is the largest possible set where $I(\mu_A, \mu_B)$ is well defined. In this setting we show that the solution of the variational principle converges to the free Brownian bridge. Moreover, we characterize the limiting joint law of $(A_N, UB_NU)$ under the tilted measure $d\mu_{A_{N},B_{N}}(U)$ defined in \eqref{e:lawU0}.

Using the joint law of $(A_N, UB_NU)$ under $\mu_{A_{N},B_{N}}$ as input, in Section \ref{s:derSI}, we compute the derivatives of the limit function $I(\cdot,\cdot)$.  In Section \ref{s:continuity}, we give a more precise description of the solutions of the variational problem characterizing $I(\cdot, \cdot)$, by transforming the equations for the solution into a Beltrami equation.

\subsection{Preliminaries}\label{s:prel}

For any probability measure $\mu\in \cM(\bR)$, we denote $\Sigma(\mu)$ the energy of its logarithmic potential, or  its non-commutative entropy, 
\begin{align*}
\Sigma(\mu)=\int\int \log |x-y|\rd \mu(x)\rd \mu(y).
\end{align*}
We recall the following Theorem from \cite{GZ3}, where it is  proven that the limit of the spherical integral exists, provided one measure has bounded $L^2$ moment and logarithmic potential, another measure is compactly supported.
\begin{theorem}[{\cite[Theorem 1.1]{GZ3}}]\label{t:exist}
Let $A_{N},B_{N}$ be two sequences of deterministic self-adjoint matrices, such that their spectral measures $\mu_{A_N}$ and $\mu_{B_N}$ converge weakly to $\mu_{A}$ and $\mu_{B}$ respectively. If there exists a constant $\fK>0$, such that $\supp \mu_{A_N}\subset [-\fK, \fK]$ and $ \mu_{B_N}(|x|^2)\leq \fK$, $\Sigma(\mu_B)\geq -\fK$, then the spherical integral converges
\begin{align}\label{e:Sph}
\lim_{N\rightarrow\infty}I_{N}(A_{N},B_{N}):=\lim_{N\rightarrow\infty}\frac{1}{\beta N^{2}}\int e^{\frac{\beta N}{2}\Tr(A_NUB_NU^{*})} \rd U=I(\mu_{A},\mu_{B}),
 \end{align}
where $U$ follows the Haar measure on the unitary group (resp. orthogonal group) when $\beta=2$ (resp. $\beta=1$). 
\end{theorem}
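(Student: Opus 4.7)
The plan is to follow the stochastic-calculus approach of Guionnet-Zeitouni. The key reformulation, valid for $\beta=2$ via the HCIZ formula~\eqref{e:HCIZ}, is to interpret the spherical integral as (up to explicit factors) a Radon-Nikodym derivative for the law of the eigenvalues of a matrix-valued diffusion. Concretely, set $X = A_N + \sqrt{t}\, G_N$ with $G_N$ a standard GUE matrix. Integrating out the unitary part in the Gaussian density $e^{-\frac{N}{2t}\Tr(X-A_N)^2}$ using HCIZ rewrites the joint eigenvalue density of $X$ as a squared Vandermonde times an explicit Gaussian weight times $\exp(2N^2 I_N(A_N/t,\Lambda))$ where $\Lambda=\diag(\lambda(X))$. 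Thus $I_N(A_N, B_N)$ is expressed, modulo explicit terms, by the logarithmic density of the empirical spectral measure of $X$ at a configuration with spectral measure close to $\mu_B$, after choosing the scaling $t$ suitably.

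Given this inversion, I would proceed in three steps. \emph{Step 1:} By the Ben Arous-Guionnet LDP for GUE and its deterministic shifts (accessible via Dyson Brownian motion), the empirical measure $\hat\mu_X$ obeys an LDP at speed $N^2$ with a good rate function depending on $\mu_A$, involving the $L^2$ moment and the noncommutative entropy $\Sigma$. \emph{Step 2:} Inverting the explicit density formula yields the limit $I(\mu_A,\mu_B)$ as this rate function evaluated at $\mu_B$, corrected by an explicit Gaussian-plus-entropy contribution. \emph{Step 3:} Identify this variational expression with the Matytsin action along a hydrodynamic path interpolating $\mu_A$ and $\mu_B$ through the free convolution semigroup, whose Cauchy transform satisfies a complex Burgers equation. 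For $\beta=1$ the same strategy applies with orthogonal Brownian motion in place of Hermitian; the closed form from HCIZ is replaced by the analogous orthogonal integral, but the diffusive identity and the LDP framework are identical.

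The main technical obstacle is establishing the LDP and extracting $I(\mu_A,\mu_B)$ under the stated minimal regularity: $\mu_B$ need not be compactly supported, only $\mu_B(|x|^2)\leq\fK$ and $\Sigma(\mu_B)\geq-\fK$. This requires exponential tightness in the weak topology refined by $L^2$ moment control, together with a truncation argument showing that the contribution of eigenvalues of $B_N$ outside a large compact set is negligible at scale $N^2$; the uniform entropy lower bound is essential to avoid blow-up from clustering. A secondary point is matching upper and lower bounds: the LDP delivers one inequality directly, while the complementary bound requires constructing near-optimal Dyson Brownian paths connecting $\mu_A$ and $\mu_B$, at which point lower semicontinuity of the rate function and continuity of the Matytsin action in the prescribed topology become crucial. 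I expect the truncation-and-tightness step, rather than the identification with the hydrodynamic action, to be the genuinely delicate part of the argument.
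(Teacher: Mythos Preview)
Your proposal is correct and follows essentially the same route as the paper's sketch (which itself summarizes the argument of \cite{GZ3,GZei1add}): represent the eigenvalue density of $X_N(1)=A_N+H_N(1)$ via the spherical integral as in \eqref{e:lawX1}, invoke the large deviation principle for the empirical measure of $X_N(\cdot)$ (Theorem~\ref{t:NBMldp}), and read off \eqref{formulaJ} relating the rate function $J_\beta(\mu_A,\cdot)$ to $I(\mu_A,\cdot)$. Your Step~3 (identification with the Matytsin/Burgers action) is the content of the separately stated Theorem~\ref{theoCMP} and Proposition~\ref{propbur}, so it goes a bit beyond what is needed for Theorem~\ref{t:exist} itself, but the core Steps~1--2 match the paper exactly.
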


The proof of Theorem \ref{t:exist} is intimately related with the following large deviation principle based on the Hermitian (resp. symmetric) matrix Brownian motion $H_N(t)$. It is the process of $N\times N$ matrices filled with independent Brownian motion entries above the diagonal, 
with $(i,j)$-th entry given by
\begin{align}\label{e:defBDM}
(H_N(t))_{ij}=
\left\{
\begin{array}{ll}
\frac{1}{\sqrt \beta N}(B_{i,j}(t)+\sqrt{-1}(\beta-1)\tilde B_{i,j}(t)), & \text{if } i<j,\\
\frac{\sqrt 2}{\sqrt \beta N}B_{i,i}(t), & \text{if } i=j,
\end{array}
\right.
\end{align}
where $B_{i,j}(t)$ are independent standard Brownian motions.

\begin{theorem}[{\cite[Theorem 3.2 and Theorem 3.3]{GZei1add}}]\label{t:NBMldp}
Let $A_{N}$ be a sequence of deterministic diagonal matrices, with diagonal entries $a_1\leq a_2\leq \cdots\leq a_N$ 
whose spectral measures converge towards $\mu_{A}$. Assume that there exist a constant $\fK>0$ and $\varepsilon>0$ such that
$$\mu_{A_N}(|x|^2)\leq \fK,\quad \mu_{A}(|x|^{5+\varepsilon})\leq \fK.$$
Let $H_{N}(t)$ be a Hermitian (resp. Symmetric) matrix Brownian motion (recall from \eqref{e:defBDM}). Let $(\lambda_{i}(t))_{1\leq i\leq N}$ be the eigenvalues of the  self-adjoint matrix
\begin{align}\label{e:defX}
X_{N}(t)=A_{N}+H_N(t), \quad t\in [0,1],
\end{align}
and denote by $(\mu^{N}_{t})_{t\in [0,1]}$  the empirical measure of these eigenvalues.
Then,  the law of $(\mu^{N}_{t})_{t\in [0,1]}$, seen as a continuous process with values  in the space $\mathcal P(\mathbb R)$ of probability measures, satisfies a large deviation principle with speed $N^{2}$ and good rate function which is infinite if $\mu_{0}\neq \mu_{A}$ and otherwise given by $\beta S_{\mu_{A}}$,
\begin{align}\label{e:defSA}
S_{\mu_{A}}(\mu_{.})=\sup_{f\in C^{2,1}(\mathbb R, [0,1])}\left(S^{0,1}(f,\mu_{.})-\frac{1}{2}\langle f,f\rangle_{\mu}\right),
\end{align}
where $C^{2,1}(\bR, [0,1])$ is the set of functions $f(x,t)$ which is twice differentiable in $x$ and differentiable in $t$, $\langle f,g\rangle_{\mu}=\int_0^{1}\int \partial_{x}f(s,x)\partial_{x}g(s,x)\rd\mu_{s}(x) \rd s$ and 
$$S^{0,1}(f,\nu)=\int f(1,x)\rd\nu_{1}(x)-\int f(0,x)\rd\mu_{A}(x)-\int_{0}^{1}\int \partial_{t} f(t,x)\rd\nu_{t}(x) \rd t\qquad\qquad$$
$$\qquad \qquad \qquad\qquad\qquad -\frac{1}{2}\int_{0}^{1}\int \frac{\partial_{x}f(s,x)-\partial_{x}f(s,y)}{x-y} \rd\nu_{s}(x)\rd \nu_{s}(y)\rd s\,.$$
As a consequence, the law of $\mu^{N}_{1}$ satisfies a large deviation principle in the scale $N^{2}$ with the rate function
\begin{align}\label{e:infJ}
J_{\beta}(\mu_{A},\mu):=\beta \inf\{S_{\mu_{A}}(\mu_{.}):\mu_{1}=\mu\}\,.
\end{align}
\end{theorem}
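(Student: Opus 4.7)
The plan is to derive this path-level LDP from the Dyson Brownian motion (DBM) SDE satisfied by the eigenvalues $(\la_i(t))_{1\leq i\leq N}$ of $X_N(t)$, combining exponential martingale estimates for the upper bound with a Girsanov-type change of measure for the lower bound. A first application of It\^o's formula turns the symmetric/Hermitian Brownian motion $H_N(t)$ into the DBM
$$\rd \la_i(t)= c_\beta N^{-1/2}\,\rd B_i(t)+\frac{1}{N}\sum_{j\neq i}\frac{\rd t}{\la_i(t)-\la_j(t)},$$
for an explicit $\beta$-dependent constant $c_\beta$. A second application, to $\mu_t^N(f(t,\cdot))$ for $f\in C^{2,1}(\bR\times[0,1])$, produces a martingale
$$M_t^{N,f}=\mu_t^N(f(t,\cdot))-\mu_0^N(f(0,\cdot))-\int_0^t\mu_s^N(\partial_s f)\rd s-\frac{1}{2}\int_0^t\int\int\frac{\partial_x f(s,x)-\partial_x f(s,y)}{x-y}\rd\mu_s^N(x)\rd\mu_s^N(y)\rd s-R^N_t(f),$$
where $R^N_t(f)$ is an It\^o correction of order $N^{-1}$ and $\langle M^{N,f}\rangle_t$ is proportional to $N^{-2}\int_0^t\mu_s^N((\partial_xf)^2)\rd s$. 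Thus $M_1^{N,f}$ coincides, up to $O(1/N)$ and after identifying $\mu_0^N$ with $\mu_A$, with $-S^{0,1}(f,\mu^N_\cdot)$.

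For the upper bound I would form the exponential martingale
$$\cE_t^{N,f}=\exp\Bigl(\beta N^2\bigl[S^{0,t}(f,\mu^N_\cdot)-\tfrac{1}{2}\textstyle\int_0^t\langle f,f\rangle_{\mu^N_s}\rd s\bigr]\Bigr)\cdot(1+o(1)),$$
normalized so as to have mean one up to the $R^N_t$ correction; the $(5+\veps)$-th moment hypothesis on $\mu_A$ is precisely what is needed to control this correction uniformly as $f$ ranges over a suitable class of test functions growing polynomially at infinity. Chebyshev's inequality applied to $\cE_1^{N,f}$ on events $\{\mu^N_\cdot\in F\}$, together with an optimization over $f$, yields, for every closed set $F$,
$$\limsup_{N\to\infty}\frac{1}{\beta N^2}\log\bP(\mu^N_\cdot\in F)\leq -\inf_{\nu_\cdot\in F}\sup_f\Bigl(S^{0,1}(f,\nu_\cdot)-\tfrac12\textstyle\int_0^1\langle f,f\rangle_{\nu_s}\rd s\Bigr)=-\inf_{\nu_\cdot\in F}S_{\mu_A}(\nu_\cdot).$$
Taking $f$ independent of $t$ and large forces any finite-rate path to satisfy $\nu_0=\mu_A$, explaining the clause ``infinite if $\mu_0\neq \mu_A$''.

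For the lower bound at a path $(\mu_t)_{t\in[0,1]}$ with $\mu_0=\mu_A$ and $S_{\mu_A}(\mu_\cdot)<\infty$, I would approximate $\mu_\cdot$ by a smooth, compactly supported path $\mu^\veps_\cdot$ admitting a smooth optimizer $f_{\mu^\veps}$ of the variational problem. The Euler--Lagrange equation for $f_{\mu^\veps}$ is a continuity equation with Hilbert-transform drift, which on the level of Stieltjes transforms reduces to a complex Burgers equation, so that classical free-probability techniques guarantee existence of $f_{\mu^\veps}$. The regularization must be chosen so that $\limsup_{\veps\to 0}S_{\mu_A}(\mu^\veps_\cdot)\leq S_{\mu_A}(\mu_\cdot)$. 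Girsanov's theorem with tilt $\cE_1^{N,f_{\mu^\veps}}$ then shifts the drift of the DBM so that the tilted empirical process concentrates on $\mu^\veps_\cdot$ by a law-of-large-numbers argument, while the Radon--Nikodym density contributes the factor $\exp(-\beta N^2 S_{\mu_A}(\mu^\veps_\cdot)+o(N^2))$, yielding the matching lower bound on open balls around $\mu_\cdot$.

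The main obstacle will be this lower bound: the construction of the regularizing sequence $\mu^\veps_\cdot$ which simultaneously admits a smooth optimizer and does not raise the rate is delicate, and the control of the singular drift $1/(\la_i-\la_j)$ under the non-Gaussian tilted measure requires uniform estimates that do not follow from standard Gaussian tools. Exponential tightness on $C([0,1],\cP(\bR))$, needed to promote the weak LDP to a full one and to handle the unboundedness of $\supp\mu_A$, follows from the moment hypothesis on $\mu_{A_N}$ combined with Gaussian tail bounds on the increments of $H_N$. Finally, the LDP for $\mu_1^N$ stated in the theorem is a direct consequence of the path-level LDP through the contraction principle applied to the continuous evaluation map $\mu_\cdot\mapsto\mu_1$, the rate function being the infimum of $\beta S_{\mu_A}$ over paths ending at the prescribed terminal measure.
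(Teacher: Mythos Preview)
The paper does not prove this theorem: it is quoted verbatim as a result from \cite{GZei1add} (Theorems 3.2 and 3.3 there) and used as a black box in Section~\ref{s:prel}. So there is no ``paper's own proof'' to compare against.

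That said, your outline is essentially the strategy of the original Guionnet--Zeitouni argument: derive the Dyson SDE for the eigenvalues, apply It\^o to $\mu_t^N(f(t,\cdot))$ to express $S^{0,1}(f,\mu^N_\cdot)$ as a martingale plus negligible It\^o correction, use exponential martingales plus Chebyshev and optimization over $f$ for the upper bound, and a Girsanov tilt along an approximating smooth path for the lower bound, with the contraction principle giving the time-$1$ marginal LDP. Two small remarks. First, your identification ``$M_1^{N,f}$ coincides with $-S^{0,1}(f,\mu^N_\cdot)$'' has the wrong sign: from your own display the martingale equals $+S^{0,1}(f,\mu^N_\cdot)$ up to the correction. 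Second, the genuine technical work in \cite{GZei1add} is exactly where you locate it---the construction of the approximating path $\mu^\varepsilon_\cdot$ with a smooth optimizer and lower-semicontinuous rate, and exponential tightness under the weak moment hypotheses---and your sketch correctly flags these as the hard steps without resolving them. As a proof \emph{plan} this is accurate; as a proof it would need those two ingredients filled in from the cited reference.
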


From formula \eqref{t:NBMldp}, if we condition on that the eigenvalues of $X_N(1)$ are given by $B_N$, i.e. $X_N(1)=UB_NU^*$, then the law for the eigenvectors $U$ of $X_N(1)$ is given by the integrand of the spherical integral:
\begin{align}\label{e:lawU}
d\mu_{A_{N},B_{N}}(U)=\frac{1}{Z_N}e^{\frac{\beta N}{2} \Tr(B_NUA_NU^{*})}\rd U.
\end{align}
Theorem \ref{t:exist} can then be deduced from Theorem \ref{t:NBMldp} by noticing that the law for the eigenvalues of $X_N(1)$ is given by
\begin{align}\label{e:lawX1}
\frac{e^{-\frac{\beta N}{4}\Tr(A_N^{2}) }}{Z_{N}} \prod_{i<j}|x_i-x_j|^\beta e^{-\frac{\beta N}{4}\Tr X_N(1)^{2}}\prod_{i=1}^N\rd x_i
\int e^{\frac{\beta N}{2} \Tr(X_N(1)UA_NU^{*})}\rd U,
\end{align}
where $x_1, x_2, \cdots, x_N$ are the eigenvalues of $X_N(1)$. Indeed, this formula \eqref{e:lawX1} asymptotically yields 
\begin{equation}\label{formulaJ}
J_{\beta}(\mu_{A},\mu)=\frac{\beta}{4}\left(\int x^{2}d\mu_{A}(x)+\int x^{2}d\mu(x)\right)-\frac{\beta}{2}\Sigma(\mu)-\beta I(\mu_{A},\mu)+const.\,,
\end{equation}
where $const.$ is the  finite constant coming from the partition function $Z_{N}$.

One can directly study the optimizing problem \eqref{e:infJ}. The optimizer is characterized via solutions of an Euler equation with negative pressure.
\begin{theorem}[{\cite[Theorem]{Gurig}}]\label{theoCMP}
We assume that there exists a constant $\fK>0$, such that $\supp\mu_A\subset[-\fK, \fK]$,  $\mu_B(|x|^2)\leq \fK$ and $\Sigma(\mu_B)\geq -\fK$, $\Sigma(\mu_A)\geq -\fK$. 
Then $I(\mu_A, \mu_B)$ is given by
\begin{align}\label{e:Iexp}
I(\mu_A,\mu_B)=-\frac{1}{4}\inf S(u,\rho)-\frac{1}{4}\left(\Sigma(\mu_A)+\Sigma(\mu_B)\right)+\frac{1}{4}\left(\int x^2 \rd \mu_A(x)+\int x^2 \rd \mu_B(x)\right)-\text{const.}
\end{align}
where 
\begin{align}\label{e:funcS}
S(u,\rho)=\int_0^1 \int_\bR \left(\frac{\pi^2}{3}\rho_t^3 +u_t^2 \rho_t \right)\rd x \rd t;
\end{align}
the $\inf$ is taken over all the pairs $(u_t,\rho_t)$ such that $\del_t \rho_t+\del_x(\rho_tu_t)=0$ in the sense of distributions, $\rho_t\geq 0$ almost surely w.r.t. the Lebesgue measure, $\int \rho_t \rd x=1$, and with initial and terminal data for $\rho$ given by 
\begin{align*}
\lim_{t\rightarrow 0+}\rho_t(x)\rd x=\mu_A,\quad \lim_{t\rightarrow 1-}\rho_t(x)\rd x=\mu_B,
\end{align*}
where convergence holds in the weak sense.

$S$ is strictly convex in $(\rho,u\rho)$. The infimum in \eqref{e:Iexp} is reached at
a unique probability measure-valued path
$\rho_t^*\rd x\in\Ca([0,1],M_1(\R))$
such that for $t\in (0,1)$, $\rho_t^* \rd x$ is absolutely continuous with respect to Lebesgue measure. 
%
The pair $(\rho^*,u^*)$ satisfies the Euler equation for isentropic 
flow described, for $t\in (0,1)$, by the equations
\begin{align}
\partial_t \rho^*_t(x)&=-\partial_x( \rho^*_t(x)u^*_t(x))
\label{eulpr10}\\
\partial_t(\rho^*_t(x)u^*_t(x))&=-\partial_x\left(\rho^*_t(x)
u^*_t(x)^2-{\pi^2\over 3}\rho^*_t(x)^3\right),\label{eulpr20}
\end{align}
in the sense of distributions:
for all $\varphi\in \Ca_c^{\infty,\infty}({\R}\ts [0,1])$,
$$\int_0^1\int\partial_t \varphi(t,x)\rho^*_t(x)\rd x 
\rd t +\int_0^1\int\partial_x  \varphi(t,x) u^*_t(x) \rho^*_t(x)
\rd x\rd t=0,$$
and, for 
any $\varphi\in\Ca^{\infty,\infty}_c(\Omega)$ with
\begin{align}\label{Omega}
& \Omega:=\{(x,t)\in\mathbb R\times(0,1)\colon\rho^*_t(x)>0\},\\
&\label{distf}\int \left(u^*_t(x)\partial_t \varphi(x,t)+\left( u^*_t(x)^2
-\frac{\pi^2}{3} \rho^*_t(x)^2\right) \partial_x \varphi(x,t)\right)\rho^*_t(x) \rd x \rd t=0.
\end{align}
The infimum $(\rho^*,u^*)$ are smooth in the interior of $\Omega$,
which guarantees that (\ref{eulpr10}) and (\ref{eulpr20})
hold everywhere in the interior of $\Omega$. 
Moreover, $\Omega$ is bounded in $\R\ts [0,1]$.
\end{theorem}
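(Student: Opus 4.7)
The plan is to derive Theorem \ref{theoCMP} from the matrix-model large deviation principle of Theorem \ref{t:NBMldp} combined with the identity \eqref{formulaJ}. Rearranging \eqref{formulaJ} expresses $-\beta I(\mu_A,\mu_B)$ as $J_\beta(\mu_A,\mu_B)$ up to explicit second-moment and logarithmic-entropy boundary terms; since $J_\beta(\mu_A,\mu_B)=\beta\inf\{S_{\mu_A}(\mu_\cdot):\mu_0=\mu_A,\mu_1=\mu_B\}$, the main task is to recast the Legendre-dual rate function $S_{\mu_A}$ of \eqref{e:defSA} into the hydrodynamic form $S(u,\rho)$ of \eqref{e:funcS}, and then analyze the resulting constrained minimization.

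First I would carry out the hydrodynamic reformulation. For any path $\mu_\cdot$ of finite rate, a Riesz-representation argument applied to the supremum over $f$ in \eqref{e:defSA} produces a measurable velocity field $u_t$ satisfying the continuity equation $\partial_t\rho_t+\partial_x(\rho_t u_t)=0$ in the sense of distributions, with the optimal test function satisfying $\partial_x f^*_t=u_t-\pi H\rho_t$, where $H$ is the Hilbert transform. Expanding the optimal value $\tfrac12\int(u_t-\pi H\rho_t)^2\rho_t\,dx$ and invoking the two algebraic identities
\begin{align*}
\int (\pi H\rho_t)^2\rho_t\,dx=\tfrac{\pi^2}{3}\int\rho_t^3\,dx,\qquad \int u_t\rho_t H\rho_t\,dx=\tfrac{1}{2\pi}\tfrac{d}{dt}\Sigma(\mu_t),
\end{align*}
the first coming from the vanishing of $\operatorname{Im}\int g_t^3\,dx$ for the boundary Stieltjes transform $g_t=\pi H\rho_t-i\pi\rho_t$ and the second by integrating $\tfrac{d}{dt}\Sigma(\mu_t)$ against the continuity equation, identifies $S_{\mu_A}(\mu_\cdot)$ with $\tfrac12 S(u,\rho)$ up to entropy and second-moment boundary terms. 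Substituting into \eqref{formulaJ} then yields \eqref{e:Iexp}.

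Second I would establish existence, uniqueness and the Euler equations by standard convex-analytic and PDE arguments. Introducing the momentum variable $q=\rho u$, the functional $(\rho,q)\mapsto\int(\tfrac{\pi^2}{3}\rho^3+q^2/\rho)\,dx\,dt$ is jointly convex and strictly convex in $\rho$; the admissibility set (continuity equation with prescribed endpoints) is affine and weakly closed; the direct method of the calculus of variations provides a unique optimizer $(\rho^*,u^*)$. Lagrange multipliers introduce a dual variable $\varphi$ for which stationarity in $u$ gives $u^*=\partial_x\varphi$ on $\Omega=\{\rho^*>0\}$ and stationarity in $\rho$ produces the Hamilton--Jacobi relation $\partial_t\varphi+\tfrac12(\partial_x\varphi)^2=\tfrac{\pi^2}{2}(\rho^*)^2$, which together with the continuity equation is equivalent to the distributional Euler system \eqref{eulpr10}--\eqref{eulpr20}. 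Interior smoothness of $(\rho^*,u^*)$ then follows by passing to the Riemann invariants $f^\pm=u^*\pm i\pi\rho^*$ and solving the resulting decoupled complex Burgers equations via characteristics; boundedness of $\Omega$ in $\R\times[0,1]$ is deduced from comparison with the matrix bridge representation \eqref{fbbN}, whose spectral support is uniformly bounded in $N$ for $t\in[0,1]$.

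The principal obstacle will be the hydrodynamic reformulation on paths of limited regularity: one must justify the Hilbert-transform and integration-by-parts identities on densities $\rho_t$ that may vanish, diverge or have jumps at the boundary of their support, and verify that the Riesz-constructed velocity $u_t$ is admissible in the sense that the reformulated infimum equals the original one (equivalently, that the dual optimizer $\partial_x f^*_t=u_t-\pi H\rho_t$ can be approximated within the test class $C^{2,1}(\R,[0,1])$ appearing in \eqref{e:defSA}). Once this reformulation and its admissibility class are in hand, the remaining convex-analytic and PDE steps are standard.
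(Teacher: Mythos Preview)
The paper does not prove this theorem: it is quoted verbatim from \cite{Gurig} (Guionnet, \emph{Comm.\ Math.\ Phys.}), and the surrounding text simply uses it as input. There is therefore no proof in the paper to compare your proposal against.

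That said, your sketch is essentially the strategy carried out in \cite{Gurig}. The passage from the Legendre-type formula \eqref{e:defSA} for $S_{\mu_A}$ to the hydrodynamic functional \eqref{e:funcS} via a Riesz-representation / optimal-velocity argument, the two algebraic identities linking $(H\rho_t)^2\rho_t$ to $\rho_t^3$ and $u_t\rho_t H\rho_t$ to $\tfrac{d}{dt}\Sigma(\mu_t)$, the strict convexity of $(\rho,q)\mapsto \int(\tfrac{\pi^2}{3}\rho^3+q^2/\rho)$ in momentum variables, and the derivation of \eqref{eulpr10}--\eqref{eulpr20} through a Lagrange multiplier $\varphi$ with $u^*=\partial_x\varphi$ are all exactly the ingredients of that paper. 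The interior smoothness via the complex Burgers equation for $u^*\pm i\pi\rho^*$ and the boundedness of $\Omega$ via the bridge representation \eqref{fbbN} are likewise the arguments used there (and recalled in Proposition \ref{propbur}).

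Two minor points. First, with the paper's convention $H\rho(x)=\mathrm{p.v.}\int(x-y)^{-1}\rho(y)\,dy$ (no factor of $\pi$), your identities should read $\int (H\rho_t)^2\rho_t\,dx=\tfrac{\pi^2}{3}\int\rho_t^3\,dx$ and $\int u_t\rho_t H\rho_t\,dx=\tfrac12\tfrac{d}{dt}\Sigma(\mu_t)$; the stray $\pi$'s in your display would throw off the bookkeeping. Second, you correctly flag the genuine difficulty: justifying the Hilbert-transform identities and the equality of the two variational problems on paths with the limited a priori regularity furnished by finiteness of $S_{\mu_A}$. In \cite{Gurig} this is handled by first restricting to smooth paths, deriving the Euler system there, and then using the free Brownian bridge representation (your step via \eqref{fbbN}) together with Biane's regularity results to close the argument---so your identification of the obstacle and of the tool to resolve it are both on target.
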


Moreover,
Theorem \ref{theoCMP} and the large deviation principle of \cite[Theorem 3.2 and Property 2.2]{GZei1add} implies that the spectral measure of $\{X_N(t), t\in [0,1]\}$ (as in \eqref{e:defX})  conditioned at time $1$ to be equal to $\mu_{B_N}$  converges towards the minimizer $\{\rho_t^*\}_{0\leq t\leq 1}$ of  $S$ in \eqref{e:funcS}.

\begin{theorem}[{\cite[Theorem 3.3]{GZei1add}}]\label{convrho}
Let $A_{N},B_{N}$ be two sequences of deterministic self-adjoint matrices, such that their spectral measures $\mu_{A_N}$ and $\mu_{B_N}$ converge weakly to $\mu_{A}$ and $\mu_{B}$ respectively. If there exists a constant $\fK>0$, such that $\supp \mu_{A_N}\subset [-\fK, \fK]$ and $ \mu_{B_N}(|x|^2)\leq \fK$, $\Sigma(\mu_B)\geq -\fK$, then 
the spectral measure of $\{X_N(t), t\in [0,1]\}$ (as in \eqref{e:defX})  conditioned at time $1$ to be equal to $\mu_{B_N}$  converges weakly almost surely towards the minimizer $\{\rho_t^*\}_{0\leq t\leq 1}$ of \eqref{e:Iexp}.
%
%
\end{theorem}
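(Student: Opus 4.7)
\textbf{Proof plan for Theorem \ref{convrho}.} The plan is to deduce the convergence of the conditional spectral process from the unconditional large deviation principle of Theorem \ref{t:NBMldp}, using the uniqueness of the minimizer supplied by Theorem \ref{theoCMP}. The heuristic is standard: if an LDP holds with a rate function that has a unique minimum, then the underlying laws concentrate exponentially fast at that minimum. Here the only subtlety is that the conditioning event $\{\mu^N_1 = \mu_{B_N}\}$ has probability tending to zero at speed $N^2$, so we must work with a conditional LDP.

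First I would realize the conditional process explicitly. By \eqref{fbbN}, conditionally on $X_N(1)=UB_NU^*$, the path $\{X_N(t)\}_{t\in[0,1]}$ has the same law as the free Brownian bridge $(1-t)A_N + tUB_NU^* + \sqrt{t(1-t)}\,G_N$, where $U$ is distributed according to the tilted measure $\rd\mu_{A_N,B_N}(U)$ of \eqref{e:lawU} and $G_N$ is an independent GUE/GOE matrix. Combined with the joint density \eqref{e:lawX1}, the conditional law of the trajectory of spectral measures $(\mu^N_t)_{t\in[0,1]}$ given $\mu^N_1$ close to $\mu_B$ can be written as a tilt of the unconditioned law by a factor whose logarithm, divided by $\beta N^2$, is asymptotically $-J_\beta(\mu_A,\mu_B)$, thanks to \eqref{formulaJ} and Theorem \ref{t:exist}.

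Next I would upgrade the unconditional LDP of Theorem \ref{t:NBMldp} to a conditional LDP. Concretely, for any open neighbourhood $\mathcal{O}$ of a given path $\nu_\cdot$ in $\mathcal{C}([0,1],\mathcal{P}(\mathbb R))$, the unconditional LDP gives
\[
\limsup_{N\to\infty}\frac{1}{\beta N^2}\log \bP\bigl(\mu^N_\cdot\in\mathcal{O},\ \rd_W(\mu^N_1,\mu_B)<\delta\bigr)\leq -\inf\{S_{\mu_A}(\eta_\cdot):\eta_\cdot\in\overline{\mathcal O},\ \eta_1=\mu_B\},
\]
and a matching lower bound on open sets. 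Dividing by $\bP(\rd_W(\mu^N_1,\mu_B)<\delta)$, whose exponential rate is $-J_\beta(\mu_A,\mu_B)=-\beta\inf\{S_{\mu_A}(\eta_\cdot):\eta_1=\mu_B\}$ by Theorem \ref{t:NBMldp}, yields an LDP for the conditional law with good rate function $\beta S_{\mu_A}(\nu_\cdot)-J_\beta(\mu_A,\mu_B)$ on the closed set $\{\nu_1=\mu_B\}$ and $+\infty$ elsewhere. Theorem \ref{theoCMP} identifies the unique zero of this rate function as $\rho^*_\cdot$; consequently the conditional probability that $\mu^N_\cdot$ lies outside any neighbourhood of $\rho^*_\cdot$ decays like $\exp(-c N^2)$ for some $c>0$ depending on the neighbourhood, which by Borel--Cantelli upgrades convergence in probability to almost sure convergence along the sequence.

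The main obstacle will be the conditioning step, which in the statement of Theorem \ref{t:NBMldp} as recalled above is only phrased for the unconstrained endpoint. One needs to show that the rate function of the unconditioned LDP is genuinely a good rate function on the subspace of paths pinned at $\mu_B$, and that the projection $\nu_\cdot\mapsto \nu_1$ is continuous enough for the conditional large deviation upper and lower bounds to match. This is handled by the exponential tightness already present in Theorem \ref{t:NBMldp} and by the convexity/strict convexity of $S$ in $(\rho,u\rho)$ from Theorem \ref{theoCMP}, which guarantees uniqueness of the minimizer; the remaining technicality is to rule out pathological paths that touch $\mu_B$ only in a degenerate sense, which is ensured by the integrability hypotheses $\mu_{B_N}(|x|^2)\leq \fK$ and $\Sigma(\mu_B)\geq -\fK$ needed to make the tilt \eqref{formulaJ} a genuine probabilistic change of measure.
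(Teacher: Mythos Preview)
Your approach is essentially the one the paper sketches: the theorem is cited from \cite{GZei1add}, and the paper's only commentary is the short paragraph following the statement, which says exactly that Theorems \ref{t:NBMldp} and \ref{theoCMP} together yield the result via the standard ``LDP with unique minimizer implies concentration'' argument, just as you outline.

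There is one gap the paper flags that you do not address. Theorem \ref{theoCMP}, which you invoke for the uniqueness of the minimizer $\rho^*_\cdot$, requires $\Sigma(\mu_A)>-\infty$, whereas Theorem \ref{convrho} only assumes $\supp\mu_{A_N}\subset[-\fK,\fK]$ with no control on $\Sigma(\mu_A)$. When $\Sigma(\mu_A)=-\infty$, the variational characterization \eqref{e:Iexp} is not directly available and your appeal to strict convexity of $S$ does not apply. The paper's fix is to define $\rho^*_t$ for such $\mu_A$ by continuity (Claim \ref{c:densitye}), then run the LDP argument on the time interval $[\varepsilon,1]$ with initial data $\rho^*_\varepsilon$, which has finite entropy, so that Theorem \ref{theoCMP} applies there; letting $\varepsilon\downarrow0$ concludes. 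You should add this step, since as written your argument only covers the case $\Sigma(\mu_A)>-\infty$.
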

Strictly speaking the two previous theorems imply this theorem under the additional hypothesis that $\Sigma(\mu_A)$ is finite. However we can define $\rho^*_t$ even when $\Sigma(\mu_A)$  is infinite by continuity, see Lemma \ref{c:densitye}, and consider the large deviation principle from time $\varepsilon$ conditioned to equal $\rho^*_{\varepsilon}$ at the final time. Since the latest has finite entropy, all the previous arguments apply. Theorem \ref{convrho}, or \cite[Theorem 2.6]{Gurig}, also shows that $\{\rho^*_t\}_{t\in [0,1]}$ has the distribution of a free Brownian bridge, defined 
thanks to the notion of non-commutative joint law from free probability. 

The next theorem involves in an essential way some notions from free probability, which we now introduce. An abstract noncommutative probability space is a pair
$(\mathcal A,\tau)$, where $\mathcal A$ is a unital algebra over the field of complex numbers $\mathbb C$ and $\tau\colon\mathcal A\to\mathbb C$ is a $\mathbb C$-linear
functional which maps the unit of $\mathcal A$ to the complex number one. In our paper, we assume in addition that $\mathcal A$ is a von Neumann algebra (see \cite{Takesaki01}) 
and $\tau$ is {\em normal, faithful, tracial, and positive}, that is, it belongs to the predual of $\mathcal A$ (normality), 
it satisfies $\tau(x^*x)\ge0$ for all $x\in\mathcal A$ (positivity), with equality if and only if $x=0$ (faithfulness), and $\tau(yx)=\tau(xy)$ for all $x,y\in\mathcal A$ (traciality).
This notion does generalize the classical notion of probability space: the algebra of essentially bounded measurable functions on a classical probability space is a 
commutative von Neumann algebra and the integration with respect to the probability measure is a unit-preserving positive linear functional. By analogy with the classical
terminology, elements of $\mathcal A$ are called random variables, or sometimes noncommutative random variables (for this, and much of the material related to free probability,
we refer to \cite{VDN92}). 

As in classical analysis, one defines $L^p$ spaces with respect to $\tau$: for $1\le p<\infty$, the Banach space $L^p(\mathcal A,\tau)$ is the completion of $\mathcal A$ with respect
to the norm $\|x\|_p:=[\tau((x^*x)^{p/2})]^{1/p}$. Then it makes sense to write $L^\infty(\mathcal A,\tau)=\mathcal A$, and to let $L^\infty(S,\tau)$ (or just $L^\infty(S)$ when 
there is no risk of confusion) be the von Neumann subalgebra of $\mathcal A$ generated by the set $S$ (note that $S$ may contain unbounded operators: for instance, the von 
Neumann algebra generated by an unbounded selfadjoint operator equals the von Neumann algebra generated by its - clearly bounded - spectral projections). Another notation that we 
will sometimes use is $S''$, the double commutant of the set $S$ in the space of bounded operators on $L^2(\mathcal A,\tau)$; this makes use of the equivalent definition of a von 
Neumann algebra \cite[Definition 3.2 of Chapter II]{Takesaki01}.

While the distribution of a single selfdjoint random variable with respect to $\tau$ is a classical object, the distribution of several random variables, even selfadjoint,
is usually not a classical probability measure. For bounded random variables, the distribution of a tuple $(x_1,\dots,x_k)\in\mathcal A^k$ is conveniently defined as the collection
of all moments $\tau(x_{\iota_1}x_{\iota_2}\cdots x_{\iota_n}),n\in\mathbb N,\iota_1,\iota_2,\dots,\iota_n\in\{1,\dots,k\}$ (in this article we are only concerned with 
selfadjoint or normal random variables). 

A noncommutative notion of independence was introduced by Voiculescu: subalgebras $\{\mathcal A_\iota\}_{\iota\in I}$ of $\mathcal A$ containing the unit of $\mathcal A$ are called 
{\em freely independent with respect to} $\tau$, or simply {\em free}, if $\tau(x_1x_2\cdots x_n)=0$ whenever $\tau(x_j)=0$, $x_j\in\mathcal A_{\iota_j}1\le j\le n$, $\iota_1\neq
\iota_2,\iota_2\neq\iota_3,\dots,\iota_{n-1}\neq\iota_n, n\in\mathbb N$. Subsets $S_\iota$, $\iota\in I$ (possibly singletons) are called free if the algebras $\mathcal A_\iota$
generated by $S_\iota$ together with the unit of $\mathcal A$ are free. Naturally, if $x=x^*,y=y^*$ are free in $\mathcal A$ and distributed according to $\mu_x$ and $\mu_y$
with respect to $\tau$, then the distribution of the sum $x+y$ with respect to $\tau$ depends only on $\mu_x$ and $\mu_y$ and is called the free additive convolution
of $\mu_x$ and $\mu_y$, denoted by $\mu_x\boxplus\mu_y$. 

While the above definitions apply to subsets consisting of {\em bounded} random variables (indeed, elements of $\mathcal A$ automatically have bounded spectrum), they can all be 
extended to unbounded random variables by considering operators affiliated 
to $\mathcal A$ and recalling that the existence of $\tau$ guarantees that such operators do form an algebra. 
This result is due to Bercovici and Voiculescu \cite{BV93}, to which we refer for details. A more general notion of joint distribution of unbounded selfadjoint random variables 
(that is, beyond the sum of two unbounded selfadjoints) can be defined in various ways: the distribution of selfadjoint random variables $(x_1,\dots,x_k)$ affiliated to 
$\mathcal A$ can be defined as the collection of the classical distributions of $P(x_1,\dots,x_k)$ with respect to $\tau$ as $P$ runs through the set of all selfadjoint polynomials in $k$ 
non-commuting indeterminates, or as the collection of the noncommutative distributions of $(f_1(x_1),\dots,f_k(x_k))$ for all bounded measurable real-valued $f_j$ etc. For our 
purposes, the version described in \eqref{e:defcF} will suffice.

\begin{theorem}[{\cite[Theorem 2.6]{Gurig}}]\label{t:FBB}
Assume $\mu_{A}, \mu_{B}$ are compactly supported. Then the space of free Brownian path distributions $FBB(\mu_{A},\mu_{B})$ given by
$$(1-t)\mathsf a+t\mathsf b+\sqrt{t(1-t)} \mathsf s, \quad t\in [0,1],$$
is closed, 
where $(\mathsf a,\mathsf b)$ are free from  the semi-circle law $\mathsf s$, with joint distribution such that the distribution of $\mathsf a$ and $ \sf b$ are $\mu_{A}$ and $\mu_B$  respectively. Then, $\{\rho^*_t\}_{t\in [0,1]}$ belongs to $FBB(\mu_{A},\mu_{B})$.
\end{theorem}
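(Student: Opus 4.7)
The plan is to prove the two assertions separately, using the matrix model \eqref{fbbN} together with asymptotic freeness for the identification, and a moment compactness argument for the closedness.

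For the identification of $\{\rho^*_t\}_{t\in [0,1]}$, I would use the representation \eqref{fbbN}: under the tilted measure $\rd\mu_{A_N,B_N}$ defined in \eqref{e:lawU}, one can find, for each fixed $t\in [0,1]$, a GUE (resp.\ GOE) matrix $G_N$ independent of $(A_N,UB_NU^*)$ such that
\begin{equation*}
X_N(t)=(1-t)A_N+tUB_NU^*+\sqrt{t(1-t)}\,G_N.
\end{equation*}
By Theorem \ref{convrho}, the spectral measure of $X_N(t)$ converges almost surely to $\rho_t^*$. On the other hand, the joint non-commutative distribution of $(A_N,UB_NU^*)$ under the tilted measure converges by Theorem \ref{t:convl1} to some pair $(\sfa,\sfb)$ whose marginals are $\mu_A$ and $\mu_B$; since $A_N, UB_NU^*$ are uniformly norm-bounded by $\fK$, Voiculescu's asymptotic freeness theorem for independent GUE/GOE matrices and bounded-norm ensembles guarantees that $G_N$ is asymptotically free from $(A_N,UB_NU^*)$, so the triple converges jointly in $*$-distribution to $(\sfa,\sfb,\sfs)$ with $\sfs$ a semicircular variable free from $(\sfa,\sfb)$. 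Hence $X_N(t)$ converges in distribution to $(1-t)\sfa+t\sfb+\sqrt{t(1-t)}\,\sfs$, and by uniqueness of the limiting spectral measure this equals $\rho_t^*$. Joint convergence in $t$ (to get the path, not just time marginals) follows either by extending the argument to finite collections of times using the continuity of the matrix Brownian bridge or by invoking the path-level large deviation principle of Theorem \ref{t:NBMldp}.

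For the closedness of $FBB(\mu_A,\mu_B)$, consider a sequence of paths in the family, given by pairs $(\sfa_n,\sfb_n)$ with marginals $\mu_A,\mu_B$, each free from a semicircular $\sfs_n$. Since $\mu_A,\mu_B$ are compactly supported in $[-\fK,\fK]$, the operator norms $\|\sfa_n\|,\|\sfb_n\|$ are uniformly bounded by $\fK$, so every joint $*$-moment of $(\sfa_n,\sfb_n)$ is uniformly bounded. By a diagonal extraction, a subsequence converges in all joint $*$-moments to a non-commutative law of some $(\sfa,\sfb)$ with marginals $\mu_A,\mu_B$. Freeness from a semicircular element is a polynomial identity on mixed moments, hence persists in the limit, giving a semicircular $\sfs$ free from the limit pair. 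Therefore the limiting path $(1-t)\sfa+t\sfb+\sqrt{t(1-t)}\,\sfs$ lies in $FBB(\mu_A,\mu_B)$, and the convergence at each $t$ is in moments, hence in weak topology since all measures are supported in a fixed compact interval.

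The main obstacle is the asymptotic freeness input under the tilted (non-Haar) law of $U$: one cannot directly quote the classical "unitary conjugation is asymptotically free" statement. The workaround is that freeness is only needed \emph{between} the independent Gaussian block $G_N$ and the pair $(A_N,UB_NU^*)$, and for this Voiculescu's theorem requires only that the latter pair have uniformly bounded norm and a limiting joint $*$-distribution, both of which are provided by Theorem \ref{t:convl1}, irrespective of how the law of $U$ is tilted. The other delicate point, namely that the limiting joint distribution $(\sfa,\sfb)$ exists and has the correct marginals under the tilted measure, is exactly the content of Theorem \ref{t:convl1} and so is available as a black box.
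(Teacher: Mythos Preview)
The paper does not give its own proof of this statement; it is quoted from \cite[Theorem 2.6]{Gurig}. Your overall strategy---using the matrix bridge representation \eqref{fbbN}, Voiculescu's asymptotic freeness of independent Gaussian matrices from uniformly norm-bounded ensembles, and the almost sure convergence of Theorem \ref{convrho}---is the natural one and is in the spirit of the original argument.

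There is, however, a circularity in your write-up as it stands: you invoke Theorem \ref{t:convl1} to obtain convergence of the joint non-commutative law of $(A_N,UB_NU^*)$ under the tilted measure, but in the present paper the proof of Theorem \ref{t:convl1} (through Claim \ref{c:densitye}) already relies on Theorem \ref{t:FBB}. Fortunately this is easy to repair, because you do not actually need convergence of the joint law---only that $\{\rho_t^*\}$ lands in $FBB(\mu_A,\mu_B)$. Since $\|A_N\|,\|UB_NU^*\|\le\fK$ deterministically, all joint $*$-moments under $\mu_{A_N,B_N}$ are uniformly bounded, so a diagonal extraction yields a subsequence along which the joint law converges to some pair $(\sfa,\sfb)$; the marginals are automatically $\mu_A,\mu_B$. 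Along that subsequence, your asymptotic freeness argument (which, as you correctly observe, only requires independence of $G_N$ and uniform norm bounds on the other block, not anything about how $U$ is distributed) gives that the law of $(1-t)\sfa+t\sfb+\sqrt{t(1-t)}\sfs$ is the limit of the spectral distribution of $X_N(t)$, which by Theorem \ref{convrho} equals $\rho_t^*$. This places $\{\rho_t^*\}$ in $FBB(\mu_A,\mu_B)$ without ever needing uniqueness of the limiting joint law of $(\sfa,\sfb)$. Your closedness argument via moment compactness is correct.
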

 Here, we assumed $\mu_A,\mu_B$ compactly supported to rely on the standard definition of non-commutative laws based on evaluation at polynomial test functions. This however can be generalized as we will see in the next section. 

As in classical probability, one can define a notion of conditional expectation with respect to $\tau$ onto a subalgebra of $\mathcal A$. If $\mathcal B$ is a von Neumann subalgebra 
of $\mathcal A$, then the restriction of the projection from $L^2(\mathcal A,\tau)$ onto $L^2(\mathcal B,\tau|_{\mathcal B})$ to the von Neumann algebra $\mathcal A$,
which we denote by $\tau(\cdot|\mathcal B)$, has the remarkable properties that (i) it takes values in $\mathcal B$, (ii) it equals the identity when restricted to $\mathcal B$, and
(iii) it is a $\mathcal B$-$\mathcal B$ bimodule map, that is, $\tau(bxb'|\mathcal B)=b\tau(x|\mathcal B)b'$ for all $x\in\mathcal A,b,b'\in\mathcal B$. We will often write
$\tau(\cdot|S)$ for the conditional expectation onto the von Neumann subalgebra of $\mathcal A$ generated by the set $S$ (including when $S$ is a singleton). For details,
the reader is referred to \cite[Proposition 2.36, Chapter V]{Takesaki01}.

Theorem \ref{t:FBB} gives several a priori properties of the minimizers of $S_{\mu_{A}}$, for instance they are absolutely continuous and with bounded density for any time $t\in (0,1)$.
Putting together the characterizations of Theorems  \ref{theoCMP} and \ref{t:FBB}, we have the  following:

\begin{proposition}\label{propbur}
We assume that  the probability measures $\mu_A, \mu_B$ are compactly supported. The pair $(\rho^*, u^*)$ is the unique solution of the variational problem \eqref{e:Iexp}, and $f(t,x)=u_t^*(x)+\pi \ri \rho_t^*(x)$ satisfies the complex Burgers equation
\begin{equation}\label{burg}
\partial_t f(t,x)+f(t,x)\partial_x f(t,x)=0.
\end{equation} 
Moreover, 
\begin{enumerate}
\item $(t,x)\ra f(t,x)$ is real analytic in each component in the interior of $\Omega$ as defined in \eqref{Omega}.
\item $ \Im[f(t,x)]/\pi$ converges weakly
as $t$ goes to $0$ or $1$ to $\mu_A$ and $\mu_B$
respectively.
\item For any $g\in C^1$,
\begin{align}\label{e:lim}\lim_{t\ra 0+}
\int g(x){\Re}[f(t,x)] \rho_t^*(x) \rd x
=\int g(x) {\Re}[f(0,x)] \rd \mu_A,
\end{align}
and the same as time $t\rightarrow 1-$.
\item There exists a finite constant $\fK>0$ (depending on the support of $\mu_A, \mu_B$) such that
for all $(x,t)$ in the interior of $\Omega$,
$$|f(t,x)|\leq \frac{\fK}{\sqrt{t(1-t)}}.$$
\item For all $t\in (0,1)$,
all $x\in\Omega$ such that $(t,x_0)\in \partial\Omega$,
$$\rho_t^*(x)\leq \left({3\over 4\pi^3 t^2(1-t)^2}\right)^{1\over3}
(x-x_0)^{1\over 3}.$$
\item There exist two operators $\mathsf a, \sf b$
in a non-commutative probability
space $(\cal A,\tau)$ 
with marginal distribution $(\mu_A, \mu_B)$ 
so that $t\mathsf a+(1-t)\mathsf b +\sqrt{t(1-t)} \mathsf s$ has the law of $\rho^{*}_t(x){\rm d}x$
where $\mathsf s$ is a semicircular variable free from $(\mathsf a,\mathsf b)$.
\item  For all $t\in [0,1]$, let $\{\sf s_t\}_{0\leq t\leq 1}$ be a non-commutative Brownian motion independent of $\sf a, \sf b$, and 
\begin{align*}
\rd {\sf x}_t=\rd {\sf s}_t+\frac{{\sf b}-{\sf x}_t}{1-t}\rd t, \quad {\sf x}_0=\sf a, \quad {\sf x}_1=\sf b,
\end{align*}
then $\sf x_t$ has the law of $(1-t)\sf a+t\sf b+\sqrt{t(1-t)}\sf s$ given by $\rho_t^*$, and 
\begin{equation}\label{e:condut} u_t^*=\frac{1}{t-1}\tau(\mathsf x_t-\mathsf b|\mathsf x_t) +H\rho^*_t,\qquad \rho_{t}^{*}(x) \rd x \quad a.s. \end{equation}
where $\tau(\cdot|\cdot)$ is the free conditional expectation, and $H\rho(x)=\mbox{p.v.}\int (x-y)^{-1}\rho(y)\rd y$ is the Hilbert transform of $\rho$.
\end{enumerate}
\end{proposition}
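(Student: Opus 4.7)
The argument combines the Euler-system characterization of $(\rho^*,u^*)$ from Theorem \ref{theoCMP} with the free-probabilistic representation from Theorem \ref{t:FBB}. Uniqueness of $(\rho^*,u^*)$ is already built into Theorem \ref{theoCMP}. I would treat the Burgers equation and item (1) by direct PDE manipulation, items (2)--(5) by a mix of conservation laws and free-convolution estimates, and items (6)--(7) by identifying the free Brownian bridge as the unique process whose marginals solve the isentropic Euler system.

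For the Burgers equation, start from \eqref{eulpr10}--\eqref{eulpr20}; expand the momentum equation using the continuity equation to rewrite it on $\{\rho^*>0\}$ as $\partial_t u + u\,\partial_x u = \pi^{2}\rho\,\partial_x\rho$; then multiply \eqref{eulpr10} by $i\pi$ and add. A direct expansion of $f\,\partial_x f$ with $f = u + i\pi\rho$ shows that the sum collapses to $\partial_t f + f\,\partial_x f = 0$. Item (1) then follows from the classical fact that the implicit characteristic representation $f(t,\,x_{0} + t\,f(0,x_{0})) = f(0,x_{0})$ propagates real-analyticity in each variable wherever the characteristic map is locally invertible; smoothness of $(\rho^*,u^*)$ in the interior of $\Omega$ (from the last sentence of Theorem \ref{theoCMP}) plus the fact that $\Im f = \pi\rho^* > 0$ there guarantees this.

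For items (2)--(5), the weak convergence in item (2) is the boundary condition built into Theorem \ref{theoCMP}. The $\Re f$ convergence in item (3) I would obtain by integrating the continuity equation against an antiderivative $G$ of $g$, passing to the limit in the identity $\int G\,\rho^*_{t}\,dx - \int G\,d\mu_{A} = \int_{0}^{t}\!\int g\,u^*_{s}\rho^*_{s}\,dx\,ds$, using finite-mass control on $u^*_{s}\rho^*_{s}$ coming from $S(u^*,\rho^*)<\infty$. The bound (4) follows from the identification $\rho^*_{t}\,dx = \mathrm{law}\bigl((1-t)\mathsf a + t\mathsf b + \sqrt{t(1-t)}\,\mathsf s\bigr)$ of Theorem \ref{t:FBB}: free convolution with a semicircle of variance $t(1-t)$ forces density bounded by $1/(\pi\sqrt{t(1-t)})$, and the subordination function governing this free convolution gives an analogous bound on the real part of its Cauchy--Stieltjes transform and hence on $u^*_{t}$. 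The cube-root bound (5) is the standard edge estimate for a free convolution with a semicircle: the subordination equation reduces to a cubic whose discriminant vanishes simply at $(t,x_{0}) \in \partial\Omega$, so a Puiseux expansion supplies the $1/3$ exponent, and matching coefficients against the semicircle variance $t(1-t)$ produces the explicit prefactor.

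For the free-probabilistic conclusions, item (6) is precisely Theorem \ref{t:FBB}. The main content is item (7), and this is the step I expect to require the most care. My approach is to introduce the free stochastic differential equation $d\mathsf x_{t} = d\mathsf s_{t} + (\mathsf b - \mathsf x_{t})/(1-t)\,dt$, verify by a free It\^o calculation that $(1-t)\mathsf a + t\mathsf b + \sqrt{t(1-t)}\,\mathsf s$ solves it in distribution, and derive the forward (continuity) equation for the marginal $\rho^*_{t}(x)\,dx$. The drift splits naturally into two pieces: a Hilbert-transform piece $H\rho^*_{t}$ coming from the free Brownian increment (in the Biane fashion familiar from free convolution with a semicircle) and a conditional-expectation piece $\tau(\mathsf x_{t} - \mathsf b\mid \mathsf x_{t})/(t-1)$ coming from the bridge correction. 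Matching this against $\partial_{t}\rho^*_{t} + \partial_{x}(\rho^*_{t} u^*_{t}) = 0$ and invoking uniqueness of the minimizer from Theorem \ref{theoCMP} forces \eqref{e:condut}. The delicate point, and the step I anticipate as the main obstacle, is to identify the free conditional expectation $\tau(\mathsf x_{t}-\mathsf b\mid\mathsf x_{t})$ with an honest $\rho^*_{t}(x)\,dx$-measurable function of the scalar variable $x$, which I would handle using the spectral theorem for the self-adjoint $\mathsf x_{t}$ together with $L^{2}(\mathcal A,\tau)$-contractivity of the conditional expectation onto the von Neumann algebra generated by $\mathsf x_{t}$.
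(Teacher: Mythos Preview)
Your overall architecture is sound, but two steps do not go through as written.

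For Item (3), integrating the continuity equation against an antiderivative $G$ of $g$ yields $\int G\,\rho^*_t\,dx - \int G\,d\mu_A = \int_0^t\!\int g\,u^*_s\rho^*_s\,dx\,ds$; the energy bound $S(u^*,\rho^*)<\infty$ only places $s\mapsto\int g\,u^*_s\rho^*_s\,dx$ in $L^2([0,1])$, which gives neither a pointwise limit as $s\to 0^+$ nor an identification of that limit with $\int g\,\Re[f(0,\cdot)]\,d\mu_A$ (a quantity which, absent Item (7), has no independent meaning). The paper reverses your order: it first quotes \eqref{e:condut} from \cite{Gurig}, then splits $\int g\,u^*_t\rho^*_t\,dx$ into the conditional-expectation piece $\frac{1}{t-1}\tau\bigl(g(\mathsf x_t)(\mathsf x_t-\mathsf b)\bigr)$, which converges by norm-continuity of $t\mapsto\mathsf x_t$, and the Hilbert piece $\int g\,H\rho^*_t\,\rho^*_t\,dx=\tfrac12\iint\frac{g(x)-g(y)}{x-y}\rho^*_t(x)\rho^*_t(y)\,dx\,dy$, which converges by weak continuity of $\rho^*_t$ against a bounded continuous kernel.

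For Item (1), the characteristic representation $f(t,x_0+tf(0,x_0))=f(0,x_0)$ can only \emph{propagate} analyticity, not create it, and $f(0,\cdot)$ need not be analytic (indeed $\Im f(0,\cdot)$ is not even a function when $\mu_A$ has a singular part). More fundamentally, since $\Im f>0$ the characteristics are genuinely complex, so inverting the map $x_0\mapsto x_0+(t-t_0)f(t_0,x_0)$ for real $x$ already presupposes an analytic extension of $f(t_0,\cdot)$ off the real line. The Euler system \eqref{eulpr10}--\eqref{eulpr20} has principal-symbol eigenvalues $u\pm i\pi\rho$ and is therefore \emph{elliptic} on $\Omega$, not hyperbolic; analyticity comes from elliptic regularity or, as the paper does, directly from Biane's subordination analysis of free convolution with a semicircle (which is also what furnishes Items (4) and (5)).
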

\begin{proof} Most of the proof is already contained 
in \cite[Corollary 2.8]{Gurig} and lies in the representation
of the solution in terms of a free Brownian
bridge stated in the last two points above, i.e. Item 6,7, see Theorem \ref{t:FBB}; namely, it is shown that there exists two non-commutative
variables $\sf a, \sf b$ with marginals distributions
$\mu_A,\mu_B$ so that $\rho_t^*(x)\rd x$ is the law of
$(1-t)\mathsf a+t\mathsf b+\sqrt{t(1-t)}
\mathsf s$, with $\mathsf s$ a semi-circular law free with $\mathsf a,\mathsf b$.
Items 1,4,5 are then direct consequences of 
\cite{Biane}. By the definition of the variational problem \eqref{e:Iexp}, we have that $\rho_t^*(x)dx$ 
converges weakly towards $\mu_A$ as $t$ goes to $0$.
Finally, by (2.19) in \cite{Gurig}, $u^{*}$ is given by \eqref{e:condut}.
But
$$\int  \frac{g(x)}{t-1}\tau({\sf x}_t-{\sf b}|{\sf x}_t) \rho^*_t(x)\rd x= \tau( g({\sf x}_t) ({\sf x}_t-{\sf b}))$$
converges as $t$ goes to $0$ by continuity of
$g$ and ${\sf x}_t$
whereas since $H\rho_t^*\in L^2(\rho_t^*)$ (as $\rho^*_t\in L^3(\rd x)$)
$$\int g(x)H\rho^*_t(x)\rho_t^*(x)\rd x=
\frac{1}{2}\int\int \frac{g(x)-g(y)}{x-y} \rho_t^*(x)\rho_t^*(y)\rd x\rd y,$$
with $(x,y)\ra (g(x)-g(y))/(x-y)$ continuous when $g$ is $C^1$,
converges as $t$ goes to $0$ or $1$ by weak
continuity of $\rho^*_t(x)dx$. The claim \eqref{e:lim} follows from the above discussion.
\end{proof}

We recall that given a probability measure $\mu$, we denote by $T_\mu\colon(0,1)\mapsto (-\infty, \infty)$   the right continuous increasing function, such that $\mu$ is the push-forward of the uniform distribution on $(0,1)$ by $T_\mu$.
Instead of viewing $I(\mu_A, \mu_B)$  a function of $\mu_A, \mu_B$, we can think about  it as a function of $T_{\mu_A}, T_{\mu_B}$, i.e. $I(T_{\mu_A}, T_{\mu_B})\deq I(\mu_A, \mu_B)$. Since the spherical integral 
\begin{align*}
\lim_{N\rightarrow \infty}\frac{1}{\beta N^2}\log\int e^{\frac{\beta N}{2}\Tr A_NUB_NU^*}\rd U= I(T_{\mu_A}, T_{\mu_B})
\end{align*}
is convex in both $A_N$ and $B_N$, the limit is also convex in $T_{\mu_A}$ and $T_{\mu_B}$.

\subsection{The spherical integral for $L^{1}$ distributions and convergence of the non-commutative law}\label{s:L1spherical}

Later in the article, we will need to extend the  limit of the spherical integral \eqref{e:defI} to the setting where one measure has bounded support and the other measure has bounded first moment.
The following proposition states that the limit function $I(\cdot, \cdot)$ and other quantities appearing in our main theorems are continuous with respect to the Wasserstein distance $\rd_W(\cdot, \cdot)$ as defined in \eqref{e:wd}.

\begin{proposition}\label{p:continuity}
We assume that the probability measures $\mu,\nu$ and $\mu', \nu'$ satisfy that $\supp \mu, \supp\mu'\subset [-\fK,\fK]$, and $\nu(|x|), \nu'(|x|)\leq \fK$ for some constant $\fK>0$. Then there exists a finite constant $C_{\fK}$ so that for any  small $\delta>0$ such that $\rd_W(\mu, \mu')\leq\delta$ and $\rd_W(\nu, \nu')\leq\delta$,
\begin{align*}
\left|I( \nu,\mu)-I( \nu',\mu')\right|= C_\fK{\oo_\delta(1)},
\end{align*}
and 
\begin{align*}\left|\int T_{\mu}T_{\nu} \rd x-\int T_{\mu'}T_{\nu'} \rd x\right|
=C_\fK{\oo_\delta(1)},
\end{align*}
where we can take $C_\fK = (\fK +1)$ and $\oo_\delta(1)$ to be $\nu(|x|\bm1_{|x|\geq \delta^{-1/2}})+\nu'(|x|\bm1_{|x|\geq \delta^{-1/2}})+\delta^{1/2}$.

\end{proposition}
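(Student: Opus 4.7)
The plan is to treat the two inequalities separately. The estimate on $\int T_\mu T_\nu \, \rd x$ is elementary and follows from the identity $\rd_W(\mu, \mu') = \int_0^1 |T_\mu - T_{\mu'}| \, \rd x$ (cf.\ \eqref{e:wd2}) combined with a truncation of $T_\nu$ at a level $M$. The estimate on $I(\nu, \mu)$ is the substantive one: it combines a matrix-level Lipschitz estimate, derived from the operator-norm/trace-norm inequality, with a truncation argument that simultaneously extends $I(\cdot, \cdot)$ from the regime of Theorem~\ref{t:exist} to arguments of merely bounded first moment.

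\textbf{The bound on $\int T_\mu T_\nu \, \rd x$.} Writing
\begin{equation*}
\int T_\mu T_\nu \, \rd x - \int T_{\mu'} T_{\nu'} \, \rd x = \tfrac{1}{2}\int (T_\mu - T_{\mu'})(T_\nu + T_{\nu'}) \, \rd x + \tfrac{1}{2}\int (T_\mu + T_{\mu'})(T_\nu - T_{\nu'}) \, \rd x,
\end{equation*}
the second integral is at most $2\fK \, \rd_W(\nu,\nu') \leq 2\fK \delta$ since $|T_\mu|, |T_{\mu'}| \leq \fK$. For the first, split the integration range according to $|T_\nu| + |T_{\nu'}| \leq 2M$ vs.\ $> 2M$: the bounded part is $\leq 2M \, \rd_W(\mu, \mu') \leq 2M \delta$ (using $|T_\mu - T_{\mu'}| \leq 2\fK$), and the tail part is $\leq 2\fK \int(|T_\nu|\bm 1_{|T_\nu|>M} + |T_{\nu'}|\bm 1_{|T_{\nu'}|>M}) \, \rd x$, which equals $2\fK[\nu(|x|\bm 1_{|x|>M}) + \nu'(|x|\bm 1_{|x|>M})]$. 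Choosing $M = \delta^{-1/2}$ yields the stated bound.

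\textbf{The bound on $I(\nu, \mu)$.} The key ingredient is the matrix-level Lipschitz estimate
\begin{equation*}
\Bigl|\tfrac{1}{\beta N^2} \log I_N(A, B) - \tfrac{1}{\beta N^2} \log I_N(A', B)\Bigr| \leq \tfrac{1}{2N} \|A - A'\|_1 \|B\|_{\rm op},
\end{equation*}
obtained by writing the ratio $I_N(A,B)/I_N(A',B)$ as an integral of $e^{(\beta N/2) \Tr((A-A')UBU^*)}$ against a probability measure on $U$ and applying $|\Tr((A-A')UBU^*)| \leq \|A - A'\|_1 \|B\|_{\rm op}$. Taking $A_N, A'_N$ diagonal with sorted eigenvalues approximating $\nu, \nu'$ in Wasserstein so that $\|A_N - A'_N\|_1 / N \to \rd_W(\nu, \nu')$, Theorem~\ref{t:exist} gives, for compactly supported $\nu, \nu'$,
\begin{equation*}
|I(\nu, \mu) - I(\nu', \mu)| \leq \tfrac{\|B\|_{\rm op}}{2} \, \rd_W(\nu, \nu').
\end{equation*}
To extend to bounded first moment, set $\nu^M = (\phi_M)_* \nu$ with $\phi_M(x) = (x \wedge M) \vee (-M)$; the same estimate shows $\{I(\nu^M, \mu)\}_M$ is Cauchy with $|I(\nu^{M_1}, \mu) - I(\nu^{M_2}, \mu)| \leq (\fK/2)\nu(|x|\bm 1_{|x|> M_1 \wedge M_2})$, so $I(\nu, \mu) \deq \lim_M I(\nu^M, \mu)$ is well-defined and satisfies the truncation estimate $|I(\nu, \mu) - I(\nu^M, \mu)| \leq (\fK/2)\nu(|x|\bm 1_{|x|>M})$. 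Assembled by the triangle inequality, using the $1$-Lipschitz property of $\phi_M$ (so $\rd_W(\nu^M, \nu'^M) \leq \rd_W(\nu, \nu')$), $\|B\|_{\rm op} \leq \fK$ for the variation in $\nu$, and $\|A'^M\|_{\rm op} \leq M$ for the variation in $\mu$, one obtains
\begin{equation*}
|I(\nu, \mu) - I(\nu', \mu')| \leq \tfrac{\fK}{2}[\nu + \nu'](|x|\bm 1_{|x|>M}) + \tfrac{\fK}{2}\delta + \tfrac{M}{2}\delta,
\end{equation*}
and the choice $M = \delta^{-1/2}$ produces the claimed $\oo_\delta(1)$.

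\textbf{Main obstacle.} The delicate step is the middle triangle-inequality term: varying the compactly-supported measure $\mu$ requires the bound $\|A'^M\|_{\rm op} \leq M$, which deteriorates linearly in the truncation level. The trade-off between this deterioration and the truncation error $\nu(|x|\bm 1_{|x|>M})$ forces $M \sim \delta^{-1/2}$ and is what produces the $\delta^{1/2}$ (rather than $\delta$) factor in $\oo_\delta(1)$. A related technical point is that the Cauchy argument doubles as the extension of $I(\cdot, \cdot)$ beyond the hypotheses of Theorem~\ref{t:exist}, so continuity and existence of the extension are obtained simultaneously.
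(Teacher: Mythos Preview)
Your proposal is correct and takes essentially the same approach as the paper: truncate $\nu$ at level $\delta^{-1/2}$, invoke a trace-norm/operator-norm Lipschitz estimate on the finite-$N$ spherical integral, and assemble via the triangle inequality. The only differences are cosmetic---the paper cites \cite[Lemma 5.1]{GZ3} for the Lipschitz bound you derive explicitly, and it truncates by replacing large values with a Dirac at $0$ rather than by clipping via $\phi_M$---and its decomposition of $\int T_\mu T_\nu\,\rd x$ is the asymmetric rather than symmetric split, but the estimates and choice $M=\delta^{-1/2}$ coincide.
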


The set of probability measures with compact support are dense in the set of probability measures with bounded $L^1$ norm with respect to the Wasserstein distance. 
Using Proposition \ref{p:continuity}, we can extend Theorems \ref{t:exist} and  \ref{theoCMP} to measures $\nu$ and $\mu$ such that  $\nu(|x|)\leq \fK$ and $\supp \mu\subset [-\fK,\fK]$.
In this case, Proposition \ref{p:continuity} also implies that $I(\nu,\mu)$ is well defined and continuous with respect to the Wasserstein distance. The proof of Proposition \ref{p:continuity} involves some straightforward estimates of the spherical integral, we postpone it to the Appendix \ref{a:proofL1}. It is then straightforward to deduce that

\begin{corollary}\label{t:convl1}
Let $A_{N},B_{N}$ be two sequences of deterministic self-adjoint matrices, such that their spectral measures $\mu_{A_N}$ and $\mu_{B_N}$ converge in Wasserstein distance \eqref{e:wd} towards $\mu_{A}$ and $\mu_{B}$ respectively. We assume that  there exists a constant $\fK>0$, such that $ \mu_{A_N}(|x|)\leq \fK$ and $\supp \mu_{B_N}\subset [-\fK, \fK]$. 
 Then, the following limit exists :
\begin{align}\label{e:L1limit}
\lim_{N\rightarrow \infty}\frac{1}{\beta N^{2}}\log\int e^{\frac{\beta N}{2}\Tr(A_NUB_NU^{*})}\rd U=I(\mu_{A},\mu_{B}).
\end{align}
\end{corollary}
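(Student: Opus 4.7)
The plan is to reduce the statement to the compactly-supported case treated in Theorem \ref{t:exist} by truncating the eigenvalues of $A_N$ at scale $M$, estimating the finite-$N$ truncation error uniformly in $N$, and then sending $M\to\infty$ using the Wasserstein continuity provided by Proposition \ref{p:continuity}.

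First, for each $M>\fK$, I will construct a truncated matrix $\tilde A_N^M$ by replacing every eigenvalue $a_i$ of $A_N$ with $|a_i|>M$ by a distinct value in $[-M-1,-M]\cup[M,M+1]$, spread so that the resulting empirical measure has density bounded by $1$ on these intervals. This ensures $\supp \mu_{\tilde A_N^M}\subset[-M-1,M+1]$, Wasserstein convergence of $\mu_{\tilde A_N^M}$ to a limit $\tilde\mu_A^M$ with $\rd_W(\tilde\mu_A^M,\mu_A)\to 0$ as $M\to\infty$, and a lower bound on $\Sigma(\tilde\mu_A^M)$ (independent of $N$). The finite-$N$ error at the level of spherical integrals is then controlled using $|\Tr(XUYU^*)|\leq \|Y\|_{\rm op}\,\Tr|X|$ with $X=A_N-\tilde A_N^M$, $Y=B_N$, and $\|B_N\|_{\rm op}\leq \fK$, yielding
\begin{align*}
\bigl|I_N(A_N,B_N)-I_N(\tilde A_N^M,B_N)\bigr|\leq \frac{\fK}{2}\int (|x|-M)_+\rd\mu_{A_N}(x)+o_M(1),
\end{align*}
which is $o_M(1)$ uniformly in $N$, thanks to the uniform integrability of $\{\mu_{A_N}\}$ that follows from Wasserstein convergence together with a uniform bound on the first moment.

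Next, I will apply Theorem \ref{t:exist} to $(\tilde A_N^M,B_N)$, using the symmetry $I_N(A,B)=I_N(B,A)$ of the spherical integral to put $\mu_B$ (compactly supported) in the ``compact'' slot and $\tilde\mu_A^M$ (compactly supported with finite logarithmic energy) in the ``bounded-second-moment/finite-$\Sigma$'' slot. This gives $I_N(\tilde A_N^M,B_N)\to I(\tilde\mu_A^M,\mu_B)$ as $N\to\infty$ for every fixed $M$. Then Proposition \ref{p:continuity} yields $I(\tilde\mu_A^M,\mu_B)\to I(\mu_A,\mu_B)$ as $M\to\infty$, and combining these ingredients, taking first $N\to\infty$ and then $M\to\infty$, produces the limit \eqref{e:L1limit}.

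The main obstacle is the careful design of the truncation in the first step: the clipped eigenvalues must be arranged so that $\tilde\mu_A^M$ is simultaneously close to $\mu_A$ in Wasserstein distance, compactly supported, and has $\Sigma$ bounded below by a constant allowing Theorem \ref{t:exist} to apply. Uniform spreading of the clipped mass achieves this, but one must check that the three sources of error (truncation, the $N\to\infty$ limit for fixed $M$, and the $M\to\infty$ limit) can all be driven to zero in a single combined estimate.
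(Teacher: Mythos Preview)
Your approach—truncate $A_N$, bound the finite-$N$ error via $|\Tr(XUB_NU^*)|\leq\|B_N\|_{\rm op}\Tr|X|$, invoke Theorem~\ref{t:exist} on the compactly supported pair, then close with Proposition~\ref{p:continuity}—is exactly the approximation scheme the paper intends (it merely says the corollary is ``straightforward to deduce'' from Proposition~\ref{p:continuity}). The uniform-integrability argument for the truncation error is correct.

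There is, however, a genuine gap in your handling of the $\Sigma$ hypothesis of Theorem~\ref{t:exist}. Your spreading trick affects only the eigenvalues with $|a_i|>M$; the eigenvalues in $[-M,M]$ are left untouched, so $\tilde\mu_A^M|_{[-M,M]}=\mu_A|_{[-M,M]}$. If $\mu_A$ has an atom (or any singular part) in $[-M,M]$—which the hypotheses do not exclude—then $\Sigma(\tilde\mu_A^M)=-\infty$ no matter how you arrange the clipped mass. For instance, if $\mu_A=\tfrac12\delta_{-1}+\tfrac12\delta_{1}$, nothing is clipped for $M>1$ and $\Sigma(\tilde\mu_A^M)=-\infty$. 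Nor can you swap roles and put $\mu_B$ in the finite-$\Sigma$ slot, since $\Sigma(\mu_B)>-\infty$ is not assumed either. So Theorem~\ref{t:exist} does not apply to $(\tilde A_N^M,B_N)$ as written.

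The repair is cheap: insert one more regularization. For each fixed $M$, perturb $B_N$ to $B_N^\varepsilon$ with $\|B_N-B_N^\varepsilon\|_{\rm op}\leq 2\varepsilon$ and $\mu_{B_N^\varepsilon}\to\mu_B\boxplus\sigma_\varepsilon$, so that $\Sigma(\mu_B\boxplus\sigma_\varepsilon)>-\infty$; the extra finite-$N$ cost is at most $(M+1)\varepsilon$. Theorem~\ref{t:exist} now applies to $(\tilde A_N^M,B_N^\varepsilon)$; send $N\to\infty$, then $\varepsilon\to0$ via Proposition~\ref{p:continuity}, then $M\to\infty$. With this fix your argument is complete. (Incidentally, once such a smoothing is added, the careful spreading of the clipped eigenvalues on $[M,M+1]$ buys nothing—you may as well send the large eigenvalues to $0$, as in Proposition~\ref{p:spbound2}.)
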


Theorem \ref{theoCMP}  gives a quite complicated formula for $I$. However, we can obtain asymptotic limits which are much easier to handle based on the following propositions. The estimates will be used to study the large deviation rate functions. The proofs of Propositions \ref{p:spbound} and \ref{p:spbound2} involve some straightforward estimates of the spherical integral, we postpone them to the Appendix \ref{a:proofL1}.

\begin{proposition}\label{p:spbound}
We assume that the probability measures $\nu, \mu$ satisfies that $\nu(|x|)<\infty$ and $\supp \mu \subset [-\fK,\fK]$  for some constant $\fK>0$. Then for any small $\varepsilon>0$, there exists a constant $C(\varepsilon)>0$ such that
\begin{align}\label{e:spbound}
\frac{1}{2}\int T_{\nu}T_{\mu}\rd x-\OO(\varepsilon)\nu(|x|)-C(\varepsilon)
\leq I( \nu,\mu)\leq\frac{1}{2}\int T_{\nu}T_{\mu}\rd x.
\end{align}
Here, one can take $\OO(\varepsilon)=5\varepsilon\fK $ and $C(\varepsilon)$ depending only on $\varepsilon$.
\end{proposition}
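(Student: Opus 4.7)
This is immediate from the von Neumann trace inequality. Arranging the eigenvalues $a_1\geq\cdots\geq a_N$ of $A_N$ and $b_1\geq\cdots\geq b_N$ of $B_N$ in decreasing order, we have $\Tr(A_NUB_NU^*)\leq \sum_{i=1}^N a_ib_i$ uniformly in $U$, so
\begin{equation*}
I_N(A_N,B_N)=\frac{1}{\beta N^2}\log\int e^{\frac{\beta N}{2}\Tr(A_NUB_NU^*)}\rd U\leq \frac{1}{2N}\sum_{i=1}^N a_ib_i,
\end{equation*}
and the right-hand side converges to $\tfrac12\int_0^1 T_\nu(x)T_\mu(x)\rd x$ by Wasserstein convergence of the empirical measures combined with the uniform bound $|T_\mu|\leq \fK$.

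\textbf{Lower bound.} For $R=R(\varepsilon)$ of order $1/\varepsilon$, let $\nu_\varepsilon$ be the pushforward of $\nu$ by $x\mapsto \sgn(x)(|x|\wedge R)$; on the matrix level this corresponds to truncating $A_N$ to $A_N^\varepsilon$. The trace-operator duality
\begin{equation*}
|\Tr((A_N-A_N^\varepsilon)UB_NU^*)|\leq \|A_N-A_N^\varepsilon\|_1 \|B_N\|_{\mathrm{op}}\leq \fK\, N\, \nu_N(|x|\bm 1_{|x|>R})
\end{equation*}
gives $|I(\nu,\mu)-I(\nu_\varepsilon,\mu)|\leq \tfrac{\fK}{2}\nu(|x|\bm 1_{|x|>R})$, and the analogous comparison for $\int T_\nu T_\mu\rd x$ follows from $|T_\mu|\leq \fK$. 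For the compactly supported pair $(\nu_\varepsilon,\mu)$, Theorem \ref{theoCMP} gives the formula \eqref{e:Iexp}. I then bound $\inf S(u,\rho)$ from above by evaluating $S$ along the free Brownian bridge $\rho_t^\#\rd x$ having the law of $(1-t)\sfa+t\sfb+\sqrt{t(1-t)}\,\sfs$, with $(\sfa,\sfb)$ the free monotone coupling of $(\nu_\varepsilon,\mu)$ and $\sfs$ semicircular free from $(\sfa,\sfb)$. The kinetic term $\int u_t^2\rho_t$ then reduces to $W_2^2(\nu_\varepsilon,\mu)$ up to a bounded semicircular correction, the cubic term $\int \rho_t^3$ is bounded by a constant depending only on the support $[-R,R]$, and, combined with the monotone-coupling identity
\begin{equation*}
\tfrac14\!\int\! x^2\rd\nu_\varepsilon+\tfrac14\!\int\! x^2\rd\mu-\tfrac12\!\int\! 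T_{\nu_\varepsilon}T_\mu\rd x=\tfrac14 W_2^2(\nu_\varepsilon,\mu),
\end{equation*}
the logarithmic-energy and second-moment pieces assemble into a constant $C(\varepsilon)$. This yields $I(\nu_\varepsilon,\mu)\geq \tfrac12\int T_{\nu_\varepsilon}T_\mu\rd x - C(\varepsilon)$, which combined with the truncation step gives the claimed lower bound.

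\textbf{Main obstacle.} The delicate point is to match the error to the stated form $\fK(3\varepsilon+\varepsilon^2)\nu(|x|)+C(\varepsilon)$, rather than the weaker $\fK\nu(|x|\bm 1_{|x|>R})+C(\varepsilon)$ that a naive truncation at $R=1/\varepsilon$ yields. Since $\nu$ has only a finite first moment, the tail $\nu(|x|\bm 1_{|x|>R})$ is not uniformly bounded by $\varepsilon\cdot \nu(|x|)$, so the argument must be refined. One route is a two-block decomposition of $A_N$ into its ``small'' and ``large'' spectral parts: on the large block one shows the spherical integral saturates the von Neumann upper bound up to a multiplicative $O(\varepsilon)\cdot \|A_N^{\mathrm{large}}\|_1$ loss (which is exactly of order $\fK\varepsilon N\nu(|x|)$), while on the small block one invokes the compactly supported variational analysis sketched above. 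This two-block argument is the technical heart of the estimate and is what is carried out in the appendix.
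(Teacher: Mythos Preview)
Your upper bound is exactly the paper's argument.

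For the lower bound, however, you have missed the simple idea and replaced it with a much more elaborate scheme that is neither complete nor what the paper does. The paper's proof is a one-step localization: restrict the Haar integral to the set $\bB_\varepsilon=\{U:|U_{ii}-1|\leq\varepsilon\text{ for all }i\}$. On this set one expands
\[
\Tr(A_NUB_NU^*)=\Tr(A_NB_N)+\Tr(A_N(U-I)B_N(U^*-I))+2\Re\Tr(A_NB_N(U^*-I))
\]
and bounds the last two terms by $\fK(3\varepsilon+\varepsilon^2)\sum_i|a_i|$ using only $|U_{ii}-1|\leq\varepsilon$ and $\sum_j|U_{ij}-\delta_{ij}|^2\leq 2\varepsilon$. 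This produces the exact coefficient $\fK(3\varepsilon+\varepsilon^2)$ in front of $\nu(|x|)$ directly, with no truncation of $\nu$ at all. The constant $C(\varepsilon)$ then comes from $\bP(\bB_\varepsilon)\geq e^{-C(\varepsilon)N^2}$, proved by containing in $\bB_\varepsilon$ the eigenvalue event $\{|z_i-1|\leq\varepsilon\text{ for all }i\}$ and invoking the Coulomb-gas large deviation principle for the spectrum of Haar matrices.

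Your variational-formula approach has genuine gaps: Theorem~\ref{theoCMP} requires finite non-commutative entropy $\Sigma(\mu)>-\infty$, which you do not assume; the claimed reduction of $\int u_t^2\rho_t$ to $W_2^2(\nu_\varepsilon,\mu)$ plus a bounded correction is not justified (and is not obviously true for general couplings); and the uniform bound on $\int\rho_t^3$ would need density bounds that you do not establish. More importantly, you yourself note that this route does not yield the stated error form, and your proposed fix --- a two-block decomposition separating large and small eigenvalues of $A_N$ --- is \emph{not} what the paper does here. That block argument is the device used for the neighboring Proposition~\ref{p:spbound2}, where one genuinely needs to peel off the tail of $\nu$; for the present proposition the localization near the identity handles all of $\nu$ at once and gives the sharp $\fK(3\varepsilon+\varepsilon^2)\nu(|x|)$ in a single stroke.
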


As a consequence, we deduce that if $L\#\nu$ is the pushforward of $\nu$ by the homothety of factor $L$: $\int f(Lx)\rd\nu(x)=\int f(x)\rd L\#\nu(x)$, then
$$\lim_{L\ra\infty}\frac{1}{L} I( L\#\nu,\mu)=\frac{1}{2}\int T_{\nu}T_{\mu}\rd x\,.$$

\begin{proposition}\label{p:spbound2}
We assume that the probability measures $\nu, \mu$ satisfies that $\nu(|x|)\leq \fK$ and $\supp \mu \subset [-\fK,\fK]$  for some constant $\fK>0$. Then for any small $\varepsilon>0$, it holds
\begin{align}\label{e:spbound2}
I( \nu,\mu)=I(\nu^\varepsilon, \mu)+\frac{1}{2}\int_{|T_\nu|> 1/\varepsilon} T_{\nu}T_{\mu}\rd x
+C_\fK\oo_\varepsilon(1),
\end{align}
where $\nu^\varepsilon$ is the restriction of $\nu$ on the interval $|x|\leq 1/\varepsilon$, i.e. $\nu^\varepsilon=\nu\bm1(|x|\leq 1/\varepsilon)+\delta_0\int_{|x|> 1/\varepsilon}\rd \nu$, and the implicit error term depends only on $\varepsilon$ and $\fK$.
\end{proposition}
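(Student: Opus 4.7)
The idea is to isolate the contribution of $\nu$'s outlier spectrum (eigenvalues of modulus exceeding $1/\varepsilon$) to $I(\nu,\mu)$ by splitting the matrix realizing $\nu$ into a bounded ``bulk'' block (whose spectral measure converges to $\nu^\varepsilon$) and an outlier block, and then identifying the outlier block's contribution with the tail integral $\frac{1}{2}\int_{|T_\nu|>1/\varepsilon}T_\nu T_\mu\,\rd x$.

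\textbf{Factorization.} I would pick $A_N,B_N$ diagonal self-adjoint with $\mu_{A_N}\to\mu$ and $\mu_{B_N}\to\nu$ in Wasserstein distance, and in the eigenbasis of $B_N$ write $B_N=B_N^\varepsilon+\hat B_N$ where $B_N^\varepsilon$ retains $b_i$ for $i\in I:=\{i:|b_i|\leq 1/\varepsilon\}$ (zero elsewhere) and $\hat B_N$ is supported on the outlier set $O=\{i:|b_i|>1/\varepsilon\}$, of cardinality $|O|=N\alpha+\oo(N)$ with $\alpha=\nu(|x|>1/\varepsilon)\leq \fK\varepsilon$. Then $\mu_{B_N^\varepsilon}\to\nu^\varepsilon$ in Wasserstein distance, and by additivity of the trace
\begin{align*}
\frac{\int e^{(\beta N/2)\Tr(A_NUB_NU^*)}\,\rd U}{\int e^{(\beta N/2)\Tr(A_NUB_N^\varepsilon U^*)}\,\rd U}
=\int e^{(\beta N/2)\Tr(A_NU\hat B_NU^*)}\,\rd\mu_{A_N,B_N^\varepsilon}(U).
\end{align*}
By Corollary~\ref{t:convl1}, $(\beta N^2)^{-1}\log$ of the left-hand side tends to $I(\nu,\mu)-I(\nu^\varepsilon,\mu)$, so what remains is to evaluate the right-hand side.

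\textbf{Reduction and asymptotic evaluation.} Since $B_N^\varepsilon$ vanishes on the outlier block, the density of $\mu_{A_N,B_N^\varepsilon}$ depends on $U$ only through the columns indexed by $I$. Conditional on those columns, the outlier columns $\{u_i\}_{i\in O}$ are Haar-distributed on the Stiefel manifold in the orthogonal complement, so the conditional expectation of $e^{(\beta N/2)\Tr(A_NU\hat B_NU^*)}$ is an $|O|$-dimensional spherical integral with $B$-matrix $\Lambda=\mathrm{diag}(b_i)_{i\in O}$ (all $|b_i|>1/\varepsilon$) and $A$-matrix the random compression $PA_NP$, with $P=I-\sum_{i\in I}u_iu_i^*$. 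Applying Proposition~\ref{p:spbound} to this inner spherical integral with a small but fixed inner parameter $\delta$, its $C(\delta)$-correction is of order $\OO(1)$ while the leading-order term is of order $\beta|O|^2$, so the inner integral equals $\exp\bigl(\tfrac{\beta|O|^2}{2}\int T_{(N/|O|)\#\mu_{\tilde A}}T_{\mu_\Lambda}\,\rd u+\oo(\beta N^2)\bigr)$. Concentrating the tilted marginal and changing variables from the $|O|$-dimensional $(0,1)$-quantile back to the two outlier subintervals of $T_\nu$'s $(0,1)$-quantile recovers $\tfrac{\beta N^2}{2}\int_{|T_\nu|>1/\varepsilon}T_\nu T_\mu\,\rd x+C_\fK\oo_\varepsilon(1)\beta N^2$, which is the desired identity.

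\textbf{Main obstacle.} The delicate point is the last identification: the spectral measure of the random compression $PA_NP$ under the tilted marginal is asymptotically a free compression of $\mu$ by ratio $\alpha$, not $\mu$ itself. The saving observation is that the outlier quantile variable concentrates near the boundary of $(0,1)$, so the pairing samples only the extreme quantiles of $\mu$, and these are preserved under free compression by $\alpha\leq \fK\varepsilon$ up to $\oo_\varepsilon(1)$. Making this quantitative with an error depending only on $\fK$ and $\varepsilon$ will be the heart of the argument, and will require both concentration for the tilted marginal on $\{u_i\}_{i\in I}$ and careful tracking of the uniformity in Proposition~\ref{p:spbound} in the large-eigenvalue regime.
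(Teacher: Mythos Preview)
Your factorization identity and the observation that the outlier columns are conditionally Haar on the Stiefel manifold in the orthogonal complement are both correct. However, the step you flag as the ``main obstacle'' is a genuine gap, and it is more serious than you suggest. Under the tilted measure $\mu_{A_N,B_N^\varepsilon}$ the projection $P=U\Pi_O U^*$ is a function of $UB_N^\varepsilon U^*$, so the limiting joint law of $(A_N,P)$ is governed by $\tau_{\mu,\nu^\varepsilon}$ from Theorem~\ref{t:convl1}, \emph{not} by freeness. Hence the spectrum of $PA_NP$ is not a free compression of $\mu$, and there is no reason its extreme quantiles should match those of $\mu$; Cauchy interlacing only gives $T_{\mu_{\tilde A}}(u)\in[T_\mu(\alpha u),\,T_\mu(1-\alpha(1-u))]$, an interval of width $\OO(\fK)$ that does not shrink with $\varepsilon$. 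Controlling this would require a delicate analysis of $\tau_{\mu,\nu^\varepsilon}(\bm1(\sfb^\varepsilon=0)\,\sfa\,\bm1(\sfb^\varepsilon=0))$, which is far from immediate.

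The paper's proof sidesteps this entirely. For the upper bound it simply applies the von Neumann trace inequality $\Tr(CUB_NU^*)\leq\sum_i\lambda_i^\downarrow(C)\lambda_i^\downarrow(B_N)$ with $C=A_N-A_N^\varepsilon$, which gives $\Tr((A_N-A_N^\varepsilon)UB_NU^*)\leq\sum_{|a_i|>1/\varepsilon}a_ib_i$ pointwise in $U$ and hence \eqref{e:Iupp} in one line. For the lower bound it stays under the \emph{original} Haar measure and restricts to the event $\bB_\delta=\{|U_{ii}-1|\leq\delta\text{ on outlier indices}\}$; on $\bB_\delta$ the outlier contribution is within $\OO(\fK^2\delta)$ of $\sum_{|a_i|>1/\varepsilon}a_ib_i$, while the bulk block $U_2$ remains Haar-invariant and yields $I(\nu^\varepsilon,\mu)$ after a continuity estimate (Proposition~\ref{p:continuity}). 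The only work is the probability lower bound $\bP(\bB_\delta)\geq e^{-C(\delta)\varepsilon\fK N^2}$, which the paper obtains via a Gram--Schmidt/Gaussian argument; since only $\OO(\varepsilon\fK N)$ diagonal entries are constrained, the cost is negligible after choosing $\delta$ appropriately relative to $\varepsilon$. This avoids any analysis of tilted laws or compression spectra.
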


 Later on, we shall need to differentiate the map $\mu_A\mapsto  I(\mu_{A},\mu_{B})$.  One of our problems is that our formulas, see e.g. \eqref{e:Iexp}, are ill defined when the matrices have unbounded variance or non-commutative entropy. These terms have to cancel with $\inf S(u,\rho)$ (as in \eqref{e:funcS}) which needs to be infinite as well since $I$ stays bounded. By a monotonicity property of the nonintersecting Brownian bridges, for $\beta=2$, we can extend Theorem  \ref{convrho} to this setting. Since the rate function $I(\mu_A,\mu_B)$ is independent of $\beta$, some results also hold for $\beta=1$.

Theorem \ref{convrho} can be used  to prove the statement \eqref{e:tauF0}, namely the weak convergence of the non-commutative law of 
$(A_N,UB_NU^*)$ under $\mu_{A_N,B_N}$ as in \eqref{e:lawU}. We recall that such non-commutative probability distribution $\tau$ can be described by 
$
\tau( F(\mathsf a,\mathsf b)),
$
where $F(\mathsf a,\mathsf b)$ belongs to  the complex vector space of test functions $\mathcal F$ generated by noncommutative polynomials in the form 
\begin{align}\label{e:defcF}
\sfb^{n_0}\frac{1}{z_1-\mathsf a}\sfb^{n_1} \frac{1}{z_2-\mathsf a}\sfb^{n_2} \frac{1}{z_k-\mathsf a} \sfb^{n_k},
\end{align}
where $k$ is any positive integer number and $\{z_j\}_{1\leq j\leq k}$ belong to $\mathbb C\backslash \mathbb R$ whereas $(n_i)_{0\leq i\leq k}$ are non-negative  integer numbers (here we use resolvents instead of polynomials because $\mathsf a$ has a priori only its first moment finite). In this way, we can view the non-commutative probability distribution of $(\mathsf a,\mathsf  b)$ as an element of the dual of  $\mathcal F$.

\begin{theorem}\label{t:convl1}
Take $\beta=2$. Let $A_{N},B_{N}$ be two sequences of deterministic self-adjoint matrices, such that their spectral measures $\mu_{A_N}$ and $\mu_{B_N}$ converge in Wasserstein distance \eqref{e:wd} towards $\mu_{A}$ and $\mu_{B}$ respectively. We assume that  there exists a constant $\fK>0$, such that $ \mu_{A_N}(|x|)\leq \fK$ and $\supp \mu_{B_N}\subset [-\fK, \fK]$. 

\begin{enumerate}
\item Let $H_N(t),t\in [0,1]$ be the Hermitian (resp. symmetric) Brownian motion as in \eqref{e:defBDM}. The law of eigenvalues of $X_N(t)=A_N+H_N(t)$, conditioned to have the same eigenvalues as $B_N$ at time $ t=1$, converges towards the measure valued process $(\rho^{*}_{t})_{0\leq t\leq 1}$ such that 
for all $t\in (0,1)$, $\partial_{t}\rho^{*}_{t}+\partial_{x}(\rho^{*}_{t}u^{*}_{t})=0$ and they satisfy the Euler equation \eqref{eulpr20}. 
\item  For any $F\in\mathcal F$ as defined in \eqref{e:defcF}, the following limit exists:
\begin{align}\label{e:tauF}
\lim_{N\rightarrow \infty}\int \frac{1}{N}\Tr(F(A_N,UB_NU^{*}))\rd\mu_{A_N,B_N}(U)=:\tau_{\mu_A,\mu_B} (F(\mathsf a, \mathsf b))\,,
\end{align}
where $\rd\mu_{A_{N},B_{N}}$ was defined in \eqref{e:lawU}.  \end{enumerate}
Moreover  if $(\mu_{A^{(p)}},\mu_{B^{(p)}})_{p\geq 0}$ are sequences of probability measures such that there exists a finite constant $\fK$ such that   $\supp\mu_{A^{(p)}}\subset[-\fK, \fK]$ and  $\mu_{B^{(p)}}(|x|)\leq \fK$, and they converge weakly towards $\mu_{A},\mu_{B}$ respectively, then 
\begin{enumerate}
\setcounter{enumi}{2}
\item $\{\rho^*_t\}_{0\leq t\leq 1}$ is uniquely described as the weak  limit of  $\{\rho^{(p)}_t\}_{0\leq t\leq 1}$ which minimizes the strictly convex function  $S(u,\rho)$ as in \eqref{e:funcS} for regularized boundary data $(\mu_{A^{(p)}},\mu_{B^{(p)}})$.
\item $\tau_{\mu_{A^{(p)}},\mu_{B^{(p)}}}$ converges towards $\tau_{\mu_A,\mu_B}$ in the sense that $\tau_{\mu_{A^{(p)}},\mu_{B^{(p)}}}(F(\mathsf a, \mathsf b))$ converges towards $\tau_{\mu_{A},\mu_{B}}(F(\mathsf a, \mathsf b))$ for all $F\in\mathcal F$ (as in \eqref{e:defcF}). 
\end{enumerate}
\end{theorem}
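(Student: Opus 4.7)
The plan is to extend the case where both $\mu_A, \mu_B$ have compact support and finite entropy — already handled by Theorems \ref{convrho} and \ref{t:FBB} — to the case where $\mu_A$ has only bounded first moment. The key tool, as suggested in the surrounding text, is the monotonicity of non-intersecting Brownian bridges that is specific to $\beta = 2$: the joint law of eigenvalues of the matrix Brownian bridge between $A_N$ and $UB_NU^*$ admits a monotone coupling that preserves ordering, so perturbing the endpoint $A_N$ in Wasserstein distance perturbs the entire trajectory in a controlled way. I would first dispose of the ``moreover'' items (well-definedness of $\rho^*_t$ and $\tau_{\mu_A,\mu_B}$ as limits) and then use them to conclude parts 1 and 2 via regularization.

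For the ``moreover'' items: for each regularizing sequence $(\mu_{A^{(p)}}, \mu_{B^{(p)}})$ with uniformly compact support, bounded second moment, and entropy bounded below, the minimizer $\rho^{*,(p)}_t$ is characterized by Theorem \ref{theoCMP}, and the non-commutative law $\tau_{\mu_{A^{(p)}}, \mu_{B^{(p)}}}$ is well-defined via the free Brownian bridge of Theorem \ref{t:FBB}. By Proposition \ref{p:continuity}, the limit spherical integral $I(\cdot,\cdot)$ is continuous in Wasserstein distance, hence the objective functional $S$ in \eqref{e:funcS} (together with the boundary terms in \eqref{formulaJ}) converges as $p\to\infty$. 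Combined with strict convexity of $S$ in $(\rho, u\rho)$, the $\rho^{*,(p)}_t$ converge weakly to a unique limit $\rho^*_t$ independent of the chosen regularization. For $\tau_{\mu_{A^{(p)}}, \mu_{B^{(p)}}}$, the point is that each test function $F\in\cF$ of the form \eqref{e:defcF} has uniformly bounded operator norm, since $\|(z-\sfa^{(p)})^{-1}\|\leq 1/|\Im z|$ and $\|\sfb^{(p)}\|\leq \fK$, so the sequence of functionals on $\cF$ is uniformly bounded; the Brownian-bridge representation in Proposition \ref{propbur}(6) and Wasserstein-continuity of the spectral data force uniqueness of any weak limit, defining $\tau_{\mu_A, \mu_B}$.

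To prove parts 1 and 2, truncate $A_N$ to $A_N^{(p)}$ supported in $[-p,p]$, so that $\rd_W(\mu_{A_N}, \mu_{A_N^{(p)}})\to 0$ as $p\to\infty$, uniformly in $N$ by the assumption $\mu_{A_N}(|x|)\leq \fK$. Under the tilted measure \eqref{e:lawU}, the eigenvalue process of $X_N(t)=A_N+H_N(t)$ is a non-intersecting Brownian bridge ensemble with prescribed endpoints, and the $\beta=2$ monotonicity lets us couple the bridges for $(A_N,B_N)$ and $(A_N^{(p)},B_N)$ so that their ordered eigenvalues remain ordered, giving a uniform bound $\rd_W(\mu^N_t, \mu^{N,(p)}_t)\lec \rd_W(\mu_{A_N}, \mu_{A_N^{(p)}})$ for all $t\in[0,1]$. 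Applying Theorem \ref{convrho} to the truncated problem and then letting $p\to\infty$ using the ``moreover'' part gives part 1, with the Euler equations passing to the limit in the sense of distributions on the interior of the support. For part 2, the uniform operator bound $\|F(A_N,UB_NU^*)\|\leq \fK^{\sum n_i}/\prod|\Im z_i|$ turns the convergence of $\frac{1}{N}\Tr F$ under the tilt into a bounded-function question; the truncated analogue converges to $\tau_{\mu_{A^{(p)}},\mu_B}(F)$ by the compactly-supported case, and the ``moreover'' item combined with the Wasserstein-continuity of the joint law yields the claim. The main obstacle is making the monotone coupling rigorous at the level of the two different tilted measures $\mu_{A_N,B_N}$ and $\mu_{A_N^{(p)},B_N}$, which are mutually singular; the cleanest route is to implement the coupling pointwise in the joint eigenvalue density \eqref{e:lawX1} — where the bridge has fixed endpoint eigenvalues and non-intersecting Brownian bridges with ordered initial and terminal data admit an explicit monotone coupling — and then integrate out the endpoint distributions.
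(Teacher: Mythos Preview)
Your approach to Part 1 and the first ``moreover'' item via monotone coupling of non-intersecting Brownian bridges is essentially the paper's route (this is the content of Claim \ref{c:densitye}), though note that your appeal to strict convexity of $S$ to deduce convergence of $\rho^{*,(p)}_t$ does not work directly: in the $L^1$ setting the functional $S$ may be $+\infty$ at the limit, which is precisely why the paper falls back on the bridge monotonicity argument rather than the variational characterization.

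The real gap is in Part 2 and the second ``moreover'' item. You assume that the ``compactly-supported case'' of the convergence of $\tau_N(F)$ is already available, and then extend by continuity. But no such result is established prior to this theorem: Theorems \ref{convrho} and \ref{t:FBB} only tell you that for each $t$ the spectral measure of $X_N(t)$ converges to $\rho^*_t$, i.e.\ you know the distribution of $(1-t)\mathsf a + t\mathsf b + \sqrt{t(1-t)}\mathsf s$ for every $t$. This is far weaker than knowing the full non-commutative joint law $\tau_{\mu_A,\mu_B}$ on all of $\mathcal F$. Different joint laws of $(\mathsf a,\mathsf b)$ could in principle produce the same family $\{\rho^*_t\}_{t\in[0,1]}$, and ruling this out is the actual content of the paper's proof. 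Your sentence ``the Brownian-bridge representation\dots\ and Wasserstein-continuity of the spectral data force uniqueness'' is circular: the continuity of $\tau_{\mu_A,\mu_B}$ in $(\mu_A,\mu_B)$ is a \emph{consequence} of the uniqueness argument, not an input to it.

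The paper's missing idea is this. From $\rho^*_t$ one subtracts the semicircle via the $R$-transform to obtain the law of $(1-t)\mathsf a + t\mathsf b$; rescaling and Taylor-expanding $\tau\bigl((z-\mathsf a - s\mathsf b)^{-1}\bigr)$ in $s$ then yields $\tau\bigl((z-\mathsf a)^{-1}(\mathsf b(z-\mathsf a)^{-1})^k\bigr)$ for every $k$ and every $z\in\mathbb C\setminus\mathbb R$. To recover arbitrary words in $\mathcal F$ from these, the paper runs an induction on the $\mathsf b$-degree using the loop (Schwinger--Dyson) equation $\tau\otimes\tau(\partial G)=\tau(G(\mathsf a\mathsf b-\mathsf b\mathsf a))$, which lets one commute a factor $\mathsf b$ past a resolvent $(z-\mathsf a)^{-1}$ at the cost of lower-degree terms. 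This reconstruction is what shows any limit point of $\tau_N$ is uniquely determined by $\rho^*$, hence $\tau_N$ converges; the continuity in $(\mu_A,\mu_B)$ then follows because the reconstruction depends only on $\rho^*$, which was already shown to depend continuously on the boundary data.
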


\begin{remark}\label{r:beta1}
For $\beta=2$ there is another interpretation of the eigenvalues of $X_N (t)$ as non-intersecting Brownian bridges, which satisfies  a monotonicity statement. As we will see in the proof, the monotonicity is crucial. 
Thus the statements in Theorem \ref{t:convl1} were only proven for $\beta=2$. 
Since the limiting spherical integral $I(\mu_A,\mu_B)$ does not depend on $\beta$,
the information from studying $\beta=2$ case is enough for us to analyze $I(\mu_A,\mu_B)$. For instance we can write 
$$I(\mu_A,\mu_B)=\int_0^1 \tau_{\mu_{uA},\mu_{B}}(\mathsf a\mathsf b) \rd u\,.$$
In particular we may use this representation to differentiate the spherical integral. 
\end{remark}

Before proving Theorem \ref{t:convl1}, we recall some results on nonintersecting Brownian motions.
We denote $A_N=\diag\{a_1\leq a_2\leq \cdots\leq  a_N\}$ and $B_N=\diag\{b_1\leq b_2\leq \cdots\leq  b_N\}$. Let $(H_{N}(t), t\in [0,1])$ be the matrix Hermitian Brownian motion and consider
the process $X_{N}(t)=A_{N}+H_{N}(t)$. From \eqref{e:lawX1}, one can see that the distribution of $X_{N}(1)=A_{N}+H_{N}(1)$ conditioning to have eigenvalues given by $B_N$ is the same as the law of $UB_NU^{*}$ where $U$ has distribution $\mu_{A_N,B_N}$ as in \eqref{e:lawU}. For $\beta=2$, there is another interpretation of the eigenvalues of $X_N(t)$ as non-intersecting Brownian bridges: the law of the eigenvalues of $X_N(t)$ is the law of nonintersecting Brownian bridges $w_{1}(t)\leq w_{2}(t)\leq \cdots \leq w_{N}(t)$, where $w_i(t)$ is from $a_i$ to $b_i$,  conditioning not to intersect each other.
In the following, we show that 
\begin{lemma}\label{c:densitye}  Assume $(1/N)\sum_{i=1}^N\delta_{a_i}$ and $(1/N)\sum_{i=1}^N\delta_{b_i}$ converges towards $\mu_A$ and $\mu_B$.
The empirical measure of nonintersecting Brownian motions 
\begin{align}
\label{e:empbridge}\mu^{N}_{t}=\frac{1}{N}\sum_{i=1}^N\delta_{w_{i}(t)},
\end{align}
converges weakly almost surely to a measure valued process $\{\rho_t^*\}_{0\leq t\leq 1}$. Moreover, the map between $(\mu_A,\mu_B)$ and the measure valued process $\{\rho_t^*\}_{0\leq t\leq 1}$ is continuous, with respect to the uniform weak topology.
\end{lemma}

\begin{proof}[Proof of Lemma \ref{c:densitye}]
There is a monotonicity statement for nonintersecting Brownian bridges \cite[Lemmas 2.6 and 2.7]{MR3152753}. Given two pairs of boundary data $(\hat a_1\leq \hat a_2\leq \cdots\leq \hat a_N)$, $(\hat b_1\leq \hat b_2\leq \cdots\leq \hat b_N)$,  $(\tilde a_1\leq \tilde a_2\leq \cdots\leq \tilde a_N)$ and $(\tilde b_1\leq \tilde b_2\leq \cdots\leq \tilde b_N)$. We consider nonintersecting Brownian bridges from $(\hat a_1,\hat a_2,\cdots, \hat a_N)$ and $(\tilde a_1, \tilde a_2, \cdots, \tilde a_N)$ to $(\hat b_1,\hat  b_2, \cdots,\hat b_N)$ and $(\tilde b_1, \tilde b_2, \cdots, \tilde b_N)$: $\hat w_1(t)\leq \hat w_2(t)\cdots\leq \hat w_N(t)$ and $\tilde w_1(t)\leq \tilde w_2(t)\cdots\leq \tilde w_N(t)$. If $a_i\geq \tilde a_i$ and $b_i\geq \tilde b_i$ for all $1\leq i\leq N$, \cite[Lemmas 2.6 and 2.7]{MR3152753} gives a coupling, such that at any time $0\leq t\leq 1$, $\hat w_i(t)\geq \tilde w_i(t)$ for all $1\leq i\leq N$. 

Especially if we denote the empirical particle density of the two nonintersecting Brownian bridges as $\hat \mu_t^N=(1/N)\sum_{i=1}^N \delta_{w_i(t)}$ and ${\tilde \mu_t}^N = (1/N)\sum_{i=1}^N \delta_{\tilde w_i(t)}$, then 
\begin{align}\label{e:monotone}
\hat h^N(x,t)\deq\hat \mu_t^N([-\infty, x])\leq  
\tilde\mu_t^N([-\infty,x])\eqd \tilde h^N(x,t),
\end{align}
for any $x\in \bR$, almost surely under the coupling. We note it is possible that some $\hat a_i, \tilde a_i, \hat b_i, \tilde b_i$ are at $\pm \infty$ and $\hat \mu_t^N, \tilde \mu_t^N$ may have delta mass at $\pm \infty$. The statements in \cite[Lemmas 2.6 and 2.7]{MR3152753} still hold if some particles are at $\pm \infty$.
Combining the discussion above with Theorem \ref{t:FBB}, if as $N\rightarrow \infty$, the empirical density of the boundary data converges to $\hat \mu_A,\hat  \mu_B, \tilde \mu_A, \tilde \mu_B$ respectively with compact support on $(-\infty, \infty)$, possibly some delta mass at $+\infty$, then $\hat \mu_t^N$, $\tilde \mu_t^N$ converge weakly to some measure-valued processes $\hat \mu_t, \tilde \mu_t$ respectively. Their cumulative densities satisfy
\begin{align}\label{e:densityb}
\hat h(x,t)\deq\hat \mu_t([-\infty, x])\leq  
\tilde\mu_t([-\infty,x])\eqd \tilde h(x,t).
\end{align}

If $\mu_{A_N}, \mu_{B_N}$ are both uniformly compactly supported, the convergence of the empirical particle density of the nonintersecting Brownian bridges follows from Theorem \ref{t:FBB}. If $\mu_{A_N}$ or $\mu_{B_N}$ are not compactly supported, we approximate them with compact ones and use the monotonicity property of the nonintersecting Brownian bridges to show the existence of limiting density. 

We denote $A^{\varepsilon+}_N, B^{\varepsilon+}_N$ the new boundary data by moving the first and last $\lfloor \varepsilon N\rfloor$ particles of $A_N, B_N$ to the location $-\infty$, i.e. they are $(-\infty\leq \cdots\leq -\infty\leq a_{\lfloor \varepsilon N\rfloor+1}\leq a_{\lfloor \varepsilon N\rfloor+2}\cdots\leq a_{N-\lfloor \varepsilon N\rfloor})$ and $(-\infty\leq \cdots\leq -\infty\leq b_{\lfloor \varepsilon N\rfloor+1}\leq b_{\lfloor \varepsilon N\rfloor+2}\cdots\leq b_{N-\lfloor \varepsilon N\rfloor})$. We denote the nonintersecting Brownian bridge between them as $w^+_1(t)\leq w^+_2(t)\leq \cdots \leq w^+_N(t)$. Then $A_N, B_N, A_N^{\varepsilon+}, B_N^{\varepsilon+}$ satisfy the monotone condition, \eqref{e:monotone} implies
\begin{align}\label{e:ubb1}
\mu_t^N([-\infty, x])\leq \frac{1}{N}\#\{j: w^+_j(t)\leq x\}\eqd h_+^N(x,t),
\end{align}
almost surely. The empirical particle densities of $A_N^{\varepsilon+}, B_N^{\varepsilon+}$ converge to  measures in the form of a delta mass at $-\infty$ plus a compactly supported measure.  From the discussion above and Theorem \ref{t:FBB}, the limits exist
\begin{align}\label{e:ubb2}
\lim_{N\rightarrow \infty} h_+^N(x,t)=h^\varepsilon_+(x,t), 
\end{align}
and $\lim_{x\rightarrow-\infty}h_+^{\varepsilon}(x,t)=2\varepsilon$, $\lim_{x\rightarrow+\infty}h_+^{\varepsilon}(x,t)=1$.
Similarly, we denote $A^{\varepsilon-}_N, B^{\varepsilon-}_N$ the new boundary data by moving the first and last $\lfloor \varepsilon N\rfloor$ particles of $A_N, B_N$ to the location $+\infty$, and define $h_-^\varepsilon$ analogously. Then we have 
\begin{align}\label{e:lbb}
\liminf_{N\rightarrow\infty}\mu_t^N([-\infty, x])\geq h_-^\varepsilon(x,t),
\end{align}
almost surely.
Combining those estimates \eqref{e:ubb1}, \eqref{e:ubb2} and \eqref{e:lbb} together, we get
\begin{align*}
h_-^\varepsilon(x,t)\leq \liminf_{N\rightarrow\infty}\mu_t^N([-\infty, x])\leq \limsup_{N\rightarrow\infty}\mu_t^N([-\infty, x])\leq h_+^\varepsilon(x,t),
\end{align*}
almost surely. From our construction, $h_-^\varepsilon(x,t)$ is simply a shift of 
$h_+^\varepsilon(x,t)$, i.e. $h_-^\varepsilon(x,t)=h_+^\varepsilon(x,t)-2\varepsilon$. Moreover, thanks to the monotonicity property of nonintersecting Brownian bridges, $h_-^\varepsilon(x,t)$  is nondecreasing in $\varepsilon$ and $h_+^\varepsilon(x,t)$ is nonincreasing in $\varepsilon$. Thus the limits exist
\begin{align*}
h(x,t)\deq \lim_{\varepsilon\rightarrow 0}h_-^\varepsilon(x,t)=\lim_{\varepsilon\rightarrow 0}h_+^\varepsilon(x,t),
\end{align*}
and $h(x,t)$ gives the limiting cumulative density of $\mu_t^N$. This finishes the proof of \ref{e:empbridge} and the first point of Lemma  \ref{c:densitye}.
For the second point, if we have a sequence of probability measures $(\mu_{A^{(p)}}, \mu_{B^{(p)}})_{p\geq 1}$ converging weakly to $(\mu_A, \mu_B)$. We denote the limiting height functions for corresponding Brownian bridges as $h^{(p)}(x,t)$ and $h^*(x,t)$. Since weak topology is compatible with the L{\'e}vy metric, for any $\varepsilon>0$, we have for $p$ large enough, 
\begin{align}\label{e:t01}
h^*(x-\varepsilon,t)-\varepsilon \leq h^{(p)}(x,t)\leq h^*(x+\varepsilon,t)+\varepsilon, \quad x\in \bR,\quad t\in\{0,1\}.
\end{align}
By the monotonicity argument, we have that  \eqref{e:t01} holds for $0\leq t\leq 1$, and the second point follows.  
\end{proof}

\begin{proof}[Proof of Theorem \ref{t:convl1}]
It follows from the second point of Lemma \ref{c:densitye} and the fact that the limit of solutions of Euler equations \eqref{eulpr10} and \eqref{eulpr20} is still a solution (even if we only assume $\mu_A(|x|)<\infty$ rather than $\mu_A(x^2)<\infty$) that
\begin{enumerate}
\item The law of eigenvalues of $X_N(t)=A_N+H_N(t)$, conditioned to have the same eigenvalues as $B_N$ at time $ t=1$, converges towards the distribution $(\rho^{*}_{t})_{0\leq t\leq 1}$ such that 
for all $t\in (0,1)$, $\partial_{t}\rho^{*}_{t}+\partial_{x}(\rho^{*}_{t}u^{*}_{t})=0$ and they satisfy the Euler equation \eqref{eulpr20}. 
\item $\{\rho^*_t\}_{0\leq t\leq 1}$ is uniquely described as the weak  limit of  $\{\rho^{(p)}_t\}_{0\leq t\leq 1}$ which minimizes the strictly convex function  $S(u,\rho)$ as in \eqref{e:funcS} for regularized boundary data $(\mu_{A^{(p)}},\mu_{B^{(p)}})$.
\end{enumerate}

In the following we prove \eqref{e:tauF}. 
We need to  show  that $\{\rho_t^*\}_{0\leq t\leq 1}$ characterize uniquely the limit points of  \begin{align}\label{e:deftau}
\tau_{N}(F)\deq \int \frac{1}{N}\Tr(F(A_N,UB_NU^{*}))\rd\mu_{A_N,B_N}(U)\,,
\end{align}
for all $F\in\mathcal F$.
The non-commutative law $\tau_N$ is sequentially tight since it belongs to a compact space. We can therefore consider a  limit point $\tau$ of $\tau_N$ and need to show it is unique. 

We have already proven that for all $t\in [0,1]$, all $z\in\mathbb C\backslash \mathbb R$,
$$\tau\left(\frac{1}{z-(1-t)\mathsf a-t\mathsf b-\sqrt{t(1-t)}\mathsf s}\right)=\int \frac{1}{z-x}\rho^{*}_{t}(x) \rd x\,.$$
This is enough to deduce the distribution $\nu_{t}$ of $(1-t)\mathsf a+t\mathsf b$, since the  $R$-transform formula yields for $z$  small enough
$$R_{\nu_{t}}(z)=R_{\rho^{*}_{t}}(z)-t(1-t) z, $$
where $R_{\nu_t}$ and $R_{\rho^*_t}$ are the $R$-transforms of $\nu_t$ and $\rho^{*}_t$ respectively. This defines uniquely the Stieltjes transform of $\nu_t$.
We then deduce by a change of variable $u=t/(1-t)$ that
\begin{align}\label{e:tauab}
\tau\left(\frac{1}{z-\mathsf a-u\mathsf b}\right)=\int \frac{1-t}{(1-t)z-x}\rd\nu_{t}(x)\,.
\end{align}
Since $\|\mathsf b\|<\infty$, we can Taylor expand the above expression
\begin{align}\label{e:tylor}
\tau\left(\frac{1}{z-\mathsf a-u\mathsf b}\right)=\sum_{k\geq 0}\tau\left(\frac{u^k}{z-\mathsf a}\left(\sfb \frac{1}{z-\mathsf a}\right)^k\right),
\end{align}
for $s>0$ small enough. Term by term, we can use \eqref{e:tylor} to extract the value of 
\begin{align}\label{e:tauk}
\tau\left(\frac{1}{z-\mathsf a}\left(\sfb \frac{1}{z-\mathsf a}\right)^k\right),
\end{align}
for any $k\geq 0$.
In the following we show that  those are enough to  retrieve the complete joint distribution $\tau$ thanks to loop equations. 
We want to show that  the observables
\begin{equation}\label{e:ob}\tau\left( \sfb^{n_0}\frac{1}{z_1-\mathsf a}\sfb^{n_1}\frac{1}{z_2-\mathsf a}\cdots \mathsf b^{n_{k-1}}\frac{1}{z_k-\mathsf a} \sfb^{n_k}\right)\end{equation}
are uniquely determined
for any  choice of $k\geq 0$, $z_i\in \bC\setminus\bR$ and $n_i\geq 0$ with $0\leq i\leq n$. Note that we may and shall assume that $\tau$ is tracial and hence take $n_0=0$.  Also, any $n_i$ can vanish and hence the above contains all polynomials in $(z_i-\sfa)^{-1}$ and $\sfb$. We prove the uniqueness  by induction on the degree of $\sfb$, i.e. $\deg_{\sfb}(F)=\sum n_i$.
We assume we know \eqref{e:ob} for any $F\in \cF$ with $\deg_{\sfb}(F)\leq m$, and prove the statement for $\deg_{\sfb}(F)=m+1$. For the base case $m=0$, $F$ is a polynomial of 
$(
\frac{1}{z_{i}-\mathsf a})_{1\leq i\leq k} $which is characterized by the marginal distribution $\mu_A$ of $\sfa$.

To proceed by induction we use the loop equations, see e.g. \cite{GN}, which implies that for any $G\in \cF$,
\begin{align}\label{e:loop}
\tau\otimes \tau (\partial G)=\tau(G (\mathsf a\mathsf b-\mathsf b\mathsf a)),
\end{align}
where 
$$\partial G=\sum_{G=G_{1}\mathsf b G_{2}} \left(G_{1}\mathsf b \otimes G_{2}-G_{1}\otimes \mathsf b G_{2}\right)\,.$$
In the following, we will use the loop equation \eqref{e:loop} to commute $1/(z-\sfa)$ and $\sfb$. For any $F\in \cF$, with $\deg_\sfb(F)=m+1$, since $\tau$ is tracial, we can rewrite it as
\begin{align}\begin{split}\label{e:gF}
\tau(F)=\tau\left(\tilde F \frac{1}{z-\sfa} \sfb\right)
&=\tau\left(\tilde F \sfb\frac{1}{z-\sfa} \right)+\tau\left(\tilde F \frac{1}{z-\sfa} (\sfa\sfb-\sfb\sfa)\frac{1}{z-\sfa}\right)\\
&=\tau\left(\tilde F \sfb\frac{1}{z-\sfa} \right)+\tau\left(\frac{1}{z-\sfa}\tilde F \frac{1}{z-\sfa} (\sfa\sfb-\sfb\sfa)\right),
\end{split}\end{align}
for some $z\in \{z_1, z_2,\cdots, z_k\}$, and $\deg_{\sfb}(\tilde F )=m$.  For the last term in \eqref{e:gF}, we can use the loop equation \eqref{e:loop} with 
$G=(z-\sfa)^{-1}\tilde F(z-\sfa)^{-1}$, 
\begin{align}
\tau\left(\frac{1}{z-\sfa}\tilde F \frac{1}{z-\sfa} (\sfa\sfb-\sfb\sfa)\right)
=\tau\otimes \tau \left(\partial \left(\frac{1}{z-\sfa}\tilde F \frac{1}{z-\sfa} \right)\right)
\end{align}
The righthand side decomposes into polynomials of $(z_{1}-\mathsf a)^{-1}, (z_{2}-\mathsf a)^{-1},\cdots,  (z_{k}-\mathsf a)^{-1},\sfb$, and the degree of $\sfb$ is at most $m$. Therefore, we can compute $\tau((z-\sfa)^{-1}\tilde F(z-\sfa)^{-1}(\sfa\sfb-\sfb\sfa))$ uniquely by our induction hypothesis. Using \eqref{e:gF}, we can commute $(1/(z-\sfa))$ and $\sfb$, the extra terms can be uniquely computed by our induction hypothesis. By repeated using \eqref{e:gF}, and commuting $(1/(z-\sfa))$ and $\sfb$, computing $\tau(F)$ is reduced to compute terms in the form
\begin{align}\label{e:res1}
\tau\left(P\left(\frac{1}{z_1-\sfa}, \frac{1}{z_2-\sfa}, \cdots, \frac{1}{z_k-\sfa}\right)\sfb^{m+1}\right)
\end{align}
where $P$ is a polynomial. By using partial fraction, we can rewrite the polynomial $P$ in \eqref{e:res1}   as a linear combination of terms in the form $1/(z_i-\sfa)^\ell$. 
Thus to compute \eqref{e:res1}, we only need to compute 
\begin{align}\label{e:zl}
\tau\left(\frac{1}{(z-\sfa)^\ell}\sfb^{m+1}\right),
\end{align}
for any $z\in \{z_1,z_2,\cdots, z_k\}$ and $\ell\geq 0$.
Thanks to  \eqref{e:tauk}, we can compute 
\begin{align}
\tau\left(\frac{1}{(z-\sfa)}\left(\sfb\frac{1}{z-\sfa}\right)^{m+1}\right).
\end{align}
If we take 
\begin{align}
F=\frac{1}{(z-\sfa)}\left(\sfb\frac{1}{z-\sfa}\right)^{m+1},
\end{align}
the above procedure allows us to commute $1/(z-\sfa)$ and $\sfb$, and compute 
\begin{align}\label{e:zm}
\tau\left(\frac{1}{(z-\sfa)^{m+2}}\sfb^{m+1}\right),
\end{align}
for any $z\in \bC\setminus \bR$.
We can use \eqref{e:zm} to compute \eqref{e:zl} by taking derivatives and antiderivatives with respect to $z$.
This yields our induction hypothesis for $\deg_{\sfb}(F)=m+1$ and  finishes the proof of the convergence of the non-commutative law.

Finally, for the last point, we notice that Lemma  \ref{c:densitye} shows the continuity of $\{\rho_t^*\}_{0\leq t\leq 1}$ in its boundary values, i.e. $\mu_A,\mu_B$, with respect to the weak topology. 
More precisely, if $(\mu_{A^{(p)}},\mu_{B^{(p)}})_{p\geq 0}$ is a sequence of probability measures such that there exists a finite constant $\fK$ such that   $\supp\mu_{A^{(p)}}\subset[-\fK, \fK]$ and  $\mu_{B^{(p)}}(|x|)\leq \fK$, and they converge weakly towards $\mu_{A},\mu_{B}$ respectively, then 
their limiting empirical measure $\{\rho_t^{(p)}\}_{0\leq t\leq 1}$ converges to $\{\rho_t^*\}_{0\leq t\leq 1}$ in weak topology uniform in $t$.  The same as in the proof of \eqref{e:tauF}, we can construct the tracial state $\tau_{\mu_{A^{(p)}},\mu_{B^{(p)}}}$ for the measure valued process $\{\rho_t^{(p)}\}_{0\leq t\leq 1}$. In the proof of \eqref{e:tauF}, we constructed $\tau$ using \eqref{e:tauk} and the recursion relation \eqref{e:gF}. Thus if 
$\{\rho_t^{(p)}\}_{0\leq t\leq 1}$ converges to $\{\rho_t^*\}_{0\leq t\leq 1}$ in weak topology uniform in $t$, we have $\tau_{\mu_{A^{(p)}},\mu_{B^{(p)}}}(F)$ converges towards $\tau_{\mu_{A},\mu_{B}}(F)$ for all $F\in\mathcal F$. 
Hence we can conclude that $\tau_{\mu_A,\mu_B}$ depends on $\mu_A,\mu_B$ continuously (in the weak topology). 
\end{proof}

\subsection{Derivatives of the Spherical Integral}\label{s:derSI}
In this section we compute the derivatives of the spherical integral. This will be crucial to analyze the critical points of our large deviations rate functions which are expressed as supremum of  functions depending on spherical integral.  Since the limiting functional $I(\mu_A, \mu_B)$ defined in \eqref{e:defI} is independent of $\beta$, in this section, we take 
$\beta=2$. We identify any $N\times N$ diagonal matrix $A_N=\diag\{a_1,a_2,\cdots,a_N\}$ (that is, $(A_N)_{i,j}=\delta_{i=j}a_i,1\le i,j\le N$) with the multiplicative operator 
$\tilde T_{A_N}:[0,1)\mapsto\bR,$ 
\begin{align*}
\tilde T_{A_N}(x)=\sum_{i=1}^N a_i {\bm 1}_{[\frac{i-1}{N}, \frac{i}{N})}(x).
\end{align*}
From the definition, the empirical eigenvalue distribution $\mu_{A_N}=(1/N)\sum_i \delta_{a_i}$ of $A_N$ is the push forward measure of the uniform measure on $[0,1]$ by $\tilde T_{A_N}$. We rearrange $a_1, a_2, \cdots, a_N$ in increasing order: $a_{1^*}\leq a_{2^*}\leq \cdots \leq a_{N^*}$ and define the multiplicative operator
\begin{align*}
T_{A_N}(x)=\sum_{i=1}^N a_{i^*} {\bm 1}_{[\frac{i-1}{N}, \frac{i}{N})}(x).
\end{align*}
Then $T_{A_N}$ is a right continuous nondecreasing function. Moreover, if $F_{A_N}$ is the cumulative density of the empirical eigenvalue distribution $\mu_{A_N}$, then $T_{A_N}$ is the functional inverse of $F_{A_N}$.
More generally for any measurable function $\tilde T_A: [0,1]\mapsto \bR$, we denote the measure $\mu_A=(\tilde T_A)_\# \unif[0,1]$ the pushforward of the uniform measure on $[0,1]$ by $\tilde T_A$, $F_A$ the cumulative density of $\mu_A$ and $T_{A}$ the functional inverse of $F_{A}$, which is right continuous and non-decreasing.

A sequence of measures $\mu_{A_N}$ converges weakly to $\mu_A$ if and only if  $T_{A_N}$ converges to $T_A$ at all continuous point of $T_A$. And $\mu_{A_N}$ converges  in Wasserstein distance \eqref{e:wd} to $\mu_A$ if and only if $T_{A_N}$ converges to $T_A$ in $L^1$ norm.

Let $A_{N},B_{N}$ be two sequences of deterministic self-adjoint matrices, such that their spectral measures $\mu_{A_N}$ and $\mu_{B_N}$ converge  in Wasserstein distance \eqref{e:wd} towards $\mu_{A}$ and $\mu_{B}$ respectively. We assume that  there exists a constant $\fK>0$, such that $ \mu_{A_N}(|x|)\leq \fK$ and $\supp \mu_{B_N}\subset [-\fK, \fK]$.   
 As a consequence of Theorem \ref{t:convl1}, for any bounded Lipschitz function $f: \bR\mapsto \bR$, 
\begin{align}\label{e:der}
\lim_{N\rightarrow \infty} \tau_N(f(A_N)UB_NU^*)
= \tau(f(\mathsf a) \mathsf b)
=\tau(\tau(f(\mathsf a)\mathsf b|\mathsf a))
=\tau(f(\mathsf a)\tau(\mathsf b|\mathsf a)),
\end{align}
where $U$ follows the law \eqref{e:lawU}.
The goal of this section is to characterize the derivative of the spherical integral using the non-commutative distribution $\tau$. Indeed, we have 
\begin{align}\begin{split}\label{e:perturbc}
&\phantom{{}={}}\left.\del_{\varepsilon}\frac{1}{\beta N^2}\ln \int e^{\frac{\beta N}{2}\Tr((A_N+\varepsilon C_N)UB_NU^*)}dU\right|_{\varepsilon=0}=\frac{1}{2N}
\frac{\int \Tr(C_NUB_NU^*)e^{\frac{\beta N}{2}\Tr(A_N UB_NU^*)}dU}{\int e^{\frac{\beta N}{2}\Tr(A_NUB_NU^*)}dU}.
\end{split}\end{align}
\begin{proposition}\label{p:derI}
Given two probability measures $\mu_A, \mu_B$, such that $\mu_A(|x|)<\infty$ and $\mu_B$ is compactly supported, for any compactly supported and Lipschitz  real-valued function $f$, it holds
\begin{align}\label{e:nondelta}
\left.\del_\varepsilon I(T_A+\varepsilon f(T_ A), T_B)\right|_{\varepsilon=0}=\frac{1}{2}\int f(x)\tau(\mathsf b|\mathsf a)(x)\rd \mu_A (x).
\end{align}
If the measure $\mu_A$ has a delta mass at $a$, for any bounded measurable function $\tilde T_C$ supported on $\{x: T_A(x)=a\}$, it holds
\begin{align}\label{e:delta}
\left.\del_\varepsilon I(T_A+\varepsilon \tilde T_C, T_B)\right|_{\varepsilon=0}=\frac{1}{2}\tau(\mathsf b|\mathsf a)(a)\int \tilde T_C(x)\rd x.
\end{align}
\end{proposition}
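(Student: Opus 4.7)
The approach is to write the derivative at finite $N$ as a tilted expectation (via \eqref{e:perturbc}), pass the $\varepsilon$--derivative through the $N\to\infty$ limit by a convexity argument, and then identify the limiting tilted expectation through the non-commutative law $\tau=\tau_{\mu_A,\mu_B}$ from Theorem \ref{t:convl1}.

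\textbf{Step 1 (Construction of the matrix perturbation).} For \eqref{e:nondelta}, I take a sequence of diagonal matrices $A_N$ with eigenvalues $a_1\leq\cdots\leq a_N$ such that $T_{A_N}\to T_A$ in $L^1$, and set $C_N=f(A_N)$, which is also diagonal. Then $T_{A_N+\varepsilon C_N}(x) = T_{A_N}(x)+\varepsilon f(T_{A_N}(x))$, which converges in $L^1$ to $T_A+\varepsilon f(T_A)$; since $f$ is bounded and $\supp\mu_{B_N}\subset[-\fK,\fK]$, Proposition \ref{p:continuity} plus Corollary \ref{t:convl1} (i.e.\ convergence of the spherical integral under $L^1$ convergence) yield
\begin{equation*}
\lim_{N\to\infty}\frac{1}{\beta N^2}\log\!\int\! e^{\frac{\beta N}{2}\Tr((A_N+\varepsilon C_N)UB_NU^*)}\rd U
=I(T_A+\varepsilon f(T_A),\,T_B)
\end{equation*}
for every (small enough) $\varepsilon$.

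\textbf{Step 2 (Exchange of $\partial_\varepsilon$ and $\lim_N$).} The finite-$N$ log-spherical integral $\varepsilon\mapsto \frac{1}{\beta N^2}\log I_N(A_N+\varepsilon C_N,B_N)$ is convex in $\varepsilon$, as it is the log of an integral of exponentials linear in $\varepsilon$. Hence its limit $\varepsilon\mapsto I(T_A+\varepsilon f(T_A),T_B)$ is also convex. By the standard fact that pointwise limits of convex functions have derivatives converging at every point of differentiability of the limit, the one-sided derivatives of $I(T_A+\varepsilon f(T_A),T_B)$ at $\varepsilon=0$ are given by the corresponding one-sided limits of the finite-$N$ derivatives. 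The formula \eqref{e:perturbc} identifies the latter as
\begin{equation*}
\frac{1}{2N}\int \Tr\!\bigl(f(A_N)\,U B_N U^*\bigr)\,\rd\mu_{A_N,B_N}(U)=\frac{1}{2}\,\tau_N\!\bigl(f(A_N)\,U B_N U^*\bigr),
\end{equation*}
where $\tau_N$ is as in \eqref{e:deftau}.

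\textbf{Step 3 (Identification of the limit via the non-commutative law).} By Theorem \ref{t:convl1} and Remark \ref{r:beta1}, $\tau_N$ converges weakly to $\tau=\tau_{\mu_A,\mu_B}$ on the algebra $\mathcal F$ from \eqref{e:defcF}. Since $f$ is bounded, it can be approximated by linear combinations of resolvents $(z-\mathsf a)^{-1}$ (e.g.\ via the Stieltjes inversion formula, together with the compact-supported hypothesis on $f$), and $\mathsf b$ is bounded, so a standard approximation yields
\begin{equation*}
\lim_{N\to\infty}\tau_N(f(A_N)\,UB_NU^*)=\tau(f(\mathsf a)\,\mathsf b)=\tau\!\bigl(f(\mathsf a)\,\tau(\mathsf b\,|\,\mathsf a)\bigr)=\int f(x)\,\tau(\mathsf b\,|\,\mathsf a)(x)\,\rd\mu_A(x),
\end{equation*}
by the tower property and the defining identity of the free conditional expectation. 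Combining Steps 1--3 gives \eqref{e:nondelta}, and in particular shows the function is differentiable (its left/right derivatives coincide because the RHS does not depend on the sign of $\varepsilon$).

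\textbf{Step 4 (The atomic case, \eqref{e:delta}).} Suppose $\mu_A$ has an atom of mass $m$ at $a$, so $T_A(x)\equiv a$ on an interval $J$ of length $m$, and $\tilde T_C$ is supported in $J$. I construct $C_N$ as a diagonal matrix whose nonzero entries are precisely those indices $i$ for which $a_i\approx a$, with values chosen so that the rescaled staircase function of $C_N$ approximates $\tilde T_C$ in $L^1$. The same convexity argument as in Step 2 applies, and the finite-$N$ derivative becomes $\frac{1}{2}\tau_N(C_N\,UB_NU^*)$. The limit now requires $\tau(p_a\,\mathsf c\,\mathsf b)$ where $p_a$ is the spectral projection of $\mathsf a$ onto $\{a\}$; by the tower property, $\tau(p_a\,\mathsf c\,\mathsf b)=\tau(p_a\,\mathsf c\,\tau(\mathsf b\,|\,\mathsf a))$, and since $\tau(\mathsf b\,|\,\mathsf a)$ is a function of $\mathsf a$, it is a constant multiplied by $p_a$ on the atom, namely $\tau(\mathsf b\,|\,\mathsf a)(a)\cdot p_a$. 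The mass normalization $\tau(p_a\,\mathsf c)=\int \tilde T_C(x)\,\rd x$ then yields \eqref{e:delta}.

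\textbf{Main obstacle.} The delicate point is the identification of the limit of $\tau_N(f(A_N)UB_NU^*)$ when $f$ is only assumed bounded and Lipschitz (or, in the atomic case, when we need to single out a spectral projection at an atom of $\mathsf a$). Since Theorem \ref{t:convl1} is stated on the algebra $\mathcal F$ generated by resolvents and polynomials in $\mathsf b$, I expect the main technical work will be a density/approximation argument (using the uniform bound $\|UB_NU^*\|\leq \fK$ and compact support of $f$) to promote convergence from $\mathcal F$ to bounded Borel functions of $\mathsf a$ against the bounded operator $\mathsf b$, and similarly to identify spectral projections of $\mathsf a$ as weak limits of resolvent combinations.
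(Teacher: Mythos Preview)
Your outline is the right overall shape (convexity + identification via $\tau$), but there are two genuine gaps, and the paper's argument differs precisely at those points.

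\textbf{Gap in Step 2.} The claim ``the one-sided derivatives of $I(T_A+\varepsilon f(T_A),T_B)$ at $\varepsilon=0$ are given by the corresponding one-sided limits of the finite-$N$ derivatives'' is not a consequence of convexity. If $g_N$ are convex, $g_N\to g$ pointwise, and $g_N'(0)\to c$, you only get $g'(0^-)\le c\le g'(0^+)$; this does \emph{not} force differentiability (take $g_N(x)=\sqrt{x^2+1/N}\to|x|$, where $g_N'(0)=0$ but $g'(0^\pm)=\pm1$). The paper closes this gap by also computing $\lim_N \partial_\varepsilon I_N(A_N+\varepsilon f(A_N),B_N)$ at \emph{nonzero} $\varepsilon$: since $f$ is compactly supported and Lipschitz, $f(A_N+\varepsilon f(A_N))=f(A_N)+O(\varepsilon)$, so this limit is $\tfrac12\tau^{\varepsilon}(f(\mathsf a)\mathsf b)+O(\varepsilon)$, where $\tau^\varepsilon$ is the limiting law under the tilt by $A_N+\varepsilon f(A_N)$. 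The last point of Theorem~\ref{t:convl1} (continuity of $\tau^\varepsilon$ in $\varepsilon$) then gives $\tfrac12\tau^{\varepsilon}(f(\mathsf a)\mathsf b)\to\tfrac12\tau(f(\mathsf a)\mathsf b)$, and the convexity sandwich \eqref{e:uplow} yields matching upper and lower bounds for the difference quotient. You are missing this ``derivative at $\varepsilon\neq 0$ + continuity of $\tau^\varepsilon$'' step.

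\textbf{Gap in Step 4.} Writing ``$\tau(p_a\,\mathsf c\,\mathsf b)$'' is not meaningful: $\tau=\tau_{\mu_A,\mu_B}$ is a law of $(\mathsf a,\mathsf b)$ only, and $C_N$ is \emph{not} a function of $A_N$ (two different diagonal matrices $C_N$ can sit over the same atom), so Theorem~\ref{t:convl1} gives you no limit for $\tau_N(C_N\,UB_NU^*)$ directly. The paper proceeds differently: it exploits that the Haar measure and $A_N$ are invariant under right multiplication of $U$ by a block unitary $\hat U=I\oplus V\oplus I$ on the atom's eigenspace, inserts an extra average over $V$, and then invokes \cite[Theorem~0.1]{CGM} to evaluate the resulting small-$\varepsilon$ spherical integral in $V$; this produces \eqref{e:integralV}, namely $\frac{\Tr C_N}{2mN}\cdot\frac{1}{N}\Tr(\mathbf 1_{x=a}(A_N)UB_NU^*)+o_{\varepsilon,N}(1)$, after which the same continuity-of-$\tau^\varepsilon$ argument as above closes the sandwich. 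Note that the invariance/\cite{CGM} step is needed precisely because at $\varepsilon\neq 0$ the tilted law $\mu_{A_N+\varepsilon C_N,B_N}$ is \emph{not} block-invariant, so a naive exchangeability argument on the atom's block fails for the upper bound.

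The approximation issue you flag in your ``Main obstacle'' (passing from $\mathcal F$ to bounded Lipschitz $f$) is real but routine; the substantive missing ideas are the two above.
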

We remark that thanks to  \eqref{e:condut}, we can express the conditional expectation in terms of the solution $(\rho_t^*, u_t^*)$ of the variational problem \eqref{e:funcS} by $\tau(\mathsf b|\mathsf a)(x)=u_0^*(x)-H\mu_A(x)+x$.

\begin{proof}
Since the spherical integral $I_N(A_N, B_N)$ is convex in both $A_N$ and $B_N$, its limit $I(T_A, T_B)$ is also convex in $T_A$ and $T_B$. Especially for any sequence $C_{N}$ of deterministic self-adjoint matrices, such that their spectral measures $\mu_{C_N}$converge weakly towards $\mu_{C}$, both $I_N(A_N+\varepsilon C_N, B_N)$ and $I(T_A+\varepsilon \tilde T_C, T_B)$ are convex in $\varepsilon$. Then for sufficiently small $\varepsilon>0$,
\begin{align}\label{e:uplow}
\del_\varepsilon I_N(A_N+\varepsilon C_N, B_N)\geq \frac{I_N(A_N+\varepsilon C_N, B_N)-I_N(A_N, B_N)}{\varepsilon}\geq \left.\del_\varepsilon I_N(A_N+\varepsilon C_N, B_N)\right|_{\varepsilon=0}\,.
\end{align}
Thanks to \eqref{e:der} and \eqref{e:perturbc}, we have that if $C_N=f(A_N)$, then 
\begin{align}\label{e:oneside}
\lim_{N\rightarrow \infty}\left.\del_\varepsilon I_N(A_N+\varepsilon f(A_N), B_N)\right|_{\varepsilon=0}
=\frac{1}{2}\tau(f(\mathsf a)\mathsf b).
\end{align}
For the lefthand side of \eqref{e:uplow}, we have
\begin{align*}\begin{split}
&\phantom{{}={}}\del_\varepsilon I_N(A_N+\varepsilon f(A_N), B_N)
=\frac{1}{2N}
\frac{\int \Tr(f(A_N)UB_NU^*)e^{\frac{\beta N}{2}\Tr((A_N+\varepsilon f(A_N))UB_NU^*)}\rd U}{\int e^{\frac{\beta N}{2}\Tr((A_N+\varepsilon f(A_N))UB_NU^*)}\rd U}\\
&=\frac{1}{2N}
\frac{\int \Tr(f(A_N+\varepsilon f(A_N))UB_NU^*)e^{\frac{\beta N}{2}\Tr((A_N+\varepsilon f(A_N))UB_NU^*)}\rd U}{\int e^{\frac{\beta N}{2}\Tr((A_N+\varepsilon f(A_N))UB_NU^*)}\rd U}+\OO(\varepsilon),
\end{split}\end{align*}
provided $f$ is compactly supported and Lipschitz. By taking the limit $N\rightarrow \infty$, we get
\begin{align*}
\lim_{N\rightarrow \infty}\del_\varepsilon I_N(A_N+\varepsilon f(A_N), B_N)
=\frac{1}{2}\tau^{\varepsilon}(f(\mathsf a)\mathsf b)+\OO(\varepsilon),
\end{align*}
where $\tau^{\varepsilon}$
is the limit of the joint law of $(A_N+\varepsilon f(A_N), UB_NU^*)$ with the normalized trace $\Tr(\cdot)/N$ under the deformed measure $\rd\mu_{A_{N}+\varepsilon f(A_N),B_{N}}(U)$.
Thanks to the last point in Theorem \ref{t:convl1}, since the limiting empirical measure of $A_N+\varepsilon f(A_N)$ converges weakly to $\mu_A$ as $\varepsilon$ goes to zero,  $\tau^{\varepsilon}$ is continuous in $\varepsilon$. As a consequence, by taking the limit $N\rightarrow \infty$ in \eqref{e:uplow} and combining with \eqref{e:oneside}, we get
\begin{align}\label{e:derIbound}
\frac{1}{2}\tau(f(\mathsf a)\mathsf b)+\oo_\varepsilon(1)\geq \frac{I(T_A+\varepsilon f(T_A), T_B)-I(T_A, T_B)}{\varepsilon}\geq \frac{1}{2}\tau(f(\mathsf a)\mathsf b)\,.
\end{align}
By a similar argument, we also have the estimate \eqref{e:derIbound} for $\varepsilon\leq 0$. The first claim \eqref{e:nondelta} follows by taking $\varepsilon$ to zero. 

In the following we deal with the second case, that the measure $\mu_A$ has a delta mass at $a$ with $\mu_A(a)=m$, and $T_C$ is supported on that $\{x: T_A(x)=a\}$.
We take a sequence of diagonal matrices $A_N=\diag\{a_1, a_2, \cdots, a_N\}$ with non-decreasing diagonal entries, with empirical eigenvalue distribution $\mu_{A_N}$ converging to $\mu_A$. Moreover, $a_i=a$ for $i\in \qq{\alpha N+1, (\alpha+m)N}$. We also take the sequence of diagonal matrices $C_N=\diag\{c_1, c_2, \cdots, c_N\}$ (not necessarily non-decreasing), with empirical eigenvalue distribution $\mu_{C_N}$ converging to $\mu_C$. Moreover, $c_i=0$ for $i\not\in \qq{\alpha N+1, (\alpha+m)N}$. We can write $C_N$ in the block form $\bm 0_{\al N}\oplus \tilde C_N\oplus \bm 0_{(1-\al-m) N}$ where $\bm 0_{\al N}$ and $\bm 0_{(1-\al-m) N}$ are zero matrices of sizes $\alpha N$ and $(1-\alpha-m)N$ respectively, and $\tilde C_N$ is an $m N \times m N$ diagonal matrix.
We take $P_N$ to be the projection operator onto $i\in \qq{\alpha N+1, (\alpha+m)N}$ entries, and $\tilde P_N$ the $mN\times N$ rectangular matrix consisting of the $\qq{\alpha N+1, (\alpha+m)N}$ rows of $P_N$. Explicitly, it is diagonal and also a function of $A_N$:  $P_N=\bm1_{x=a}(A_N)$, $(P_N)_{ii}=\bm1((A_N)_{ii}=a)$ for $1\leq i\leq N$.
Since $U$ follows the Haar measure on orthogonal/unitary group, if we multiply $U$ by the block diagonal matrix $I_{\alpha N}\oplus V\oplus I_{(1-\alpha-m)N}$ where $I_{\alpha N}$ and $I_{(1-\alpha-m)N}$ are identity matrices of sizes $\alpha N$ and $(1-\alpha-m)N$ respectively, and $V$ is an $m N \times m N$ orthogonal/unitary matrix, the law of $U$ does not change. Then we have
\begin{align*}
&\phantom{{}={}}\int \Tr(C_NUB_NU^*)e^{\frac{\beta N}{2}\Tr((A_N+\varepsilon C_N)UB_NU^*)}\rd U
\\
&=\int e^{\frac{\beta N}{2}\Tr(A_NUB_NU^*)}\int  \Tr(V\tilde C_N V^* \tilde P_NUB_N U^* \tilde P_N^*)e^{\frac{\varepsilon\beta N}{2}\Tr(V\tilde C_N V^* \tilde P_NUB_N U^* \tilde P_N^*)}\rd V\rd U,
\\
&\phantom{{}={}}\int e^{\frac{\beta N}{2}\Tr((A_N+\varepsilon C_N)UB_NU^*)}\rd U
=\int e^{\frac{\beta N}{2}\Tr(A_NUB_NU^*)}\int e^{\frac{\varepsilon\beta N}{2}\Tr(V\tilde C_N V^* \tilde P_NUB_N U^* \tilde P_N^*)}\rd V\rd U.
\end{align*}
We can rewrite \eqref{e:perturbc} as
\begin{align}\begin{split}\label{e:derIbb}
&\del_{\varepsilon}I_N(A_N+\varepsilon C_N, B_N)
=\frac{1}{2N}
\frac{\int \Tr(C_NUB_NU^*)e^{\frac{\beta N}{2}\Tr((A_N+\varepsilon C_N)UB_NU^*)}\rd U}{\int e^{\frac{\beta N}{2}\Tr((A_N+\varepsilon C_N)UB_NU^*)}\rd U}\\
&=\frac{1}{2N}\frac{\int e^{\frac{\beta N}{2}\Tr(A_NUB_NU^*)}\int  \Tr(V\tilde C_N V^* \tilde P_NUB_N U^* \tilde P_N^*)e^{\frac{\varepsilon\beta N}{2}\Tr(V\tilde C_N V^* \tilde P_NUB_N U^* \tilde P_N^*)}\rd V\rd U}{\int e^{\frac{\beta N}{2}(\Tr(A_NUB_NU^*))}\int e^{\frac{\varepsilon\beta N}{2}\Tr(V\tilde C_N V^* \tilde P_NUB_N U^* \tilde P_N^*)}\rd V\rd U}.
\end{split}\end{align}
For the integral over $V$ conditionally to $U$, we use the results \cite[Theorem 0.1]{CGM} to find that for $\varepsilon$ small enough, $N$ large enough,
\begin{align}\label{e:integralV}
\frac{1}{2N}\frac{\int  \Tr(V\tilde C_N V^* \tilde P_NUB_N U^* \tilde P_N^*)e^{\frac{\varepsilon\beta N}{2}\Tr(V\tilde C_N V^* \tilde P_NUB_N U^* \tilde P_N^*)}\rd V}{\int e^{\frac{\varepsilon\beta N}{2}\Tr(V\tilde C_N V^* \tilde P_NUB_N U^* \tilde P_N^*)}\rd V}=\frac{\Tr(C_N)}{2mN} \frac{\Tr(P_NUB_NU^{*}P_N)}{N}+\oo_{\varepsilon, N}(1).
\end{align}
We recall that $P_N$ is  a function of $A_N$: $P_N={\bm1}_{x=a}(A_N)$. By plugging \eqref{e:integralV} into \eqref{e:derIbb}, we get that
\begin{align*}
\del_{\varepsilon}I_N(A_N+\varepsilon C_N, B_N)
=
\frac{\Tr(C_N)}{2m N^2}\frac{\int \Tr({\bm1}_{x=a}(A_N)UB_NU^*)e^{\frac{\beta N}{2}\Tr((A_N+\varepsilon C_N)UB_NU^*)}\rd U}{\int e^{\frac{\beta N}{2}\Tr((A_N+C_N)UB_NU^*)}\rd U}+\oo_{\varepsilon, N}(1).
\end{align*}
By taking the limit $N\rightarrow \infty$, we deduce
\begin{align}\label{e:derIbb2}
\lim_{N\rightarrow \infty}\del_\varepsilon I_N(A_N+\varepsilon f(A_N), B_N)
=\frac{\int \tilde T_C\rd x}{2\mu_A(a)}\tau^{\varepsilon}(\bm 1_{x=a}(\mathsf a)\mathsf b)+\oo_{\varepsilon}(1)\,.
\end{align}
By taking the limit $N\rightarrow \infty$ in \eqref{e:uplow}, and using that $\tau^\varepsilon$ is continuous in $\varepsilon$ at zero, \eqref{e:derIbb2} implies that
\begin{align}\label{e:derlbound2}
\frac{1}{2}\frac{\int \tilde T_C \rd x}{\mu_A(a)}\tau(\bm 1_{x=a}(\mathsf a)\mathsf b)+\oo_\varepsilon(1)\geq \frac{I(T_A+\varepsilon \tilde T_C, T_B)-I(T_A, T_B)}{\varepsilon}\geq \frac{1}{2}\frac{\int \tilde T_C \rd x}{\mu_A(a)}\tau(\bm 1_{x=a}(\mathsf a)\mathsf b)\,.
\end{align}
By a similar argument, we also have the estimate \eqref{e:derlbound2} for $\varepsilon\leq 0$. The second claim \eqref{e:delta} follows by taking $\varepsilon$ to zero. This finishes the proof of Proposition \ref{p:derI}.
\end{proof}

\subsection{Continuity at the boundary}\label{s:continuity}

In this section, we obtain a more precise description of the solutions $(\rho^*,u^*)$ of the variational problem
\eqref{e:Iexp} by transforming the complex Burgers equation \eqref{burg} into a Beltrami equation. This will be a key point to prove Corollary \ref{c:uniquemu}, which is central to the proof of  the lower bound in Theorem \ref{t:LR}. Due to some technical reason, in this section we assume that the boundary data $\nu,\mu$ are compactly supported.

Recall that $ f=u^*+i\pi\rho^*$ satisfies \eqref{burg}. Observe that $(\partial_x-\ri\partial_t)f=\partial_x f+\ri f\partial_xf=(1+\ri f)\partial_xf=\ri(f-\ri)\partial_xf,$ and 
$(\partial_x+\ri\partial_t)f=\partial_xf-\ri f\partial_xf=(1-\ri f)\partial_xf=-\ri(f+\ri)\partial_xf.$ Thus, 
$(\partial_x-\ri\partial_t)f=((\ri-f)/(\ri+f))(\partial_x+\ri\partial_t)f$. Recall that $\Im [f]>0$ for all 
$(t,x)\in\Omega$ as defined in \eqref{Omega},{ so that $|(\ri-f)/(\ri+f)|<1$ for all $(t,x)\in\Omega$. 
Let 
\begin{equation}\label{change}
z=x-\ri t,\quad\bar{z}=x+\ri t,\quad (t,x)\in\Omega.
\end{equation}
Then the above shows that \eqref{burg} is equivalent to the Beltrami equation (see, for instance, \cite{Letho,IM})
\begin{equation}\label{e:Bel}
\partial_{\bar{z}}f=\frac{\ri-f}{\ri+f}\partial_{z}f,\quad (\Im[ z],\Re [z])\in\Omega.
\end{equation}
In general, the Beltrami equation $\partial_{\bar{z}}f={\boldsymbol\mu}(z)\partial_zf$ is 
defined on a domain $\Omega$ on which the measurable function 
$\Omega\ni z\mapsto{\boldsymbol\mu}(z)\in\mathbb C$ satisfies $\|{\boldsymbol\mu}\|_\infty\leq k<1$ 
for a constant $k$. It is a nontrivial classical result that, given such a $\boldsymbol\mu$, the Beltrami equation has continuous
solutions (see \cite[Theorem 4.4]{MR867407}). The solutions have a remarkable geometric property, called quasiregularity:
they are absolutely continuous on lines and $|\partial_{\bar{z}}f|+|\partial_zf|\leq\frac{k+1}{k-1}(|\partial_zf|-|\partial_{\bar{z}}f|)$ a.e.
Even more remarkably, among these solutions 
there exists one which is a homeomorphism (called {\em quasiconformal}) and any other 
solution is obtained by composing it with an analytic map. Since 
$|(\ri-f)/(\ri+f)|\not\leq k$ on $\Omega$, for any $k<1$, it would appear that 
the general theory for obtaining solutions of Beltrami equations does not apply in our case. 
However, the regularity of ${\boldsymbol\mu}=(\ri-f)/(\ri+f)$ and the fact that 
$\Omega=\bigcup_{\varepsilon>0}\{\Im [f]>\varepsilon\}$
allows us easily to conclude that the solutions of \eqref{e:Bel} have 
essentially all the useful properties of quasiregular maps. 


\begin{theorem}\label{main}
Let $f$  be as in \eqref{burg}, with boundary condition $\nu, \mu$, such that they are compactly supported. Then for $\nu^{\rm ac}$-almost all $x\in\mathbb R$, we have
$\lim_{t\to0} f(t,x)=f(0,x)$, where $\nu^{\rm ac}$ is the absolutely continuous part of $\nu$.
\end{theorem}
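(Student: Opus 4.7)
The approach is to apply boundary-behavior theorems for quasiregular maps after factorizing through the Beltrami equation \eqref{e:Bel}. Set $\Omega_\varepsilon := \{(t,x) \in \Omega : \Im f(t,x) > \varepsilon\}$, so that $\Omega = \bigcup_{\varepsilon > 0} \Omega_\varepsilon$ and on each $\Omega_\varepsilon$ the Beltrami coefficient $(\ri-f)/(\ri+f)$ has modulus uniformly bounded by some $k_\varepsilon < 1$. Viewing $\Omega_\varepsilon$ as a subset of $\mathbb{C}$ via $z = x - \ri t$, the restriction $f|_{\Omega_\varepsilon}$ is genuinely $k_\varepsilon$-quasiregular, and so admits a Stoilow factorization $f|_{\Omega_\varepsilon} = h_\varepsilon \circ \phi_\varepsilon$, in which $\phi_\varepsilon \colon \Omega_\varepsilon \to V_\varepsilon$ is a quasiconformal homeomorphism onto a planar domain and $h_\varepsilon$ is holomorphic on $V_\varepsilon$. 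Item~4 of Proposition~\ref{propbur} ensures that $f$, and hence $h_\varepsilon$, is bounded on $\Omega_\varepsilon \cap \{t \in [\delta, 1-\delta]\}$ for each $\delta > 0$.

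Next I would use classical boundary-value theory. The bounded holomorphic function $h_\varepsilon$ has non-tangential limits at harmonic-measure-almost every point of $\partial V_\varepsilon$ by Fatou's theorem. Quasiconformal homeomorphisms of planar domains extend continuously and absolutely continuously to suitable portions of the boundary and carry null sets to null sets there. Applying this to $\phi_\varepsilon$ on the portion of $\partial \Omega_\varepsilon$ lying in $\{t = 0\}$ would transport the limits back and yield non-tangential boundary limits of $f$ at Lebesgue-almost every $x$ in $\{\rho^{\rm ac}(x) > \varepsilon/\pi\}$. Letting $\varepsilon \to 0$ then covers $\nu^{\rm ac}$-almost every $x$, since the absolutely continuous part of $\nu$ is concentrated on $\{\rho^{\rm ac} > 0\} = \bigcup_{\varepsilon > 0}\{\rho^{\rm ac} > \varepsilon/\pi\}$ up to a $\nu^{\rm ac}$-null set.

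To identify the resulting non-tangential limit with $f(0,x) = u^*(0,x) + \ri\pi\rho^{\rm ac}(x)$, I would combine it with the weak-convergence statements of Proposition~\ref{propbur}: Item~2 gives $\Im f(t,\cdot)\,\rd x/\pi \Rightarrow \rd\nu$, while \eqref{e:lim} gives the analogous statement for $\Re f \cdot \rho_t^*$. At a Lebesgue point $x$ of $\rho^{\rm ac}$, the only value consistent simultaneously with the existence of a pointwise non-tangential limit and with these weak limits is $\pi\rho^{\rm ac}(x)$ for the imaginary part and $u^*(0,x)$ for the real part, which is exactly the claim $\lim_{t \to 0} f(t,x) = f(0,x)$.

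The main obstacle is the boundary regularity of $\Omega_\varepsilon$ at $\{t = 0\}$: the domain $\Omega_\varepsilon$ is not a priori rectifiable or even locally connected at $\{t=0\}$, so standard boundary-extension results for quasiconformal maps do not apply directly. Establishing that the trace of $\partial\Omega_\varepsilon$ on $\{t=0\}$ is tame enough to admit a quasiconformal boundary extension, and that this extension maps Lebesgue-null sets to harmonic-measure-null sets on $\partial V_\varepsilon$, is the delicate technical step. I expect one can leverage the interior $C^1$ regularity of $f$ from Item~1 together with the one-sided Hölder bound of Item~5 to localize the argument to small neighborhoods of Lebesgue points of $\rho^{\rm ac}$, where $\Im f$ remains uniformly positive on a full-measure subset of $\{t=0\}$ and the boundary trace is well controlled.
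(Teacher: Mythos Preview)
Your strategy shares its starting point with the paper---both use a Stoilow-type factorization of the Beltrami solution---but the paper's argument hinges on a structural ingredient that your outline lacks, and without it the proof does not close.

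The paper does not factor through a \emph{generic} quasiconformal homeomorphism $\phi_\varepsilon$; it factors through the specific solution $g(t,x)=x-tf(t,x)$ of the same Beltrami equation. Proposition~\ref{subord0} shows that $g$ is a homeomorphism from a neighbourhood $O$ of the lower boundary onto a domain $\mathbf K\subset\mathbb C^-$ with $[a,b]\subset\partial\mathbf K$, and produces an analytic $\Phi\colon\mathbf K\to\mathbb C^+$ with $f=\Phi\circ g$. The key feature is that $g(t,x)\to x$ as $t\to0$ uniformly (since $|tf(t,x)|\le\mathfrak K\sqrt t$), so the curve $t\mapsto g(t,x)$ lands at $x\in\partial\mathbf K$. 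Fatou for $\Phi$ then gives a nontangential limit at a.e.\ $x$, and the remaining issue is to show that $t\mapsto g(t,x)$ approaches $x$ \emph{nontangentially}. This is exactly what Lemma~\ref{limit} does, via a Denjoy--Wolff fixed-point argument applied to the implicit equation $\gamma(t)=-t\Phi(x+\gamma(t))$: the lemma produces a unique path satisfying this equation that stays in a fixed cone, and uniqueness forces $\gamma(t)=-tf(t,x)$.

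Your proposal, by contrast, faces two real obstacles. First, it is circular at the boundary: you want $\partial\Omega_\varepsilon\cap\{t=0\}$ to contain $\{\rho^{\rm ac}>\varepsilon/\pi\}$, but that requires already knowing $\Im f(t,x)\to\pi\rho^{\rm ac}(x)$ pointwise, which is the theorem. Weak convergence alone (Items~2--3 of Proposition~\ref{propbur}) gives no such lower bound along vertical lines. Second, even granting a boundary extension of $\phi_\varepsilon$, nothing ensures that the \emph{vertical} path $t\mapsto(t,x)$ is mapped by $\phi_\varepsilon$ to a nontangential path in $V_\varepsilon$; a generic quasiconformal map need not do this, and your appeal to Items~1 and~5 of Proposition~\ref{propbur} does not address the angular distortion. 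The paper's choice of $g$ together with Lemma~\ref{limit} is precisely the device that converts the vertical limit into a nontangential one and simultaneously identifies the limiting value; this is the missing idea in your outline.
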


\begin{corollary}\label{c:uniquemu}
We assume that the  probability measures $\mu,\mu', \nu$ are compactly supported and neither of them is 
concentrated in one point.
We denote the solutions of the variational problems $I(\nu, \mu)$ and $I(\nu,\mu')$ from 
\eqref{e:Iexp} by $f^{\nu\rightarrow \mu}(t,x)$ and $f^{\nu\rightarrow \mu'}(t,x)$ 
$($as in \eqref{burg}$)$ respectively. If all components of $\supp\nu$ are infinite sets and
$f^{\nu\rightarrow \mu}(0,x)=f^{\nu\rightarrow \mu'}(0,x)$ in the sense of distributions, 
then we have $\mu=\mu'$.
\end{corollary}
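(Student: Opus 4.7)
The plan is to convert the distributional boundary agreement at $t=0$ into pointwise agreement on a large set, propagate this to agreement on the whole interior of $\Omega^{\nu\to\mu}\cap\Omega^{\nu\to\mu'}$ via a unique-continuation argument grounded in the real-analyticity of $f$ and in the Burgers/Beltrami structure of \eqref{burg}--\eqref{e:Bel}, and finally read off $\mu=\mu'$ from the terminal boundary values of $f$ at $t=1$.

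First I would observe that both $f^{\nu\to\mu}$ and $f^{\nu\to\mu'}$ are real-analytic in the interiors of their respective open domains (Proposition \ref{propbur}(1)) and solve the complex Burgers equation \eqref{burg}. Since $\Im[f^{\nu\to\mu}(0,\cdot)]/\pi=\nu=\Im[f^{\nu\to\mu'}(0,\cdot)]/\pi$, the distributional hypothesis reduces to equality of the real parts of the boundary values as distributions on $\mathbb R$. Theorem \ref{main} then yields, for $\nu^{\rm ac}$-a.e.\ $x$, pointwise limits $\lim_{t\to 0^+}f(t,x)=f(0,x)$ for both functions, and combined with the distributional agreement these limits coincide on a set of full $\nu^{\rm ac}$-measure.

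Second I would use the hypothesis that every connected component of $\supp\nu$ is infinite (i.e.\ a non-degenerate interval) and that $\nu$ is not a point mass, together with the fact that $\nu$ arises as the weak limit of the absolutely continuous densities $\rho_t^*$ furnished by the free Brownian bridge representation (Theorem \ref{t:FBB}, Proposition \ref{propbur}(6)). This should imply that $\nu^{\rm ac}$ charges each component of $\supp\nu$, so the set of boundary pointwise-agreement accumulates in each component of the support.

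Third---and this is the main obstacle---I would propagate the boundary agreement into the interior. One route is the method of characteristics for \eqref{burg}: characteristic curves $\tau\mapsto x_0+f(0,x_0)\tau$ emanating from a boundary point $(0,x_0)$ of pointwise agreement are complex lines along which $f$ is constant, so agreement on a boundary set with accumulation points propagates into a nonempty open piece of $\Omega^{\nu\to\mu}\cap\Omega^{\nu\to\mu'}$, whereupon the identity principle for real-analytic functions forces equality on the entire connected component. An alternative is to use a Stoilow-type factorization of the quasiregular maps $f^{\nu\to\mu}, f^{\nu\to\mu'}$ and boundary uniqueness of their analytic factors. Either approach must handle the degeneracy of the Beltrami coefficient $(\ri-f)/(\ri+f)$ as $\Im[f]\to 0$, which is precisely why standard quasiconformal theory does not apply verbatim and why the boundary regularity supplied by Theorem \ref{main} is needed. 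Once $f^{\nu\to\mu}\equiv f^{\nu\to\mu'}$ on the common domain, the two domains must coincide, and Proposition \ref{propbur}(2) applied as $t\to 1^-$ yields $\mu=\Im[f^{\nu\to\mu}(1,\cdot)]/\pi=\Im[f^{\nu\to\mu'}(1,\cdot)]/\pi=\mu'$.
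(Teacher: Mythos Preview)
Your second step contains a genuine gap. You claim that because $\nu$ is the weak limit of the absolutely continuous densities $\rho_t^*$, it follows that $\nu^{\rm ac}$ charges every component of $\supp\nu$. This is false: the boundary measure $\nu$ is given as input and may be purely singular on any (or every) component of its support. Weak limits of absolutely continuous measures need not have any absolutely continuous part. The hypothesis that each component of $\supp\nu$ is an infinite set (hence a nondegenerate interval) does not help here. The paper handles exactly this difficulty by splitting the proof in two: first a restricted case assuming $\nu^{\rm ac}$ is nonzero on each component---where Theorem \ref{main} and the Riesz--Privalov uniqueness theorem for boundary values suffice to identify $\Phi=\Phi'$---and then the general (possibly purely singular) case, which requires a separate argument. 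In that second step one builds a Schwartz distribution $T^\varepsilon$ from the boundary jump of $\Phi$, identifies it with $\nu$ restricted to an interval, and uses the infiniteness of $\supp\nu$ in each component together with the identity principle for the \emph{analytic continuation} of $\Phi-G_{T^\varepsilon}$ across $(a,b)$ to conclude $\Phi=\Phi'$. Your outline does not touch this mechanism at all.

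Your third step also needs more care. The method of characteristics for \eqref{burg} is formally $x(t)=x_0+tf(0,x_0)$, but $f(0,x_0)$ is complex, so this is not a curve in the real $(t,x)$-plane and you cannot directly assert that $f$ is constant along it inside $\Omega$. The paper replaces this by the implicit relation $\Phi\circ g=f$ (Proposition \ref{subord0}) and proves via a Denjoy--Wolff fixed-point argument (Lemma \ref{limit}) that the equation $\gamma(t)=t\Phi(x+\gamma(t))$ has a \emph{unique} solution approaching the boundary nontangentially; uniqueness is what forces $f(t,x)=f'(t,x)$ for small $t$ once $\Phi=\Phi'$. After that, equality is propagated to all of $\Omega_0$ not by the identity principle alone but by a layer-by-layer argument in the $t$-variable, reapplying the Stoilow construction at each level $s\in(0,1)$. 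Your sketch of ``characteristics plus real-analytic unique continuation'' glosses over both the uniqueness of the fixed point and the fact that one must show the domains $\Omega_0,\Omega_0'$ themselves coincide, which requires the slicing argument.
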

We shall prove this result  in two steps: first we show that Corollary \ref{c:uniquemu} is true under the assumption that in each connected component of $\nu$, 
there is a set of non-zero $\nu^{\rm{ac}}$-measure such that $f^{\nu\rightarrow \mu}(0,x)=f^{\nu\rightarrow \mu'}(0,x)$,
and then we use a strong version of the reflection principle to prove Corollary \ref{c:uniquemu} in the general case.

We postpone the proof of Theorem \ref{main} and Corollary \ref{c:uniquemu} to Section \ref{s:f}. 
As by-products of the proof of Theorem \ref{main}, we also prove a maximum principle of $f(t,x)$ 
and a precise description on the topology of the set $\Omega$, which might be of independent interest.

\section{Large Deviation Principle for  $UBU^*$}
We recall from Theorem \ref{t:UBU}, $B_N=\diag\{b_1, b_2,\cdots,b_N\}$ is a sequence of deterministic self-adjoint matrices such that the spectral measures $\mu_{B_N}=(1/N)\sum_{i=1}^N \delta_{b_i}$ of $B_N$ converge weakly towards $\mu_B$ as $N\to\infty$. 
In this section, we use the spherical integral to study the large deviation principle of the law $\bP_N$ of the empirical measure
\begin{align*}
\mu_N=\frac{1}{N}\sum_{i=1}^N \delta_{(UB_NU^*)_{ii}},
\end{align*}
and prove Theorem \ref{t:UBU}. 
\subsection{Study of the rate function}

The classical Schur-Horn theorem \cite{MR0063336} states that
the diagonal entries of $UB_NU^*$ are in the permutation polytope generated by $(b_1,b_2,\cdots, b_N)$, or equivalently
\begin{align}\label{e:admissible}
\int_0^1 (T_{\mu_N}-T_{\mu_{B_N}})\rd x= 0,\quad \int_y^1 (T_{\mu_N}-T_{\mu_{B_N}})\rd x\leq 0,
\end{align}
for any $0\leq y\leq 1$.
We recall the functions $H^{\mathsf D}_\mu(\cdot)$ and $\cal I^{\mathsf D}(\cdot)$ from Theorem \ref{t:UBU}, $\mathcal I^{\mathsf D}(\mu)=\sup_{\nu\in \cM}H^{\mathsf D}_\mu(\nu)$. 
 In the following proposition we study these functions, and show the rate function $\mathcal I^{\mathsf D}(\mu)$ equals $+\infty$ outside the admissible set $\mathcal A_{\mu_B}$ of probability measures $\mu$ described by the limiting Schur-Horn theorem \eqref{e:admissible}:
 \begin{align}
\label{e:limitadm}
 \int_0^1 (T_{\mu}-T_{\mu_{B}})(x)\rd x= 0,\quad \int_y^1 (T_{\mu}-T_{\mu_{B}})(x)\rd x\leq 0 \quad \forall y\in [0,1].
 \end{align}

\begin{proposition}\label{p:domain1}
Under the assumptions of Theorem \ref{t:UBU}, the function $H_\mu^{\mathsf D}(\cdot)$ and rate function $\mathcal I^{\mathsf D}(\cdot)$ as defined in Theorem \ref{t:UBU} satisfy:
\begin{enumerate}
\item For $\mu$ satisfying \eqref{e:limitadm}, $H_\mu^{\mathsf D}(\cdot)$ is { upper semi-continuous in the weak topology on $\{\nu\in \cM: \nu(|x|)\leq \fR\}$ for any $\fR>0$}. If we view $H_\mu^{\mathsf D}(\nu)$ as a function of $T_\nu$, i.e. $H_\mu^{\mathsf D}(T_\nu)\deq H_\mu^{\mathsf D}(\nu)$, then it is concave.

\item If $\int_0^1 (T_\mu(x)-T_{\mu_B}(x))\rd x\neq 0$, or there exists some $0<y<1$ such that 
\begin{align}\label{e:domain1}
\int_y^1(T_\mu(x)-T_{\mu_B}(x))\rd x>0, 
\end{align}
then $\mathcal I^{\mathsf D}(\mu)=+\infty$.

\item If there exists some small constant $\fc>0$
\begin{align}\begin{split}\label{e:domain2}
&\int_y^1 \left(T_{\mu}(x) -T_{\mu_{B}}(x)\right)\rd x\leq 
\left\{\begin{array}{ll}
 -\fc y,      &\text{ for } 0\leq y\leq \fc,\\
 -\fc,         &\text{ for } \fc\leq y\leq 1-\fc,\\
 -\fc(1-y),  &\text{ for } 1-\fc\leq y\leq 1,
 \end{array}\right.\\
\end{split}\end{align}
 then
$\mathcal I^{\mathsf D}(\mu)=H^{\mathsf D}_\mu(\nu^*)<\infty$ for some probability measure $\nu^{*}$ such that  $\nu^*(|x|)<\infty$. 

\item $\mathcal I^{\mathsf D}(\cdot)$ is nonnegative and  lower semicontinuous on $\cM([-\fK, \fK])$ (hence it is a good rate function). It vanishes only at the Dirac mass at  
$\int x\rd \mu_B$.

\item For any measure $\mu$ in the admissible set $\cA_{\mu_B}$ as defined in \eqref{e:limitadm}, there exists a sequence of measures $\mu^{\varepsilon}$ inside the region as given in \eqref{e:domain2}, converging to $\mu$ in the weak topology and 
$\lim_{\varepsilon\rightarrow 0}\mathcal I^{\mathsf D}(\mu^{\varepsilon})=\mathcal I^{\mathsf D}(\mu)$.
\end{enumerate}
\end{proposition}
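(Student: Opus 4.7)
The plan is to combine the sandwich bounds on $I(\nu,\mu_B)$ from Proposition \ref{p:spbound} with the derivative formulas of Proposition \ref{p:derI}, treating $H_\mu^{\mathsf D}$ as a concave functional of the quantile function $T_\nu$ whose critical points can be analyzed explicitly.

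For item (1), concavity of $T_\nu \mapsto H_\mu^{\mathsf D}(\nu)$ follows from linearity of the first term together with convexity of $T_\nu \mapsto I(\nu,\mu_B)$, the latter inherited from convexity of the finite-$N$ spherical integral $A_N\mapsto I_N(A_N,B_N)$ (a log-Laplace transform in $A_N$) via the limit of Corollary \ref{t:convl1}. Upper semicontinuity on $\{\nu:\nu(|x|)\leq \fR\}$ then follows from the boundedness of $T_\mu$ (giving weak continuity of the first term) together with lower semicontinuity of $\nu \mapsto I(\nu,\mu_B)$, obtained by truncating $\nu$ to $\nu^\varepsilon$ supported in $|x|\leq 1/\varepsilon$ via Proposition \ref{p:spbound2} and applying the Wasserstein continuity of $I$ on compactly supported measures from Proposition \ref{p:continuity}.

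For item (2), the upper bound $I(\nu,\mu_B)\leq \tfrac{1}{2}\int T_\nu T_{\mu_B}\,\rd x$ in Proposition \ref{p:spbound} gives $H_\mu^{\mathsf D}(\nu)\geq \tfrac{1}{2}\int T_\nu(T_\mu-T_{\mu_B})\,\rd x$; choosing $T_\nu=L\bm1_{x\geq y}$ (i.e.\ $\nu=y\delta_0+(1-y)\delta_L$) makes the right-hand side equal to $(L/2)\int_y^1(T_\mu-T_{\mu_B})\,\rd x$, and sending $L\to\pm\infty$ with the appropriate sign drives $H_\mu^{\mathsf D}\to+\infty$ whenever \eqref{e:limitadm} is violated. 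For item (3), the complementary lower bound plus integration by parts---using that $G(y):=\int_y^1(T_\mu-T_{\mu_B})\,\rd x$ vanishes at $y=0,1$ under \eqref{e:limitadm}---yields
\[
H_\mu^{\mathsf D}(\nu) \;\leq\; \tfrac{1}{2}\int_0^1 G(y)\,\rd T_\nu(y) \;+\; \OO(\varepsilon)\nu(|x|) \;+\; C(\varepsilon).
\]
Under \eqref{e:domain2}, $G(y)\leq -\fc\min(y,1-y,\fc)$. The functional $H_\mu^{\mathsf D}$ is invariant under shifts $T_\nu\mapsto T_\nu+c$ precisely because $\int x\,\rd\mu=\bar b:=\int x\,\rd\mu_B$, so we may normalize to $T_\nu(1/2)=0$, whence $\nu(|x|)=\int \min(z,1-z)\,\rd T_\nu(z)$, and the elementary bound $\min(z,1-z,\fc)\geq 2\fc\min(z,1-z)$ (valid for $\fc\leq 1/2$) converts the main term to $\leq -\fc^2\,\nu(|x|)$. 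Choosing $\varepsilon$ small makes the right-hand side $\leq -(\fc^2/2)\,\nu(|x|)+C(\varepsilon)$, so any maximizing sequence is tight in first moment; combined with item (1) upper semicontinuity this produces a maximizer $\nu^*$ with $\nu^*(|x|)<\infty$ and $\mathcal I^{\mathsf D}(\mu)<\infty$.

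For item (4), nonnegativity follows from $\mathcal I^{\mathsf D}(\mu)\geq H_\mu^{\mathsf D}(\delta_0)=0$, and lower semicontinuity on the compact set $\cM([-\fK,\fK])$ by writing $\mathcal I^{\mathsf D}$ as a supremum of $\mu$-continuous functions $\mu\mapsto H_\mu^{\mathsf D}(\nu)$, with the reduction to compactly supported $\nu$ justified by Proposition \ref{p:spbound2}. For the characterization of zeros, Proposition \ref{p:derI} applied at $\nu=\delta_0$ (where $\tau(\mathsf b\,|\,\mathsf a=0)=\bar b$) gives, for any bounded $\tilde T_C$,
\[
\left.\partial_\varepsilon H_\mu^{\mathsf D}(\varepsilon\tilde T_C)\right|_{\varepsilon=0} = \tfrac{1}{2}\int \tilde T_C(x)\bigl(T_\mu(x)-\bar b\bigr)\,\rd x;
\]
by concavity, $\delta_0$ is a global maximizer iff this derivative vanishes for every $\tilde T_C$, iff $\mu=\delta_{\bar b}$, giving $\mathcal I^{\mathsf D}(\delta_{\bar b})=0$, while for any other admissible $\mu$ one chooses $\tilde T_C$ correlated with $T_\mu-\bar b$ to find $\mathcal I^{\mathsf D}(\mu)>0$. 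For item (5), a suitable mean-preserving approximation $\mu^\varepsilon$ (e.g.\ a convex combination of $\mu$ with a concentrated measure near $\bar b$, or a small contraction of $\mu$ toward $\bar b$) converges weakly to $\mu$ and lies in the strict region \eqref{e:domain2} with some constant $\fc(\varepsilon,\mu)>0$; the convergence $\mathcal I^{\mathsf D}(\mu^\varepsilon)\to \mathcal I^{\mathsf D}(\mu)$ then follows by combining item (1) upper semicontinuity with item (3) existence of maximizers. The main obstacle is item (3): the coercive gap from \eqref{e:domain2} must strictly beat the non-sharp error $\OO(\varepsilon)\nu(|x|)$ in Proposition \ref{p:spbound}, which forces us to exploit the translation invariance of $H_\mu^{\mathsf D}$ in an essential way---an invariance available precisely on the admissible set $\bar\mu=\bar b$.
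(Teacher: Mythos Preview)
Your approaches to Items 2, 3, and 4 are correct and in places cleaner than the paper's: the median-zero normalization in Item 3 is a nice alternative to the paper's mean-zero normalization, and invoking Proposition \ref{p:derI} at $\nu=\delta_0$ in Item 4 (where $\tau(\mathsf b\,|\,\mathsf y=0)=\bar b$) replaces the paper's appeal to the small-coupling expansion from \cite{CGM}. There are, however, genuine gaps in Items 1 and 5.

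In Item 1, the claim that boundedness of $T_\mu$ gives weak continuity of $\nu\mapsto\tfrac12\int T_\nu T_\mu\,\rd x$ on $\{\nu(|x|)\leq\fR\}$ is false: take $\nu_n=(1-1/n)\delta_0+(1/n)\delta_n$, which converges weakly to $\delta_0$ with $\nu_n(|x|)=1$, yet $\int T_{\nu_n}T_\mu\,\rd x\to T_\mu(1^-)\neq 0$. Neither term of $H_\mu^{\mathsf D}$ is individually semicontinuous in the required direction; the upper semicontinuity of their \emph{difference} relies essentially on the Schur--Horn hypothesis \eqref{e:limitadm}, which you never invoke. The paper truncates $\nu$ to $\nu^\delta$ and shows $H_\mu^{\mathsf D}(\nu)\leq H_\mu^{\mathsf D}(\nu^\delta)+o_\delta(1)$ uniformly in $\nu$, by observing that the combined tail $\tfrac12\int_{|T_\nu|>1/\delta}T_\nu(T_\mu-T_{\mu_B})\,\rd x$ is $\leq 0$ via integration by parts against \eqref{e:limitadm}. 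In Item 5, the contraction $T_{\mu^\varepsilon}=(1-\varepsilon)T_\mu+\varepsilon\bar b$ does not lie in the region \eqref{e:domain2} for small $\varepsilon$: it yields only $G^\varepsilon(y)\leq -\varepsilon c_B\min(y,1-y)$, whereas \eqref{e:domain2} demands $G^\varepsilon(y)\leq-\fc$ on $[\fc,1-\fc]$, which at $y=\fc$ forces $\varepsilon c_B\geq 1$. (Your own Item 3 argument shows the weaker bound already suffices for existence of a maximizer, so the contraction is adequate for the \emph{application} in Theorem \ref{t:UBU}; it just does not prove Item 5 as stated.) The paper instead performs explicit surgery on $T_\mu$ near the endpoints, and for the limsup direction of $\mathcal I^{\mathsf D}(\mu^\varepsilon)\to\mathcal I^{\mathsf D}(\mu)$ proves the pointwise monotonicity $H_\mu^{\mathsf D}(\nu)\geq H_{\mu^\varepsilon}^{\mathsf D}(\nu)$, which does not follow from Items (1) and (3) as you assert.
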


\begin{proof}
 For Item 1,
unfortunately, $H_\mu^{\mathsf D}(\cdot)$ is not continuous in the weak topology, it is only continuous in the Wasserstein metric. In the following we show that $H_\mu^{\mathsf D}(\cdot)$ is upper semi-continuous in the weak topology on $\{\nu\in \cM: \nu(|x|)\leq \fR\}$ for any $\fR>0$.
Given a probability measure $\nu$ we denote  the truncated measure 
$\nu^\delta=\nu\bm1(|x|\leq \delta^{-1})+\delta_0\int_{|x|>\delta^{-1}}\rd \nu $. 

\begin{claim}\label{c:HDbound}
If $\mu$ is a probability measure  supported on $[-\fK,\fK]$ which satisfies \eqref{e:limitadm}, for any probability measure $\nu$ with $\nu(|x|)\leq \fK$,  it holds
\begin{align*}
H_\mu^{\mathsf D}(\nu)
\leq H_\mu^{\mathsf D}(\nu^\delta)+C_{\fK}\oo_\delta(1),
\end{align*}
where the implicit error $\oo_\delta(1)$ is independent of the measure $\nu$.
\end{claim}
\begin{proof}
We recall the definition of $H_\mu^{\mathsf D}(\nu)$ from \eqref{e:defHD}
\begin{align*}\begin{split}
H_\mu^{\mathsf D}(\nu)
&=\frac{1}{2}\int T_\nu(x)T_\mu(x)\rd x-I(\nu,\mu_B)\\
&=\frac{1}{2}\int T_\nu(x)T_\mu(x)\rd x-\left(\frac{1}{2}\int_{|T_\nu|>1/\delta} T_\nu(x) T_{\mu_B}(x)\rd x +I(\nu^\delta,\mu_B)+C_\fK\oo_\delta(1)\right)\\
&=\frac{1}{2}\int_{|T_\nu|\leq1/\delta} T_\nu(x)T_\mu(x)\rd x-I(\nu^\delta,\mu_B)+\frac{1}{2}\int_{|T_\nu|>1/\delta} T_\nu(x) (T_\mu(x)-T_{\mu_B}(x))\rd x+C_\fK\oo_\delta(1)\\
&\leq \frac{1}{2}\int_{|T_\nu|\leq1/\delta} T_\nu(x)T_\mu(x)\rd x-I(\nu^\delta,\mu_B)+C_{\fK}\oo_\delta(1)\\
&=\frac{1}{2}\int T_{\nu^\delta}(x)T_\mu(x)\rd x-I(\nu^\delta,\mu_B)+C_\fK\oo_\delta(1)=H_\mu^{\mathsf D}(\nu^\delta)+C_\fK\oo_\delta(1),
\end{split}\end{align*}
where we used Proposition \ref{p:spbound2} in the second line. In the fourth line, we used  \eqref{e:limitadm} and the fact that $x\rightarrow T_\nu(x) 1_{|T_\nu(x)|>1/\delta } $ is increasing to show that the last term in the third line is non-positive.
Finally, we used that  $|T_\mu(x)|\leq \fK$ in the last line.
\end{proof}

Let $\{\nu_n\}_{n\geq 1}$ be 
 a sequence of probability measures with $\nu_n(|x|)\leq \fR$ converging weakly to $\nu$. Take $\delta>0$ sufficiently small,  such that $\nu(\{\delta^{-1}, -\delta^{-1}\})=0$. It is easy to see that $\nu^{\delta}$ converges to $\nu$ in Wasserstein metric as $\delta\rightarrow0$. As a consequence, we get
\begin{align}\label{e:truncate}
H_\mu^{\mathsf D}(\nu)
=H_\mu^{\mathsf D}(\nu^\delta)+\oo_{\delta,\nu}(1),
\end{align}
where for any fixed measure $\nu$ with $\nu(|x|)\leq \fR$, $\oo_{\delta,\nu}(1)$ goes to zero as $\delta$ goes to zero. 
Moreover, we have that $\nu_n^\delta$ converges to $\nu^\delta$ in Wasserstein distance. Thus it gives
\begin{align}\label{e:limitdelta}
\limsup_{n\rightarrow\infty}H_\mu^{\mathsf D}(\nu_n^\delta)= H_\mu^{\mathsf D}(\nu^\delta).
\end{align}
It follows from combining \eqref{e:truncate}, Claim \ref{c:HDbound} and \eqref{e:limitdelta},
\begin{align*}\begin{split}
\limsup_{n\rightarrow\infty}H_\mu^{\mathsf D}(\nu_n)
&\leq \limsup_{n\rightarrow\infty}H_\mu^{\mathsf D}(\nu^\delta_n)+C_{\fK\vee\fR}\oo_\delta(1)\\
&=H_\mu^{\mathsf D}(\nu^\delta)+C_{\fK\vee\fR}\oo_\delta(1)
=H_\mu^{\mathsf D}(\nu)+C_{\fK\vee\fR}\oo_\delta(1)+\oo_{\delta,\nu}(1),
\end{split}\end{align*}
By sending $\delta$ to $0$ in the above estimate, we get that
\begin{align*}
\limsup_{n\rightarrow\infty}H_\mu^{\mathsf D}(\nu_n)
\leq H_\mu^{\mathsf D}(\nu),
\end{align*}
and the upper semicontinuity of $H_\mu^{\mathsf D}$ follows.

Both $\int T_\nu(x)T_\mu(x)\rd x$ and $-I(T_\nu, T_{\mu_B})$ are concave, so is $H_\mu^{\mathsf D}(T_\nu)$.

For Item 2, given any measure $\mu_Y$, we denote its dilation by a factor $L$ as $\mu_{LY}=L_\#\mu_Y$, then $T_{\mu_{LY}}=LT_{\mu_Y}$. Thanks to Proposition \ref{p:spbound},
for any $\varepsilon>0$, there exists a constant $C(\varepsilon)$,
\begin{align}\label{e:ratebound1}
\frac{L}{2}\int (T_\mu-T_{\mu_B})T_{ \mu_{Y}}\rd x\leq  H^{\mathsf D}_{\mu}(\mu_{LY})\leq  \frac{L}{2}\int (T_\mu-T_{\mu_{B}})T_{ \mu_{Y}}\rd x +L\OO(\varepsilon)\mu_{Y}(|x|)+C(\varepsilon).
\end{align}
If $\int_0^1 (T_\mu-T_{\mu_B})\rd x\neq 0$, we can take $\mu_Y=\delta_1$, then $T_{\mu_Y}={\bf1}_{[0,1]}$, and
\begin{align*}
\mathcal I^{\mathsf D}(\mu)\geq \lim_{L\rightarrow \infty}\max\{H_\mu^D(\mu_{ LY}),H_\mu^D(\mu_{-LY})\}
= \lim_{L\rightarrow \infty} \frac{L}{2}\left|\int_0^1 (T_\mu-T_{\mu_B})\rd x\right|=+\infty.
\end{align*}
If \eqref{e:domain1} holds for some $0<y<1$, we can take $\mu_Y=y\delta_0+(1-y)\delta_{1/(1-y)}$, then $T_{\mu_Y}={\bf 1}_{[y,1]}/(1-y)$, and 
\begin{align*}
\mathcal I^{\mathsf D}(\mu)\geq \lim_{L\rightarrow \infty}H_\mu^D(\mu_{LY})
= \lim_{L\rightarrow \infty} \frac{L}{2(1-y)}\int_y^1 (T_\mu-T_{\mu_B})\rd x=+\infty.
\end{align*}

For Item 3, we remark that the function $\nu\mapsto H^{\mathsf D}_\mu(\nu)$ is translation invariant when $\int_0^1 (T_\mu-T_{\mu_B})\rd x=0$. If $\nu$ has finite first moment, we can always translate $\nu$ to make $\int x\rd\nu=0$. In the rest of the proof, we will restrict ourselves to the set of measures in $\cM$ with mean zero.  We will first show that  \eqref{e:domain2}
implies that there exists a small $\delta>0$ and a large $L^*>0$ such that for any $\mu_Y\in \cM$ such that $\int |x|\rd \mu_Y= 1$, $\int x\rd \mu_Y=0$ and any $L\geq L^*$, then $ H^{\mathsf D}_{\mu}(\mu_{LY})\leq -\delta L$. Moreover, note that the set of measures $\mu_Y$ such that $T_{\mu_Y}$ is differentiable is dense in $\cM$. Hence,  by continuity of $ H^{\mathsf D}_\mu$, we may assume that $\mu_Y$ 
is such that $T_{\mu_Y}$ is differentiable. Given  such a  $\mu_Y$, integration by parts yields

$$\int_0^1 (T_\mu-T_{\mu_{B}})T_{ \mu_Y}\rd x=\int_0^1 T_{\mu_Y}'(y) \int_{y}^{1} (T_\mu(x)-T_{\mu_{B}}(x)) \rd x \rd y\,.$$
Since $T_{\mu_Y}$ is non-decreasing, $T_{\mu_Y}'$ is non-negative, and we deduce from \eqref{e:domain2} that 
\begin{equation}\label{jh}\int_0^1 (T_\mu-T_{\mu_{B}})T_{ \mu_Y}\rd x
\leq-\fc\int_0^1 ( y\bm 1_{[0,\fc]}(y) + \bm 1_{[\fc,1-\fc]}(y)+(1-y)\bm 1_{[1-\fc,1]}(y)) T_{\mu_Y}'(y) \rd y.
\end{equation}
On the other hand, because $\int_0^1|T_{\mu_Y}|(x)\rd x=1$ and $\int_0^1 T_{\mu_Y}(x)\rd x=0$, and thanks to  the smoothness and the monotonicity  of $T_{\mu_Y}$, we know that there exists $y_0\in [0,1]$ such that
$T_{\mu_Y}(y_0)=0$ and  $T_{\mu_Y}(y)\leq 0$ for $0\leq y\leq y_0$, 
$T_{\mu_Y}(y)\geq 0$ for $y_0\leq y\leq 1$.
Then we have $\int_{0}^{y_0}T_{\mu_{Y}}(y)\rd y=-1/2$ and $\int_{y_0}^{1}|T_{\mu_{Y}}(y)|\rd y=1/2$.
By an integration by part, we conclude
\begin{align}\label{e:cc0}
-\int_{0}^{y_0}T_{\mu_{Y}}(y)\rd y= \int_0^{y_0} y T_{\mu_Y}'(y)\rd y=\frac{1}{2},\quad 
\int_{y_0}^{1}T_{\mu_{Y}}(y)\rd y=\int_{y_0}^1 (1-y) T_{\mu_Y}'(y)\rd   y =\frac{1}{2}.
\end{align}
For $y_0\geq \fc$, we have for $y\in [0,1]$
\begin{align}\label{e:cc1}
y\bm 1_{[0,\fc]}(y) + \bm 1_{[\fc,1-\fc]}(y)+(1-y)\bm 1_{[1-\fc,1]}(y)\geq   (1-y) 1_{[y_0,1]}(y).
\end{align}
For $y_0\leq 1-\fc$, we have for $y\in [0,1]$
\begin{align}\label{e:cc2}
y\bm 1_{[0,\fc]}(y) + \bm 1_{[\fc,1-\fc]}(y)+(1-y)\bm 1_{[1-\fc,1]}(y)\geq   y 1_{[0,y_0]}(y).
\end{align}
We get from plugging \eqref{e:cc0}, \eqref{e:cc1} and \eqref{e:cc2} into \eqref{jh}, the following upper bound
\begin{equation}\label{e:yconstraint}
\int_0^1 (T_\mu-T_{\mu_{B}})(x)T_{ \mu_Y}(x)\rd x
\leq -\frac{\fc}{2}.
\end{equation}
We use \eqref{e:ratebound1} and \eqref{e:yconstraint} to estimate $H_\mu^{\mathsf D}(\mu_{LY})$. As a consequence, if we take $\varepsilon$ much smaller than $\fc$, \eqref{e:ratebound1} implies that there exists a small $\delta>0$ and a large $L^*>0$ (depending only on $\fc$) such that for any $L\geq L^*$, it holds $ H^{\mathsf D}_{\mu}(\mu_{LY})\leq -\delta L$.
%
We  conclude that 
\begin{align*}
\sup_{\nu\in \cM} H^{\mathsf D}_\mu(\nu)=\sup_{\mu_Y: \int |x|\rd \mu_Y\leq L^*}H^{\mathsf D}_\mu(\mu_Y)<\infty,
\end{align*}
and the supremum is achieved at some $\nu^*$ with $\int |x|\rd \nu^*\leq L^*$, since $\{\mu_Y: \int |x|\rd \mu_Y\leq L^*\}$ is compact and $H^D_\mu$ is upper semicontinuous.

For Item 4, since $(\mu, \nu)\mapsto H_\mu^{\mathsf D}(\nu)$ is continuous in $\mu$, $\cI^{\mathsf D}(\mu)=\sup_{\nu\in \cM}H_\mu^{\mathsf D}(\nu)$ is lower semicontinuous. Moreover $\cI^{\mathsf D}(\mu)\geq H_\mu^{\mathsf D}(\delta_0)=0$, so $\cI^{\mathsf D}(\cdot)$ is nonnegative.

If $\mathcal I^{\mathsf D}(\mu)=0$, then $\int x\rd \mu=\int x\rd \mu_B$ and $H_\mu^D(\nu)\leq 0$ for all probability measures $\nu\in \cM$,
\begin{equation}\label{po}
\frac{1}{2}\int T_{\mu}T_{\nu}\rd x\leq I(\nu,\mu_{B}).
\end{equation}
We denote $\nu_\varepsilon=\varepsilon_\#\nu$  the pushforward of $\nu$ by the homothety of factor $\varepsilon$, and then $T_{\nu_\varepsilon}=\varepsilon T_\nu$. \cite[Theorem 0.1]{CGM} implies that for $\varepsilon>0$ small enough
$$I(\nu_{\varepsilon},\mu_{B})= \frac{\varepsilon}{2}\int x\rd \mu_{B}\int x\rd\nu +O(\varepsilon^{2})\,.$$
Hence, we deduce from \eqref{po} by replacing $\nu$ by $\nu_{\varepsilon}$  and sending $\varepsilon$  to zero that 
$$\int T_{\mu}(x)T_{\nu}(x)\rd x\leq  \int x\rd \mu_{B}\int x\rd\nu = \int x\rd \mu_{B}\int T_{\nu}\rd x,$$
or equivalently for any probability measure $\nu\in \cM$
$$\int \left(T_{\mu}(x)-\int x\rd \mu_{B}\right)T_{\nu}(x)\rd x\leq 0.$$
Taking $T_{\nu}={\bf1}_{\{x: T_{\mu}(x)\geq \int x\rd \mu_{B}\}}$, we deduce that
$T_{\mu}(x)\leq \int x\rd \mu_{B}$ almost surely.
On the other hand, $\int T_{\mu}\rd x=\int x\rd \mu_{B}$, and therefore $T_{\mu}=\int x\rd \mu_{B}$ almost surely. We conclude that if $\cal I^D(\mu)=0$ then $T_\mu=\int x\rd \mu_{B}$ almost surely and $\mu$ is the delta mass at $\int x\rd \mu_{B}$.

{ Finally for the second point of Item 5, we pick $\mu\in \mathcal A_{\mu_B}$ and construct $\mu^\varepsilon$ satisfying \eqref{e:domain2}  
converging to $\mu$ when $\varepsilon$ goes to zero.
If $\mu$ 
 is not a delta mass, we have for small  enough $\varepsilon>0$, $T_\mu(1-\varepsilon)>T_\mu(\varepsilon)+2\varepsilon. $
We  take $$T_{\tilde \mu^\varepsilon}(y)=\left\{
\begin{array}{ll}
T_\mu(y)+\varepsilon &\mbox{ for }y\in[0,\varepsilon], \cr
T_\mu(\varepsilon)+\varepsilon &\mbox{ for } y\in   [\varepsilon,\varepsilon_{1}],\cr
T_{\mu}(y)  &\mbox{ for }y\in [\varepsilon_{1},\varepsilon_{2}],\cr
T(1-\varepsilon)-\varepsilon &\mbox{ for } y\in [\varepsilon_{2},1-\varepsilon],\cr
T_{\mu}(y)-\varepsilon &\mbox{ for }y\in[1-\varepsilon,1],\cr
\end{array}
\right.$$
where $$\varepsilon_{1}=\sup\{ x>\varepsilon : T_\mu(\varepsilon)+\varepsilon\geq T_\mu(x)\},\quad \varepsilon_{2}=\sup\{ x: T_{\mu}(x)\leq  T_{\mu}(1-\varepsilon)-\varepsilon\}\,.$$
Then we get $T_{\mu^{\varepsilon}}$ by shifting $T_{\tilde \mu^{\varepsilon}}$ such that its first moment is the same as $T_\mu$:
$$T_{\mu^{\varepsilon}}(y):=T_{\tilde \mu^{\varepsilon}}(y)-\Gamma,\quad \Gamma:=\int_{0}^{1}(T_{\tilde \mu^{\varepsilon}}(y)-T_{\mu}(y))\rd y \,.$$
We can check $\Gamma=\int_{\varepsilon}^{\varepsilon_{1}}(T_{\mu}(\varepsilon)+\varepsilon-T_{\mu}(y))\rd y+\int_{\varepsilon_{2}}^{1-\varepsilon}(T_{\mu}(1-\varepsilon)-\varepsilon-T_{\mu}(y))\rd y$.  On the interval $[\varepsilon, \varepsilon_1]$, $T_\mu$ is non decreasing, it holds that on this interval $0\leq T_\mu(\varepsilon)+\varepsilon-T_\mu(y)\leq \varepsilon$. Therefore $0\leq \int_{\varepsilon}^{\varepsilon_{1}}(T_{\mu}(\varepsilon)+\varepsilon-T_{\mu}(y))\rd y\leq \varepsilon(\varepsilon_1-\varepsilon)$. Similarly we have 
$-\varepsilon(1-\varepsilon-\varepsilon_2)\leq \int_{\varepsilon_{2}}^{1-\varepsilon}(T_{\mu}(1-\varepsilon)-\varepsilon-T_{\mu}(y))\rd y\leq 0$. As a consequence, we have $|\Gamma|\leq \varepsilon-\varepsilon^2$, and also  that $\mu^{\varepsilon}$ goes to $\mu$ as $\varepsilon$ goes to zero. 

We claim that $\mu^{\varepsilon}$ satisfies \eqref{e:domain2}. We denote
 for all $y\in [0,1]$,
\begin{equation}\label{sd}
\varphi(y):=\int_y^1 (T_\mu(x)-T_{\mu^\varepsilon}(x))\rd x\leq \int_y^1 (T_{\mu_B}(x)-T_{\mu^\varepsilon}(x))\rd x\,.
\end{equation}
From the construction $\varphi(0)=\varphi(1)=0$, and $\varphi(y)$ first decreases then increases on $[0,1]$. Moreover, for $y\in[0,\varepsilon]$,
$$\varphi(y)=-\int_{0}^{y}(T_\mu(x)-T_{\mu^\varepsilon}(x))\rd x
=-\int_{0}^{y}[(T_\mu(x)-T_{\tilde\mu^\varepsilon}(x))+(T_{\tilde\mu^\varepsilon}(x)-T_{\mu^\varepsilon}(x))]\rd x=(\varepsilon-\Gamma)y.$$
For $y\in[1-\varepsilon, 1]$, we have
$$\varphi(y)=\int_y^1 \left((T_\mu(x)-T_{\tilde \mu^\varepsilon}(x))+(T_{\tilde\mu^\varepsilon}(x)-T_{\mu^\varepsilon}(x))\right)\rd x= (\varepsilon+\Gamma)(1-y).$$
And for $y\in[\varepsilon, 1-\varepsilon]$, 
\begin{align*}
\varphi(y)=\int_y^1 (T_\mu(x)-T_{\mu^\varepsilon}(x))\rd x\geq \varepsilon^2-|\Gamma|\varepsilon.
\end{align*}
Therefore, $\mu^{\varepsilon}$ satisfies \eqref{e:domain2} with $\fc=\varepsilon^2-|\Gamma|\varepsilon\geq \varepsilon ^3>0$.
We finally prove that $\mathcal I^{\mathsf D}(\mu^{\varepsilon})$ goes to $\mathcal I^{\mathsf D}(\mu)$ as $\varepsilon$ goes to zero.
By lower semi-continuity of $\mathcal I^{\mathsf D}$, we already know that \begin{align}
\label{e:lowb}\mathcal I^{\mathsf D}(\mu)\leq  \liminf_{\varepsilon\rightarrow 0}\mathcal I^{\mathsf D}(\mu^{\varepsilon})\,.
\end{align}
For the converse bound, note that for all $\nu\in\mathcal M$, integration by parts and \eqref{sd} imply that
$$\int T_\nu(T_\mu-T_{\mu^\varepsilon})(x)\rd x =\int T_\nu'(y)\int_y^1 (T_\mu(x)-T_{\mu^\varepsilon}(x))\rd x \rd y\geq 0,$$
which results in
$$H^D_{\mu}(\nu)=\int T_\nu(x)T_\mu(x)dx-I(\nu,\mu_B)\geq \int T_\nu(x)T_{\mu^\varepsilon}(x)dx-I(\nu,\mu_B)=H^D_{\mu^\varepsilon}(\nu)\,.$$
As a consequence, we have
$$\mathcal I^D(\mu)\geq \mathcal I^D(\mu^\varepsilon),$$
and therefore 
\begin{align}\label{e:upbodd}
\mathcal I^{\mathsf D}(\mu)\geq \limsup_{\varepsilon\rightarrow 0}\mathcal I^{\mathsf D}(\mu^{\varepsilon})\,.
\end{align}
The claim follows from combining \eqref{e:lowb} and \eqref{e:upbodd}
.}

\end{proof}


\subsection{Large deviation upper bound}
In this section we prove the large deviation upper bound in Theorem \ref{t:UBU}.
We first notice that if $\Delta$ is the simplex $\{\bmy_N\in \mathbb R^{N}:y_1\geq y_2\geq \cdots \geq y_N\}$, since the law of $U$ is permutation invariant as well as $\mu_{N}$,
$$\bP_N(\mu_{N}\in .)=N!\,\bP_{N}\left( \{\mu_{N}\in .\}\cap \{((UB_NU^{*})_{ii})_{1\leq i\leq N}\in \Delta\}\right)\,.$$
We estimate the probability of a small $\delta$-neighborhood $\bB_\delta(\mu)$ of $\mu$, by tilting the measure as follows:
\begin{align}\begin{split}\label{e:weight1}
\bP_N(\bB_\delta(\mu))
=N!\,\bE\left[\bm 1(\{\mu_{N}\in \bB_{\delta}(\mu)\}\cap \{((UB_NU^{*})_{ii})_{1\leq i\leq N}\in \Delta\})\frac{ \exp\{(\beta /2)N\Tr(Y_NUB_NU^*)\}}{ \exp\{(\beta /2)N\Tr(Y_NUB_NU^*)\}}\right],
\end{split}\end{align}
where $Y_N=\diag\{y_1,y_2,\cdots, y_N\}$ is a sequence of diagonal matrices, with $\bmy_N\in \Delta$ and its spectral measure converging in Wasserstein distance \eqref{e:wd} towards $\mu_Y\in \cM$ (we can take $y_1, y_2, \cdots, y_N$ the $N$-quantiles of the measure $\mu_Y$).
We notice that when $\mu_{N}$ is   in the neighborhood $\bB_{\delta}(\mu)$, and the diagonal entries of $(UB_{N}U^{*})$ 
and $Y_N$ are both in $\Delta$, the integrand of the spherical integral is approximately
\begin{align}\label{e:integrand1}
\exp\{(\beta /2)N\Tr (Y_NUB_NU^*)\}= \exp\left\{\frac{\beta N^2}{2}\left( \int  T_{\mu_{Y_N}}T_{\mu}\rd x+\oo_{\delta}(1)\right)\right\}.
\end{align}
The estimates \eqref{e:weight1} and \eqref{e:integrand1} give a large deviation upper bound for the random measure $\mu_N$ as follows
\begin{align}\begin{split}\label{e:upUBU1}
\bP_N(\bB_\delta(\mu))
&=N!\,\bE\left[\bm 1(\{\mu_{N}\in \bB_{\delta}(\mu)\}\cap \{((UB_N U^{*})_{ii})_{1\leq i\leq N}\in \Delta\}))\frac{ \exp\{(\beta /2)N\Tr(Y_NUB_NU^*)\}}{ \exp\{(\beta /2)N\Tr(Y_NUB_NU^*)\}}\right]\\
&= N! \exp\left\{-\beta N^2\left(\frac{1}{2}\int   T_{ \mu_{Y_N}}T_{\mu}\rd x+\oo_{\delta}(1)\right)\right\}\bE\left[\bm 1(\mu_N\in \bB_\delta(\mu))\exp\{(\beta /2)N\Tr(Y_NUB_NU^*)\}
\right]\\
&\leq 
N! \exp\left\{-\beta N^2\left(\frac{1}{2}\int  T_{\mu_Y}T_{\mu} \rd x+\oo_{\delta}(1)+\oo_N(1)\right)\right\}\bE\left[\exp\{(\beta/2)N\Tr(Y_NUB_NU^*)\}\right]\\
 &= N!\exp\left\{-\beta N^2\left(\frac{1}{2}\int   T_{\mu_Y}T_{\mu}\rd x- I (\mu_Y,\mu_B)+\oo_{\delta}(1)+\oo_N(1)\right)\right\}.
\end{split}
\end{align}
It follows by taking the large $N$ limit, then $\delta$ going to zero and taking the infimum on the right hand side of \eqref{e:upUBU1}, we get the following large deviation upper bound
\begin{align}\label{e:LDPu1}
\limsup_{\delta\rightarrow 0}\limsup_{N\rightarrow 0}\frac{1}{\beta N^2}\log \bP_N(\bB_\delta(\mu))\leq -\sup_{\mu_Y\in \cM} H^{\mathsf D}_\mu(\mu_Y)=-\mathcal I^{\mathsf D}(\mu)\,.
\end{align}

\subsection{Large deviation lower bound}
In this section we derive the large deviation lower bound for the empirical measure of the diagonal entries of $UB_NU^*$, which matches the upper bound \eqref{e:LDPu1}. The large deviation lower bound follows from combining the following Propositions \ref{p:unique1} and \ref{p:lowerbound1}.

\begin{proposition}\label{p:unique1}

We assume the assumptions of Theorem \ref{t:UBU}.
For any probability measure $\mu_Y\in \cM$,  there exists a unique $\mu$ supported on $[-\fK, \fK]$ such that 
\begin{align}\label{e:choicemuYB}
\mu_Y\in \arg\sup_{\nu\in \cM} H^{\mathsf D}_\mu(\nu), \quad H^{\mathsf D}_\mu(\nu)=\frac{1}{2}\int  T_{\nu} T_\mu\rd x-I(\nu, \mu_B).
\end{align}
%
Here, $T_\mu$ is uniquely determined by $T_Y$ by
\begin{align*}
T_\mu=\tau(\mathsf b|\mathsf y)\circ T_Y.
\end{align*}
Here, $\tau(\mathsf b|\mathsf y)$ is the conditional expectation of $\mathsf b$ knowing $\mathsf y$ under the non-commutative distribution $\tau=\tau_{\mu_B, \mu_Y}$ uniquely associated to $(\mu_B,\mu_Y)$ as in \eqref{e:tauF} Theorem \ref{t:convl1}.
\end{proposition}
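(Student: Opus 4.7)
The plan is to exploit the concavity of $H_\mu^{\mathsf D}$ in $T_\nu$ established in Proposition \ref{p:domain1}(1) together with the derivative formulas of Proposition \ref{p:derI}. By concavity, $\mu_Y$ belongs to the argmax if and only if the Gateaux derivative of $\nu\mapsto H_\mu^{\mathsf D}(\nu)$ at $\mu_Y$ vanishes along every admissible direction. Matching this condition against the explicit derivative formulas should both pin down $T_\mu$ uniquely and recover the announced representation $T_\mu = \tau(\mathsf b|\mathsf y)\circ T_Y$.

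I would first test optimality against perturbations of the form $T_Y^\varepsilon := T_Y + \varepsilon f(T_Y)$ for a bounded Lipschitz function $f$. For $|\varepsilon|$ smaller than the reciprocal Lipschitz constant of $f$, $T_Y^\varepsilon$ is still non-decreasing and hence the quantile of some $\mu_Y^\varepsilon\in\cM$. The linear summand of $H_\mu^{\mathsf D}$ contributes $\frac{1}{2}\int f(T_Y(x))T_\mu(x)\,\rd x$ to $\partial_\varepsilon H_\mu^{\mathsf D}(\mu_Y^\varepsilon)|_{\varepsilon=0}$, while the non-atomic formula \eqref{e:nondelta}, applied with first argument $\mu_Y$ and second $\mu_B$, gives
\[
\partial_\varepsilon I(\mu_Y^\varepsilon,\mu_B)\big|_{\varepsilon=0} = \frac{1}{2}\int f(y)\,\tau(\mathsf b|\mathsf y)(y)\,\rd\mu_Y(y) = \frac{1}{2}\int_0^1 f(T_Y(x))\,\tau(\mathsf b|\mathsf y)(T_Y(x))\,\rd x,
\]
where the last equality uses $\mu_Y = (T_Y)_\#\unif[0,1]$. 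Vanishing of the combined derivative for arbitrary $f$ forces $T_\mu(x) = \tau(\mathsf b|\mathsf y)(T_Y(x))$ almost everywhere on the set where $T_Y$ is strictly increasing. On plateaus of $T_Y$ (atoms of $\mu_Y$), the atomic formula \eqref{e:delta} shows that within each plateau $\{x: T_Y(x) = a\}$ the function $T_\mu$ can only deviate from its plateau-mean in a way compatible with the non-decrease constraint; combined with the previous step, this forces $T_\mu \equiv \tau(\mathsf b|\mathsf y)(a)$ on the plateau. Assembling both cases yields $T_\mu = \tau(\mathsf b|\mathsf y)\circ T_Y$ a.e.

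It remains to verify that this right-hand side really is the quantile function of a probability measure on $[-\fK,\fK]$, which simultaneously gives existence; uniqueness is then automatic from the pointwise a.e.\ determination above. The range bound $|\tau(\mathsf b|\mathsf y)(y)| \leq \|\mathsf b\|_\infty \leq \fK$ is immediate from contractivity of the free conditional expectation. The delicate step is the monotonicity of $y\mapsto \tau(\mathsf b|\mathsf y)(y)$, which I expect to be the main obstacle: it should follow from the representation $\tau(\mathsf b|\mathsf y)(y) = u_0^*(y) + y - H\mu_Y(y)$ of Proposition \ref{propbur}(7) together with the transport/Beltrami description of the variational minimizer $(\rho_t^*, u_t^*)$ from Section \ref{s:continuity}. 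Once the monotonicity is secured, composition with the nondecreasing $T_Y$ produces a genuine quantile, and $\mu$ is the unique probability measure whose quantile function is $\tau(\mathsf b|\mathsf y)\circ T_Y$.
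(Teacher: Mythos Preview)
Your uniqueness argument via first-order optimality is essentially the paper's own argument: perturb $T_Y$ by $\tilde T_C$ (either $f(T_Y)$ or supported on a plateau of $T_Y$), use Proposition~\ref{p:derI} for the derivative of $I$, and read off $T_\mu=\tau(\mathsf b|\mathsf y)\circ T_Y$ almost everywhere. The paper handles the rearrangement issue (when $T_Y+\varepsilon\tilde T_C$ is not monotone) by the Hardy--Littlewood inequality $\int T_{(T_Y+\varepsilon\tilde T_C)_\#\unif[0,1]}\,T_\mu\geq\int (T_Y+\varepsilon\tilde T_C)\,T_\mu$, obtaining an inequality that becomes an equality once one also perturbs by $-\tilde T_C$; your restriction to $|\varepsilon|<\|f\|_{\cL}^{-1}$ is a legitimate shortcut for the non-atomic case.

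The existence part, however, is genuinely incomplete. You correctly identify that the obstacle is proving $y\mapsto\tau(\mathsf b|\mathsf y)(y)$ is non-decreasing, and you only gesture at the Beltrami machinery of Section~\ref{s:continuity}. The paper does \emph{not} attempt this direct verification. Instead it runs a soft contradiction argument that bypasses monotonicity entirely: suppose no $\mu\in\cM([-\fK,\fK])$ has $\mu_Y$ in its argmax. Then for every such $\mu$ the hypothesis of Proposition~\ref{c:expbound1} holds, so there exist $\delta(\mu)>0$ and $c(\delta)>0$ with
\[
\bE\!\left[\bm1(\mu_N\in\bB_{\delta}(\mu))\,e^{(\beta/2)N\Tr(Y_NUB_NU^*)}\right]\le e^{-c(\delta)N^2}\,\bE\!\left[e^{(\beta/2)N\Tr(Y_NUB_NU^*)}\right].
\]
By compactness of $\cM([-\fK,\fK])$ one extracts a finite cover $\{\bB_{\delta_i}(\mu_i)\}$, sums the above bounds, and contradicts the trivial identity $\bE[e^{(\beta/2)N\Tr(Y_NUB_NU^*)}]=\bE[e^{(\beta/2)N\Tr(Y_NUB_NU^*)}]$. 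Once existence is secured this way, uniqueness pins down $T_\mu=\tau(\mathsf b|\mathsf y)\circ T_Y$, and monotonicity of $\tau(\mathsf b|\mathsf y)\circ T_Y$ then follows \emph{a posteriori} (as the paper remarks right after the statement of the proposition), rather than needing to be proved up front.
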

Observe that the above shows that $\tau(\sfb|\sfy)\circ T_Y$ is non-decreasing for any $\mu_Y$: this is coherent with the fact that we can always reorder the $(UB_N U^*)_{ii}$ up to neglectable factors $N!$ and that the density of the tilted measure is maximal when the $A_{ii}$ and $(UB_N U^*)_{ii}$ are both increasing.

\begin{proposition}\label{p:lowerbound1}
We assume the assumptions of Theorem \ref{t:UBU}. For any probability measure $\mu_Y\in \cM$,  let  $\mu$ be the unique measure supported on $[-\fK, \fK]$ so that 
\begin{align*}
\mu_Y\in \arg\sup_{\nu\in \cM} H^{\mathsf D}_\mu(\nu), \quad H^{\mathsf D}_\mu(\nu)=\frac{1}{2}\int  T_{\nu} T_\mu\rd x-I(\nu, \mu_B).
\end{align*}
Then we have
\begin{align}\label{e:LDPl}
\liminf_{\delta\rightarrow 0}\liminf_{N\rightarrow \infty}\frac{1}{\beta N^2}\log \bP_N(\bB_\delta(\mu))\geq -H_\mu^{\mathsf D}(\mu_Y)=-\mathcal I^{\mathsf D}(\mu).
\end{align}
\end{proposition}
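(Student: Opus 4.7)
The plan is to lower-bound $\bP_N(\mu_N\in\bB_\delta(\mu))$ by tilting the Haar measure with the spherical integrand at a matrix $Y_N$ whose spectral law approximates $\mu_Y$, and then to show that the tilted measure concentrates on the desired event. I choose $Y_N=\diag(y_1,\ldots,y_N)$ with $y_1\leq\cdots\leq y_N$ the $N$-quantiles of $\mu_Y$, so that $\mu_{Y_N}\to\mu_Y$ in Wasserstein distance. Setting $Z_N:=\int e^{(\beta/2)N\Tr(Y_N VB_NV^*)}\rd V$ and writing $\rd\bP^{Y_N}_N(U):=Z_N^{-1}e^{(\beta/2)N\Tr(Y_N UB_NU^*)}\rd U$ for the tilted probability, the change-of-measure identity gives
\[
\bP_N(\mu_N\in\bB_\delta(\mu))=Z_N\cdot \bE^{Y_N}\bigl[\bm 1_{\mu_N\in\bB_\delta(\mu)}\,e^{-(\beta/2)N\Tr(Y_N UB_NU^*)}\bigr].
\]
On the event $\{\mu_N\in\bB_\delta(\mu)\}$, since $y_i$ is ordered, the rearrangement inequality yields
$\tfrac{1}{N}\Tr(Y_N UB_NU^*)=\tfrac{1}{N}\sum_i y_i(UB_NU^*)_{ii}\leq \int T_{\mu_{Y_N}}T_{\mu_N}\rd x=c+\oo_{\delta,N}(1)$
with $c:=\int T_{\mu_Y}T_\mu\rd x$ (the continuity of $\nu\mapsto\int T_{\mu_Y}T_\nu\rd x$ at $\mu$ on $\cM([-\fK,\fK])$ follows from $|T_{\mu_N}-T_\mu|\leq 2\fK$ and $T_{\mu_Y}\in L^1$ by dominated convergence). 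Hence
\[
\bP_N(\mu_N\in\bB_\delta(\mu))\geq Z_N\, e^{-(\beta/2)N^2(c+\oo_{\delta,N}(1))}\,\bP^{Y_N}_N(\mu_N\in\bB_\delta(\mu)).
\]

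The crux is to show $\bP^{Y_N}_N(\mu_N\in\bB_\delta(\mu))\to 1$ as $N\to\infty$ then $\delta\to 0$, which I establish in two steps. First, applying Proposition \ref{p:derI} to the perturbation $Y_N\to Y_N+sg(Y_N)$ and exploiting the convexity of the spherical integral in its first argument, I obtain for every bounded Lipschitz $g$ the weighted convergence
\[
\lim_N \tfrac{1}{N}\sum_i g(y_i)\,\bE^{Y_N}[(UB_NU^*)_{ii}]=\int g(y)\tau(\mathsf b|\mathsf y)(y)\rd\mu_Y(y).
\]
Combined with the identity $T_\mu=\tau(\mathsf b|\mathsf y)\circ T_Y$ from Proposition \ref{p:unique1} and the monotonicity of $\tau(\mathsf b|\mathsf y)$ (which follows since $T_\mu$ and $T_Y$ are both non-decreasing), together with the monotonicity of $i\mapsto m_i:=\bE^{Y_N}[(UB_NU^*)_{ii}]$ coming from the convexity of $\log Z_N$ in the diagonal of $Y_N$ and a swap-symmetry argument, this promotes via Helly's theorem to an $L^1$-type convergence $\tfrac{1}{N}\sum_i|m_i-\tau(\mathsf b|\mathsf y)(y_i)|\to 0$; consequently $\tfrac{1}{N}\sum_i\delta_{m_i}\to\mu$ weakly. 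Second, concentration of Lipschitz functions of $U\in U(N)$ (resp.\ $O(N)$) via the log-Sobolev inequality on the group, preserved under the spherical tilt through the Brownian-bridge representation behind Theorem \ref{t:convl1}, applied to the $O(\fK\|f\|_{\rm Lip}/\sqrt N)$-Frobenius-Lipschitz map $U\mapsto\tfrac{1}{N}\sum_i f((UB_NU^*)_{ii})$, yields $\int f\rd\mu_N=\tfrac{1}{N}\sum_i f(m_i)+\oo_N(1)$ in $\bP^{Y_N}_N$-probability. Combining the two steps gives $\mu_N\to\mu$ in probability under $\bP^{Y_N}_N$.

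Putting everything together, dividing by $\beta N^2$, and using $\tfrac{1}{\beta N^2}\log Z_N\to I(\mu_Y,\mu_B)$ from Corollary \ref{t:convl1} together with the optimality of $\mu_Y$ for $H^{\mathsf D}_\mu$, we obtain
\[
\liminf_{\delta\to 0}\liminf_{N\to\infty}\tfrac{1}{\beta N^2}\log\bP_N(\bB_\delta(\mu))\geq I(\mu_Y,\mu_B)-\tfrac{c}{2}=-H^{\mathsf D}_\mu(\mu_Y)=-\mathcal I^{\mathsf D}(\mu),
\]
which is the claimed bound. The main obstacle is the Helly-type promotion step: one must combine the weighted-sum convergence $\tfrac{1}{N}\sum g(y_i)m_i\to\int g\,\tau(\mathsf b|\mathsf y)\rd\mu_Y$ with the monotonicity of $\tau(\mathsf b|\mathsf y)$ and an asymptotic monotonicity of $i\mapsto m_i$ to deduce convergence of the empirical measure of the $m_i$'s to $\mu$. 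When $\mu_Y$ has unbounded support, one first truncates $\mu_Y$, applies the above to the truncated measure, and then passes to the limit using the continuity of the rate function established in Proposition \ref{p:domain1}(5).
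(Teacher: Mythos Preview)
Your approach differs substantially from the paper's, and the concentration step has a real gap.

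The paper does \emph{not} try to show directly that the tilted law $\bP_N^{Y_N}$ concentrates on $\bB_\delta(\mu)$ by identifying the limit of the means $m_i$ and invoking a log-Sobolev inequality. Instead it argues indirectly: by the uniqueness established in Proposition~\ref{p:unique1}, for every $\mu'\neq\mu$ in $\cM([-\fK,\fK])$ one has $\mu_Y\notin\arg\sup_\nu H^{\mathsf D}_{\mu'}(\nu)$, so the hypothesis of Proposition~\ref{c:expbound1} is satisfied and
\[
\bE\!\left[\bm1(\mu_N\in\bB_{\delta'}(\mu'))\,e^{(\beta/2)N\Tr(Y_NUB_NU^*)}\right]
\le e^{-c(\delta')N^2}\,\bE\!\left[e^{(\beta/2)N\Tr(Y_NUB_NU^*)}\right].
\]
A finite cover of the compact set $\cM([-\fK,\fK])\setminus\bB_\delta(\mu)$ by such balls then forces
$\bE[\bm1(\mu_N\in\bB_\delta(\mu))e^{\cdots}]\ge(1-\sum_i e^{-c(\delta_i)N^2})\bE[e^{\cdots}]$,
and plugging this into the change-of-measure identity \eqref{e:weight1} (with the $N!$ symmetrization) concludes. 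In other words, the already-proven large-deviation \emph{upper} bound, together with uniqueness and compactness, furnishes the concentration of the tilted measure for free; no functional inequality on the group is needed, and no computation of the $m_i$'s enters.

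Your direct route runs into trouble precisely at this point. The claim that the log-Sobolev inequality on $U(N)$ (or $O(N)$) is ``preserved under the spherical tilt through the Brownian-bridge representation'' is not justified: the tilt potential $U\mapsto(\beta/2)N\Tr(Y_NUB_NU^*)$ has Hessian of order $N\|Y_N\|_{\rm op}\fK$, which competes with the Ricci curvature $\sim N$ of the group, so Bakry--\'Emery fails unless $\|Y_N\|_{\rm op}$ is small --- and for $\mu_Y\in\cM$ with merely a finite first moment, $\|Y_N\|_{\rm op}$ is typically unbounded. The Brownian-bridge representation behind Theorem~\ref{t:convl1} concerns the eigenvalues of $A_N+H_N(t)$, not the diagonal entries of $UB_NU^*$, and does not supply the concentration you need here. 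The Helly/monotonicity promotion you flag as an obstacle is a second genuine gap: the monotonicity of $i\mapsto m_i$ is asserted via ``convexity of $\log Z_N$ and a swap-symmetry argument'' but not proved, and without it the weighted-sum convergence $\tfrac1N\sum g(y_i)m_i\to\int g\,\tau(\sfb|\sfy)\,\rd\mu_Y$ does not upgrade to convergence of $\tfrac1N\sum\delta_{m_i}$ to $\mu$. Finally, your truncation remark conflates approximating $\mu$ (the subject of Proposition~\ref{p:domain1}(5)) with approximating $\mu_Y$; truncating $\mu_Y$ changes the associated optimal $\mu$, and the continuity of the map $\mu_Y\mapsto\mu$ would itself have to be established.
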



\begin{proof}[Proof of Theorem \ref{t:UBU}]
Item 1 of Theorem \ref{t:UBU} follows from Proposition \ref{p:domain1}.
For Item 2, the large deviation upper bound follows from \eqref{e:LDPu1}. If $\mu$ does not satisfy $\int_0^1 (T_\mu(x)-T_{\mu_B}(x))\rd x\neq 0$ or  the limiting Schur-Horn inequalities \eqref{limSH}, then both sides of \eqref{e:UBU} are $-\infty$. There is nothing to prove. In the following we first prove \eqref{e:UBU} when $\mu$ satisfies $\int_0^1 (T_\mu(x)-T_{\mu_B}(x))\rd x= 0$ and the  strong limiting Schur-Horn inequalities \eqref{e:domain2} with some $\fc>0$.
In this case, thanks to Item 3 in Proposition \ref{p:domain1}, 
 there 
 exists a probability measure $\mu_Y$ such that
$\mathcal I^{\mathsf D}(\mu)=H^{\mathsf D}_\mu(\mu_Y)<\infty$ and $\mu_Y\in \cM$.  
Then Propositions \ref{p:unique1} and \ref{p:lowerbound1} imply that $\mu$ is uniquely 
determined by $\mu_Y$ and the large deviation lower bound holds. This gives the full large deviation principle when  the  strong limiting Schur-Horn inequalities \eqref{e:domain2} hold. 
Next we extend it to the boundary case by a continuity argument. Thanks to Item 5 in Proposition \ref{p:domain1},  for any measure $\mu$ inside the admissible set but not satisfying \eqref{e:domain2}, there exists a sequence of measures $\mu^{\varepsilon}$ inside the region as given in \eqref{e:domain2}, converging to $\mu$ in the weak topology and 
$\lim_{\varepsilon\rightarrow 0}\mathcal I^{\mathsf D}(\mu^{\varepsilon})=\mathcal I^{\mathsf D}(\mu)$.
Then for any $\delta>0$, there exists sufficiently small $\varepsilon>0$ 
\begin{align}\label{e:bbcase}
\liminf_{N\rightarrow \infty}\frac{1}{\beta N^2}\log \bP(\mu_N\in \bB_{\delta}(\mu))\geq 
\liminf_{N\rightarrow \infty}\frac{1}{\beta N^2}\log \bP(\mu_N\in \bB_{\delta/2}(\mu^{\varepsilon}))
=\mathcal I^{\mathsf D}(\mu^{\varepsilon})+\oo_\delta(1).
\end{align}
The large deviation lower bound follows by first sending $\delta$ to zero and then $\varepsilon$ to zero in the right hand side of  \eqref{e:bbcase}.
This finishes the proof of Theorem \ref{t:UBU}.

\end{proof}

The proofs of both Propositions \ref{p:unique1} and \ref{p:lowerbound1} rely on the following probability estimate.
\begin{proposition}\label{c:expbound1}
We assume the assumptions of Theorem \ref{t:UBU}. Fix probability measures $\mu_Y\in \cM$, and $\mu$ supported on $[-\fK, \fK]$.
Let $Y_N=\diag\{y_1,y_2,\cdots,y_N\}$ be a sequence of diagonal matrices whose spectral measures converge in Wasserstein distance \eqref{e:wd} towards $\mu_Y\in \cM$, such that
\begin{align}\label{e:asup1}
\sup_{\nu\in \cM}\left\{\frac{1}{2}\int T_{\nu}T_{\mu} \rd x-I(\nu, \mu_B)\right\}
>\frac{1}{2}\int  T_{\mu_Y}T_{\mu} \rd x-I( \mu_Y,\mu_B)\,,
\end{align}
then there exists a small $\delta>0$, and positive constant $c(\delta)>0$ such that
\begin{align}\label{e:expbound1}\begin{split}
\phantom{{}={}}\bE\left[\bm 1(\mu_N\in \bB_\delta(\mu))\exp\{(\beta/2)N\Tr(Y_NUB_NU^*)\}\right]
\leq e^{-c(\delta)N^2}\bE\left[ \exp\{(\beta/2)N\Tr(Y_NUB_NU^*)\}\right].
\end{split}\end{align}
\end{proposition}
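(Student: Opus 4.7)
The plan is to reduce the tilted exponential integral to a purely probabilistic estimate on the event $\{\mu_N\in\bB_\delta(\mu)\}$ by a rearrangement argument, and then to invoke the large deviation upper bound \eqref{e:LDPu1} which has already been established in the preceding subsection.

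\textbf{Symmetrization and rearrangement.} Because the Haar measure on $U$ is invariant under the left-action of any permutation matrix $P_\sigma$ and because the event $\{\mu_N\in\bB_\delta(\mu)\}$ depends only on the multiset of diagonal entries $d_i=(UB_NU^*)_{ii}$, one may write
\begin{equation*}
\mathbb E\!\left[\bm 1_{\{\mu_N\in\bB_\delta(\mu)\}}e^{\frac{\beta N}{2}\Tr(Y_NUB_NU^*)}\right]
=\mathbb E\!\left[\bm 1_{\{\mu_N\in\bB_\delta(\mu)\}}\frac{1}{N!}\sum_{\sigma\in S_N} e^{\frac{\beta N}{2}\sum_i y_{\sigma(i)} d_i}\right].
\end{equation*}
For each fixed $U$ the rearrangement inequality gives $\max_\sigma\sum_i y_{\sigma(i)}d_i=\sum_i y_i^\uparrow d_i^\uparrow$, so the average over $\sigma$ is dominated by $\exp\{(\beta N/2)\sum_i y_i^\uparrow d_i^\uparrow\}=\exp\{(\beta N^2/2)\int T_{\mu_{Y_N}}(u)T_{\mu_N^{\uparrow}}(u)\,du\}$, where $\mu_N^{\uparrow}$ is the sorted empirical measure (which equals $\mu_N$).

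\textbf{Limit of the exponent on the event.} On $\{\mu_N\in\bB_\delta(\mu)\}$ the measures $\mu_N$ are supported on $[-\fK,\fK]$, so weak and Wasserstein convergence coincide there, while $\mu_{Y_N}\to\mu_Y$ in Wasserstein by hypothesis. For any $\eps>0$ I would split the integral $\int T_{\mu_Y}(T_{\mu_N^{\uparrow}}-T_\mu)\,du$ into $|T_{\mu_Y}|\le M$ and $|T_{\mu_Y}|>M$: the bulk piece is controlled by $M\cdot d_W(\mu_N,\mu)$, which is $o_\delta(1)$, while the tail piece is controlled by $2\fK\int_{|x|>M}|x|\,d\mu_Y\to 0$ as $M\to\infty$ using $\mu_Y(|x|)<\infty$. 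Choosing $M$ large then $\delta$ small we obtain, on $\{\mu_N\in\bB_\delta(\mu)\}$,
\begin{equation*}
\int T_{\mu_{Y_N}}(u)T_{\mu_N^{\uparrow}}(u)\,du \le \int T_{\mu_Y}(u) T_\mu(u)\,du+o_\delta(1)+o_N(1).
\end{equation*}

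\textbf{Combination with the upper bound.} By the assumption \eqref{e:asup1} we can choose $\nu_*\in\cM$ with $2c:=H_\mu^{\mathsf D}(\nu_*)-H_\mu^{\mathsf D}(\mu_Y)>0$. The already established large deviation upper bound \eqref{e:LDPu1} applied with the test measure $\nu_*$ yields
\begin{equation*}
\bP(\mu_N\in\bB_\delta(\mu))\le \exp\{-\beta N^2 H_\mu^{\mathsf D}(\nu_*)+o_\delta(N^2)+o(N^2)\}.
\end{equation*}
Multiplying this by the pointwise bound from the previous step and using that $H_\mu^{\mathsf D}(\mu_Y)=\frac12\int T_{\mu_Y}T_\mu\,du-I(\mu_Y,\mu_B)$, we obtain
\begin{equation*}
\mathbb E\!\left[\bm 1_{\{\mu_N\in\bB_\delta(\mu)\}}e^{\frac{\beta N}{2}\Tr(Y_NUB_NU^*)}\right]\le \exp\{\beta N^2[I(\mu_Y,\mu_B)-2c+o_\delta(1)+o_N(1)]\}.
\end{equation*}
On the other hand the convergence of the spherical integral (Corollary \ref{t:convl1}, applied with $A_N=Y_N$) gives $\mathbb E[e^{\frac{\beta N}{2}\Tr(Y_NUB_NU^*)}]=\exp\{\beta N^2 I(\mu_Y,\mu_B)+o(N^2)\}$. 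Taking the ratio we get \eqref{e:expbound1} with, say, $c(\delta)=c>0$, for $\delta$ small enough and $N$ large enough.

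\textbf{Main obstacle.} The only genuinely delicate step is the estimate of the exponent on the event $\{\mu_N\in\bB_\delta(\mu)\}$, because $\mu_Y$ (equivalently $T_{\mu_Y}$) need not be bounded, while we control $\mu_N\to\mu$ only in a weak/Wasserstein sense; the tail truncation argument sketched above is essential to turn weak convergence of the sorted empirical measure into an effective bound on $\int T_{\mu_{Y_N}}T_{\mu_N^\uparrow}du$.
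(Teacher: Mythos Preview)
Your proof is correct and follows essentially the same route as the paper: you symmetrize over permutations and use rearrangement to replace the tilted integrand by $\exp\{\tfrac{\beta N^2}{2}\int T_{\mu_Y}T_\mu\,dx\}$ on the event, then bound $\bP(\mu_N\in\bB_\delta(\mu))$ via the large deviation upper bound \eqref{e:LDPu1} with a strictly better test measure $\nu_*$, and compare with the spherical integral asymptotics for the denominator. The only difference is cosmetic: where you spell out the tail-truncation argument to control $\int T_{\mu_{Y_N}}T_{\mu_N}\,du$ when $\mu_Y$ is unbounded, the paper simply invokes the estimate \eqref{e:integrand1} (itself a consequence of Proposition~\ref{p:continuity}).
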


\begin{proof}[Proof of Proposition \ref{c:expbound1}]
Under the assumption \eqref{e:asup1}, for sufficiently small $\varepsilon>0$, there exists a measure $\nu\in \cM$ such that
\begin{align}\label{e:epserror}
\frac{1}{2}\int  T_{\nu}T_\mu\rd x-I(\nu,\mu_B)
\geq\frac{1}{2}\int  T_{\mu_Y}T_{\mu}\rd x-I( \mu_Y,\mu_B)+\varepsilon.
\end{align}
The right hand side of \eqref{e:expbound1} is the spherical integral
\begin{align*}\begin{split}
\bE\left[\exp\{(\beta/2)N\Tr(Y_NUB_NU^*)\}\right]=\exp\{\beta N^2(I( \mu_Y,\mu_B)+\oo_N(1))\}.
\end{split}\end{align*}
We divide $\exp\{\beta N^2(I(\mu_Y, \mu_B)+\oo_N(1))\}$ on both sides of \eqref{e:expbound1}, and take a small $\delta>0$ which will be chosen later, 
\begin{align*}\begin{split}
&\phantom{{}={}}\exp\left\{-\beta N^2(I(\mu_Y, \mu_B)+\oo_N(1))\right\}\bE\left[\bm 1(\mu_N\in \bB_\delta(\mu))\exp\{(\beta/2)N\Tr(Y_NUB_NU^*)\}\right]
\\ 
&=
\exp\left\{-\beta N^2\left(I(\mu_Y, \mu_B)-\frac{1}{2}\int  T_{\mu_Y}T_\mu\rd x+\oo_{\delta}(1)+\oo_N(1)\right)\right\}\bE\left[\bm 1(\mu_N\in \bB_\delta(\mu))\right]\\
&\leq  
\exp\left\{-\beta N^2\left(I(\mu_Y, \mu_B)-\frac{1}{2}\int  T_{\mu_Y}T_\mu\rd x-I(\nu,\mu_B)+\frac{1}{2}\int T_\nu T_\mu \rd x+\oo_{\delta}(1)+\oo_N(1)\right)\right\}\\
&\leq \exp\left\{-\beta N^2(\varepsilon+\oo_{\delta}(1)+\oo_N(1))\right\},
\end{split}
\end{align*}
where in the first inequality we used the large deviation upper bound \eqref{e:LDPu1}, and \eqref{e:epserror} in the last inequality. The claim follows provided we take $\delta$ sufficiently small and $N$ large.
\end{proof}

\begin{proof}[Proof of Proposition \ref{p:unique1}]
We first prove the existence of such $\mu$ by contradiction. If there is no such $\mu$, i.e. for any measure $\mu$ supported on $[-\fK, \fK]$, we have 
\begin{align*}
\mu_Y\not\in\arg\sup_{\nu\in \cM}\left\{\frac{1}{2}\int  T_{\nu}T_\mu\rd x-I(\nu,\mu_B)\right\}.
\end{align*}
It follows that for any measure $\mu$ supported on $[-\fK, \fK]$, it holds
\begin{align*}
\sup_{\nu\in \cM}\left\{\frac{1}{2}\int  T_{\nu}T_\mu\rd x-I( \nu,\mu_B)\right\}
>\frac{1}{2}\int T_{\mu_Y}T_\mu \rd x-I( \mu_Y,\mu_B).
\end{align*}
Then  Proposition \ref{c:expbound1} implies that there exists a small $\delta>0$, and positive constant $c(\delta)>0$ such that
\begin{align*}\begin{split}
\phantom{{}={}}\bE\left[\bm 1(\mu_N\in \bB_\delta(\mu))\exp\{(\beta/2)N\Tr(Y_NUB_NU^*)\}\right]
\leq e^{-c(\delta)N^2}\bE\left[ \exp\{(\beta/2)N\Tr(Y_NUB_NU^*)\}\right].
\end{split}\end{align*}
Since the space of probability measures supported on $[-\fK, \fK]$ is compact, we get a finite open cover $ \cup \bB_{\delta_i}(\mu_i)$ of the set of probability measures supported on $[-\fK, \fK]$,  
\begin{align*}\begin{split}
&\phantom{{}={}}\bE\left[\exp\{(\beta/2)N\Tr(Y_NUB_NU^*)\}\right]=
\bE\left[\sum_i \bm 1(\mu_N\in \bB_{\delta_i}(\mu_i))\exp\{(\beta/2)N\Tr(Y_NUB_NU^*)\}\right]\\
&\leq \sum_ie^{-c(\delta_i)N^2}\bE\left[\exp\{(\beta/2)N\Tr(Y_NUB_NU^*)\}\right]<\bE\left[\int \exp\{(\beta/2)N\Tr(Y_NUB_NU^*)\}\right],
\end{split}\end{align*}
for sufficiently large $N$. This gives a contradiction.

In the following we prove the uniqueness of such measure $\mu$ satisfying \eqref{e:choicemuYB}.
Since $\mu_Y$ is one of the maximizer, then for any $\varepsilon>0$,
\begin{align*}\begin{split}
\frac{1}{2}\int  T_{Y} T_\mu\rd x-I(T_Y, T_B)
&\geq \frac{1}{2}\int  (T_Y+\varepsilon \tilde T_C)_\#(\unif[0,1]) T_\mu\rd x-I(T_Y+\varepsilon \tilde T_C, T_B)\\
&\geq \frac{1}{2}\int  (T_{Y}+\varepsilon \tilde T_C) T_\mu\rd x-I(T_Y+\varepsilon \tilde T_C, T_B),
\end{split}\end{align*}
By rearranging the above expression, and sending $\varepsilon$ to $0$, we have
\begin{align}\label{e:lowerbound}
\left.\del_{\varepsilon}I(T_Y+\varepsilon \tilde T_C, T_B)\right|_{\varepsilon=0}\geq {\frac{1}{2}} \int \tilde T_C T_\mu \rd x.
\end{align}
We will choose $\tilde T_C=f(T_Y)$ with some compactly supported and Lipschitz real-valued function $f$ as in the case \eqref{e:nondelta} or a bounded measurable function supported on $\{x: T_Y(x)=a\}$ as in \eqref{e:delta}. We notice that in both cases if we replace $\tilde T_C$ by $-\tilde T_C$, both sides of \eqref{e:lowerbound} change the sign. Therefore, we conclude that
\begin{align*}
\left.\del_{\varepsilon}I(T_Y+\varepsilon \tilde T_C, T_B)\right|_{\varepsilon=0}={\frac{1}{2}} \int \tilde T_C T_\mu \rd x.
\end{align*}
Now if we choose $\tilde T_C$ in \eqref{e:delta}, i.e. $\tilde T_C$ supported on that $\{x: T_Y(x)=a\}$, we have
\begin{align*}
\left.\del_{\varepsilon}I(T_Y+\varepsilon \tilde T_C, T_B)\right|_{\varepsilon=0}={\frac{1}{2}}\int \tilde T_C(x)\rd x\tau(\mathsf b|\mathsf y)(a)
={\frac{1}{2}}\int \tilde T_C T_\mu \rd x,
\end{align*}
We conclude that $T_\mu(x)=\tau(\mathsf b|\mathsf y)(a)=\tau(\mathsf b|\mathsf y)\circ T_Y(x)$ on $\{x: T_Y(x)=a\}$. Especially, on the intervals where $T_Y$ is a constant, we have $T_\mu(x)=\tau(\mathsf b|\mathsf y)\circ T_Y(x)$.
Next we take $\tilde T_C=f(T_Y)$ as in \eqref{e:nondelta},
\begin{align}\begin{split}\label{e:inc0}
\left.\del_{\varepsilon}I(T_Y+\varepsilon f(T_Y), T_B)\right|_{\varepsilon=0}
&={\frac{1}{2}}\int f(x)\tau(\mathsf b|\mathsf y)(x)\rd \mu_Y\\
&={\frac{1}{2}}\int f(T_Y)\tau(\mathsf b|\mathsf y)\circ T_Y(x)\rd x
={\frac{1}{2}}\int f( T_{Y}) T_\mu \rd x.
\end{split}\end{align}
On the intervals where $T_Y$ is increasing, we conclude from \eqref{e:inc0} that $T_\mu(x)=\tau(\mathsf b|\mathsf y)\circ T_Y(x)$.
Therefore, we  conclude that $T_\mu(x)=\tau(\mathsf b|\mathsf y)\circ T_Y(x)$ almost surely, and this finishes the proof.

\end{proof}

\begin{proof}[Proof of Proposition \ref{p:lowerbound1}]
Thanks to the uniqueness of $\mu$, we have that for any $\mu'\neq \mu$ supported on $[-\fK, \fK]$
\begin{align*}
\mu_Y\not\in \arg\sup_{\nu\in \cM}\left\{\frac{1}{2}\int  T_{\nu}T_{\mu'}\rd x-I(\nu,\mu_B)\right\}.
\end{align*}
As a consequence the assumption in Proposition \ref{c:expbound1} holds
\begin{align*}
\sup_{\nu\in \cM}\left\{\frac{1}{2}\int T_{\nu}T_{\mu'} \rd x-I(\nu,\mu_B)\right\}>\frac{1}{2}\int T_{\mu_Y}T_{\mu'} \rd x-I(\mu_Y,\mu_B).
\end{align*}
It follows from Proposition \ref{c:expbound1} that there exists a small $\delta>0$, and  a positive constant $c(\delta)>0$ such that
\begin{align*}\begin{split}
\bE\left[\bm 1(\mu_X\in \bB_\delta(\mu'))\exp\{(\beta/2)N\Tr(Y_NUB_NU^*)\}\right]\leq e^{-c(\delta)N^2}\bE\left[ \exp\{(\beta/2)N\Tr(Y_NUB_NU^*)\}\right].
\end{split}\end{align*}
Because the complement of the open ball $\bB_\delta(\mu)$ in the  space of probability measures supported  in $[-\fK, \fK]$ is compact, we get a finite open cover $ \cup \bB_{\delta_i}(\mu_i)$,  
\begin{align}\begin{split}\label{e:openball1}
&\phantom{{}={}}\bE\left[\bm 1(\mu_N\in \bB_{\delta}(\mu)) \exp\{(\beta/2)N\Tr(Y_NUB_NU^*)\}\right]\\
&\geq\bE\left[ \exp\{(\beta/2)N\Tr(Y_N UB_NU^*)\}\right]-\bE\left[\sum_i \bm 1(\mu_N\in \bB_{\delta_i}(\mu_i))\exp\{(\beta/2)N\Tr(Y_N UB_NU^*)\}\right]\\
&\geq \left(1-\sum_ie^{-c(\delta_i)N^2}\right)\bE\left[\exp\{(\beta/2)N\Tr(Y_N UB_NU^*)\}\right]\\
&= \left(1-\sum_ie^{-c(\delta_i)N^2}\right)\exp\{\beta N^2(I(\mu_Y, \mu_B)+\oo_N(1))\}.
\end{split}\end{align}
The large deviation lower bound at $\mu$ follows from \eqref{e:weight1} and the estimate \eqref{e:openball1}
\begin{align*}
\begin{split}
\bP_N(\bB_\delta(\mu))&= N! \exp\left\{-\beta N^2\left(\frac{1}{2}\int   T_{ \mu_{Y_N}}T_{\mu}\rd x+\oo_{\delta}(1)\right)\right\}\bE\left[\bm 1(\mu_N\in \bB_\delta(\mu))\exp\{(\beta /2)N\Tr(Y_NUB_NU^*)\}
\right]\\
&\geq \exp\left\{-\beta N^2\left(\frac{1}{2}\int T_{\mu_Y}T_{\mu}\rd x+\oo_{\delta}(1)+\oo_N(1)\right)\right\}\exp\{\beta N^2(I(\mu_Y, \mu_B)+\oo_N(1))\}\\
&= \exp\left\{-\beta N^2\left(\frac{1}{2}\int  T_{\mu_Y}T_\mu\rd x-I(\mu_Y, \mu_B)+\oo_{\delta}(1)+\oo_N(1)\right)\right\}.
\end{split}
\end{align*}
\end{proof}

\section{Large Deviation Estimates for $A+UBU^*$}
We recall from Theorem \ref{t:AUBU}, $A_N, B_N$ are two $N\times N$ diagonal matrices with eigenvalues 
$a_1\geq a_2\geq\cdots\geq a_N$ and $b_1\geq b_2,\cdots\geq b_N$. In this section, we use the spherical integral to study the large deviation of the law $\bP_N$ of the empirical eigenvalue distribution of $A_N+UB_NU^*$,
\begin{align*}
\mu_N=\frac{1}{N}\sum_{i=1}^N \lambda_i(A_N+UB_NU^*),
\end{align*}
and prove Theorem \ref{t:AUBU}.  Besides the relations $\Tr A_N +\Tr B_N =\Tr (A_N+UB_NU^*)$, and the Ky Fan inequalities,
\begin{align}\label{e:KyFan}
\sum_{i=1}^k a_i+\sum_{i=1}^k b_i\geq \sum_{i=1}^k\lambda_i(A_N+UB_NU^*),\quad 1\leq i\leq N,
\end{align}
Horn \cite{MR0140521} had conjectured the form of a set of necessary and sufficient inequalities to be satisfied for the eigenvalues of $A_N+UB_N U^*$.  
After contributions by several authors, see in particular
\cite{MR1654578}, these conjectures were proven by Knutson and Tao \cite{MR1671451,MR1811121}.

\subsection{Study of the rate function}
In the following proposition we study the rate function $\mathcal I^{\mathsf{A+B}}(\cdot)$ from Theorem \ref{t:AUBU}. Clearly, it is a good rate function again by Proposition \ref{p:continuity}. Unfortunately, it does not capture the admissible set for the possible eigenvalues given by Horn's problem.  However it contains the information about the constraints given by the Ky Fan inequalities, i.e. it equals $+\infty$ outside the region described by the limiting Ky Fan inequalities:
 \begin{align}
\label{e:limitadm2}
\int_0^1 \left(T_{\mu} -T_{\mu_{A}}-T_{\mu_{B}}\right)\rd x=0,\quad
\int_y^1 \left(T_{\mu} -T_{\mu_{A}}-T_{\mu_{B}}\right)\rd x\leq0, \quad \forall y\in [0,1].
 \end{align}

\begin{proposition}\label{p:rateprop}
Under the assumptions of Theorem \ref{t:AUBU}, the function $H_\mu^{\mathsf A+\mathsf B}(\cdot)$ and rate function $\mathcal I^{\mathsf A+\mathsf B}(\cdot)$ as defined in Theorem \ref{t:AUBU} satisfy:
\begin{enumerate}
\item For measure $\mu$ satisfies \eqref{e:limitadm2}, $H_\mu^{\mathsf A+\mathsf B}(\cdot)$ is  upper semi-continuous in the weak topology on $\{\nu\in \cM: \nu(|x|)\leq \fR\}$ for any $\fR>0$.

\item If $\int_0^1 \left(T_{\mu} -T_{\mu_{A}}-T_{\mu_{B}}\right)\rd x\neq 0$, or there exists some $0<y<1$ such that 
\begin{align}\label{e:domain0}
\int_y^1 \left(T_{\mu} -T_{\mu_{A}}-T_{\mu_{B}}\right)\rd x>0, 
\end{align}
then $\mathcal I^{\mathsf{A+B}}(\mu)=+\infty$.
\item 
If there exists some small constant $\fc>0$
\begin{align}\begin{split}\label{e:domain}
& \int_y^1 \left(T_{\mu} -T_{\mu_{A}}-T_{\mu_{B}}\right)\rd x\leq
\left\{\begin{array}{ll}
 -\fc y,      &\text{ for } 0\leq y\leq \fc,\\
 -\fc,         &\text{ for } \fc\leq y\leq 1-\fc,\\
 -\fc(1-y),  &\text{ for } 1-\fc\leq y\leq 1,
 \end{array}\right.
 \end{split}
\end{align}
then 
$\mathcal I^{\mathsf{A+B}}(\mu)=H^{\mathsf{A+B}}_\mu(\nu^*)<\infty$ for some probability measure $\nu^{*}$ such that  $\nu^*(|x|)<\infty$.

\item The rate function $\mathcal I^{\mathsf A+\mathsf B}(\cdot)$ is nonnegative and lower semicontinuous on $\cM([-2\fK, 2\fK])$. 

\end{enumerate}

\end{proposition}
%
%
\begin{proof}
For Item 1, we know that $H_\mu^{\mathsf D}(\cdot)$ is continuous in the Wasserstein metric. In the following we show that $H_\mu^{\mathsf D}(\cdot)$ is upper semi-continuous in the weak topology on $\{\nu\in \cM: \nu(|x|)\leq \fR\}$ for any $\fR>0$.

\begin{claim}\label{c:HABbound}
Under the assumption \eqref{e:limitadm2}, for any probability measure $\nu$ with $\nu(|x|)\leq \fK$ and probability measure $\mu$ supported on $[-\fK,\fK]$, then setting
$\nu^\delta=\nu\bm1(|x|\leq \delta^{-1})+\delta_0\int_{|x|>\delta^{-1}}\rd \nu $, we have
\begin{align}
H_\mu^{\mathsf A+\mathsf B}(\nu)
\leq H_\mu^{\mathsf A+\mathsf B}(\nu^\delta)+C_{\fK}\oo_\delta(1),
\end{align}
where the implicit error $\oo_\delta(1)$ is independent of the measure $\nu$.
\end{claim}
\begin{proof}
We recall the definition of $H_\mu^{\mathsf A+\mathsf B}(\nu)$ from \eqref{e:defHA+B} and \eqref{e:spbound2}:
\begin{align}\begin{split}\label{appH}
&\phantom{{}={}}H_\mu^{\mathsf A+\mathsf B}(\nu)
=I(\nu,\mu)-I(\nu,\mu_A)-I(\nu,\mu_B)\\
&=\frac{1}{2}\int_{|T_\nu|>1/\delta} T_\nu(T_{\mu}-T_{\mu_A}-T_{\mu_B})\rd x+(I(\nu^\delta,\mu)-I(\nu^\delta,\mu_A)-I(\nu^\delta,\mu_B))+C_\fK\oo_\delta(1)\\
&\leq I(\nu^\delta,\mu)-I(\nu^\delta,\mu_A)-I(\nu^\delta,\mu_B))+C_\fK\oo_\delta(1)
=H_\mu^{\mathsf A+\mathsf B}(\nu^\delta)+C_{\fK}\oo_\delta(1),
\end{split}\end{align}
where we used Proposition \ref{p:spbound2} in the second line, and Assumption \ref{e:limitadm2} in the last line(with the remark that $x\rightarrow 1_{|T_\nu(x)|>1/\delta} T_\nu(x)$ is non-decreasing).
\end{proof}

Let $\{\nu_n\}_{n\geq 1}$ be  a sequence of probability measures  with $\nu_n(|x|)\leq \fR$, converging weakly to $\nu$. Take $\delta>0$ sufficiently small,  such that $\nu(\{\delta^{-1}, -\delta^{-1}\})=0$. Then, $\nu^{\delta}$ converges to $\nu$ in Wasserstein metric as $\delta\rightarrow0$. Therefore for any fixed measure $\nu$ with $\nu(|x|)\leq \fK$, we have
\begin{equation}\label{e:truncate2}
H_\mu^{\mathsf A+\mathsf B}(\nu)
=H_\mu^{\mathsf A+\mathsf B}(\nu^\delta)+\oo_{\delta,\nu}(1).
\end{equation}
Moreover, $\nu_n^\delta$ converges to $\nu^\delta$ in Wasserstein distance, so that
\begin{align}\label{e:limitdelta2}
\limsup_{n\rightarrow\infty}H_\mu^{\mathsf A+\mathsf B}(\nu_n^\delta)= H_\mu^{\mathsf A+\mathsf B}(\nu^\delta).
\end{align}
It follows from combining \eqref{e:truncate2}, Claim \ref{c:HABbound} and \eqref{e:limitdelta2},
\begin{align*}\begin{split}
\limsup_{n\rightarrow\infty}H_\mu^{\mathsf A+\mathsf B}(\nu_n)
&\leq \limsup_{n\rightarrow\infty}H_\mu^{\mathsf A+\mathsf B}(\nu^\delta_n)+C_{\fK\vee\fR}\oo_\delta(1)\\
&=H_\mu^{\mathsf A+\mathsf B}(\nu^\delta)+C_{\fK\vee\fR}\oo_\delta(1)
=H_\mu^{\mathsf A+\mathsf B}(\nu)+C_{\fK\vee\fR}\oo_\delta(1)+\oo_{\delta,\nu}(1).
\end{split}\end{align*}
By sending $\delta$ to $0$ in the above estimate, the upper semicontinuity of $H_\mu^{\mathsf A+\mathsf B}$ follows as
\begin{align*}
\limsup_{n\rightarrow\infty}H_\mu^{\mathsf A+\mathsf B}(\nu_n)
\leq H_\mu^{\mathsf A+\mathsf B}(\nu).
\end{align*}

For Item 2, given any measure $\mu_Y$, we denote its dilation by a factor $L$ as $\mu_{LY}=L_{\#}\mu_Y$, then $T_{\mu_{LY}}=LT_{\mu_Y}$. Thanks to 
Proposition \ref{p:spbound}, for any $\varepsilon>0$, there exists a constant $C(\varepsilon)$ such that 
\begin{align}\label{e:ratebound}
H^{\mathsf{A+B}}_\mu( \mu_{LY})= \frac{L}{2}\int (T_\mu-T_{\mu_A}-T_{\mu_{B}})T_{ \mu_{Y}}\rd x +L\OO(\varepsilon)\mu_{Y}(|x|)+C(\varepsilon).
\end{align}
If $\int_0^1 \left(T_{\mu} -T_{\mu_{A}}-T_{\mu_{B}}\right)\rd x\neq 0$, we can take $\mu_Y=\delta_1$, then $T_{\mu_Y}={\bf 1}_{[0,1]}$ and
\begin{align*}
\cal I^{\mathsf{A+B}}(\mu)
&\geq \lim_{L\rightarrow \infty} 
\max\{H_\mu^{\mathsf{A+B}}(\mu_{LY}),H_\mu^{\mathsf{A+B}}(\mu_{-LY}) \}\\
&\geq \lim_{L\rightarrow \infty} \frac{L}{2}\left|\int (T_\mu-T_{\mu_A}-T_{\mu_{B}})T_{ \mu_{Y}}\rd x\right| +L\OO(\varepsilon)\mu_{Y}(|x|)+C(\varepsilon)=\infty.
\end{align*}
If \eqref{e:domain0} holds for some $0<y<1$, 
we can take $\mu_Y=y\delta_0+(1-y)\delta_{1/(1-y)}$, then $T_{\mu_Y}={\bf 1}_{[y,1]}/(1-y)$, and 
\begin{align*}
\cal I^{\mathsf{A+B}}(\mu)
&\geq \lim_{L\rightarrow \infty} H_\mu^{\mathsf{A+B}}(\mu_{LY})\\
&\geq \lim_{L\rightarrow \infty} \frac{L}{2(1-y)}\int_y^1 (T_\mu-T_{\mu_A}-T_{\mu_B})\rd x+L\OO(\varepsilon)\mu_Y(|x|)+C(\varepsilon)=\infty,
\end{align*}
provided we take $\varepsilon$ small enough.

For Item 3, 
to prove that \eqref{e:domain} implies that there exists $\nu^*\in \cM$ such that $H^{\mathsf{A+B}}_\mu(\nu^*)=\sup_{\nu\in \cM}H^{\mathsf{A+B}}_\mu(\nu)$,
we need to define another functional 
\begin{align*}
\tilde H^{\mathsf{A+B}}_\mu(\nu)=\frac{1}{2}\int T_{\nu}T_{\mu}\rd x-I(\nu, \mu_A)-I(\nu, \mu_B),
\end{align*}
which is an upper bound of $H^{\mathsf{A+B}}_\mu(\nu)$, i.e. $H^{\mathsf{A+B}}_\mu(\nu)\leq \tilde H^{\mathsf{A+B}}_\mu(\nu)$, thanks to Proposition \ref{p:spbound}. 
We remark that the functions $H^{\mathsf{A+B}}_\mu(\nu)$ and $\tilde H^{\mathsf{A+B}}_\mu(\nu)$ are both translation invariant. If $\nu$ has finite first moment and $\int_0^1 (T_\mu-T_{\mu_A}-T_{\mu_B})\rd x=0$, we can always translate $\nu$ to make $\int x\rd \nu=0$. In the rest of the proof, we will restrict ourselves to the set of measures in $\cM$ with mean zero.
Moreover, since $I(\mu_{LY}, \mu_A)$ and $I(\mu_{LY},\mu_B)$ are both convex in $L$, the function $\tilde H^{\mathsf{A+B}}_\mu(\mu_{LY})$ is concave in $L$ for any $\mu_Y$.  We remark that $H^{\mathsf{A+B}}_\mu(\mu_{LY})$ may not be concave in $L$, this is why we need to introduce $\tilde H^{\mathsf{A+B}}_\mu(\nu)$.
We first prove that under \eqref{e:domain}, there exists a small $\delta>0$ and a large $L^*>0$ such that for any $\mu_Y\in \cM$ with $\int |x|\rd \mu_Y= 1$, $\int x\rd \mu_Y=0$ and any $L\geq L^*$, then $\tilde H^{\mathsf{A+B}}_\mu(\mu_{LY})\leq -\delta L$. 
Given  such  a $\mu_Y$, there exists some $y_0\in (0,1)$ such that $T_{\mu_Y}(y_0)=0$, and the same as in \eqref{e:cc0}, we have
\begin{align}\label{e:yconstraint2}
\int_0^1 y\bm 1_{[0,y_0]}(y)T'_{\mu_Y}(y)\rd y=\frac{1}{2},\quad \int_0^1(1-y)\bm 1_{[y_0,1]}(y)T_{\mu_Y}'(y)\rd y=\frac{1}{2}.
\end{align}
Thanks to Proposition \ref{p:spbound}, we have the following estimates for $\tilde H^{\mathsf{A+B}}_\mu(\mu_{LY})$, 
\begin{align}\label{e:live1}
\tilde H^{\mathsf{A+B}}_\mu(\mu_{LY})= \frac{L}{2}\int \left(T_{\mu} -T_{\mu_{A}}-T_{\mu_{B}}\right)T_{\mu_Y}\rd x+L\OO(\varepsilon)\mu_{Y}(|x|)+C(\varepsilon).
\end{align}
Integration by parts yields
\begin{align}\begin{split}\label{e:live2}
&\phantom{{}={}}\int_0^1 (T_{\mu} -T_{\mu_{A}}-T_{\mu_{B}})T_{ \mu_Y}\rd x
=\int_0^1 T_{\mu_Y}'(y) \int_{y}^{1} (T_{\mu}(x) -T_{\mu_{A}}(x)-T_{\mu_{B}}(x)) \rd x \rd y\\
&\leq-\fc\int_0^1 ( y\bm 1_{[0,\fc]}(y) + \bm 1_{[\fc,1-\fc]}(y)+(1-y)\bm 1_{[1-\fc,1]}(y)) T_{\mu_Y}'(y) \rd y\\
&\leq-\fc \min\left\{\int_0^1 (y\bm 1_{[0,y_0]}(y)T_{\mu_Y}'(y)\rd y, \int_0^1(1-y)\bm 1_{[y_0,1]}(y))T_{\mu_Y}'(y)\rd y\right\}=-\fc/2,
\end{split}\end{align}
where we used Assumption \eqref{e:domain},  $T_{\mu_Y}$ is non-decreasing, \eqref{e:yconstraint2},\eqref{e:cc1} and \eqref{e:cc2}. Therefore, if we take $\varepsilon$ much smaller than $\fc$, \eqref{e:live1} and \eqref{e:live2} imply that there exists a small $\delta>0$ and a large $L^*>0$ (depending only on $\fc$) such that for any $L\geq L^*$, it holds $ \tilde H^{\mathsf {A+B}}_{\mu}(\mu_{LY})\leq -\delta L$.

%
%
%
Using Proposition \ref{p:spbound} again, for arbitrarily small $\varepsilon>0$, we have
\begin{align*}
0\leq \tilde H^{\mathsf{A+B}}_\mu(\mu_{LY})- H^{\mathsf{A+B}}_\mu(\mu_{LY})\leq L\OO(\varepsilon) \mu_Y(|x|) +C(\varepsilon),
\end{align*}
Therefore by taking $\varepsilon$ small and $L^*$ large enough, for $L\geq L^*$, $\int |x|\rd \mu_Y=1$ and $\int x\rd \mu_Y=0$ we have by \eqref{e:live1} and \eqref{e:live2}

\begin{align*}
H^{\mathsf{A+B}}_\mu(\mu_{LY})\leq -\delta L/2.
\end{align*}
We conclude that 
\begin{align*}
\sup_{\nu} H^{\mathsf{A+B}}_\mu(\nu)=\sup_{\nu: \int |x|\rd \nu\leq L^*}H^{\mathsf{A+B}}_\mu(\nu)<\infty,
\end{align*}
and the supremum is achieved at some $\nu^*$ with $\int |x|\rd \nu^*\leq L^*$, since $\{\mu_Y: \int |x|\rd \mu_Y\leq L^*\}$ is compact and $H^{\mathsf{A+B}}_\mu$ is upper semicontinuous, thanks to Item 1.

For Item 4, since $(\mu, \nu)\mapsto H_\mu^{\mathsf{A+B}}(\nu)$ is continuous in $\mu$, $\cI^{\mathsf{A+B}}(\mu)=\sup_{\nu\in \cM}H_\mu^{\mathsf{A+B}}(\nu)$ is lower semicontinuous. Moreover $\cI^{\mathsf{A+B}}(\mu)\geq H_\mu^{\mathsf{A+B}}(\delta_0)=0$, so $\cI^{\mathsf{A+B}}(\cdot)$ is nonnegative.
\end{proof}

\subsection{Large deviation upper bound}
In this section we prove the large deviation upper bound in Theorem \ref{t:AUBU}.
We take $Y_N=\diag\{y_1,y_2,\cdots, y_N\}$ a sequence of diagonal matrices, whose spectral measures converge  in Wasserstein distance \eqref{e:wd} towards $\mu_Y$ (we can take $y_1, y_2, \cdots, y_N$ the $N$-quantiles of the measure $\mu_Y$), and denote $\bB_\delta(\mu)$ a $\delta$ neighborhood of $\mu$. We have by Theorem \ref{t:convl1},
\begin{align}\begin{split}\label{e:exppAUBU}
&\phantom{{}={}}\bP_N(\bB_\delta(\mu))
=\bE\left[\bm 1(\mu_N\in \bB_\delta(\mu))\frac{\int \exp\{(\beta/2)N\Tr(Y_NV(A_N+UB_NU^*)V^*)\}\rd V}{\int \exp\{(\beta/2)N\Tr(Y_NV(A_N+UB_NU^*)V^*)\}\rd V}\right]\\
&= e^{-\beta N^2(I(\mu_Y,\mu)+\oo_\delta(1)+\oo_N(1))}\bE\left[\bm 1(\mu_{N}\in \bB_\delta(\mu))\int e^{\frac{\beta N}{2}\Tr(Y_NV(A_N+UB_NU^*)V^*)}\rd V\right],
\end{split}\end{align}
where we used Proposition \ref{p:continuity} that the functional $I(\cdot,\cdot)$ is continuous in Wasserstein distance.
Then we have
\begin{align}\label{e:upboundkk}
\begin{split}
\bP_N(\bB_\delta(\mu))
&=e^{-\beta N^2(I(\mu_Y,\mu)+\oo_\delta(1)+\oo_N(1))}
\bE\left[\bm 1(\mu_N\in \bB_\delta(\mu)) \int e^{\frac{\beta N}{2}\Tr(Y_NV(A_N+UB_NU^*)V^*)}\rd V\right]\\
&\leq e^{-\beta N^2(I(\mu_Y,\mu)+\oo_\delta(1)+\oo_N(1))}\bE\left[ \int e^{\frac{\beta N}{2}\Tr(Y_NV(A_N+UB_NU^*)V^*)}\rd V\right]\\
&= \exp\{-\beta N^2(I(\mu_Y,\mu)-I(\mu_Y,\mu_A)-I(\mu_Y,\mu_B)+\oo_\delta(1)+\oo_N(1))\}.
\end{split}
\end{align}
It follows by taking the large $N$ limit, then $\delta$ going to zero and taking the infimum on the right hand side of \eqref{e:upboundkk}, we get the following large deviation upper bound
\begin{align}\begin{split}\label{e:LDPu}
&\limsup_{\delta\rightarrow 0}\limsup_{N\rightarrow 0}\frac{1}{\beta N^2}\log \bP_N(\bB_\delta(\mu))\leq -\sup_{\mu_Y\in \cM} H^{\mathsf{A+B}}_\mu(\mu_Y)=-\mathcal I^{\mathsf{A+B}}(\mu),\\
&H^{\mathsf{A+B}}_\mu(\mu_Y)=I(\mu_Y,\mu)-I( \mu_Y,\mu_A)-I( \mu_Y,\mu_B),
\end{split}\end{align}
where $\cM$ is the set of probability measures with bounded first moment.


\subsection{Large deviation lower bound}\label{Subsection:4.3}
In this section we prove the large deviation lower bound in Theorem \ref{t:AUBU}.  We notice that without loss of generality we can take $\mu_A$ and $\mu_B$ to have smooth density. Especially, this implies that $\Sigma(\mu_A)$ and $\Sigma(\mu_B)$ are finite. To do it, we can change the 
matrices $A_N$ and $B_N$ into $A^\varepsilon_N,B^\varepsilon_N$  at distance smaller than $\varepsilon$ from $A_N,B_N$ for the operator norm, such that  their empirical eigenvalue distributions converge to $\mu_A^\varepsilon$ and $\mu_B^\varepsilon$ respectively. 
Such matrices $A_N^\varepsilon,B_N^\varepsilon$  can for instance be constructed by adding  deterministic matrices $\varepsilon S^A_N, \varepsilon S^B_N$ with semicircle distribution on the scale $\varepsilon$,  and so that the spectral measures of $A_N^\varepsilon=A_N+\varepsilon S^A_N$ and $B_N^\varepsilon=B_N+\varepsilon S^A_N$ converge towards the free convolution  $\mu_A\boxplus \sigma_\varepsilon$ and $\mu_B\boxplus\sigma_\varepsilon$, where $\sigma_\varepsilon$ is the semi-circle distribution on the scale $\varepsilon$. Under this perturbation, we have
\begin{align*}
\left\| (A_N+UB_NU^*)-(A^\varepsilon_N+UB^\varepsilon_NU^*)\right\|_2=\OO(\varepsilon),
\end{align*}
and
\begin{align*}
\left|(I(\mu_Y,\mu)-I( \mu_Y,\mu_A)-I( \mu_Y,\mu_B))-(I(\mu_Y,\mu)-I( \mu_Y,\mu^\varepsilon_A)-I( \mu_Y,\mu^\varepsilon_B))\right|=\OO(\varepsilon).
\end{align*}
Therefore, a large deviation lower bound for $(A^\varepsilon_N+UB^\varepsilon_NU^*)$ will imply the large deviation lower bound for $(A_N+UB_NU^*)$. In the rest of this section, we assume that $\Sigma(\mu_A)$ and $\Sigma(\mu_B)$ are finite.

Following the inequalities performed in \eqref{e:upboundkk}, to get a  lower bound on  $\bP_N(\bB_\delta(\mu))$, it is enough to show that for any $\delta>0$, there exists $Y_N=\diag\{y_1,y_2,\cdots, y_N\}$ a sequence of diagonal matrices,
 whose spectral measures converge  in Wasserstein distance \eqref{e:wd} towards $\mu_{Y}$ such that 
under the deformed measure
\begin{align}\label{e:lawABY}
\rd\mu_{A_N,B_N,Y_N}(U,V)=\frac{e^{\frac{\beta N}{2}\tr (A_{N}UY_{N} U^{*})+\frac{\beta N}{2}\tr (B_{N}VY_{N} V^{*})}\rd U \rd V}{I_{N}(A_N,Y_N) I_{N}(B_N,Y_N )},
\end{align}
 the spectral measure of  $(U^{*}A_{N}U+V^{*}B_{N}V)$ belongs to $\bB_\delta(\mu)$ with probability at least $1-\oo(1)$ for $N$ large enough. 
 We investigate below such possible limit laws and show that they are described by freeness by amalgamation, as in \eqref{e:defHAB}.

 \subsubsection{ Freeness with Amalgamation}
With the notions and notations introduced in Section \ref{s:prel} (following Theorems \ref{convrho} and \ref{t:FBB}), we
let $\tau_{\sfa, \sfy}$ and $\tau_{\sfb, \sfy}$ be the non-commutative joint distributions of $\{\sfa, \sfy\}$ and of $\{\sfb, \sfy\}$ respectively, $\sfa,\sfb$ being bounded and $\sfy$ in $L^1$. 
 We assume that $(\sfa,\sfb)$ are free with amalgamation over $\sfy$. That is, the joint law of $(\sfa,\sfb,\sfy)$ is the non-commutative distribution defined on the von Neumann algebra generated by functions in $\mathcal F$ of $(\sfa,\sfy)$ and $(\sfb,\sfy)$ as follows (see \cite{voiam}):
It is the law $\tau$ such that $\tau|_{L^\infty\{\sfa, \sfy\}}=\tau_{\sfa,\sfy}$, $\tau|_{L^\infty\{\sfb, \sfy\}}=\tau_{\sfb,\sfy}$, the marginal law of $\sfy$ is given by  $\mu_Y$ and 
for  any noncommutative functions $P_{i},Q_{i}\in\mathcal F$ for $1\leq i\leq k$
\begin{align}\begin{split}\label{freeam}
&\tau( ((P_{1}(\sfa,\sfy)-\tau_{\sfa,\sfy}(P_{1}(\sfa,\sfy)|\sfy))(Q_{1}(\sfb,\sfy)-\tau_{\sfb,\sfy}(Q_{1}|\sfy))(P_{2}(\sfa,\sfy)-\tau(P_{2}(\sfa,\sfy)|\sfy))\\
&\qquad\cdots (P_{k}(\sfa,\sfy)-\tau_{\sfa,\sfy}(P_{k}(\sfa,\sfy)|\sfy))(Q_{k}(\sfb,\sfy)-\tau_{\sfb,\sfy}(Q_{k}(\sfb,\sfy)|\sfy)|\sfy)=0
\end{split}\end{align}
 (recall from Section \ref{s:prel} that $\tau(\cdot|\sfy)$ denotes the conditional expectation onto $L^\infty(\sfy)$).
 
The following proposition provides a random matrix model for the joint law of $(\sfa,\sfb,\sfy)$ defined above, a model for a large class of distributions of elements which are free with 
amalgamation over a commutative algebra.
\begin{proposition} \label{discr} Let $\mu_A,\mu_B,\mu_Y$
be compactly supported measures such that $\Sigma(\mu_A)$ and $\Sigma(\mu_B)$ are finite,  and $\mu_Y$ be a discrete sum of atoms:
\begin{align*}
\mu_{Y}=\sum_{i=1}^p\alpha_{i}\delta_{y_{i}}.
\end{align*}
Let $A_N, B_N$ be a sequence of deterministic self-adjoint matrices, such that their spectral measures $\mu_{A_N}, \mu_{B_N}$ converge weakly towards $\mu_A, \mu_B$ respectively. We assume that  there exists a constant $\fK$, such that $\supp\mu_{A_N}, \supp\mu_{B_N}\subset [-\fK,\fK]$. 
Let $Y_{N}$ be a diagonal matrix with $N_{i}$ eigenvalues equal to $y_{i}$ for $1\leq i\leq p$ and $N=\sum_{i=1}^{p}N_{i}$. Moreover, the limit $N_{i}/N$ goes to $\alpha_{i}$ so that the spectral measure of $Y_{N}$ goes to $\mu_{Y}=\sum\alpha_{i}\delta_{y_{i}}$.
Then under the distribution 
\begin{align}\label{e:lawABY0}
\rd\mu_{A_N,B_N,Y_N}(U,V)=\frac{e^{\frac{\beta N}{2}\tr (A_{N}UY_{N} U^{*})+\frac{\beta N}{2}\tr (B_{N}VY_{N} V^{*})}\rd U \rd V}{I_{N}(A_N,Y_N) I_{N}(B_N,Y_N)},
\end{align} 
the joint non-commutative law of $(U^{*}A_{N}U,V^{*}B_{N}V,Y_{N})$ converges towards the law of $(\sfa,\sfb,\sfy)$, where $\sfa$ and $\sfb$ are free with amalgamation over $\sfy$ and 
the joint law of $(\sfa,\sfy)$ (resp.   $(\sfb,\sfy)$) are given by $\tau_{\mu_A, \mu_Y}$ (resp. $\tau_{\mu_B, \mu_Y}$) as in \eqref{e:tauF}.

\end{proposition}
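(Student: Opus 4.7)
The plan is to establish the proposition by combining three ingredients: a factorization that gives the marginal convergence, a block-diagonal invariance of the measure, and a Weingarten-type asymptotic freeness argument adapted to the amalgamated setting.

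First, I would observe that the density in \eqref{e:lawABY0} factorizes as a product of a function of $U$ alone and a function of $V$ alone, so $U$ and $V$ are independent under $\mu_{A_N, B_N, Y_N}$. Each marginal is precisely the tilted measure analyzed in Theorem \ref{t:convl1}. Applied to the pairs $(A_N, Y_N)$ and $(B_N, Y_N)$ separately, that theorem yields the weak convergence of the joint non-commutative law of $(U^*A_N U, Y_N)$ to $\tau_{\mu_A, \mu_Y}$ and of $(V^*B_N V, Y_N)$ to $\tau_{\mu_B, \mu_Y}$. This identifies the required marginals in the limiting triple $(\sfa, \sfb, \sfy)$; what remains is to characterize the joint law of $\sfa$ and $\sfb$ over $\sfy$.

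Next, I would exploit a crucial invariance. Since $\mu_Y$ is atomic with finite support, $Y_N$ is block-diagonal (in its eigenbasis) with blocks of sizes $N_1, \ldots, N_p$. Let $\mathcal G_Y = U(N_1) \times \cdots \times U(N_p)$ denote the subgroup of unitaries commuting with $Y_N$. Because $R Y_N R^* = Y_N$ for every $R \in \mathcal G_Y$, the tilt $\Tr(A_N U Y_N U^*)$ is unchanged under $U \mapsto U R_1$, and similarly for $V$. Hence the joint distribution of $(U, V)$ is invariant under $(U, V) \mapsto (U R_1, V R_2)$ for any $R_1, R_2 \in \mathcal G_Y$, and we may integrate against Haar measure on $\mathcal G_Y \times \mathcal G_Y$ without changing any observable. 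To verify the defining identity \eqref{freeam} of freeness with amalgamation, I would compute the normalized trace of an alternating centered product; centering with respect to $\tau(\cdot|\sfy)$ corresponds at finite $N$ to subtracting, on each $Y_N$-eigenspace, the block-averaged part. The Weingarten expansion over each factor $U(N_i)$ of $\mathcal G_Y$ produces a sum of terms indexed by pairings, weighted by $N_i$-dependent Weingarten coefficients; the centering precisely eliminates the diagonal-block contributions, leaving only off-diagonal crossings which are of subleading order. This is the standard mechanism by which independent Haar conjugation yields asymptotic freeness, applied block-wise.

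The main obstacle is the last step: controlling the tilt alongside the block-Weingarten expansion. The exponential weight in \eqref{e:lawABY0} depends on the diagonal blocks $\Tr((U^*A_N U)_{ii})$ and $\Tr((V^*B_N V)_{ii})$ in the $Y_N$-eigenbasis, so within each factor the diagonal and off-diagonal blocks are not independent. One must show that the tilt affects only the marginal $\sfy$-conditional laws of $\sfa$ and $\sfb$ (already captured by $\tau_{\mu_A, \mu_Y}$ and $\tau_{\mu_B,\mu_Y}$), and that the residual block-Weingarten computation, conditional on the traces of the diagonal blocks being fixed, is the same as for an untilted Haar measure. The finite atomicity of $\mu_Y$ is essential: it reduces the problem to finitely many blocks of size of order $N$, so that within each block the classical Weingarten estimates and the standard asymptotic freeness arguments apply without dimensional degeneracy, and one can pass to the limit to conclude that $\sfa$ and $\sfb$ are free with amalgamation over $\sfy$.
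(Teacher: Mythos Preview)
Your strategy matches the paper's: use the product structure of \eqref{e:lawABY0} for the marginals, then exploit the invariance of $\mu_{A_N,B_N,Y_N}$ under $(U,V)\mapsto(UR_1,VR_2)$ for block-diagonal $R_1,R_2\in\mathcal G_Y$ to reduce the alternating centered word to a Haar computation over $\mathcal G_Y$. The paper carries this out via the loop (Schwinger--Dyson) equation for the fresh block unitary $\hat U$ together with an induction on the length $k$ of the word, rather than a direct Weingarten expansion; the two devices are interchangeable here.

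Your ``main obstacle'' paragraph, however, misidentifies the difficulty. Once the fresh Haar variables $R_1,R_2\in\mathcal G_Y$ are introduced, the integration over them is \emph{entirely untilted}: the density in \eqref{e:lawABY0} depends only on $U,V$, and $R_jY_NR_j^*=Y_N$ makes it invariant under the right multiplication. There is no ``residual block-Weingarten computation conditional on the traces of the diagonal blocks'' to control; the Haar step simply treats $U^*A_NU$ and $V^*B_NV$ as given (random) inputs, and the tilt enters only through their joint block laws with $Y_N$, which are already identified as $\tau_{\mu_A,\mu_Y}$ and $\tau_{\mu_B,\mu_Y}$ by your first paragraph. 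What actually needs care---and what you do not address---is showing that the multi-trace expressions produced by the Haar integration over $\mathcal G_Y$ vanish in the limit. The paper handles this by combining the loop equation \eqref{e:loopU} with concentration of measure for the blocks $\hat U_i$, which lets one split products of normalized traces and reduce to shorter words, closing the induction on $k$; the base case $k=1$ uses only the independence of $U$ and $V$ under \eqref{e:lawABY0} to factor $\frac{1}{NN_i}\tr(\bar P_1)_{ii}\tr(\bar Q_1)_{ii}$ and kill it by the recentering. In a Weingarten approach you face the same issue: the centering kills the leading pairings, but controlling the full sum for general $k$ still requires either this inductive reduction or a concentration argument for the tilted $U,V$.
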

\begin{proof}


Under \eqref{e:lawABY0} the  noncommutative joint distribution of $(U^{* }A_{N} U, Y_{N})$ converges towards $\tau_{\mu_A,\mu_Y}$ by Theorem \ref{t:convl1} and Remark \eqref{r:beta1}.
Moreover,  $\sfy$ is bounded and so that \eqref{e:tauF} extends to noncommutative polynomials $F$  in $\sfa,\sfy$.

As indicated in Section \ref{s:prel},  the conditional expectation with respect to $\sfy$ defined on $L^{2}(\tau_{\sfa,\sfy})$ is the orthogonal projection onto the $L^2$-completion of the von Neumann algebra generated by $\sfy$ (or, when $\sfy$ is bounded, simply onto $L^2(\tau_{\sfy})$). When viewed as a  linear operator on  the algebra generated by $(\sfa,\sfy)$ with value in the algebra generated by $\sfy$, it satisfies $\tau_{\sfa,\sfy}(1|\sfy)=1$ and $\tau_{\sfa,\sfy}(f_1(\sfy)g(\sfa, \sfy)f_2(\sfy)|\sfy)=f_1(\sfy)\tau(g(\sfa,\sfy)|\sfy)f_2(\sfy)$ for all bounded measurable functions $f_1,f_2$, and all rational functions $g$ in $L^\infty(\sfa)$ and $L^\infty(\sfy)$.

We notice that    $\tau_{\sfa,\sfy}(f(\sfa)|\sfy)$ is a function of $\sfy$. It is equal to
\begin{align*}
\tau_{\sfa,\sfy}(f(\sfa)|\sfy)=\sum_{i=1}^p \frac{\tau_{\sfa,\sfy}(f(\sfa)\bm1_{\sfy=y_{i}})}{\tau_{\sfa,\sfy}(\bm1_{\sfy=y_{i}})}\bm1_{\sfy=y_i}
\end{align*}

Let $\tau_{N}$ be the normalized trace restricted to the algebra generated by $U^{*}A_{N}U, V^*B_N V$ and $Y_N$:
\begin{align}\label{e:tauFN}
\tau_N(F(\sfa,\sfb, \sfy))\deq  \frac{1}{N}\Tr(F(U^*A_NU,V^*B_NV, Y_N)).
\end{align}
 \eqref{freeam} follows from showing that 
\begin{align}\begin{split}\label{freeam2}
&\tau_{N}\left( (P_{1}(\sfa,\sfy)-\tau_{N}(P_{1}|\sfy))(Q_{1}(\sfb,\sfy)-\tau_{N}(Q_{1}|\sfy))\right.\\
&\cdots \left.(P_{k}(\sfa,\sfy)-\tau_{N}(P_{k}|\sfy))(Q_k(\sfb,\sfy)-\tau_{N}(Q_{k}|\sfy)) T(\sfy)\right)
\end{split}\end{align}
goes to zero almost surely as $N$ goes to infinity for any polynomial $T$. 

We denote in short $\bar P_i (\sfa,\sfy)=P_i(\sfa,\sfy) -\tau_N(P_i|\sfy)$ as well as $\bar Q_i(\sfb,\sfy)=
Q_i(\sfb,\sfy)-\tau_N(Q_i|\sfy)$, and for all $1\leq i\leq p$, $R_i=\bm1_{\sfy=y_i}$. Observe that $\tau_{N}(\bar P_i (\sfa,\sfy))=0$ and $\tau_{N}(\bar Q_i(\sfb,\sfy))=0$ for all $i\in\{1,\ldots,k\}$. 
The main point  in the proof is to notice that under $\mu_{A_N,B_N,Y_N}$ the law of $U$ is invariant under  right multiplication  by $\hat U= \mbox{diag}(U_{1},\ldots, U_{p})$  where $U_{i}$ is an $N_{i}\times N_{i}$  unitary matrix for $1\leq i\leq p$. Indeed, this is true for  the Haar measure and the density is also invariant since
$$\tr(\hat U^{*}U^{*}A_{N}U \hat U Y_{N})=\tr(U^{*}A_{N}U\hat U Y_{N}\hat U^{*})=\tr(U^{*}A_{N}U Y_{N}),$$
because $Y_{N}$ commutes with $\hat U$. We take the $\hat U_i$ following the Haar measure on the unitary or orthogonal  group  $U(N_{i})$. Then the expression in \eqref{freeam2} has the same law as
\begin{align*}
\frac{1}{N} \Tr\left( \hat U^* \bar P_1(U^{*}A_{N}U, Y_{N})\hat U \bar Q_1(V^*B_NV,Y_N)\hat U^* \bar P_2(U^{*}A_{N}U, Y_{N})\hat U \cdots \bar Q_k(V^*B_NV,Y_N) T(Y_{N})\right)\,.
\end{align*}
But, it is well known that concentration of measures holds under the Haar measure  \cite[Theorem 4.4.27]{AGZ}  so that since the $N_{i}$ go to infinity, the above quantity is close to its expectation over $\hat U$. Taking $T$ to approximate the indicator function $\bm1_{\sfy=y_i}$ shows that
 \eqref{freeam2} converging to zero
 is equivalent to show  that for any $1\leq i\leq p$,
$$
L_N^{(k)}(i)\deq\bE_{\hat U}\left[\frac{1}{N}\Tr\left(R_i\hat U^* \bar P_1(U^{*}A_{N}U, Y_{N})\hat U \bar Q_1(V^*B_NV,Y_N)\hat U^* \bar P_2(U^{*}A_{N}U, Y_{N})\hat U \cdots \bar Q_k(V^*B_NV,Y_N)\right)\right]
$$
goes to zero, where the expectation is over $\hat U$.

For any $N\times N$ matrix $X$, with slight abuse of notations, let $X_{ij}$ be the $N_i\times N_j$ submatrix from the eigenspace of  $\{Y_N=y_i\}$ into $\{Y_N=y_j\}$ given by the non trivial part of 
$R_i XR_j$.  By the definition of  $L_N^{(k)}(i)$, it equals
\begin{align*}
\begin{split}
&\frac{1}{N}\sum_{1\leq i_1,\ldots i_{2k-1}\leq p} \bE_{\hat U}\left[\tr\left(\hat U_i^*
\bar P_1(U^{*}A_{N}U, Y_{N})_{ii_1}\hat U_{i_1} \bar Q_1(V^*B_NV,Y_N)_{i_1 i_2} \hat U_{i_2}^*\bar P_2(U^{*}A_{N}U, Y_{N})_{i_2 i_3} \hat U_{i_3}\right.\right.\\
&\cdots  \left.\left.\hat U_{i_{2k-2}}^*
\bar P_k(U^{*}A_{N}U, Y_{N})_{i_{2k-2}i_{2k-1}}\hat U_{i_{2k-1}}\bar Q_k(V^*B_NV,Y_N)_{i_{2k-1},i}\right)\right].
\end{split}
\end{align*}
We claim that this quantity goes to zero as $N$ goes to infinity and prove it by induction over $k$. The expectation is over $\hat U$, and $U,V$ are fixed. In the case $k=1$ we see that to have a non zero contribution we need that $i_1=i$ by invariance and independence of the law of the $\hat U_i$. Then, since $\bE[\hat U_i({ij})\hat U^*_i({kl})]$ vanishes unless $i=l, j=k$ where it equals $1/N_i$. We find that $L_N^{(1)}(i)$ equals
\begin{align*}\begin{split}
L_N^{(1)}(i)
&=\frac{1}{N}\sum_{1\leq i_1\leq p} \bE_{\hat U}\left[\tr\left(\hat U_i^*
\bar P_1(U^{*}A_{N}U, Y_{N})_{ii_1}\hat U_{i_1} \bar Q_1(V^*B_NV,Y_N)_{i_1 i} \right)\right]\\
&=\frac{1}{N N_i} \tr\left(\bar P_1(U^{*}A_{N}U, Y_{N})_{ii}\right)\tr(\bar Q_1(V^*B_NV,Y_N)_{i i})
\end{split}\end{align*}
which vanishes uniformly because by recentering $ \tr(\bar P_1(U^{*}A_{N}U, Y_{N})_{ii})$ vanishes. 
Assuming $L_N^{(k)}(i)$ goes to zero up to any choices of polynomials, $1\leq i\leq p$ and for $k\leq \ell$, we show it does as well for $k=\ell+1$. To this hand we use the loop equation which implies that for any noncommutative function $p$ in terms of these submatrices $\{\hat U_i, \hat U_i^*\}_{1\leq i\leq p}$ and $\{P_1(U^{*}A_{N}U, Y_{N})_{ij}, \bar Q_1(V^*B_NV,Y_N)_{i j})\}_{1\leq i,j\leq p}$,
\begin{align}\begin{split}\label{e:loopU}
&\phantom{{}={}}\bE_{\hat U}\left[\frac{1}{N_i} \tr (\hat U_i p)\right]=\bE_{\hat U}\left[\sum_{p=p_1 \hat U_i^* p_2} \frac{1}{N_i}\tr(p_1)\frac{1}{N_i}\tr(p_2)-\sum_{p=p_1\hat U_ip_2}  \frac{1}{N_i}\tr(p_1\hat U_i)\frac{1}{N_i}\tr(p_2 \hat U_i)\right]\\
&= \sum_{p=p_1 \hat U_i^* p_2}\bE_{\hat U}\left[ \frac{1}{N_i}\tr(p_1)\right]\bE_{\hat U}\left[\frac{1}{N_i}\tr(p_2)\right]-\sum_{p=p_1\hat U_ip_2} \bE_{\hat U}\left[ \frac{1}{N_i}\tr(p_1\hat U_i)\right]\bE_{\hat U}\left[\frac{1}{N_i}\tr(p_2 \hat U_i)\right]+\oo(1),
\end{split}\end{align}
where in the second line we finally used concentration of measures with respect to $\{\hat U_i\}_{1\leq i\leq p}$ (and the fact that the $N_i$ go to infinity), see \cite[Section 4.4.2]{AGZ}. Using this equation in $L_N^{(\ell +1)}$ with the choice
$$p=\bar P_1(U^{*}A_{N}U, Y_{N})_{ii_1}\hat U_{i_1} \bar Q_1(V^*B_NV,Y_N)_{i_1 i_2} \cdots \hat U_{i_{2k-1}}\bar Q_k(V^*B_NV,Y_N)_{i_{2k-1},i},$$
we see that we get terms of lower orders for which we can use our induction hypothesis, and when $i_1=i$, the trace of the first term, which vanishes by recentering. Hence, we conclude that $L_N^{(\ell+1)}$ goes to zero.
\end{proof}

We can have a more general version of Proposition \ref{discr} by approximation, i.e. $\mu_Y$ is any probability measure with bounded $L_1$ norm. For any $p\geq 1$, we take a discrete approximation of $\mu_Y$ by $\mu^{(p)}_Y$ which 
contains $p$ discrete atoms each with probability $1/p$
\begin{align*}
\mu^{(p)}_Y=\frac{1}{p}\sum_{i=1}^p\delta_{y_i},\quad y_1\leq y_2\leq \cdots \leq y_p.
\end{align*}
Let $Y^{(p)}_{N}$ be a diagonal matrix with $N_{i}$ eigenvalues equal to $y_{i}$ for $1\leq i\leq p$ and $N=\sum_{i=1}^{p}N_{i}$. Moreover, the limit $N_{i}/N$ goes to $1/p$ so that the spectral measure of $Y^{(p)}_{N}$ goes to $\mu^{(p)}_{Y}$. From the discussion above, under the measure $\rd\mu^{N}_{A_N,B_N,Y_N^{(p)}}$
 the joint non-commutative law of $(U^{*}A_{N}U,V^{*}B_{N}V,Y_{N}^{(p)})$ converges towards the law $\tau_{\mu_A,\mu_B,\mu_{Y^{(p)}}}$ of $(\sfa,\sfb,\sfy^{(p)})$, where 
the joint law of $(\sfa,\sfy^{(p)})$ is given by $\tau_{\mu_A, \mu_{Y^{(p)}}}$ as in \eqref{e:tauF}, and analogously the joint law of $(\sfb,\sfy^{(p)})$ is given by $\tau_{\mu_B, \mu_{Y^{(p)}}}$. Moreover, 
the joint law of $(\sfa,\sfb)$ is free with amalgamation over $\sfy^{(p)}$.

The proof of the large deviation lower bound   follows from a continuity argument given by the following statement that the joint law $\tau_{\mu_A,\mu_B,\mu_{Y^{(p)}}}$ of $(\sfa,\sfb,\sfy^{(p)})$ converges to $\tau_{\mu_A,\mu_B,\mu_{Y}}$
 as $p$ goes to infinity.
This claim is a consequence of the continuity of $\mu_Y\mapsto\tau_{\mu_A,\mu_Y}$ of Theorem \ref{t:convl1}  and the  following general statement 
\begin{lemma}\label{c:continuityfa}
Given a sequence of $\{\sfa, \sfb, \sfy^{(p)}\}_{p\geq 0}$ non-commutative variables, such that $(\sfa,\sfb)$ are free with amalgamation over $\sfy^{(p)}$.
If as $p$ goes to infinity, the joint law of $(\sfa,\sfy^{(p)})$ and  $(\sfb,\sfy^{(p)})$
converges to the joint laws of $(\sfa,\sfy)$ and $(\sfb,\sfy)$, then $\{\sfa, \sfb, \sfy^{(p)}\}$ converges towards the law of $(\sfa,\sfb,\sfy)$, where $(\sfa,\sfb)$ are free over amalgamation over $\sfy$. 
\end{lemma}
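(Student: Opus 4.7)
The plan is to show that the joint moments of $(\sfa,\sfb,\sfy^{(p)})$ are given by a \emph{universal} formula in terms of the joint moments of the two marginals $(\sfa,\sfy^{(p)})$ and $(\sfb,\sfy^{(p)})$, a formula whose only ingredient is the amalgamation relation \eqref{freeam} itself. Once this is established, the convergence of both marginals will immediately imply the convergence of the joint distribution, and the limit will be identified as the unique object free with amalgamation over $\sfy$.

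\emph{Step 1 (universal reduction).} Consider an arbitrary word of the form
\[
W \;=\; R_0(\sfy^{(p)})\, X_1 \, R_1(\sfy^{(p)}) \, X_2\, R_2(\sfy^{(p)}) \cdots X_n\, R_n(\sfy^{(p)}),
\]
where the $X_i$ are alternately polynomials in $\sfa$ or $\sfb$ and the $R_i$ are rational test functions of $\sfy^{(p)}$ of the type appearing in $\mathcal F$. I would decompose each $\sfa$-block (resp.\ $\sfb$-block) $X_i$ as
$X_i = \bar X_i + \tau(X_i \mid \sfy^{(p)})$, where $\bar X_i$ has vanishing conditional expectation on $\sfy^{(p)}$. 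Expanding the product, the fully centered summand vanishes by \eqref{freeam}, while every remaining summand contains at least one factor $\tau(X_i \mid \sfy^{(p)})$, which is a function of $\sfy^{(p)}$ alone and thus absorbs into the neighboring $R_{i-1},R_i$. Induction on $n$ then expresses $\tau(W)$ as a polynomial in
\[
\big\{ \tau(R'_0(\sfy^{(p)}) \sfa^{k_1} R'_1(\sfy^{(p)}) \cdots ) \big\}, \qquad \big\{ \tau(R''_0(\sfy^{(p)}) \sfb^{k_1} R''_1(\sfy^{(p)}) \cdots ) \big\},
\]
whose coefficients depend only on the combinatorial structure of $W$. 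The conditional expectations $\tau(X_i\mid\sfy^{(p)})$ themselves are determined by marginal moments since they are characterized by $\tau\big(\tau(X_i\mid\sfy^{(p)})\, g(\sfy^{(p)})\big)=\tau(X_i g(\sfy^{(p)}))$ for sufficiently many test functions $g$.

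\emph{Step 2 (passage to the limit and identification).} By hypothesis, every marginal moment of $(\sfa,\sfy^{(p)})$ and $(\sfb,\sfy^{(p)})$ against test functions in $\mathcal F$ converges to the corresponding moment of $(\sfa,\sfy)$ and $(\sfb,\sfy)$. Since the universal formula of Step 1 is polynomial in these moments, $\tau(W(\sfa,\sfb,\sfy^{(p)}))$ converges to a limit that coincides with the value assigned by the same formula to the limiting marginals. Let $\tilde\tau$ denote the (classical, see Voiculescu \cite{voiam}) unique noncommutative distribution on the algebra generated by $\sfa,\sfb,\sfy$ whose marginals are $\tau_{\sfa,\sfy},\tau_{\sfb,\sfy}$ and that satisfies the freeness with amalgamation relation \eqref{freeam} with respect to $\sfy$. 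Applying Step 1 to $\tilde\tau$ gives exactly the same universal expression, so the limit of $\tau(W(\sfa,\sfb,\sfy^{(p)}))$ equals $\tilde\tau(W)$, which is what had to be proved.

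The delicate point is Step 1: one must verify that the recursive unfolding terminates and yields genuine polynomial expressions, and—if $\sfy$ is unbounded—that the conditional expectations $\tau(X_i\mid\sfy^{(p)})$ can be approximated uniformly by elements of $\mathcal F$ so that the convergence of marginals in the weak sense of Theorem \ref{t:convl1} is enough to push the limit through the formula. This last issue is handled exactly as in the proof of Theorem \ref{t:convl1}, by writing monomials in $\sfy^{(p)}$ as sums and products of resolvents $(z-\sfy^{(p)})^{-1}$, which are bounded elements of $\mathcal F$ and stable under the weak convergence provided by the assumption.
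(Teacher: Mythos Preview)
Your approach is the natural combinatorial one, and it is genuinely different from the paper's proof. The paper does not attempt any moment recursion at all: it forms the tracial ultraproduct of the sequence $(\sfa,\sfb,\sfy^{(p)})$ along an arbitrary ultrafilter $\omega$, invokes a result of Ueda stating that amalgamated freeness passes to ultraproducts, and concludes that every ultrafilter limit of the joint distribution is the (unique) law in which $(\sfa,\sfb)$ are free with amalgamation over $\sfy$ with the prescribed marginals; independence of the choice of $\omega$ then gives convergence. This bypasses entirely the question of how joint moments depend on marginal data.

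There is, however, a genuine gap in your Step~1. The recursive centering does \emph{not} express $\tau(W)$ as a polynomial in a \emph{fixed} family of scalar marginal moments. When you replace a block $X_i$ by $\tau(X_i\mid\sfy^{(p)})$, this is an element of the von Neumann algebra generated by $\sfy^{(p)}$, i.e.\ some bounded measurable function $\phi_p(\sfy^{(p)})$, and $\phi_p$ depends on $p$ in a way that is not captured by finitely many scalar moments. After absorption into the neighbouring $R$'s, the resulting ``marginal moment'' has test functions $R'_i$ that vary with $p$. Already for the simplest mixed moment one gets
\[
\tau(\sfa\,\sfb)=\tau\!\big(E(\sfa\mid\sfy^{(p)})\,E(\sfb\mid\sfy^{(p)})\big),
\]
an $L^2$ inner product of two conditional expectations. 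Convergence of the marginal laws gives only that $\tau\big(E(\sfa\mid\sfy^{(p)})\,g(\sfy^{(p)})\big)\to\tau\big(E(\sfa\mid\sfy)\,g(\sfy)\big)$ for each \emph{fixed} test function $g$, i.e.\ weak convergence of each factor; this is not enough to pass the bilinear pairing to the limit. Your last paragraph misidentifies the difficulty: the problem is not approximating $\tau(X_i\mid\sfy^{(p)})$ by resolvents, but that the reduction is simply not polynomial in any $p$-independent family of scalars, so the continuity you need does not follow from convergence of marginals against $\mathcal F$. Closing this gap would require an additional argument establishing strong (e.g.\ $L^2$) convergence of the conditional expectations, which is exactly what the ultraproduct machinery in the paper's proof packages away.
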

\begin{proof}
We can consider the ultraproduct $\{\sfa,\sfb,\sfy^{(p)}\}_{p\geq 0}$. Any limit point for the joint distributions of $(\sfa,\sfb,\sfy^{(p)})$ can be seen as a trace on this ultra-product equipped with a well chosen ultra-filter $\omega$. By \cite[Proposition 4]{Uedaamal}, the algebras
generated by  $(\sfa,\sfy)$ and $(\sfb,\sfy)$ are free by amalgation over $\sfy$, independently of the choice of the ultra-filter $\omega$ and hence of the limit point. This concludes the proof.

\end{proof}

\subsubsection{ Proof of the large deviation lower bound}
If $\mu\in \mathcal H^{\sf A+B}$ as defined in \eqref{e:defHAB}, we know by Lemma \ref{c:continuityfa}
 that we can approximate it 
by some $\mu_p$ which is the distribution of $\sfa+\sfb$ where
\begin{enumerate}
 \item $\sfa$ and $\sfb$ are free with amalgamation over $\sfy^{(p)}$;
 \item the law of $\sfa, \sfb, \sfy^{(p)}$ are given by $\mu_A, \mu_B,\mu_{Y^{(p)}}$, and $\mu_{Y^{(p)}}$ contains $p$ discrete atoms each with probability $1/p$
\begin{align*}
\mu^{(p)}_Y=\frac{1}{p}\sum_{i=1}^p\delta_{y_i},\quad y_1\leq y_2\leq \cdots \leq y_p.
\end{align*}
 \item the joint laws of $(\sfa, \sfy^{(p)})$ and $(\sfb, \sfy^{(p)})$ are given by $\tau_{\mu_A,\mu_{Y^{(p)}}}$ and $\tau_{\mu_B,\mu_{Y^{(p)}}}$ respectively.
 \end{enumerate}

Let $Y^{(p)}_{N}$ be a diagonal matrix with $N_{i}$ eigenvalues equal to $y_{i}$ for $1\leq i\leq p$ and $N=\sum_{i=1}^{p}N_{i}$. Moreover, the limit $N_{i}/N$ goes to $1/p$ so that the spectral measure of $Y^{(p)}_{N}$ goes to $\mu^{(p)}_{Y}$. Then for $p$ large enough, such that $\bB_{\delta/2}(\mu_p)\subset \bB_\delta(\mu)$ we have
\begin{align*}
\bP_N(\bB_\delta(\mu))&\geq \bP_N(\bB_{\delta/2}(\mu_p))\\
&=\bE_{U}\left[\bm 1(\mu_N\in \bB_{\delta/2}(\mu_p))\frac{\int \exp\{(\beta/2)N\Tr(Y_N^{(p)} V(A_N+UB_NU^*)V^*)\}\rd V}{\int \exp\{(\beta/2)N\Tr(Y_N^{(p)} V(A_N+UB_NU^{*}))V^*\}\rd V}
\right]\\
&\geq  e^{\beta N^{2}(-I( \mu_{Y^{(p)}},\mu_p)+I(\mu_A,\mu_{Y^{(p)}})+I(\mu_B,\mu_{Y^{(p)}})+\oo(N)}\mu_{A_N,B_N,Y_N^{(p)}}( \mu_N\in \bB_{\delta/2}(\mu_p))\\
&\geq  e^{-\beta N^{2} \{ I( \mu_{Y^{(p)}},\mu)-I(\mu_A,\mu_{Y^{(p)}})-I(\mu_B,\mu_{Y^{(p)}})\}+\oo(N)}\geq e^{\beta N^2( \mathcal I^{\sf A+B}(\mu)+o(1))},
\end{align*}
where in the last line we used that $\mu_{A_N,B_N,Y_N^{(p)}}( \mu_N\in \bB_{\delta/2}(\mu_p))$ goes to one by Proposition \ref{discr} and the continuity of spherical integrals of Proposition \ref{p:continuity}. This finishes the proof for the large deviation lower bound.

\subsubsection{Improved large deviations upper bound}

A drawback of our large deviation bound is that we do not know how to prove that the rate function $\mathcal I^{\mathsf{A+B}}$ has a unique minimizer at $\mu_{A}\boxplus\mu_{B}$ (and whether this is true). To circumvent this fact we can improve our large deviation upper bound as follows. From \cite{CGM}, there exists $\varepsilon>0$ such that for any sequence of Hermitian matrices $Y_N$ such that $\fK\|Y_N\|_{\infty}<\varepsilon$ and with spectral measure converging towards $\mu_{Y}$, we know that the following limit exists :
\begin{align*}
\lim_{N\rightarrow\infty}\frac{1}{\beta N^{2}}\log\mathbb E\left[\frac{1}{\int e^{\frac{\beta N}{2}\Tr(Y_NV(A_N+UB_NU^{*})V^{*})} \rd V}\right]=-I(\mu_Y,\mu_{A},\mu_{B}).
\end{align*}

\begin{theorem}
Let $A_N, B_N$ be a sequence of deterministic self-adjoint matrices, such that their spectral measures $\hat \mu_{A_N}, \hat \mu_{B_N}$ converge weakly towards $\mu_A, \mu_B$ respectively, and there exists a constant $\fK>0$, such that $\supp\hat \mu_{A_N}, \supp \hat\mu_{B_N}\subset [-\fK,\fK]$,
then the empirical eigenvalue distribution $\mu_N$ of $A_N+UB_NU^{*}$ satisfies a large deviation upper bound with rate function 
$$\tilde{\mathcal I}^{\mathsf{A+B}}(\mu)=\max \{\mathcal I^{\mathsf{A+B}}(\mu),\mathcal I_{-}^{\mathsf{A+B}} (\mu)\},$$ where 
 $$\mathcal I_{-}^{\mathsf{A+B}} (\mu)=\sup_{ \supp(\mu_{Y})\subset \atop[-\varepsilon/\fK, \varepsilon/\fK]}\{I(\mu_Y,\mu_{A},\mu_{B})-I(\mu_Y,\mu)\},$$
where the supremum is taken over probability measures $\mu_{Y}$ with support in $[-\varepsilon/\fK, \varepsilon/\fK]$. 
Moreover, $\tilde{\mathcal I}^{\mathsf{A+B}}(\cdot)$ is a good rate function and vanishes only at $\mu_{A}\boxplus\mu_{B}$.
\end{theorem}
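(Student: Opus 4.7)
The plan is to mirror the tilting argument that produced the rate $\mathcal I^{\mathsf{A+B}}$ in \eqref{e:upboundkk}, but to tilt by the \emph{reciprocal} of a spherical integral so as to bring in the Laplace exponent of \cite{CGM} for $\bE[1/X_N]$ in place of that for $\bE[X_N]$. Fix $\mu_Y$ with $\supp\mu_Y\subset[-\varepsilon/\fK,\varepsilon/\fK]$ and a sequence of diagonal matrices $Y_N$ with $\|Y_N\|_\infty<\varepsilon/\fK$ whose spectral measures converge to $\mu_Y$ in Wasserstein distance. Set
$$X_N(U):=\int e^{\frac{\beta N}{2}\Tr(Y_N V(A_N+UB_NU^*)V^*)}\,\rd V,$$
and note that, by unitary invariance of the $V$-integral, $X_N(U)$ depends on $U$ only through the empirical eigenvalue distribution $\mu_N$ of $A_N+UB_NU^*$.

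Starting from the identity $\bP_N(\bB_\delta(\mu))=\bE[\bm 1(\mu_N\in\bB_\delta(\mu))\,X_N\,X_N^{-1}]$, the convergence of spherical integrals from Theorem~\ref{t:exist}, upgraded to uniform convergence on the Wasserstein-compact set $\cM([-2\fK,2\fK])$ via the continuity of $\mu\mapsto I(\mu_Y,\mu)$ of Proposition~\ref{p:continuity}, yields the pointwise bound $\bm 1(\mu_N\in\bB_\delta(\mu))\cdot X_N\leq e^{\beta N^2(I(\mu_Y,\mu)+o_\delta(1)+o_N(1))}$. Plugging this into the identity above gives
$$\bP_N(\bB_\delta(\mu))\leq e^{\beta N^2(I(\mu_Y,\mu)+o_\delta(1)+o_N(1))}\,\bE[X_N^{-1}],$$
and combining with the cited \cite{CGM} limit
$$\lim_{N\to\infty}\frac{1}{\beta N^2}\log\bE[X_N^{-1}]=-I(\mu_Y,\mu_A,\mu_B),$$
valid precisely under the norm restriction on $Y_N$, yields after letting $N\to\infty$ and then $\delta\to 0$
$$\limsup_{\delta\to 0}\limsup_{N\to\infty}\frac{1}{\beta N^2}\log\bP_N(\bB_\delta(\mu))\leq-(I(\mu_Y,\mu_A,\mu_B)-I(\mu_Y,\mu)).$$
Optimizing over admissible $\mu_Y$ gives the bound with rate $\mathcal I^{\mathsf{A+B}}_-(\mu)$, and combining with the already-established bound with rate $\mathcal I^{\mathsf{A+B}}$ in \eqref{e:LDPu} produces the upper bound with rate $\tilde{\mathcal I}^{\mathsf{A+B}}=\max\{\mathcal I^{\mathsf{A+B}},\mathcal I^{\mathsf{A+B}}_-\}$.

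For the remaining properties: non-negativity of $\mathcal I^{\mathsf{A+B}}_-$ follows by choosing $\mu_Y=\delta_0$, which makes both spherical integrals vanish; lower semi-continuity is inherited since $\mathcal I^{\mathsf{A+B}}_-$ is a supremum of continuous functions (Proposition~\ref{p:continuity}), and goodness then follows from compactness of $\cM([-2\fK,2\fK])$ in the weak topology. That $\tilde{\mathcal I}^{\mathsf{A+B}}(\mu_A\boxplus\mu_B)=0$ is immediate from Voiculescu's theorem, which asserts that $\mu_N$ converges weakly almost surely to $\mu_A\boxplus\mu_B$, forcing $\bP_N(\bB_\delta(\mu_A\boxplus\mu_B))\to 1$. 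For uniqueness of the zero, use the identity $I(\mu_Y,\mu_A,\mu_B)=I(\mu_Y,\mu_A\boxplus\mu_B)$ for small $\mu_Y$ (which follows from exponential concentration of $\mu_N$ at $\mu_A\boxplus\mu_B$ applied to the Laplace-type quantity $\bE[1/X_N]$), reducing $\mathcal I^{\mathsf{A+B}}_-(\mu)=0$ to the statement that $I(\mu_Y,\mu)\geq I(\mu_Y,\mu_A\boxplus\mu_B)$ for all admissible $\mu_Y$. A Taylor expansion in a scale parameter $t$, with $\mu_Y$ replaced by $t\mu_Y$ and use of the derivative formulas of Proposition~\ref{p:derI}, shows that equality at $t=0$ together with the one-sided inequality forces both first and second moments of $\mu$ to match those of $\mu_A\boxplus\mu_B$; the opposite-sign Taylor coefficients are controlled by $\mathcal I^{\mathsf{A+B}}(\mu)=0$, and iterating this matching over all admissible directions forces $\mu=\mu_A\boxplus\mu_B$.

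The principal technical obstacle is the uniform upper bound $X_N(U)\leq e^{\beta N^2(I(\mu_Y,\mu)+o_\delta(1)+o_N(1))}$ on the event $\{\mu_N\in\bB_\delta(\mu)\}$: Theorem~\ref{t:exist} delivers the bound pointwise in $U$, but the remainder must be uniform. Because $X_N(U)$ depends on $U$ only through $\mu_N$, this reduces to upgrading pointwise to uniform convergence of the spherical integral on the Wasserstein-compact set of measures supported in $[-2\fK,2\fK]$, which can be extracted from the standard concentration of the spherical integral around its large-$N$ limit (as in \cite{GuMa05}) combined with continuity of $I(\mu_Y,\cdot)$. A secondary but essential point is the identification $I(\mu_Y,\mu_A,\mu_B)=I(\mu_Y,\mu_A\boxplus\mu_B)$ for small $\mu_Y$, used in the characterization of the zero; this uses that $\bE[1/X_N]$ is asymptotically dominated by the typical configuration $\mu_N\approx\mu_A\boxplus\mu_B$, together with the continuity of the spherical integral in its measure arguments.
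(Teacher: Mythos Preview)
Your derivation of the large deviation upper bound is essentially identical to the paper's: tilt by $X_N/X_N$, bound $X_N$ on the event $\{\mu_N\in\bB_\delta(\mu)\}$ via the continuity of $I(\mu_Y,\cdot)$ from Proposition~\ref{p:continuity}, and invoke the \cite{CGM} limit for $\bE[1/X_N]$. The goodness and nonnegativity arguments also match.

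For the uniqueness of the zero you have the right structural idea---combine $\mathcal I^{\mathsf{A+B}}(\mu)=0$ and $\mathcal I^{\mathsf{A+B}}_-(\mu)=0$ to get two-sided control---but the execution is imprecise and points to the wrong tools. First, the identity $I(\mu_Y,\mu_A,\mu_B)=I(\mu_Y,\mu_A\boxplus\mu_B)$, while plausible, requires a concentration-of-measure argument for $\log X_N$ at scale $N^2$ that you do not carry out, and it is in any case unnecessary. Second, Proposition~\ref{p:derI} computes derivatives of $I$ with respect to additive perturbations of $T_{\mu_Y}$ (yielding conditional expectations under $\tau_{\mu_A,\mu_Y}$), which is not the small-scale expansion of $I(t_\#\mu_Y,\mu)$ that you need. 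Third, ``match first and second moments, then iterate over directions'' is not an argument: you do not explain which functionals of $\mu$ the successive Taylor coefficients are, nor why varying $\mu_Y$ over the admissible set pins them all down.

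The paper's route is more direct and avoids all three issues. It takes the specific two-point tilt $\nu_\tau=(1-\tau)\delta_0+\tau\delta_\theta$ and uses the explicit first-order expansions from \cite{CGM}, valid for $\tau,\theta$ small:
\[
I(\nu_\tau,\mu)=\tau\!\int_0^\theta R_\mu(t)\,\rd t+\OO(\tau^2),\qquad
I(\nu_\tau,\mu_A,\mu_B)=\tau\!\int_0^\theta\bigl(R_{\mu_A}(t)+R_{\mu_B}(t)\bigr)\,\rd t+\OO(\tau^2).
\]
Plugging the first into $H^{\mathsf{A+B}}_\mu(\nu_\tau)$ and combining with the second in $I(\nu_\tau,\mu_A,\mu_B)-I(\nu_\tau,\mu)$ yields
\[
\tilde{\mathcal I}^{\mathsf{A+B}}(\mu)\ \ge\ \tau\,\Bigl|\int_0^\theta\bigl(R_\mu-R_{\mu_A}-R_{\mu_B}\bigr)(t)\,\rd t\Bigr|+\OO(\tau^2),
\]
so dividing by $\tau$ and sending $\tau\to0$ forces $R_\mu=R_{\mu_A}+R_{\mu_B}$ on a neighborhood of the origin, hence $\mu=\mu_A\boxplus\mu_B$. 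The $R$-transform is exactly the coordinate in which free convolution linearizes, which is why this one-parameter family of tilts suffices; your moment-matching language obscures this and leaves the identification incomplete.
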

\begin{proof}
The same reasoning as for the proof of Theorem \ref{t:AUBU} shows that
\begin{align*}
\bP_N(\bB_\delta(\mu))
&=\bE\left[\bm 1(\mu_N\in \bB_\delta(\mu))\frac{\int \exp\{(\beta/2)N\Tr(Y_N V(A_N+UB_NU^{*})V^*)\}\rd V}{\int \exp\{(\beta/2)N\Tr(Y_NV(A_N+UB_NU^{*})V^*)\}\rd V}\right]\\
&\leq  e^{\beta N^{2}(I( \mu_Y,\mu)+\oo_\delta(1)+\oo_N(1))}\bE\left[\frac{1}{\int \exp\{(\beta/2)N\Tr(Y_NV(A_N+UB_NU^{*})V^*)\}\rd V}\right]\\
&=e^{\beta N^{2}(I( \mu_Y,\mu)-I(\mu_Y, \mu_A,\mu_B)+\oo_\delta(1)+\oo_N(1))},
\end{align*}
which gives the large deviation upper bound. Hence, the only thing to show is that $\tilde{\mathcal I}^{\mathsf{A+B}}(\cdot)$ is non negative and vanishes only at $\mu_{A}\boxplus\mu_{B}$. Moreover, if  $\mu_Y$ has the distribution of $\varepsilon \tilde Y$ with $\tilde Y$ uniformly bounded with law $\tilde \mu_Y$ and $\varepsilon>0$ small enough, by \cite{CGM},  we see that $I(\mu_Y, \mu_{A},\mu_{B})
$ is an absolutely  converging series in $\varepsilon$ whose coefficients only depends on the moments of $\tilde \mu_Y, \mu_{A},\mu_{B}$.  As a consequence, it is a continuous function of these compactly supported measures. It clearly follows that $\tilde{\mathcal I}^{\mathsf{A+B}}(\cdot)$ is a good rate function. To show that it vanishes only at $\mu_{A}\boxplus\mu_{B}$, we use that from  \cite[p. 38]{CGM}, it is proven that if we take 
$\nu_\tau=(1-\tau)\delta_0+\tau\delta_\theta$, with $\tau, \theta$ small enough, then
%
\begin{align*}
I(\nu_{\tau},\mu)=\tau\int_{0}^{\theta} R_{\mu}(t)\rd t +\OO(\tau^{2}),
\end{align*}
where $R_\mu(\cdot)$ is the $R$-transform of the measure $\mu$.
Hence,
\begin{align*}H^{\mathsf{A+B}}_{\mu}(\nu_{\tau})= \tau\int_{0}^{\theta} (R_{\mu}(t)-R_{\mu_{A}}(t)-R_{\mu_{B}}(t)) \rd t +\OO(\tau^{2}),
\end{align*}
which implies that 
\begin{align*}\mathcal I^{\mathsf{A+B}}(\mu)\geq \tau\int_{0}^{\theta} (R_{\mu}(t)-R_{\mu_{A}}(t)-R_{\mu_{B}}(t)) \rd t +\OO(\tau^{2}).
\end{align*}
This implies that if $\mathcal I^{\mathsf{A+B}}(\mu)=0$ then for sufficiently small $\theta$
\begin{align}
\label{e:upb}\int_{0}^{\theta} (R_{\mu}(t)-R_{\mu_{A}}(t)-R_{\mu_{B}}(t)) \rd t \leq 0,
\end{align}
which is still verified by many probability measures. Next, we use the symmetrization to show that $\tilde{\cal I}^{\mathsf{A+B}}=0$ implies that the equality holds in \eqref{e:upb}. In fact for $\theta,\tau$ small enough
\begin{align*}
I(\nu_\tau,\mu_{A},\mu_{B})= \tau\int_{0}^{\theta} (R_{\mu_{A}}(t)+R_{\mu_{B}}(t)) \rd t+\OO(\tau^{2}).\end{align*}
Hence for sufficiently small $\tau, \theta$ we have
\begin{align*}
\tilde{\mathcal  I}^{\mathsf{A+B}}(\mu) \geq  \tau  \left|\int_{0}^{\theta} (R_{\mu}(t)-R_{\mu_{A}}(t)-R_{\mu_{B}}(t)) \rd t \right|+\OO(\tau^{2}).
\end{align*}
By sending $\tau$ to zero, we have that $\tilde{\cal I}^{\mathsf{A+B}}(\mu)=0$ implies that $R_\mu(\theta)=R_{\mu_A}(\theta)+R_{\mu_B}(\theta)$ for all $\theta$ small enough, which further implies that $\mu=\mu_A\boxplus \mu_B$.
We conclude that $\tilde{\mathcal I}^{\mathsf{A+B}}(\cdot)$  vanishes only at $\mu_{A}\boxplus\mu_{B}$.

\end{proof}

\section{Large Deviation Principle for Kostka numbers}

In this section, we use the spherical integral to derive the large deviation estimates of the Kostka numbers and prove Theorem \ref{t:Kostka}. From the definition \eqref{e:defKostka} of Kostka numbers,
\begin{align}
K_{\bmla_N\bmeta_N}\leq \frac{S_{\bmla_N}(e^{Y_N})}{m_{\bmeta_N}(e^{Y_N})},\quad e^{Y_N}=(e^{y_1}, e^{y_2}, \cdots, e^{y_N}),\label{kostkaub}
\end{align}
where $Y_N =\diag\{y_1,y_2,\cdots,y_N\}$ is a sequence of diagonal matrices, with $y_1\geq  y_2\geq \cdots\geq y_N$  and spectral measure converging in Wasserstein distance \eqref{e:wd} towards $\mu_Y$ (we can take $y_1, y_2, \cdots, y_N$ the $N$-quantiles of $\mu_Y$).
For the monomial symmetric function $$m_{\bm\eta_N}(\bmx_N)=\sum_{\bma_N\sim\bm\eta_N}x_1^{a_1}x_2^{a_2}\cdots x_N^{a_N},$$
where $\bma_N=(a_1,a_2,\cdots, a_N)\sim\bm \eta_N=(\eta_1\geq \eta_2\geq \cdots\geq \eta_N)$, if the parts of $\bma_N$ is a rearrangement of the parts of $\bm\eta_N$. We easily see that if
$\bmx_N=e^{Y_N}$, $Y_{N}=\diag\{y_1, y_2,\cdots,y_N\}$ with $y_1\geq y_2\geq\cdots \geq y_N$, then 
\begin{equation}\label{gf}
e^{\eta_1 y_1+\eta_2y_2+\cdots+\eta_N y_N}\leq m_{\bm\eta_N}(\bmx_N)=m_{\bm\eta_N}(e^{Y_N})\leq N!e^{\eta_1 y_1+\eta_2y_2+\cdots+\eta_N y_N}.
\end{equation}
We recall from \eqref{e:defm} that $m[\bmeta_N]=\frac{1}{N}\sum \delta(\frac{\eta_{i}+N-i}{N})$ where $i\rightarrow \eta_{i}+N-i$ is non-decreasing. Hence, if $m[\bmeta_N]$ goes to $(T_\mu)_\#(\unif[0,1])$, $\frac{1}{N}\sum\delta(\frac{\eta_i}{N})$ goes to $(T_\mu-x)_\# (\unif[0,1])$.
This implies, with \eqref{gf}, that
\begin{align}\label{e:msf}
\frac{1}{N^2}\log m_{\bmeta_N}(e^{Y_N})
=\int (T_\mu-x)T_{\mu_Y}\rd x+\oo_{\delta}(1)+\oo_N(1),
\end{align}
if $m[\bmeta_N]\in \mathbb B_{\delta}(\mu)$ for some probability measure $\mu$ such that $T_{\mu}(x)\geq x$. 
The large deviation upper bound follows from combining the asymptotics of Schur symmetric polynomials \eqref{e:logS}, \eqref{kostkaub},  and \eqref{e:msf},
\begin{align}\begin{split}\label{e:LDPu3}
&\limsup_{\delta\rightarrow 0}\limsup_{N\rightarrow \infty}\frac{1}{N^2}\log \sup_{m[\bmeta_N]\in \bB_\delta(\mu)}K_{\bmla_N\bmeta_N}\leq  -H^{\mathsf{K}}_{\mu}(\mu_Y),\\
&H^{\mathsf{K}}_\mu(\mu_Y)=\int (T_\mu-x)T_{\mu_Y}\rd x-J(\mu_Y,m_\bmla),
\end{split}\end{align}
where the functional $J(\cdot,\cdot)$ is defined in \eqref{e:logS}.
Taking the  infimum over $\mu_Y\in \cM$ on the right hand side of \eqref{e:LDPu3}  finishes the proof of the large deviation upper bound in Theorem \ref{t:Kostka}.
It is known that the Kostka number $K_{\bmla_N \bmmu_N}$ is positive if and only if $\bmla_N$ and $\bmeta_N$ are of the same size, and $\bmla_N$ is larger than $\bmeta_N$ in dominance order:
\begin{align}\label{e:Kregion}
\lambda_1+\la_2+\cdots+\la_i\geq \eta_1+\eta_2+\cdots+\eta_i, \quad 1\leq i\leq N.
\end{align}
We recall  from Theorem \ref{t:Kostka} that $\mathcal I^{\mathsf K}(\mu)=\sup_{\nu\in \cM}H^{\mathsf K}_\mu(\nu)$. 
It turns out that the rate function $\cal I^{\mathsf K}(\mu)$  equals $+\infty$ outside the admissible region $\cA_{m_\bmla}$ described by the limit of \eqref{e:Kregion}:
 \begin{align}\label{e:ami1}
 \int_0^1 (T_{\mu}-T_{m_\bmla})(x)\rd x= 0,\quad \int_y^1 (T_{\mu}-T_{m_\bmla})(x)\rd x\leq 0 \quad \forall y\in [0,1].
 \end{align}
In fact, from the expression \eqref{e:logS} of $J( \mu_Y, m_\bmla)$, we have
\begin{align}\label{e:JLY}
J( \mu_{LY}, m_\bmla)
=2I(\mu_{LY}, m_\bmla)-L\int xT_{\mu_{Y}}\rd x+\OO(\log L),
\end{align}
as $L\rightarrow \infty$.
Using the estimate \eqref{e:JLY} as input, Item 2 in Proposition \ref{p:rateIK} can be proven by a similar argument as in Item 2 in Proposition \ref{p:domain1}. For other parts of Proposition \ref{p:rateIK}, the proofs are essentially the same as those in Proposition \ref{p:domain1}, so we omit them.
\begin{proposition}\label{p:rateIK}
Under the assumptions  and notations of Theorem \ref{t:Kostka}, the function $H_\mu^{\mathsf K}(\cdot)$ and rate function $\mathcal I^{\mathsf K}(\cdot)$  satisfy:
\begin{enumerate}
\item
For $\mu$ satisfies \eqref{e:limitadm}, $H_\mu^{\mathsf K}(\cdot)$ is { upper semi-continuous in the weak topology on $\{\nu\in \cM: \nu(|x|)\leq \fR\}$ for any $\fR>0$}.
\item If $\int_0^1 (T_\mu(x)-T_{m_\bmla}(x))\rd x\neq 0$, or there exists some $0<y<1$ such that 
\begin{align*}
\int_y^1(T_\mu(x)-T_{m_\bmla}(x))\rd x>0, 
\end{align*}
then $\mathcal I^{\mathsf K}(\mu)=+\infty$.

\item If there exists some small constant $\fc>0$
\begin{align}\begin{split}\label{e:domainK}
&\int_y^1 \left(T_{\mu}(x) -T_{\mu_{\bmla}}(x)\right)\rd x\leq
\left\{\begin{array}{ll}
 -\fc y,      &\text{ for } 0\leq y\leq \fc,\\
 -\fc,         &\text{ for } \fc\leq y\leq 1-\fc,\\
 -\fc(1-y),  &\text{ for } 1-\fc\leq y\leq 1.
 \end{array}\right.\\
\end{split}\end{align}

 then
$\mathcal I^{\mathsf K}(\mu)=H^{\mathsf K}_\mu(\nu^*)<\infty$ for some probability measure $\nu^{*}$ such that  $\nu^*(|x|)<\infty$. 

\item The rate function $\mathcal I^{\mathsf K}(\cdot)$ is lower semicontinuous on $\cM^{\rm b}([0, \fK])$ and  achieves its minimum value only at the uniform measure $\unif[\int x\rd m_\bmla-1/2, \int x\rd m_\bmla+1/2]$.

\item For any measure $\mu$ in the admissible set  $\mathcal A_{m_\bmla}$ as defined in \eqref{e:ami1}, there exists a sequence of measures $\mu^{\varepsilon}$ inside the region as given in \eqref{e:domainK}, converging to $\mu$ in the weak topology and 
$\lim_{\varepsilon\rightarrow 0}\mathcal I^{\mathsf D}(\mu^{\varepsilon})=\mathcal I^{\mathsf D}(\mu)$.
\end{enumerate}
\end{proposition}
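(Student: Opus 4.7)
The proof adapts the strategy of Proposition \ref{p:domain1} to the Kostka setting, with $I(\cdot,\mu_B)$ replaced by $J(\cdot,m_\bmla)$ and the ambient space $\cM([-\fK,\fK])$ replaced by $\cM^{\rm b}([0,\fK])$. The key new input is the asymptotic \eqref{e:JLY}, which yields
\begin{align*}
H^{\mathsf K}_\mu(\mu_{LY}) = L\int(T_\mu - T_{m_\bmla})\,T_{\mu_Y}\,\rd x + \OO(\log L), \qquad L \to \infty.
\end{align*}
The additional terms in $J$ beyond $2I$ -- namely the constant $\tfrac{1}{2}\Sigma(m_\bmla)+\tfrac{3}{4}$ and the symmetric double integral $-\tfrac{1}{2}\int\int \log\bigl(\frac{e^x-e^y}{x-y}\bigr)\rd\nu(x)\rd\nu(y)$ -- are either constants or continuous in $\nu$ along sequences with uniformly bounded first moment, and they contribute only at subleading order after dilation. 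Hence the dilation arguments driving Items 2 and 3 of Proposition \ref{p:domain1} transfer directly.

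For Item 1, I would truncate $\nu$ to $\nu^\delta=\nu\bm 1(|x|\le 1/\delta)+\delta_0\int_{|x|>1/\delta}\rd\nu$ and combine Proposition \ref{p:spbound2} with the admissibility \eqref{limSHK} to obtain $H^{\mathsf K}_\mu(\nu)\le H^{\mathsf K}_\mu(\nu^\delta)+C_\fK\oo_\delta(1)$, exactly as in Claim \ref{c:HDbound}; upper semi-continuity then follows by the same passage to the limit. Items 2 and 3 use the dilation test measures $\mu_Y=\delta_1$ and $\mu_Y=y\delta_0+(1-y)\delta_{1/(1-y)}$ together with the majorant $\tilde H^{\mathsf K}_\mu(\nu)=\int(T_\mu-x)T_\nu\,\rd x-\tfrac{1}{2}\int T_\nu T_{m_\bmla}\,\rd x+(\text{uniformly bounded})$ obtained from Proposition \ref{p:spbound}, and the integration-by-parts steps \eqref{jh}--\eqref{e:yconstraint} transfer unchanged.

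For Item 4, lower semi-continuity is immediate from the linearity of $\mu\mapsto H^{\mathsf K}_\mu(\nu)$ in $T_\mu$ together with upper semi-continuity in $\nu$. The minimum characterization is the main new ingredient: substituting $\nu_\varepsilon=\varepsilon_\#\nu$ into $H^{\mathsf K}_{\mu^*}$ at a minimizer $\mu^*$ and using the small-$\varepsilon$ expansion $I(\nu_\varepsilon,m_\bmla)=(\varepsilon/2)\int x\,\rd m_\bmla\int x\,\rd\nu+\OO(\varepsilon^2)$ from \cite{CGM}, together with the Taylor expansion $\log\bigl(\frac{e^{\varepsilon x}-e^{\varepsilon y}}{\varepsilon(x-y)}\bigr)=\OO(\varepsilon)$, one finds (writing $c=\int x\,\rd m_\bmla$)
\begin{align*}
H^{\mathsf K}_{\mu^*}(\nu_\varepsilon)=-\tfrac{1}{2}\Sigma(m_\bmla)-\tfrac{3}{4}+\varepsilon\int\bigl(T_{\mu^*}(x)-x-c\bigr)T_\nu(x)\,\rd x+\OO(\varepsilon^2).
\end{align*}
Letting $\varepsilon\to 0$ and varying $\nu$ over bounded monotone test functions compatible with the density constraint $\mu^*\in\cM^{\rm b}$ forces $T_{\mu^*}(x)-x$ to be almost surely constant; the mass-one and mean-$c$ constraints (the latter imposed by \eqref{limSHK}) then pin this constant to $c-1/2$, so $\mu^*=\unif[c-1/2,c+1/2]$. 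Item 5 is obtained by the same boundary-interpolation construction as in Item 5 of Proposition \ref{p:domain1}, adapted to preserve $\mu^\varepsilon\in\cM^{\rm b}$ when constructing the perturbation near the endpoints $0$ and $1$.

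The principal difficulty is the uniqueness part of Item 4: precisely identifying the additive shift $-1/2$ in the location of the uniform minimizer requires carefully tracking how the admissibility \eqref{limSHK} and the density bound $\mu^*\in\cM^{\rm b}([0,\fK])$ interact with the first-order variation derived above, and ruling out other constant offsets by combining the mean constraint with the saturation of the density bound on an interval of length $1$.
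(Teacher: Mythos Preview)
Your approach is correct and matches the paper's, which simply asserts that the result follows ``by a similar argument as in Proposition \ref{p:domain1}'' with the estimate \eqref{e:JLY} as the only new input.

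One small correction in your Item 4 expansion: the term $-\tfrac{1}{2}\int\!\!\int\log\bigl(\tfrac{e^{\varepsilon x}-e^{\varepsilon y}}{\varepsilon(x-y)}\bigr)\,\rd\nu(x)\,\rd\nu(y)=-\tfrac{\varepsilon}{2}\int x\,\rd\nu+O(\varepsilon^2)$ contributes an additional $+\tfrac{1}{2}$ to the linear coefficient, so the correct first-order term is $\varepsilon\int\bigl(T_{\mu^*}(x)-x-c+\tfrac{1}{2}\bigr)T_\nu(x)\,\rd x$. With this in hand the argument of Proposition \ref{p:domain1} Item 4 transfers verbatim: the one-sided inequality together with the mean constraint $\int(T_{\mu^*}-x)\,\rd x=c-\tfrac{1}{2}$ forces $T_{\mu^*}(x)-x\equiv c-\tfrac{1}{2}$ directly, and no separate appeal to the density bound is needed to identify the shift.
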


\subsection{Large deviation lower bound}

In this section we prove the large deviation lower bound in Theorem \ref{t:Kostka}. It follows from combining the following Propositions \ref{p:unique3} and \ref{p:lowerbound3}, and noticing that the number of partitions with at most $N$ rows in $\bB_\delta(\mu)$ is at most $\exp\{\OO(N\log N)\}$.

\begin{proposition}\label{p:unique3}
We assume the assumptions of Theorem \ref{t:Kostka}.
For any probability measure $\mu_Y\in \cM$,  there exists a unique $\mu\in \cM^{\rm b}([0,\fK])$ such that 
\begin{align}\label{e:choicemuY}
\mu_Y\in \arg\sup_{\nu\in \cM} H^{\mathsf K}_\mu(\nu), \quad H^{\mathsf K}_\mu(\nu)=\int  T_{\nu} (T_\mu-x)\rd x-J(\nu, m_\bmla).
\end{align}
and $T_\mu$ is uniquely determined by $T_Y$ by
\begin{align*}
T_\mu(x)=\tau({\mathsf m}_\bmla |\mathsf y)\circ T_Y(x)+x+\int \left(\frac{1}{T_Y(x)-T_Y(y)}-\frac{1}{1-e^{T_Y(y)-T_Y(x)}}\right)\rd y.
\end{align*}
Here, $\tau({\mathsf m}_\bmla |\mathsf y)$ is the conditional expectation of ${\mathsf m}_\bmla$ knowing $\mathsf y$ under the non-commutative distribution $\tau$ uniquely associated to $(m_\bmla,\mu_Y)$ as in Theorem \ref{t:convl1}.
\end{proposition}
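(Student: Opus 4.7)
The plan is to follow the same two-step skeleton used for Proposition \ref{p:unique1}: first establish existence by a contradiction argument that uses the large deviation upper bound \eqref{e:LDPu3} together with the Cauchy-type identity $S_{\bmla_N}(e^{Y_N})=\sum_{\bmeta_N}K_{\bmla_N\bmeta_N}m_{\bmeta_N}(e^{Y_N})$, and then compute the Euler--Lagrange equation for the concave functional $\nu\mapsto H^{\mathsf K}_\mu(\nu)$ to read off the stated formula and uniqueness.

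For existence, I would prove an analog of Proposition \ref{c:expbound1} in the Kostka setting: if $\mu\in\cM^{\rm b}([0,\fK])$ satisfies
\[
\sup_{\nu\in\cM}H^{\mathsf K}_\mu(\nu)>H^{\mathsf K}_\mu(\mu_Y),
\]
then for some $\delta>0$ there is $c(\delta)>0$ with
\[
\sum_{m[\bmeta_N]\in\bB_\delta(\mu)}K_{\bmla_N\bmeta_N}\,\frac{m_{\bmeta_N}(e^{Y_N})}{S_{\bmla_N}(e^{Y_N})}\leq e^{-c(\delta)N^2}.
\]
This is a direct consequence of \eqref{e:LDPu3}, \eqref{e:msf} and the asymptotics \eqref{e:logS}. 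Since $\cM^{\rm b}([0,\fK])$ is compact in the weak topology, any finite open cover forces the total sum over $\bmeta_N$ (up to the $e^{O(N\log N)}$ cardinality of partitions) to be strictly less than $1$, contradicting the Cauchy identity $\sum_{\bmeta_N}K_{\bmla_N\bmeta_N}m_{\bmeta_N}(e^{Y_N})/S_{\bmla_N}(e^{Y_N})=1$. Hence at least one maximizer $\mu$ must exist.

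For uniqueness and the explicit formula, I would compute the functional derivative of $H^{\mathsf K}_\mu$ at $\nu=\mu_Y$ in admissible directions $\tilde T_C$ (both of the form $f(T_Y)$ and supported on atoms, as in Proposition \ref{p:derI}). The linear term contributes
\[
\left.\partial_\varepsilon \int(T_Y+\varepsilon\tilde T_C)(T_\mu-x)\rd x\right|_{\varepsilon=0}=\int\tilde T_C(T_\mu-x)\rd x.
\]
For $J(\nu,m_\bmla)$ recall
\[
J(\nu,m_\bmla)=2I(\nu,m_\bmla)+\tfrac{1}{2}\Sigma(m_\bmla)-\tfrac{1}{2}\iint\log\frac{e^x-e^y}{x-y}\rd\nu(x)\rd\nu(y)+\tfrac{3}{4},
\]
in which $\Sigma(m_\bmla)$ is independent of $\nu$. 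Proposition \ref{p:derI} gives for the spherical integral part the contribution $\int\tilde T_C\,\tau(\mathsf m_\bmla|\mathsf y)\circ T_Y\rd x$, while a direct computation for the log term yields
\[
\left.\partial_\varepsilon\!\left[-\tfrac12\!\iint\!\log\frac{e^{T_Y+\varepsilon\tilde T_C}(x)-e^{T_Y+\varepsilon\tilde T_C}(y)}{(T_Y+\varepsilon\tilde T_C)(x)-(T_Y+\varepsilon\tilde T_C)(y)}\rd x\rd y\right]\right|_{\varepsilon=0}\!\!=\!-\!\int\tilde T_C(x)\!\int\!\left(\tfrac{1}{1-e^{T_Y(y)-T_Y(x)}}-\tfrac{1}{T_Y(x)-T_Y(y)}\right)\rd y\rd x,
\]
where the integrand in $y$ has a removable singularity at $y=x$ (value $-1/2$). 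Setting the Euler--Lagrange equation $\partial_\varepsilon H^{\mathsf K}_\mu=0$ for every such $\tilde T_C$ and arguing exactly as at the end of the proof of Proposition \ref{p:unique1} (testing separately on atoms of $\mu_Y$ and on intervals where $T_Y$ is increasing) gives the stated formula for $T_\mu$, which in particular determines $\mu$ uniquely from $\mu_Y$.

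The main obstacle will be justifying the derivative of the log term rigorously. Unlike the $\beta=2$ spherical integral part that is handled by Proposition \ref{p:derI}, this term is purely analytical: one needs (i) to exchange differentiation and integration in the presence of the principal-value singularity at $T_Y(x)=T_Y(y)$, and (ii) to treat the case when $T_Y$ has flat pieces, where $\tilde T_C$ lives on atoms of $\mu_Y$ as in \eqref{e:delta}, to see that the integrated kernel is constant on those flat pieces and the same formula applies. Once this is done, convexity of $H^{\mathsf K}_\mu(T_\nu)$ in $T_\nu$ (inherited from convexity of $-I$ and of the log-kernel in $T_\nu$) upgrades the critical-point equation to a full characterization of the maximizer, and combined with the existence step above yields both existence and uniqueness of $\mu$.
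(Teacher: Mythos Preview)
Your plan is essentially the same as the paper's: existence by contradiction via compactness of $\cM^{\rm b}([0,\fK])$ and the identity $S_{\bmla_N}(e^{Y_N})=\sum_{\bmeta_N}K_{\bmla_N\bmeta_N}m_{\bmeta_N}(e^{Y_N})$, and uniqueness by reading off $T_\mu$ from the first-order condition at $\nu=\mu_Y$, using Proposition~\ref{p:derI} for the spherical-integral part and a direct computation for the log-kernel part. Two small points are worth flagging.

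First, your closing appeal to concavity of $H^{\mathsf K}_\mu(T_\nu)$ is both unjustified and unnecessary. The log-kernel term $\frac{1}{2}\iint\log\frac{e^s-e^t}{s-t}$ is not concave in $(s,t)$ (check $\partial_s^2\log\frac{e^s-1}{s}>0$ for large $s$), so $H^{\mathsf K}_\mu$ is not obviously concave. Fortunately you never need this: you already \emph{assume} $\mu_Y$ is a maximizer, so the first-order condition holds automatically and determines $T_\mu$, which is all uniqueness requires. The paper does not invoke concavity either; it obtains $\partial_\varepsilon J\ge\int\tilde T_C(T_\mu-x)\rd x$ from the maximizer inequality and then flips the sign of $\tilde T_C$ to get equality.

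Second, in the linear term you silently replace $\int T_{(T_Y+\varepsilon\tilde T_C)_\#\unif}(T_\mu-x)\rd x$ by $\int(T_Y+\varepsilon\tilde T_C)(T_\mu-x)\rd x$. Since $T_Y+\varepsilon\tilde T_C$ need not be monotone (in particular for $\tilde T_C$ supported on an atom of $\mu_Y$), this step requires the rearrangement inequality, which is available precisely because $\mu\in\cM^{\rm b}([0,\fK])$ forces $x\mapsto T_\mu(x)-x$ to be nondecreasing. The paper uses exactly this observation to get the one-sided bound before the sign-flip.
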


\begin{proposition}\label{p:lowerbound3}
We assume the assumptions of Theorem \ref{t:Kostka}. For any probability measure $\mu_Y\in \cM$, and $\mu$ be the unique measure in $\cM^{\rm b}([0, \fK])$ so that 
\begin{align*}
\mu_Y\in \arg\sup_{\nu\in \cM} H^{\mathsf K}_\mu(\nu), \quad H^{\mathsf K}_\mu(\nu)=\frac{1}{2}\int  T_{\nu} (T_\mu-x)\rd x-J(\nu, m_\bmla).
\end{align*}
Then we have
\begin{align}\label{e:LDP3}
\frac{1}{N^2}\log\sup_{\bmeta_N: m[\bmeta_N]\in \bB_\delta(\mu)}K_{\bmla_N\bmeta_N}\geq -\left(H_\mu^{\mathsf K}(\mu_Y)+\oo_{\delta}(1)+\oo_N(1)\right).
\end{align}
\end{proposition}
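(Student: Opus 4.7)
The plan is to mimic the strategy used for the lower bound in Proposition \ref{p:lowerbound1}, replacing the spherical integral identity by the Schur expansion
\begin{align*}
S_{\bmla_N}(e^{Y_N}) = \sum_{\bmeta_N \in \bY_N} K_{\bmla_N \bmeta_N}\, m_{\bmeta_N}(e^{Y_N}),
\end{align*}
viewed as a Laplace-type sum. I would choose $Y_N = \diag\{y_1, \dots, y_N\}$ with $y_1 \geq \cdots \geq y_N$ so that its spectral measure converges in Wasserstein distance to $\mu_Y$ (taking $N$-quantiles of $\mu_Y$). When $H^{\mathsf K}_\mu(\mu_Y) = +\infty$ there is nothing to prove, so I will assume $\mu_Y(|x|) < \infty$, hence $J(\mu_Y, m_\bmla) < \infty$.

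The core of the argument is a concentration estimate analogous to Proposition \ref{c:expbound1}: for any $\mu' \in \cM^{\rm b}([0,\fK])$ with $\mu' \neq \mu$, the uniqueness statement of Proposition \ref{p:unique3} yields that $\mu_Y$ is \emph{not} a maximizer of $H^{\mathsf K}_{\mu'}(\cdot)$, so $\cI^{\mathsf K}(\mu') > H^{\mathsf K}_{\mu'}(\mu_Y)$. Combining the already-proven large-deviation upper bound $\sup_{m[\bmeta_N] \in \bB_\delta(\mu')} K_{\bmla_N\bmeta_N} \leq \exp\{-N^2 (\cI^{\mathsf K}(\mu') + \oo_\delta(1))\}$ with the elementary bound $m_{\bmeta_N}(e^{Y_N}) \leq N!\, e^{N^2 (\int (T_{\mu'}-x) T_{\mu_Y} dx + \oo_\delta(1))}$ from \eqref{gf} (and the fact that the number of partitions with $m[\bmeta_N] \in \bB_\delta(\mu')$ is $e^{\OO(N \log N)} = e^{\oo(N^2)}$), I expect to obtain some $c(\delta) > 0$ such that
\begin{align*}
\sum_{m[\bmeta_N] \in \bB_\delta(\mu')} K_{\bmla_N \bmeta_N}\, m_{\bmeta_N}(e^{Y_N}) \leq e^{-c(\delta) N^2}\, S_{\bmla_N}(e^{Y_N}).
\end{align*}
By compactness of $\cM^{\rm b}([0,\fK])$ in the weak topology and a finite open cover of the complement of $\bB_\delta(\mu)$, this upgrades to
\begin{align*}
\sum_{m[\bmeta_N] \in \bB_\delta(\mu)} K_{\bmla_N \bmeta_N}\, m_{\bmeta_N}(e^{Y_N}) \geq (1-\oo(1))\, S_{\bmla_N}(e^{Y_N}).
\end{align*}

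To conclude, I bound the left-hand side above by $\#\{\bmeta_N : m[\bmeta_N] \in \bB_\delta(\mu)\} \cdot \sup_{m[\bmeta_N] \in \bB_\delta(\mu)} K_{\bmla_N \bmeta_N} \cdot \sup_{m[\bmeta_N] \in \bB_\delta(\mu)} m_{\bmeta_N}(e^{Y_N})$. The partition count is $e^{\oo(N^2)}$; the supremum over monomial symmetric functions is at most $e^{N^2 (\int (T_\mu - x) T_{\mu_Y} dx + \oo_\delta(1))}$ by \eqref{gf} and the continuity of $(\mu, \mu_Y) \mapsto \int (T_\mu - x) T_{\mu_Y} dx$ with respect to Wasserstein distance (Proposition \ref{p:continuity}). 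Dividing and using the asymptotics $\log S_{\bmla_N}(e^{Y_N}) = N^2 J(\mu_Y, m_\bmla) + \oo(N^2)$ from \eqref{e:logS} gives
\begin{align*}
\frac{1}{N^2} \log \sup_{m[\bmeta_N] \in \bB_\delta(\mu)} K_{\bmla_N \bmeta_N} \geq J(\mu_Y, m_\bmla) - \int (T_\mu - x) T_{\mu_Y}\, dx + \oo_\delta(1) + \oo_N(1) = -H^{\mathsf K}_\mu(\mu_Y) + \oo_\delta(1) + \oo_N(1),
\end{align*}
which is the claim.

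The main obstacle I anticipate is the concentration step: verifying that the strict gap $\cI^{\mathsf K}(\mu') - H^{\mathsf K}_{\mu'}(\mu_Y) > 0$ is uniform on a suitable cover. This uses the upper semicontinuity of $\nu \mapsto H^{\mathsf K}_\mu(\nu)$ on bounded-first-moment balls (Proposition \ref{p:rateIK}(1)), joint continuity of $(\mu, \nu) \mapsto H^{\mathsf K}_\mu(\nu)$ in $\mu$, and the observation that on balls $\bB_\delta(\mu')$ the restriction $H^{\mathsf K}_{\mu'}(\mu_Y)$ varies by $\OO(\delta)$ (in fact $\oo_\delta(1)$ uniformly in $\mu_Y$ with fixed first moment) so that a single uniform constant $c(\delta)>0$ can be extracted. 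A secondary technical point is that, unlike the $UBU^*$ case where the relevant probability measure is compactly supported, here the Schur asymptotics require care at the boundary of $\cM^{\rm b}([0,\fK])$; however this is handled by the same continuity and approximation arguments already developed in Proposition \ref{p:rateIK}(5), exactly as in the conclusion of the proof of Theorem \ref{t:UBU}.
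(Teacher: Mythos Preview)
Your proposal is correct and follows essentially the same route as the paper: invoke uniqueness (Proposition \ref{p:unique3}) to ensure that for every $\mu'\neq\mu$ the assumption of the concentration estimate (Proposition \ref{c:expbound3} in the paper, the Kostka analogue of Proposition \ref{c:expbound1}) holds, use compactness of $\cM^{\rm b}([0,\fK])$ and a finite cover of $\bB_\delta(\mu)^c$ to conclude that $\sum_{m[\bmeta_N]\in\bB_\delta(\mu)}K_{\bmla_N\bmeta_N}m_{\bmeta_N}(e^{Y_N})\geq(1-\oo(1))S_{\bmla_N}(e^{Y_N})$, and finish via the asymptotics \eqref{e:msf} and \eqref{e:logS}. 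Your anticipated ``main obstacle'' is exactly Proposition \ref{c:expbound3}, whose proof is short and does not require the uniformity argument you outline: one simply fixes $\nu$ achieving the sup in \eqref{e:asup3} up to $\varepsilon$, and then chooses $\delta$ small enough so that the $\oo_\delta(1)$ error from \eqref{e:msf} is dominated by $\varepsilon$.
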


\begin{proof}[Proof of Theorem \ref{t:Kostka}]
Item 1 of Theorem \ref{t:Kostka} follows from Proposition \ref{p:rateIK}.
For Item 2, the large deviation upper bound follows from \eqref{e:LDPu3}. If $\mu$ does not satisfy $\int_0^1 (T_\mu(x)-T_{\mu_B}(x))\rd x\neq 0$ or  the limiting Schur-Horn inequalities \eqref{limSHK}, then both sides of \eqref{e:Kostka} are $-\infty$. There is nothing to prove. In the following we first prove \eqref{e:Kostka} when $\mu$ satisfies $\int_0^1 (T_\mu(x)-T_{\mu_B}(x))\rd x= 0$ and the strong limiting Schur-Horn inequalities \eqref{e:domainK} with some $\fc>0$.
In this case, thanks to Item 3 in Proposition \ref{p:rateIK}, 
 there 
 exists a probability measure $\mu_Y$ such that
$\mathcal I^{\mathsf K}(\mu)=H^{\mathsf K}_\mu(\mu_Y)<\infty$ and $\mu_Y\in \cM$.  
Then Propositions \ref{p:unique3} and \ref{p:lowerbound3} imply that $\mu$ is uniquely 
determined by $\mu_Y$ and the large deviation lower bound holds. This gives the full large deviation principle when  the  strong limiting Schur-Horn inequalities \eqref{e:domainK} hold. 
Next we extend it to the boundary case by a continuity argument. Thanks to Item 5 in Proposition \ref{p:rateIK},  for any measure $\mu$ inside the admissible set \eqref{e:ami1} but not satisfying \eqref{e:domainK}, there exists a sequence of measures $\mu^{\varepsilon}$ inside the region as given in \eqref{e:domainK}, converging to $\mu$ in the weak topology and 
$\lim_{\varepsilon\rightarrow 0}\mathcal I^{\mathsf K}(\mu^{\varepsilon})=\mathcal I^{\mathsf K}(\mu)$.
Then for any $\delta>0$, there exists sufficiently small $\varepsilon>0$ 
\begin{align}\label{e:bbcaseK}
\liminf_{N\rightarrow\infty}\frac{1}{N^2}\log \sup_{m[\bmeta_N]\in \bB_\delta(\mu)}K_{\bmla_N\bmeta_N}
\geq \liminf_{N\rightarrow\infty}\frac{1}{N^2}\log \sup_{m[\bmeta_N]\in \bB_{\delta/2}(\mu^\varepsilon)}K_{\bmla_N\bmeta_N}
=\mathcal I^{\mathsf K}(\mu^{\varepsilon})+\oo_\delta(1).
\end{align}
The large deviation lower bound follows by first sending $\varepsilon$ and then $\delta$ to zero in \eqref{e:bbcaseK}.
This finishes the proof of Theorem \ref{t:Kostka}.

\end{proof}

The proofs of both Propositions \ref{p:unique3} and \ref{p:lowerbound3} rely on the following probability estimate.
\begin{proposition}\label{c:expbound3}
We assume the assumptions of Theorem \ref{t:Kostka}.
Let $Y_N=\diag\{y_1,y_2,\cdots, y_N\}$ be a sequence of diagonal matrices, whose spectral measures converge  in Wasserstein distance \eqref{e:wd} towards $\mu_Y$. For any $\mu$ with support on $[0, \fK]$, if 
\begin{align}\label{e:asup3}
\sup_{\nu\in \cM}\left\{\int (T_\mu-x) T_{\nu}\rd x-J(\nu,m_\bmla)\right\}
>\int (T_\mu-x) T_{\mu_Y}\rd x-J(\mu_Y,m_\bmla).
\end{align}
Then there exists a small $\delta>0$, and positive constant $c(\delta)>0$ such that
\begin{align}\label{e:expbound3}\begin{split}
\phantom{{}={}}\sum_{\bmeta_N: m[\bmeta_N]\in \bB_\delta(\mu)}K_{\bmla_N\bmeta_N}m_{\bmeta_N}(e^{Y_N})
\leq e^{-c(\delta)N^2}S_{\bmla_N}(e^{Y_N}).
\end{split}\end{align}
\end{proposition}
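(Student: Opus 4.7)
The plan is to adapt the strategy of Proposition \ref{c:expbound1} to the combinatorial setting. The Schur polynomial expansion
\[
S_{\bmla_N}(e^{Y_N})=\sum_{\bmeta_N\in\bY_N} K_{\bmla_N\bmeta_N}\, m_{\bmeta_N}(e^{Y_N})
\]
plays the role of the spherical integral: we separate the sum according to whether $m[\bmeta_N]\in\bB_\delta(\mu)$, and control each term in the restricted sum using the large deviation upper bound already proved in \eqref{e:LDPu3}, the monomial estimate \eqref{e:msf}, and the Schur asymptotic \eqref{e:logS}.

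First, the cardinality of the set of partitions $\bmeta_N\in\bY_N$ with $m[\bmeta_N]\in\bB_\delta(\mu)\subset\cM^{\rm b}([0,\fK])$ is at most $\exp\{\OO(N\log N)\}=\exp\{\oo(N^2)\}$, since such partitions are weakly decreasing integer sequences of length $N$ with entries bounded by $\fK N$. So on the exponential scale, the sum is essentially its largest term. Next, the large deviation upper bound \eqref{e:LDPu3}, after optimizing over the test measure as in the proof of the upper bound of Theorem \ref{t:Kostka}, yields for $\delta$ small and $N$ large
\[
\frac{1}{N^2}\log \sup_{m[\bmeta_N]\in \bB_\delta(\mu)} K_{\bmla_N\bmeta_N}\leq -\sup_{\nu\in\cM} H^{\mathsf K}_\mu(\nu)+\oo_\delta(1)+\oo_N(1),
\]
while \eqref{e:msf} gives uniformly over the same set
\[
\frac{1}{N^2}\log m_{\bmeta_N}(e^{Y_N})=\int (T_\mu-x)T_{\mu_Y}\,\rd x+\oo_\delta(1)+\oo_N(1).
\]

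Combining these two bounds with the counting estimate, then dividing by $S_{\bmla_N}(e^{Y_N})$ whose asymptotic $N^{-2}\log S_{\bmla_N}(e^{Y_N})\to J(\mu_Y,m_\bmla)$ is given by \eqref{e:logS}, and using the identity $H^{\mathsf K}_\mu(\mu_Y)=\int(T_\mu-x)T_{\mu_Y}\,\rd x-J(\mu_Y,m_\bmla)$, we obtain
\[
\frac{1}{N^2}\log\frac{\sum_{\bmeta_N:\,m[\bmeta_N]\in \bB_\delta(\mu)} K_{\bmla_N\bmeta_N}\, m_{\bmeta_N}(e^{Y_N})}{S_{\bmla_N}(e^{Y_N})}\leq H^{\mathsf K}_\mu(\mu_Y)-\sup_{\nu\in\cM} H^{\mathsf K}_\mu(\nu)+\oo_\delta(1)+\oo_N(1).
\]
By hypothesis \eqref{e:asup3}, the leading term on the right is a strictly negative constant, so choosing $\delta$ sufficiently small and $N$ sufficiently large yields the conclusion with some $c(\delta)>0$.

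The proof is essentially a bookkeeping exercise combining already-established estimates, so I do not anticipate a substantive obstacle. The only delicate point is that the pointwise bound \eqref{e:LDPu3} must be promoted to the full supremum $\sup_\nu H^{\mathsf K}_\mu(\nu)$ uniformly over $\bmeta_N$ with $m[\bmeta_N]\in\bB_\delta(\mu)$; this follows from the same standard compactness argument used in the derivation of the upper bound in Theorem \ref{t:Kostka}.
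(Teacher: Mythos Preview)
Your proof is correct and follows essentially the same route as the paper: pull out $m_{\bmeta_N}(e^{Y_N})$ via \eqref{e:msf}, divide by $S_{\bmla_N}(e^{Y_N})$ via \eqref{e:logS}, and bound the remaining sum of Kostka numbers by the large deviation upper bound \eqref{e:LDPu3}. One small simplification: no compactness argument is needed to reach the full supremum over $\nu$ --- the paper simply fixes a single $\nu$ with $H^{\mathsf K}_\mu(\nu)\geq H^{\mathsf K}_\mu(\mu_Y)+\varepsilon$ (which exists by the strict inequality \eqref{e:asup3}) and applies \eqref{e:LDPu3} with that $\nu$.
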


\begin{proof}[Proof of Proposition \ref{c:expbound3}]
Under Assumption \eqref{e:asup3}, for sufficiently small $\varepsilon>0$, there exists a measure $\nu\in \cM$ such that
\begin{align}\label{e:epserrorK}
\int (T_\mu-x) T_{\nu}\rd x-J(\nu,m_\bmla)
\geq\int (T_\mu-x) T_{\mu_Y}\rd x-J(\mu_Y,m_\bmla)+\varepsilon.
\end{align}
We divide by $S_{\bmla_N}(e^{Y_N})$ on both sides of \eqref{e:expbound3}, and use the estimates \eqref{e:msf} and \eqref{e:logS}, 
\begin{align*}\begin{split}
&\phantom{{}={}}(1/S_{\bmla_N}(e^{Y_N}))\sum_{\bmeta_N: m[\bmeta_N]\in \bB_\delta(\mu)}K_{\bmla_N\bmeta_N}m_{\bmeta_N}(e^{Y_N})\\
&=\exp\left\{-N^2\left(J(\mu_Y, m_\bmla)-\int  (T_\mu-x)T_{\mu_Y}\rd x+\oo_{\delta}(1)+\oo_N(1)\right)\right\}\sum_{\bmeta_N: m[\bmeta_N]\in \bB_\delta(\mu)}K_{\bmla_N\bmeta_N}\\
&\leq  
\exp\left\{- N^2\left(J(\mu_Y, m_\bmla)-\int  (T_\mu-x)T_{\mu_Y}\rd x-J(\nu, m_\bmla)+\int  (T_\mu-x)T_{\nu}\rd x+\oo_{\delta}(1)+\oo_N(1)\right)\right\}\\
&\leq \exp\left\{- N^2(\varepsilon+\oo_{\delta}(1)+\oo_N(1))\right\},
\end{split}
\end{align*}
where in the first inequality we used the large deviation upper bound \eqref{e:LDPu3}, and \eqref{e:epserrorK} in the last inequality. The claim follows provided we take $\delta$ sufficiently small and $N$ large.
\end{proof}

\begin{proof}[Proof of Proposition \ref{p:unique3}]
We first prove the existence of such $\mu$ by contradiction. If there is no such $\mu$, i.e. for any measure $\mu$ supported on $[0, \fK]$, we have 
\begin{align*}
\mu_Y\not\in\arg\sup_{\nu\in \cM}\left\{\int (T_\mu-x) T_{\nu}\rd x-J(\nu, m_\bmla)\right\}.
\end{align*}
Then it follows from Proposition \ref{c:expbound3} that  there exists a small $\delta>0$, and positive constant $c(\delta)>0$ such that
\begin{align*}\begin{split}
\phantom{{}={}}\sum_{\bmeta_N: m[\bmeta_N]\in \bB_\delta(\mu)}K_{\bmla_N\bmeta_N}m_{\bmeta_N}(e^{Y_N})
\leq e^{-c(\delta)N^2}S_{\bmla_N}(e^{Y_N}).
\end{split}\end{align*}
Since the space of probability measure supported on $[0, \fK]$ is compact, we get a finite open cover $ \cup \bB_{\delta_i}(\mu_i)$ of the set of probability measure supported on $[0, \fK]$,  we get a contradiction since for $N$ large enough
\begin{align*}\begin{split}
S_{\bmla_N}(e^{Y_N})
=\sum_i\sum_{\bmeta_N: m[\bmeta_N]\in \bB_{\delta_i}(\mu_i)}K_{\bmla_N\bmeta_N}m_{\bmeta_N}(e^{Y_N})
\leq\sum_i e^{-c(\delta_i)N^2}S_{\bmla_N}(e^{Y_N})<S_{\bmla_N}(e^{Y_N}).
\end{split}\end{align*}

In the following we prove the uniqueness of such measure $\mu$ satisfying \eqref{e:choicemuY}. We note that if $\mu\in \cM^{\rm b}([0, \fK])$, then $T_\mu(x)-x$ is monotonic increasing.
Since $\mu_Y$ is one of the maximizer, then for any $\varepsilon>0$,
\begin{align*}\begin{split}
\int  T_{Y} (T_\mu-x)\rd x-J(T_Y, T_{m_\bmla})
&\geq\int  (T_Y+\varepsilon \tilde T_C)_\#(\unif[0,1]) (T_\mu-x)\rd x-J(T_Y+\varepsilon \tilde T_C, T_{m_\bmla})\\
&\geq\int  (T_{Y}+\varepsilon \tilde T_C) (T_\mu-x)\rd x-J(T_Y+\varepsilon \tilde T_C, T_{m_\bmla}),
\end{split}\end{align*}
By rearranging the above expression, and sending $\varepsilon$ to $0$, we have
\begin{align}\label{e:lowerboundK}
\left.\del_{\varepsilon}J(T_Y+\varepsilon \tilde T_C, T_{m_\bmla})\right|_{\varepsilon=0}\geq \int \tilde T_C (T_\mu-x) \rd x.
\end{align}
We recall that $J$ as in \eqref{e:logS} is given by $I$ and some explicit integrals. We compute the derivative of $J$,
\begin{align*}\begin{split}
\left.\del_{\varepsilon}J(T_Y+\varepsilon \tilde T_C, T_{m_\bmla})\right|_{\varepsilon=0}
&=\left.2\del_\varepsilon I(T_Y+\varepsilon \tilde T_C, T_{m_\bmla})\right|_{\varepsilon=0}-\frac{1}{2}\int {\tilde T}_C\rd x,\\
&+\frac{1}{2}\int \left(\frac{1}{T_Y(x)-T_Y(y)}-\frac{1}{1-e^{T_Y(y)-T_Y(x)}}\right)(\tilde T_C(x)-\tilde T_C(y))\rd x\rd y.
\end{split}\end{align*}
We will choose $\tilde T_C$ in either the case \eqref{e:nondelta} or \eqref{e:delta}. We notice that in both cases if we replace $\tilde T_C$ by $-\tilde T_C$, both sides of \eqref{e:lowerboundK} change the sign. Therefore, we conclude that
\begin{align*}
\left.\del_{\varepsilon}J(T_Y+\varepsilon \tilde T_C, T_{m_\bmla})\right|_{\varepsilon=0}=\int \tilde T_C (T_\mu-x) \rd x.
\end{align*}
Now if we choose $\tilde T_C$ in \eqref{e:delta}, i.e. $\tilde T_C$ supported on that $\{x: T_Y(x)=a\}$, we have
\begin{align*}
\left.2\del_{\varepsilon}I(T_Y+\varepsilon \tilde T_C, T_{m_\bmla})\right|_{\varepsilon=0}-\frac{1}{2}\int T_C\rd x=\int T_C(x)\rd x\tau({\mathsf m}_\bmla |\mathsf y)(a)-\frac{1}{2}\int T_C\rd x
=\int \tilde T_C (T_\mu-x) \rd x,
\end{align*}
We conclude that $T_\mu(x)=\tau({\mathsf m}_\bmla |\mathsf y)(a)+x-1/2=\tau({\mathsf m}_\bmla |\mathsf y)\circ T_Y(x)+x-1/2$ on $\{x: T_Y(x)=a\}$. Especially, on the intervals  where $T_Y$ is a constant, we have 
\begin{align}\label{e:consteq}
T_\mu(x)=\tau({\mathsf m}_\bmla |\mathsf y)\circ T_Y(x)+x-1/2.
\end{align}
Next we take $\tilde T_C=f(T_Y)$ as in \eqref{e:nondelta},
\begin{align}\begin{split}\label{e:inc}
&\phantom{{}={}}\left.\del_{\varepsilon}J(T_Y+\varepsilon f(T_Y), T_{m_\bmla})\right|_{\varepsilon=0}\\
&=\int f(x)\tau({\mathsf m}_\bmla |\mathsf y)(x)\rd \mu_Y+\int \int \left(\frac{1}{T_Y(x)-T_Y(y)}-\frac{1}{1-e^{T_Y(y)-T_Y(x)}}\right)\rd y f(T_Y(x))\rd x\\
&=\int f(T_Y)\tau({\mathsf m}_\bmla |\mathsf y)\circ T_Y(x)\rd x+\int \int \left(\frac{1}{T_Y(x)-T_Y(y)}-\frac{1}{1-e^{T_Y(y)-T_Y(x)}}\right)\rd y f(T_Y(x))\rd x\\
&=\int f( T_{Y}) (T_\mu-x) \rd x.
\end{split}\end{align}
On the intervals where $T_Y$ is increasing,  \eqref{e:inc} implies that \begin{align}\label{e:inceq}
T_\mu(x)=\tau({\mathsf m}_\bmla |\mathsf y)\circ T_Y(x)+x+\int \left(\frac{1}{T_Y(x)-T_Y(y)}-\frac{1}{1-e^{T_Y(y)-T_Y(x)}}\right)\rd y.
\end{align}
Therefore, we conclude from \eqref{e:consteq} and \eqref{e:inceq} that \eqref{e:inceq} holds almost surely on $[0,1]$, which uniquely determines $\mu$. This finishes the proof of Proposition \ref{p:unique3}.

\end{proof}

\begin{proof}[Proof of Proposition \ref{p:lowerbound3}]
Thanks to the uniqueness of $\mu$, we have that for any $\mu'\neq \mu$ in $\cM^{\rm b}([0, \fK])$
\begin{align*}
\mu_Y\not\in \arg\sup_{\nu\in \cM}\left\{\int (T_{\mu'}-x) T_{\nu}\rd x-J(\nu, m_\bmla)\right\}.
\end{align*}
As a consequence the assumption in Proposition \ref{c:expbound3} holds, 
\begin{align*}
\sup_{\nu\in \cM}\left\{\int (T_{\mu'}-x) T_{\nu}\rd x-J(\nu,m_\bmla)\right\}
>\int (T_\mu-x) T_{\mu_Y}\rd x-J(\mu_Y,m_\bmla).
\end{align*}
and there exists a small $\delta>0$, and positive constant $c(\delta)>0$ such that
\begin{align*}\begin{split}
\sum_{\bmeta_N: m[\bmeta_N]\in \bB_\delta(\mu')}K_{\bmla_N\bmeta_N}m_{\bmeta_N}(e^{Y_N})
\leq e^{-c(\delta)N^2}S_{\bmla_N}(e^{Y_N}).
\end{split}\end{align*}
The space of probability measures $\cM^{\rm b}([0,\fK])$, removing the open $\bB_\delta(\mu)$ is compact, we get a finite open cover $ \cup \bB_{\delta_i}(\mu_i)$,  
\begin{align}\begin{split}\label{e:openball4}
&\phantom{{}={}}\sum_{\bmeta_N: m[\bmeta_N]\in \bB_\delta(\mu)}K_{\bmla_N\bmeta_N}m_{\bmeta_N}(e^{Y_N})\geq S_{\bmla_N}(e^{Y_N})-\sum_i\sum_{\bmeta_N: m[\bmeta_N]\in \bB_{\delta_i}(\mu_i)}K_{\bmla_N\bmeta_N}m_{\bmeta_N}(e^{Y_N})
\\
&\geq \left(1-\sum_ie^{-c(\delta_i)N^2}\right)S_{\bmla_N}(e^{Y_N})= \left(1-\sum_ie^{-c(\delta_i)N^2}\right)\exp\{ N^2(J(\mu_Y, \mu_\bmla)+\oo_N(1))\}.
\end{split}\end{align}
The large deviation lower bound at $\mu$ follows from the estimate \eqref{e:openball4} and \eqref{e:msf}
\begin{align*}
\begin{split}
&\phantom{{}={}}\sum_{\bmeta_N: m[\bmeta_N]\in \bB_\delta(\mu)}K_{\bmla_N\bmeta_N}\\
&=\exp\left\{-N^2\left(\int (T_\mu-x)T_{\mu_Y}\rd x+\oo_{\delta}(1)+\oo_N(1)\right)\right\}\sum_{\bmeta_N: m[\bmeta_N]\in \bB_\delta(\mu)}K_{\bmla_N\bmeta_N}m_{\bmeta_N}(e^{Y_N})\\
&\geq \exp\left\{-N^2\left(\int (T_\mu-x)T_{\mu_Y}\rd x+\oo_{\delta}(1)+\oo_N(1)\right)\right\}\exp\{ N^2(J(\mu_Y, \mu_\bmla)+\oo_N(1))\}\\
&= \exp\left\{- N^2\left(\int (T_\mu-x)T_{\mu_Y}\rd x-J(\mu_Y, m_\bmla)+\oo_{\delta}(1)+\oo_N(1)\right)\right\}.
\end{split}
\end{align*}
\end{proof}

\section{Large Deviation Estimates for Littlewood-Richardson Coefficients}

In this section, we use the spherical integral to study the large deviation of the Littlewood-Richardson coefficients and prove Theorem \ref{t:LR}. From the definition \eqref{e:defLR} of Littlewood-Richardson coefficients,
\begin{align}\label{e:polybound}
c_{\bmla_N\bmeta_N}^{\bmkappa_N}\leq \frac{S_{\bmla_N}(e^{Y_N})S_{\bmeta_N}(e^{Y_N})}{S_{\bmkappa_N}(e^{Y_N})}.
\end{align}
The large deviation upper bound follows from combining the upper bound \eqref{e:polybound} and the asymptotics of Schur symmetric polynomials \eqref{e:logS} ,
\begin{align}\begin{split}\label{e:upbound}
&\frac{1}{N^2}\log \sup_{\bmkappa_N: m[\bmkappa_N]\in \bB_\delta(\mu)}c_{\bmla_N\bmeta_N}^{\bmkappa_N}\leq  -H_{\mu}^{\mathsf{LR}}(\mu_Y)+\oo_{\delta}(1)+\oo_N(1),\\
&H_\mu^{\mathsf{LR}}(\nu)=J(\nu,\mu)-J(\nu, m_\bmla)-J(\nu, m_\bmeta),
\end{split}\end{align}

In the following proposition we collect some properties of the rate function $\mathcal I^{\mathsf{LR}}(\cdot)$ from Theorem \ref{t:LR}. Its proof is very similar to the argument as in Proposition \ref{p:rateprop}, so we omit it. Unfortunately, Proposition \ref{p:rateprop2} does not capture the admissible set for the possible eigenvalues given by Horn's problem.  However it contains the information about the constraints given by the Ky Fan type inequalities, i.e. it equals $+\infty$ outside the region described by the Ky Fan type inequalities:
 \begin{align}
\label{e:limitadm4}
\int_0^1 \left(T_{\mu} -T_{\mu_{\bmla}}-T_{\mu_{\bmeta}}\right)\rd x=0,\quad
\int_y^1 \left(T_{\mu} -T_{\mu_{\bmla}}-T_{\mu_{\bmeta}}\right)\rd x\leq0, \quad \forall y\in [0,1].
 \end{align}

\begin{proposition}
\label{p:rateprop2}
Under the assumptions of Theorem \ref{t:LR}, the function $H_\mu^{\mathsf{LR}}(\cdot)$ and rate function $\mathcal I^{\mathsf{LR}}(\cdot)$ as defined in Theorem \ref{t:LR} satisfy:
\begin{enumerate}
\item For measure $\mu$ satisfies \eqref{e:limitadm4}, $H_\mu^{\mathsf{LR}}(\cdot)$ is  upper semi-continuous in the weak topology on $\{\nu\in \cM: \nu(|x|)\leq \fR\}$ for any $\fR>0$. 
\item If $\int_0^1 \left(T_{\mu} -T_{\mu_{\bmla}}-T_{\mu_{\bmeta}}\right)\rd x\neq 0$, or there exists some $0<y<1$ such that 
\begin{align}\label{e:domainLR}
\int_y^1 \left(T_{\mu} -T_{m_\bmla}-T_{m_\bmeta}\right)\rd x>0, 
\end{align}
then $\mathcal I^{\mathsf{LR}}(\mu)=+\infty$.
\item 
If there exists some small constant $\fc>0$
\begin{align}\begin{split}\label{e:domainLRs}
& \int_y^1 \left(T_{\mu} -T_{m_\bmla}-T_{m_\bmeta}\right)\rd x\leq
\left\{\begin{array}{ll}
 -\fc y,      &\text{ for } 0\leq y\leq \fc,\\
 -\fc,         &\text{ for } \fc\leq y\leq 1-\fc,\\
 -\fc(1-y),  &\text{ for } 1-\fc\leq y\leq 1,
 \end{array}\right.
 \end{split}
\end{align}
then 
$\mathcal I^{\mathsf{LR}}(\mu)=H^{\mathsf{LR}}_\mu(\nu^*)<\infty$ for some probability measure $\nu^{*}$ such that  $\nu^*(|x|)<\infty$.

\item The rate function $\mathcal I^{\mathsf{LR}}(\cdot)$ is nonnegative and lower semicontinuous on $\cM^{\rm b}([0, 2\fK])$. 
\end{enumerate}

\end{proposition}
Finally,  we prove the large deviation lower bound in Theorem \ref{t:LR}. It follows from combining the following Propositions \ref{t:unique4} and \ref{p:lowerbound4}, and noticing the number of partitions with at most $N$ rows in $\bB_\delta(\mu)$ is at most $\exp\{\OO(N\log N)\}$.

\begin{proposition}\label{t:unique4}
We assume the assumptions of Theorem \ref{t:LR}.
 Let $\mu_Y$ be compactly supported and all components of $\supp\mu_Y$ are infinite sets. Then there exists a unique $\mu\in \cM^{\rm b}([0,2\fK])$ such that 
\begin{align*}
\mu_Y\in \arg\sup_{\nu\in \cM}H_\mu^{\mathsf{LR}}(\nu),\quad H_\mu^{\mathsf{LR}}(\nu)=J(\nu,\mu)-J(\nu, m_\bmla)-J(\nu, m_\bmeta).
\end{align*}
\end{proposition}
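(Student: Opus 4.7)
The plan is to follow the template of Propositions \ref{p:unique1} and \ref{p:unique3}: combine a compactness/contradiction argument for existence with a first-variation computation plus the Beltrami rigidity result of Corollary \ref{c:uniquemu} for uniqueness.

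\textbf{Existence by a covering argument.} First I would establish an analogue of Proposition \ref{c:expbound3} for Littlewood--Richardson coefficients: if $\mu\in\cM^{\rm b}([0,2\fK])$ fails the maximizer property, i.e.\ $\sup_{\nu\in\cM}H^{\mathsf{LR}}_\mu(\nu)>H^{\mathsf{LR}}_\mu(\mu_Y)$, then, picking a witness $\nu$ with strict gap $\varepsilon$ and applying the upper bound \eqref{e:upbound}, there exist $\delta,c(\delta)>0$ with
\begin{equation*}
\sum_{\bmkappa_N:\,m[\bmkappa_N]\in\bB_\delta(\mu)} c_{\bmla_N\bmeta_N}^{\bmkappa_N}\, S_{\bmkappa_N}(e^{Y_N}) \leq e^{-c(\delta)N^2}\, S_{\bmla_N}(e^{Y_N})\, S_{\bmeta_N}(e^{Y_N}).
\end{equation*}
If no maximizing $\mu$ existed, I would cover the compact set $\cM^{\rm b}([0,2\fK])$ by finitely many such balls $\bB_{\delta_i}(\mu_i)$, sum the resulting bounds, and contradict the identity $S_{\bmla_N}(e^{Y_N})S_{\bmeta_N}(e^{Y_N})=\sum_{\bmkappa_N}c_{\bmla_N\bmeta_N}^{\bmkappa_N}S_{\bmkappa_N}(e^{Y_N})$ for $N$ large.

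\textbf{First variation identifies the conditional expectation.} Suppose $\mu,\mu'\in\cM^{\rm b}([0,2\fK])$ are both such that $\mu_Y$ is a maximizer. Writing $J(\nu,\sigma)=2I(\nu,\sigma)+\tfrac12\Sigma(\sigma)-\tfrac12 L(\nu)+\tfrac34$ with $L(\nu)=\int\!\!\int\log\bigl((e^x-e^y)/(x-y)\bigr)\rd\nu\rd\nu$, one sees that
\begin{equation*}
H^{\mathsf{LR}}_\mu(\nu)-H^{\mathsf{LR}}_{\mu'}(\nu)=2\bigl[I(\nu,\mu)-I(\nu,\mu')\bigr]+\tfrac12\bigl[\Sigma(\mu)-\Sigma(\mu')\bigr]
\end{equation*}
depends on $\nu$ only through the spherical integrals. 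Taking admissible variations $\tilde T_C$ of $T_{\mu_Y}$ and using Proposition \ref{p:derI}, first with $\tilde T_C=f(T_{\mu_Y})$ for smooth compactly supported $f$ (formula \eqref{e:nondelta}) and then with $\tilde T_C$ supported on a level set of $T_{\mu_Y}$ (formula \eqref{e:delta}), the vanishing of the first variation at $\mu_Y$ for both functionals yields
\begin{equation*}
\tau_{\mu_Y,\mu}(\mathsf b\mid \mathsf y)\circ T_{\mu_Y}(x)=\tau_{\mu_Y,\mu'}(\mathsf b\mid \mathsf y)\circ T_{\mu_Y}(x)\qquad\text{for a.e. }x\in[0,1].
\end{equation*}

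\textbf{Beltrami rigidity concludes $\mu=\mu'$.} By Proposition \ref{propbur}, item~7, formula \eqref{e:condut} specialised at $t=0$ relates $\tau_{\mu_Y,\mu}(\mathsf b\mid \mathsf y)$ to the boundary value $u^{*,\mu_Y\to\mu}_0$ of the real part of $f^{\mu_Y\to\mu}=u^{*}+\ri\pi\rho^{*}$, the solution of the complex Burgers equation \eqref{burg} for the variational problem defining $I(\mu_Y,\mu)$. The imaginary parts at $t=0$ agree by construction ($\pi\rho^{*}_0=\pi\mu_Y$ for both problems), so the previous step upgrades to $f^{\mu_Y\to\mu}(0,x)=f^{\mu_Y\to\mu'}(0,x)$ in the sense of distributions on $\supp\mu_Y$. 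The hypothesis that every component of $\supp\mu_Y$ is an infinite set then puts us in the scope of Corollary \ref{c:uniquemu}, which forces $\mu=\mu'$. The main obstacle is this final translation step: one must verify that the first-variation identities for $I$ contain enough information to pin down both the real and imaginary parts of $f(0,\cdot)$ on each component of $\supp\mu_Y$ (not merely an integrated version against $\mu_Y$), which is precisely where the ``infinite components'' hypothesis is used.
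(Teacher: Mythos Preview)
Your proposal is correct and follows essentially the same route as the paper: existence via the covering/contradiction argument built on Proposition~\ref{c:expbound4}, and uniqueness via the first variation of $H^{\mathsf{LR}}_\mu$ at $\mu_Y$ combined with Proposition~\ref{propbur}(7) and Corollary~\ref{c:uniquemu}. The only cosmetic difference is that you subtract $H^{\mathsf{LR}}_\mu-H^{\mathsf{LR}}_{\mu'}$ first so that the $L(\nu)$ term cancels, whereas the paper keeps it and obtains the explicit formula \eqref{e:dsumLR} for $\tau(\mu\mid\mathsf y)$ before comparing; both lead to the same identity $\Re f^{\mu_Y\to\mu}(0,\cdot)=\Re f^{\mu_Y\to\mu'}(0,\cdot)$ $\mu_Y$-a.e.\ and hence $\mu=\mu'$.
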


\begin{proposition}\label{p:lowerbound4}
We assume the assumptions of Theorem \ref{t:LR}. If $\mu\in \cM^{\rm b}([0,2\fK])$ is the unique measure with 
\begin{align*}
\mu_Y\in \arg\sup_{\nu\in \cM}H_\mu^{\mathsf{LR}}(\nu),\quad H_\mu^{\mathsf{LR}}(\nu)=J(\nu,\mu)-J(\nu, m_\bmla)-J(\nu, m_\bmeta).
\end{align*}
Then we have
\begin{align}\label{e:LDP4}
\liminf_{\delta\rightarrow 0}\liminf_{N\rightarrow\infty}\frac{1}{N^2}\log\sum_{\bmkappa_N: m[\bmkappa_N]\in \bB_\delta(\mu)}c^{\bmkappa_N}_{\bmla_N\bmeta_N}\geq -H_\mu^{\mathsf{LR}}(\mu_Y)=-\cI^{\mathsf{LR}}(\mu).
\end{align}
\end{proposition}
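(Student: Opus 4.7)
The plan is to mirror the proofs of Propositions \ref{p:lowerbound1} and \ref{p:lowerbound3}, replacing the role of the monomial symmetric function by the Schur symmetric polynomial. The key identity is \eqref{e:defLR} evaluated at $e^{Y_N}$, which reads
$$S_{\bmla_N}(e^{Y_N}) S_{\bmeta_N}(e^{Y_N}) = \sum_{\bmkappa_N \in \bY_N} c^{\bmkappa_N}_{\bmla_N\bmeta_N} S_{\bmkappa_N}(e^{Y_N}),$$
and can be viewed as a probability measure on partitions $\bmkappa_N$ once we normalize by the left-hand side.

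First, I would establish the analog of Proposition \ref{c:expbound3} for Littlewood-Richardson coefficients: for any $\mu' \in \cM^{\rm b}([0, 2\fK])$ such that $\sup_{\nu \in \cM} H^{\mathsf{LR}}_{\mu'}(\nu) > H^{\mathsf{LR}}_{\mu'}(\mu_Y)$, there exist $\delta > 0$ and $c(\delta) > 0$ with
$$\sum_{\bmkappa_N : m[\bmkappa_N] \in \bB_\delta(\mu')} c^{\bmkappa_N}_{\bmla_N \bmeta_N}\, S_{\bmkappa_N}(e^{Y_N}) \leq e^{-c(\delta) N^2} S_{\bmla_N}(e^{Y_N}) S_{\bmeta_N}(e^{Y_N}).$$
To prove it, pick $\nu \in \cM$ with $H^{\mathsf{LR}}_{\mu'}(\nu) > H^{\mathsf{LR}}_{\mu'}(\mu_Y) + \varepsilon$, let $\nu_N$ be a diagonal sequence whose spectral measures converge in Wasserstein distance to $\nu$, and apply the elementary bound \eqref{e:polybound} at $e^{\nu_N}$ to obtain $c^{\bmkappa_N}_{\bmla_N \bmeta_N} \leq S_{\bmla_N}(e^{\nu_N}) S_{\bmeta_N}(e^{\nu_N})/S_{\bmkappa_N}(e^{\nu_N})$. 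Using \eqref{e:logS} together with the fact that $\#\{\bmkappa_N \in \bY_N : m[\bmkappa_N] \in \bB_\delta(\mu')\} \leq e^{O(N \log N)}$, the resulting bound has exponent $-H^{\mathsf{LR}}_{\mu'}(\nu) + H^{\mathsf{LR}}_{\mu'}(\mu_Y) + \oo_\delta(1) + \oo_N(1) \leq -\varepsilon/2$ for $\delta$ small and $N$ large.

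Next I would invoke the uniqueness hypothesis on $\mu$: for every $\mu' \in \cM^{\rm b}([0, 2\fK]) \setminus \bB_\delta(\mu)$ we have $\mu_Y \not\in \arg\sup H^{\mathsf{LR}}_{\mu'}$, so the strict inequality required above holds. Since $\cM^{\rm b}([0, 2\fK])$ is compact in the weak topology, $\cM^{\rm b}([0, 2\fK]) \setminus \bB_\delta(\mu)$ is covered by finitely many balls $\bB_{\delta_i}(\mu_i)$, and summing the local estimates yields
$$\sum_{\bmkappa_N : m[\bmkappa_N] \not\in \bB_\delta(\mu)} c^{\bmkappa_N}_{\bmla_N \bmeta_N}\, S_{\bmkappa_N}(e^{Y_N}) \leq \Big(\sum_i e^{-c(\delta_i) N^2}\Big) S_{\bmla_N}(e^{Y_N}) S_{\bmeta_N}(e^{Y_N}),$$
so by the Littlewood–Richardson identity itself the complementary sum over $m[\bmkappa_N] \in \bB_\delta(\mu)$ captures a $(1 - \oo(1))$-fraction of $S_{\bmla_N}(e^{Y_N}) S_{\bmeta_N}(e^{Y_N})$.

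Finally, using the uniform asymptotic $\log S_{\bmkappa_N}(e^{Y_N})/N^2 = J(\mu_Y, \mu) + \oo_\delta(1) + \oo_N(1)$ for $\bmkappa_N$ with $m[\bmkappa_N] \in \bB_\delta(\mu)$, together with the matching asymptotics for $S_{\bmla_N}(e^{Y_N})$ and $S_{\bmeta_N}(e^{Y_N})$, dividing out the Schur values gives
$$\sum_{\bmkappa_N : m[\bmkappa_N] \in \bB_\delta(\mu)} c^{\bmkappa_N}_{\bmla_N \bmeta_N} \geq \exp\bigl\{-N^2 \bigl(J(\mu_Y, \mu) - J(\mu_Y, m_\bmla) - J(\mu_Y, m_\bmeta) + \oo_\delta(1) + \oo_N(1)\bigr)\bigr\},$$
which after $N \to \infty$ then $\delta \to 0$ is exactly \eqref{e:LDP4}. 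The main obstacle is ensuring that \eqref{e:logS} holds uniformly over all $\bmkappa_N$ with $m[\bmkappa_N] \in \bB_\delta(\mu)$, with error $\oo_\delta(1)$; this uniformity is subtle because $J$ combines a spherical integral with a logarithmic potential, but since all the measures involved are supported on the fixed compact interval $[0, 2\fK]$, weak convergence and Wasserstein convergence coincide, and the continuity of $I$ in Wasserstein distance established by Proposition \ref{p:continuity} delivers the required uniform error control.
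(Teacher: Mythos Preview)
Your proposal is correct and follows essentially the same approach as the paper. The exponential bound you establish is exactly Proposition~\ref{c:expbound4} (stated and proved separately in the paper), and the subsequent compactness-and-covering argument, together with the use of the Littlewood--Richardson identity to show that $\bB_\delta(\mu)$ captures a $(1-\oo(1))$-fraction of $S_{\bmla_N}(e^{Y_N})S_{\bmeta_N}(e^{Y_N})$, matches the paper line by line; your final paragraph on uniformity is in fact more explicit than the paper, which simply writes the factorization $\sum c^{\bmkappa_N}_{\bmla_N\bmeta_N}=e^{-N^2(J(\mu_Y,\mu)+\oo_\delta(1)+\oo_N(1))}\sum c^{\bmkappa_N}_{\bmla_N\bmeta_N}S_{\bmkappa_N}(e^{Y_N})$ without further comment.
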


The proofs of both Propositions \ref{t:unique4} and \ref{p:lowerbound4} relies on the following probability estimate.
\begin{proposition}\label{c:expbound4}
We assume the assumptions of Theorem \ref{t:LR}.
Let $Y_N=\diag\{y_1,y_2,\cdots, y_N\}$ be a sequence of diagonal matrices, whose spectral measures converge in Wasserstein distance \eqref{e:wd} towards $\mu_Y$.  For any $\mu\in \cM^{\mathsf b}([0,2\fK])$, such that 
\begin{align}\label{e:asup4}
\sup_{\nu\in \cM}\left\{J(\nu,\mu)-J(\nu, m_\bmla)-J(\nu, m_\bmeta)\right\}
>J(\mu_Y,\mu)-J(\mu_Y, m_\bmla)-J(\mu_Y, m_\bmeta).
\end{align}
Then there exists a small $\delta>0$, and positive constant $c(\delta)>0$ such that
\begin{align}\label{e:expbound4}\begin{split}
\phantom{{}={}}\sum_{\bmkappa_N: m[\bmkappa_N]\in \bB_\delta(\mu)}c^{\bmkappa_N}_{\bmla_N\bmeta_N}S_{\bmkappa_N}(e^{Y_N})
\leq e^{-c(\delta)N^2}S_{\bmla_N}(e^{Y_N})S_{\bmeta_N}(e^{Y_N}).
\end{split}\end{align}
\end{proposition}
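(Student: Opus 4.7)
The plan closely mirrors the argument used for the Kostka analogue (Proposition \ref{c:expbound3}), with the Schur-times-monomial expansion replaced by the Schur-times-Schur expansion defining the Littlewood-Richardson coefficients, and with the asymptotics of Schur polynomials from \eqref{e:logS} used on \emph{both} sides of the ratio. First, by hypothesis \eqref{e:asup4}, I fix $\varepsilon>0$ small and choose $\nu\in\cM$ with
$$H_\mu^{\sf LR}(\nu) \geq H_\mu^{\sf LR}(\mu_Y) + 2\varepsilon.$$

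Next I invoke the large deviation upper bound \eqref{e:upbound}, but with the auxiliary tilt given by a sequence $\tilde Y_N$ of diagonal matrices whose spectral measures converge in Wasserstein distance to $\nu$ (rather than $\mu_Y$). Since the coefficients $c_{\bmla_N\bmeta_N}^{\bmkappa_N}$ do not depend on $Y_N$, this produces the bound
$$\sup_{\bmkappa_N:\, m[\bmkappa_N]\in \bB_\delta(\mu)} c_{\bmla_N\bmeta_N}^{\bmkappa_N} \leq \exp\bigl\{-N^2\bigl(H_\mu^{\sf LR}(\nu) + \oo_\delta(1) + \oo_N(1)\bigr)\bigr\},$$
which I may freely combine with the original $Y_N$.

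Now divide both sides of \eqref{e:expbound4} by $S_{\bmla_N}(e^{Y_N})S_{\bmeta_N}(e^{Y_N})$. On the right, \eqref{e:logS} yields
$$S_{\bmla_N}(e^{Y_N})S_{\bmeta_N}(e^{Y_N}) = \exp\bigl\{N^2\bigl(J(\mu_Y,m_\bmla)+J(\mu_Y,m_\bmeta)+\oo_N(1)\bigr)\bigr\}.$$
On the left, when $m[\bmkappa_N]\in\bB_\delta(\mu)$, the same asymptotic gives $S_{\bmkappa_N}(e^{Y_N}) \leq \exp\{N^2(J(\mu_Y,\mu)+\oo_\delta(1)+\oo_N(1))\}$; combining with the uniform bound on $c_{\bmla_N\bmeta_N}^{\bmkappa_N}$ displayed above, and using that the number of partitions $\bmkappa_N\in\bY_N$ with $m[\bmkappa_N]\in\bB_\delta(\mu)$ is at most $e^{O(N\log N)}=e^{\oo(N^2)}$, the ratio is bounded by
$$\exp\bigl\{N^2\bigl(J(\mu_Y,\mu) - J(\mu_Y,m_\bmla) - J(\mu_Y,m_\bmeta) - H_\mu^{\sf LR}(\nu) + \oo_\delta(1) + \oo_N(1)\bigr)\bigr\}.$$
The first three terms in the exponent sum to $H_\mu^{\sf LR}(\mu_Y)$ by the very definition of $H_\mu^{\sf LR}$, so the bound reduces to $\exp\{N^2(H_\mu^{\sf LR}(\mu_Y) - H_\mu^{\sf LR}(\nu) + \oo_\delta(1) + \oo_N(1))\} \leq e^{-\varepsilon N^2}$ for $\delta$ small and $N$ large, which is \eqref{e:expbound4} with $c(\delta)=\varepsilon$.

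This proof is essentially bookkeeping; there is no substantive obstacle. The two points that require awareness are (i) the LDP upper bound \eqref{e:upbound} is valid for an arbitrary tilting measure $\nu\in\cM$ — the $\mu_Y$ appearing there is a free parameter of the derivation, not fixed by the statement — and (ii) the number of partitions whose counting measure lies in a given $\delta$-ball is subexponential in $N^2$, so passing from the uniform bound on $c_{\bmla_N\bmeta_N}^{\bmkappa_N}$ to the sum is costless at the large-deviation scale.
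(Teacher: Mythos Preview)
Your proof is correct and follows essentially the same route as the paper's: choose $\nu$ realizing a strictly larger value of $H_\mu^{\sf LR}$, apply the large deviation upper bound \eqref{e:upbound} with tilt $\nu$ to control the Littlewood--Richardson coefficients, use the Schur asymptotics \eqref{e:logS} to replace $S_{\bmkappa_N}(e^{Y_N})$ and the denominator $S_{\bmla_N}(e^{Y_N})S_{\bmeta_N}(e^{Y_N})$ by exponentials, and absorb the partition count into $\oo_N(1)$. The paper's presentation is slightly more compressed (it leaves the sup-to-sum passage and the freedom in choosing the tilt implicit), but the argument is the same.
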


\begin{proof}[Proof of Proposition \ref{c:expbound4}]
Under Assumption \eqref{e:asup4}, for sufficiently small $\varepsilon>0$, there exists a measure $\nu\in \cM$ such that
\begin{align}\label{e:boundLR}
J(\nu,\mu)-J(\nu, m_\bmla)-J(\nu, m_\bmeta)
>J(\mu_Y,\mu)-J(\mu_Y, m_\bmla)-J(\mu_Y, m_\bmeta)+\varepsilon.
\end{align}
We divide $S_{\bmla_N}(e^{Y_N})S_{\bmeta_N}(e^{Y_N})$ on both sides of \eqref{e:expbound4}, and use the asymptotics of Schur symmetric polynomials \eqref{e:logS}, 
\begin{align*}\begin{split}
&\phantom{{}={}}(1/(S_{\bmla_N}(e^{Y_N})S_{\bmeta_N}(e^{Y_N})))\sum_{\bmkappa_N: m[\bmkappa_N]\in \bB_\delta(\mu)}c^{\bmkappa_N}_{\bmla_N\bmeta_N}S_{\bmkappa_N}(e^{Y_N})
\\ 
&=
e^{ N^2(J(\mu_Y,\mu)-J(\mu_Y, m_\bmla)-J(\mu_Y, m_\bmeta)+\oo_\delta(1)+\oo_N(1))}\sum_{\bmkappa_N: m[\bmkappa_N]\in \bB_\delta(\mu)}c^{\bmkappa_N}_{\bmla_N\bmeta_N}\\
&\leq  
e^{N^2(J(\mu_Y,\mu)-J(\mu_Y, m_\bmla)-J(\mu_Y, m_\bmeta)+\oo_\delta(1)+\oo_N(1))}
e^{- N^2 (J(\nu,\mu)-J(\nu, m_\bmla)-J(\nu, m_\bmeta)+\oo_\delta(1)+\oo_N(1))}\\
&\leq e^{- N^2(\varepsilon+\oo_\delta(1)+\oo_N(1))},
\end{split}
\end{align*}
where in the first equality we used the large deviation upper bound \eqref{e:upbound}, and \eqref{e:boundLR} in the last inequality. The claim follows provided we take $\delta$ sufficiently small.
\end{proof}

\begin{proof}[Proof of Proposition \ref{t:unique4}]
We prove the existence of such $\mu$ by contradiction. If there is no such $\mu$, i.e. for any measure $\mu\in \cM^{\rm b}([0,2\fK])$, we have 
\begin{align*}
\mu_Y\not\in\arg\sup_{\nu\in \cM}\{J(\nu,\mu)-J(\nu, m_\bmla)-J(\nu, m_\bmeta)\}.
\end{align*}
Then it follows from proposition \ref{c:expbound4}, there exists a small $\delta>0$, and positive constant $c(\delta)>0$ such that
\begin{align*}\begin{split}
\sum_{\bmkappa_N: m[\bmkappa_N]\in \bB_\delta(\mu)}c^{\bmkappa_N}_{\bmla_N\bmeta_N}S_{\bmkappa_N}(e^{Y_N})
\leq e^{-c(\delta)N^2}S_{\bmla_N}(e^{Y_N})S_{\bmeta_N}(e^{Y_N}).
\end{split}\end{align*}
Since the space of probability measure supported on $[0, 2\fK]$ with density bounded by $1$ is compact, we get a finite open cover $ \cup \bB_{\delta_i}(\mu_i)$ of $\cM^{\rm b}([0, 2\fK])$,  we get the contradiction
\begin{align*}\begin{split}
&
\phantom{{}={}}S_{\bmla_N}(e^{Y_N})S_{\bmeta_N}(e^{Y_N})
=
\sum_i\sum_{\bmkappa_N: m[\bmkappa_N]\in \bB_{\delta_i}(\mu_i)}c^{\bmkappa_N}_{\bmla_N\bmeta_N}S_{\bmkappa_N}(e^{Y_N})\\&
\leq \sum_i e^{-c(\delta_i)N^2}S_{\bmla_N}(e^{Y_N})S_{\bmeta_N}(e^{Y_N})<S_{\bmla_N}(e^{Y_N})S_{\bmeta_N}(e^{Y_N}).
\end{split}
\end{align*}

The proof of the uniqueness of Proposition \ref{t:unique4} is where we need the regularity of the measure $\mu_Y$. If $\mu_{Y}$ is a maximizer  of $H^{\mathsf{LR}}_{\mu}(\cdot)$,
 then 
 \begin{align}\begin{split}\label{e:derLR}
&0=\left.\del_{\varepsilon}H^{\mathsf{LR}}_{\mu}(T_Y+\varepsilon \tilde T_C)\right|_{\varepsilon=0}\\
&=\left.\del_{\varepsilon}J(T_Y+\varepsilon \tilde T_C, T_\mu)\right|_{\varepsilon=0}
-\left.\del_{\varepsilon}J(T_Y+\varepsilon \tilde T_C, T_{m_{\bmla}})\right|_{\varepsilon=0}
-\left.\del_{\varepsilon}J(T_Y+\varepsilon \tilde T_C, T_{m_\bmeta})\right|_{\varepsilon=0}.
\end{split}\end{align}
We take $\tilde T_C=f(T_Y)$ and recall from \eqref{e:inc}, we have
\begin{align}\begin{split}\label{e:incLR}
&\phantom{{}={}}\left.\del_{\varepsilon}J(T_Y+\varepsilon f(T_Y), T_\mu)\right|_{\varepsilon=0}\\
&=\int f(x)\tau(\mu |\mathsf y)(x)\rd \mu_Y+\int \int \left(\frac{1}{x-y}-\frac{1}{1-e^{y-x}}\right)\rd \mu_Y(y) f(x)\rd \mu_Y(x).
\end{split}
\end{align}
Using \eqref{e:incLR}, we can simplify \eqref{e:derLR} as
\begin{align}\begin{split}\label{e:iic}
0&=\left.\del_{\varepsilon}J(T_Y+\varepsilon \tilde T_C, T_\mu)\right|_{\varepsilon=0}
-\left.\del_{\varepsilon}J(T_Y+\varepsilon \tilde T_C, T_{m_{\bmla}})\right|_{\varepsilon=0}
-\left.\del_{\varepsilon}J(T_Y+\varepsilon \tilde T_C, T_{m_\bmeta})\right|_{\varepsilon=0}\\
&=\int f(x)(\tau(\mu|\mathsf y)(x)-\tau(\mathsf m_\bmla|\mathsf y)(x)-\tau(\mathsf m_\bmeta|\mathsf y)(x))\rd \mu_Y(x)\\
&-\int \int \left(\frac{1}{x-y}-\frac{1}{1-e^{y-x}}\right)\rd \mu_Y(y) f(x)\rd \mu_Y(x).
\end{split}\end{align}
Since we can take $f$ any bounded Lipschitz function, we conclude from \eqref{e:iic} that $\mu_Y$-almost surely for any $x\in \bR$,
\begin{align}
\label{e:dsumLR}
\tau(\mu|\mathsf y)(x)=\tau(\mathsf m_{\bmla}|\mathsf y)(x)+\tau(\mathsf m_{\bmeta}|\mathsf y)(x)+\int \left(\frac{1}{x-y}+\frac{1}{1-e^{y-x}}\right)\rd \mu_Y(y).
\end{align}
We denote the solutions of the variational problem $I(\mu_Y, \mu), I(\mu_Y,m_{\bmla}), I(\mu_Y,m_{\bmeta})$ as given in \eqref{e:funcS} by $f^{\mu_Y\rightarrow \mu}(t,x), f^{\mu_Y\rightarrow m_{\bmla}}(t,x), f^{\mu_Y\rightarrow m_{\bmeta}}(t,x)$ respectively.
Then Item 7 in Proposition \ref{propbur} gives that
\begin{align*}\begin{split}
&\tau(\mu|\mathsf y)(x)=\Re [f^{\mu_Y\rightarrow \mu}(0,x)]+H\mu_Y-x,\\
&\tau(m_{\bmla}|\mathsf y)(x)=\Re [f^{\mu_Y\rightarrow m_{\bmla}}(0,x)]+H\mu_Y-x,\\
&\tau(m_{\bmeta}|\mathsf y)(x)=\Re [f^{\mu_Y\rightarrow m_{\bmeta}}(0,x)]+H\mu_Y-x.
\end{split}\end{align*}
Therefore \eqref{e:dsumLR} implies that the real part of $f^{\mu_Y\rightarrow \mu}(0,x)$ is given by
\begin{align*}\begin{split}
\Re [f^{\mu_Y\rightarrow \mu}(0,x)]
&=\Re [f^{\mu_Y\rightarrow \mu_{\bmla}}(0,x)]+\Re [f^{\mu_Y\rightarrow \mu_{\bmeta}}(0,x)]\\
&+H\mu_Y-x+\int \left(\frac{1}{x-y}+\frac{1}{1-e^{y-x}}\right)\rd \mu_Y(y),
\end{split}\end{align*}
for $\mu_Y$-almost surely all $x\in \bR$. The imaginary part of $f^{\nu\rightarrow \mu}(0,.)$ is given by the measure $\mu_Y$. By our assumption that $\mu_Y$ is compactly supported and all components of $\supp\mu_Y$ are infinite sets, we conclude from Corollary \ref{c:uniquemu} that $\mu$ is uniquely determined by $\mu_Y$ and $f^{\mu_Y\rightarrow \mu}(0,x)$. This finishes the proof of Proposition \ref{t:unique4}.
\end{proof}

\begin{proof}[Proof of Proposition \ref{p:lowerbound4}]
Thanks to the uniqueness of $\mu$, we have that for any $\mu'\neq \mu$ in $\cM^{\rm b}([0, 2K])$
\begin{align*}
\mu_Y\not\in \arg\sup_{\nu\in \cM}\{J(\nu,\mu')-J(\nu, m_\bmla)-J(\nu, m_\bmeta)\}.
\end{align*}
As a consequence the assumption in Proposition \ref{c:expbound4} holds, 
\begin{align*}
\sup_{\nu\in \cM}\left\{J(\nu,\mu')-J(\nu, m_\bmla)-J(\nu, m_\bmeta)\right\}
>J(\mu_Y,\mu')-J(\nu, m_\bmla)-J(\nu, m_\bmeta),
\end{align*}
and there exists a small $\delta>0$, and positive constant $c(\delta)>0$ such that
\begin{align}\begin{split}\label{e:hhop}
\sum_{\bmkappa_N: m[\bmkappa_N]\in \bB_\delta(\mu')}c^{\bmkappa_N}_{\bmla_N\bmeta_N}S_{\bmkappa_N}(e^{Y_N})
\leq e^{-c(\delta)N^2}S_{\bmla_N}(e^{Y_N})S_{\bmeta_N}(e^{Y_N}).
\end{split}\end{align}
 Again the  space of probability measure supported on $[0, 2\fK]$ with density bounded by $1$ removing the open ball $\bB_\delta(\mu)$ is compact, we get a finite open cover $ \cup \bB_{\delta_i}(\mu_i)$,  
\begin{align}\begin{split}\label{e:openball}
&\sum_{\bmkappa_N: m[\bmkappa_N]\in \bB_\delta(\mu)}c^{\bmkappa_N}_{\bmla_N\bmeta_N}S_{\bmkappa_N}(e^{Y_N})\geq S_{\bmla_N}(e^{Y_N})S_{\bmeta_N}(e^{Y_N})
-\sum_i \sum_{\bmkappa_N: m[\bmkappa_N]\in \bB_{\delta_i}(\mu_i)}c^{\bmkappa_N}_{\bmla_N\bmeta_N}S_{\bmkappa_N}(e^{Y_N})\\
&\geq \left(1-\sum_ie^{-c(\delta_i)N^2}\right)S_{\bmla_N}(e^{Y_N})S_{\bmeta_N}(e^{Y_N}),
\end{split}\end{align}
where we used \eqref{e:hhop} in the last line.
The large deviation lower bound at $\mu$ follows from the estimates \eqref{e:openball} and  \eqref{e:logS},
\begin{align*}
\begin{split}
\sum_{\bmkappa_N: m[\bmkappa_N]\in \bB_\delta(\mu)}c^{\bmkappa_N}_{\bmla_N\bmeta_N}
&=e^{-N^2 (J(\mu_Y, \mu)+\oo_\delta(1)+\oo_N(1))}\sum_{\bmkappa_N: m[\bmkappa_N]\in \bB_\delta(\mu)}c^{\bmkappa_N}_{\bmla_N\bmeta_N}S_{\bmkappa_N}(e^{Y_N})\\
&\geq e^{-N^2 (J(\mu_Y, \mu)+\oo_\delta(1)+\oo_N(1))}e^{N^2 (J(\mu_Y, m_\bmla)+J(\mu_Y, m_\bmeta)+\oo_N(1))}\\
&= e^{- N^2(J(\mu_Y, \mu)-J(\mu_Y, m_\bmla)-J(\mu_Y,m_\bmeta)+\oo_\delta(1)+\oo_N(1))}.
\end{split}
\end{align*}
\end{proof}

\section{Properties of $f(t,x)$}\label{s:f}


We recall that given two {compactly supported} Borel probability measures $\mu, \nu$ on $\mathbb R$, $(\rho^*, u^*)$ is the unique solution of the variational problem 
\eqref{e:Iexp}, and $f(t,x)=u_t^*(x)+\pi \ri \rho_t^*(x)$ satisfies the Burgers equation \eqref{burg}. We recall the set $\Omega$ from \eqref{Omega},
\begin{align}\label{e:defO}
\Omega=\{(t,x)\in (0,1)\times\bR : \rho_t^*(x)>0\}.
\end{align}
In this section we study the continuity properties of $f(t,x)$ at boundaries, and prove Theorem \ref{main} and Corollary \ref{c:uniquemu} showing that $f(0,.)$ uniquely determines 
$f(1,.)$. 
{For the rest of this section, we assume that neither $\mu$ nor $\nu$ are Dirac masses. If supplementary hypotheses are required in order for certain results to hold, we
will state them when needed.}

We introduce the following notation: given an open set $\Omega$, a function $\omega\colon\Omega\to
\mathbb C$, and a subset ${\bf D}\subseteq\overline{\Omega}$, 
we denote by $\bC_\Omega(\omega,{\bf D})$ the set of all points $w\in\mathbb C\cup\{\infty\}$ for which there exists 
a sequence $\{w_k\}_{k\in\mathbb N}\subset\Omega$ such that $\lim_{n\to\infty}\sup_{k>n}\text{dist}(w_k,{\bf D})=0$ and $\lim_{k\to\infty}\omega(w_k)=w$.
When there is no risk of confusion, we suppress the domain $\Omega$ from the notation and denote $\bC_\Omega(\omega,{\bf D})$ simply by $\bC(\omega,{\bf D})$.
Also, if ${\bf D}=\{x\}$ is a singleton, we write $\bC(\omega,x)$ rather than $\bC(\omega,\{x\})$. From Proposition \ref{propbur}, we deduce

\begin{proposition}\label{propbur2}
Assume that  the probability measures $\mu,\nu$ are compactly supported. The pair $(\rho^*, u^*)$ is the unique solution of the variational problem \eqref{e:Iexp}, and 
$f(t,x)=u_t^*(x)+\pi \ri \rho_t^*(x)$ satisfies the Burgers equation \eqref{burg}.
 For any $t\in(0,1),$ the function $x\mapsto\Im [f(t,x)]$ extends  continuously to all points $(t,x)\in\partial\Omega,$ where it takes the value zero. Moreover, for any $m\in(0,1)$ 
and any connected subset ${{\bf D}\subseteq\partial\Omega\cap([0,m]\times\mathbb R)}$, the function $g(t,x)=x-tf(t,x)$ satisfies the condition $\bC(g,{\bf D})\subset\mathbb R,$ 
and $\bC(g,{\bf D})$ is bounded.
\end{proposition}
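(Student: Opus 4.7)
The plan is to derive both assertions from the estimates gathered in Proposition \ref{propbur} together with the boundedness of $\Omega$ asserted in Theorem \ref{theoCMP}.

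For the first assertion, fix $t \in (0,1)$ and let $(t,x_0) \in \partial\Omega$. Since $\Im[f(t,\cdot)] = \pi\rho_t^*$ on the time-$t$ slice $\Omega_t := \{x : (t,x) \in \Omega\}$, Item 5 of Proposition \ref{propbur} gives $\rho_t^*(x) \leq (3/(4\pi^3 t^2(1-t)^2))^{1/3}(x-x_0)^{1/3}$ for $x \in \Omega_t$ near $x_0$. Hence $\Im[f(t,x)] \to 0$ as $x \to x_0$ within $\Omega_t$, and extending by zero on $\partial\Omega_t$ produces the desired continuous extension.

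For the second assertion, decompose $g(t,x) = x - t u_t^*(x) - \ri\pi t\rho_t^*(x)$ so that $\Im[g(t,x)] = -\pi t\rho_t^*(x)$. Boundedness of $\bC(g,{\bf D})$ follows from two inputs: $|x|$ is uniformly bounded on $\Omega$ by Theorem \ref{theoCMP}, and Item 4 of Proposition \ref{propbur} gives $|t f(t,x)| \leq \fK\sqrt{t/(1-t)} \leq \fK\sqrt{m/(1-m)}$ on $\Omega \cap ([0,m]\times\bR)$. Together these force $g$ to be bounded on $\Omega \cap ([0,m]\times\bR)$, so every cluster value lies in a bounded subset of $\bC$. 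For realness, take $(t_k,x_k) \in \Omega$ with $\dist((t_k,x_k),{\bf D}) \to 0$; after passing to a subsequence one may assume $(t_k,x_k) \to (t_0,x_0) \in \overline{\bf D}$ with $t_0 \in [0,m]$. If $t_0 = 0$ then $|\Im[g(t_k,x_k)]| \leq \fK\sqrt{t_k/(1-t_k)} \to 0$. If $t_0 \in (0,m]$, invoke joint continuity of $(t,x) \mapsto \rho_t^*(x)$ at interior-time boundary points---following from the Brownian-bridge representation in Item 6 of Proposition \ref{propbur} together with Biane's regularity theory for free convolution with the semicircle---to deduce $\rho_{t_k}^*(x_k) \to \rho_{t_0}^*(x_0) = 0$, the final equality being forced by openness of $\Omega$ at this interior-time boundary point.

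The main obstacle is this last step: establishing joint continuity of $\rho_t^*$ at boundary points with $t_0 \in (0,1)$. Item 5 of Proposition \ref{propbur} only controls $x$-variation at fixed $t$, so one needs additional analytic input, most naturally the continuity in $(t,z)$ of the subordination function for free convolution with the semicircle of variance $t(1-t)$, which transfers via Stieltjes inversion to continuity of the density. Everything else is a direct manipulation of the uniform bounds already available in Proposition \ref{propbur}.
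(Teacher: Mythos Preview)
Your proof is correct and is precisely the natural elaboration of what the paper leaves implicit: the paper states only ``From Proposition \ref{propbur}, we deduce'' and gives no argument, so your write-up is in fact more detailed than the paper's own treatment. Your use of Item 5 for the $x$-continuity of $\Im f(t,\cdot)$, and of Item 4 together with the boundedness of $\Omega$ for the boundedness of $\bC(g,\mathbf D)$, is exactly the intended route.

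The one point you flag as an obstacle --- joint continuity of $(t,x)\mapsto\rho_t^*(x)$ at boundary points with $t_0\in(0,1)$ --- is a genuine gap in a purely ``Item 5'' argument, and your proposed resolution via Biane's subordination theory is the right one. The paper itself later invokes precisely this input (the analyticity of $(t,z)\mapsto G_{\lambda_t\boxplus\gamma_{t(1-t)}}(z)$ and the description of $\partial\Omega$ through $H_t'(\omega_t(x))=0$) in the discussion preceding Remark \ref{curve}. So your caution is warranted, and the fix you indicate is exactly what the paper tacitly relies on.
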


\subsection{Properties of the solution of a special case of the Beltrami equation}
Our main tool in this section is a change of variable, transforming \eqref{burg} into a Beltrami equation (see, for instance, \cite[Section 4 of Chapter I]{MR867407}).
We recall from \eqref{e:Bel} that $f(z)$ satisfies the following Beltrami equation
\begin{equation}\label{Bel}
\partial_{\bar{z}}f=\frac{\ri-f}{\ri+f}\partial_{z}f,\quad z=x-\ri t,\quad (-\Im[z],\Re[z])\in\Omega.
\end{equation}
We also define the functions $g(t,x)$ and $h(t,x)$ by
\begin{equation}\label{gh}
g(t,x)=x-tf(t,x),\quad h(t,x)=x+(1-t)f(t,x),\quad (t,x)\in\Omega.
\end{equation}
Then $(\partial_x-\ri\partial_t)g=1-t\partial_xf+\ri f+\ri t\partial_tf=1+\ri f-t\partial_xf-\ri tf\partial_xf=(1+\ri f)(1-t\partial_xf)$ and $(\partial_x+\ri\partial_t)g=
1-t\partial_xf-\ri f+\ri tf\partial_xf=(1-\ri f)(1-t\partial_xf).$ Thus, 
\begin{align*}
\partial_{\bar{z}}g=(\partial_x-\ri \partial_t)g=\frac{(1+\ri f)(1-t\partial_xf)}{(1-\ri f)(1-t\partial_xf)}(\partial_x+\ri\partial_t)g
=\frac{\ri-f}{\ri+f}\partial_zg.
\end{align*}
That is, $f$ and $g$ satisfy the same Beltrami equation \eqref{Bel}. Since $h=g+f$, $h$ also satisfies \eqref{Bel}.

The classical version of the Beltrami equation, for whose study we refer the reader to \cite{MR2472875,Letho,MR867407}, is of the form 
$$
\partial_{\bar{z}}f={\boldsymbol\mu}(z)\partial_{z}f,
$$
where the {\em complex dilatation} ${\boldsymbol\mu}(z)$ is a measurable function defined on some open subset of $\mathbb C$ on which its $L^\infty$ norm is {\em strictly} less than 
one: there exists $K\in[1,+\infty)$ such that $\|{\boldsymbol\mu}\|_\infty\leq\frac{K-1}{K+1}$. Conventionally such functions $\mu$ are extended to all of $\mathbb C$ by being 
assigned the value zero outside this open subset of $\mathbb C$. A solution of such an equation is $K$-quasiregular\footnote{We refer here exclusively to solutions in the
Sobolev class $W_{\rm loc}^{1,p},p\ge2$ - our $f$ does satisfy that condition.}. It is known that for any $K\in(1,+\infty)$, there exists a $K$-quasiconformal (i.e. $K$-quasiregular 
and injective) solution of the Beltrami equation \cite[Theorem 5.1.2]{MR2472875} which is defined on the whole complex plane and is analytic outside the support of the function 
${\boldsymbol\mu}$. If one imposes a certain normalization on the solution, it becomes unique. Moreover, any other solution of the Beltrami equation is obtained by composing this 
solution with an analytic map. The regularity of the solutions of the Beltrami equation is influenced by the regularity of the function ${\boldsymbol\mu}$ in the obvious way, roughly 
speaking. The reader is justified to object to our naming \eqref{Bel} a Beltrami equation, since it is known that in our case ${\boldsymbol\mu}(z)=\frac{\ri-f(z)}{\ri+f(z)}$ satisfies 
$\|{\boldsymbol\mu}\|_\infty=1$: indeed, as seen in Proposition \ref{propbur2}, our ${\boldsymbol\mu}(z)$ extends continuously to the boundary of $\Omega\cap((0,1)\times\mathbb 
R)$ with values in the unit circle of $\mathbb C$. By now there is a fairly vast literature regarding Beltrami equations for which $\|{\boldsymbol\mu}\|_\infty=1$. In that case, it is 
not always true that the Beltrami equation has a solution, and certainly not a quasiconformal solution that extends to all the complex plane. Fortunately we are affected only marginally 
by this problem, thanks to two crucial facts: first, we are already guaranteed the existence of a sufficiently regular solution, namely $f$ itself, and second, the set on which our $
{\boldsymbol\mu}(z)=\frac{\ri-f(z)}{\ri+f(z)}$ is defined, namely $\Omega$, is conveniently described by the property $\Im f(z)>0$, i.e. $|{\boldsymbol\mu}(z)|<1$. In particular, $
(t,x)\in\partial\Omega\cap((0,1)\times\mathbb R)\iff|{\boldsymbol\mu}(z)|=1,z=x-{\rm i}t$. Let us collect below some of the properties $f,g,h$ (and most other solutions of \eqref{Bel})
must satisfy as a consequence of these properties of ${\boldsymbol\mu}(z)$ and $\Omega$. We refer to \cite{Letho,MR867407} for details.
\begin{enumerate}
\item $f,g,h$ are open, in the sense that the image of any open subset of $\Omega$ via one of these functions is an open set. Indeed, choose an arbitrary point $z_0\in\Omega$.
Continuity of $\Im f$ on $\Omega$ (Proposition \ref{propbur}, Item 1) together with the definition of $\Omega$ (Equation \eqref{e:defO}) guarantee that there exists a connected 
neighborhood $V$ of $z_0$ such that $\overline{V}\subset\Omega$ (thus, $\varepsilon_0:=\min\{t,1-t\colon (t,x)\in\overline{V}\}>0$), and $\varepsilon_1:=
\min_{\overline{V}}\Im f>0.$ In particular, by Proposition \ref{propbur}, Item 4, $|{\boldsymbol\mu}(z)|^2=\left|\frac{\ri-f(z)}{\ri+f(z)}\right|^2\leq
\max\left\{\frac{\mathfrak K^2+\varepsilon_0^2(1-\varepsilon_1)^2}{\mathfrak K^2+\varepsilon_0^2(1+\varepsilon_1)^2},
\frac{(\varepsilon_0-\mathfrak K)^2+\mathfrak K^2}{(\varepsilon_0+\mathfrak K)^2+\mathfrak K^2}\right\}<1$ for all $z\in\overline{V}$, so that $f$ is a solution of the 
Beltrami equation \eqref{Bel} when restricted to $V$. As $f$ is continuously $\mathbb R$-differentiable on $\overline{V}$ (Proposition \ref{propbur}, Item 1), it is quasiregular 
\cite[Theorems 5.3.4 and 5.5.1]{MR2472875}, hence open or constant on $V$ (see \cite[Corollary 5.5.2]{MR2472875}). As Biane's result \cite[Corollary 4]{Biane} guarantees 
$x\mapsto\Im f(t,x)$ is not constant, $f$ must be open on $V$. This applies to a small enough neighborhood of any point in $\Omega$, so $f$ is open on $\Omega$. Since 
$\Im g=-t\Im f,\Im h=(1-t)\Im f$, the above argument applies verbatim to show that $g$ and $h$ (and most linear combinations of $f,g,h$, for that matter) are open on $\Omega$.
\label{Open}

\item $f,g,h$ are {\em light}, meaning that the preimage of any point via one of these functions is totally disconected in $\Omega$. The argument parallels the one above: assume 
towards contradiction that there exists a point $w_0\in\mathbb C$ such that $f^{-1}(\{w_0\})$ has a limit point, call it $z_0$, in $\Omega$. As argued above, there exists a  connected 
neighborhood $V$ of $z_0$ such that $\overline{V}\subset\Omega$ and $\max_{z\in\overline{V}}|{\boldsymbol\mu}(z)|=\max_{z\in\overline{V}}\left|\frac{{\rm i}-f(z)}{{\rm i}+f(z)}\right|<1$.
This makes $f$ a (smooth) non-constant solution of the classical Beltrami equation \eqref{Bel} on $V$, ensuring that $f^{-1}(\{w_0\})$ cannot have any accumulation point in $V$, and 
in particular that it must be totally disconnected in $V$. This provides the desired contradiction.
\label{Light}

\item None of $f,g,h$ have a local maximum or minimum in $\Omega$. The lack of a local maximum is known (and a simple exercise in elementary topology to prove - for a stronger 
result, see Corollary \ref{maxi} and its proof) for complex-valued open maps. The lack of a minimum is based on the fact that $\Im f>0$ on $\Omega$ by definition of $\Omega$, so, 
as seen in the previous point, $\Im h>0,\Im g<0$ on $\Omega$. In particular, zero cannot be a local minimum value for either of these functions, so the argument for the lack of local 
maximum applies to prove the lack of a local minimum. 

\item $\partial f({\bf D})\subseteq f(\partial{\bf D})$ for any open set $\varnothing \neq {\bf D} \subseteq\Omega$, and the same for $g,h$. If $f$ does not extend 
continuously to $\partial{\bf D}$, then $f(\partial{\bf D})$ should be understood as the set of limit points of $f$ along sequences in ${\bf D}$ converging to points in $\partial{\bf D}$.
This is a general property of open continuous mappings, and arguments as in part \ref{Light}. above apply (see also \cite{MR0082545}).

\item The branching points of $f$ are the critical points $\{(t,x)\in(0,1)\times\mathbb R\colon\partial_tf(t,x)=0=\partial_xf(t,x)\}$; at all other points, $f$ is a local homeomorphism.
This follows easily by viewing $f$ as a map between open subsets of $\mathbb R^2$, computing its Jacobian and applying either of \eqref{Bel} or \eqref{burg}. Similar
statements (with the obvious modifications) hold for $g,h$.
\end{enumerate}

We focus next on some simple but useful topological properties of $\Omega$.

\begin{lemma}\label{simply}
Any connected component of $\Omega$ as defined in \eqref{e:defO} is simply connected.
\end{lemma}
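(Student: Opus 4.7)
The plan is to argue by contradiction via a winding-number computation that exploits the boundary behavior of $f$. Suppose some connected component $U$ of $\Omega$ is not simply connected. Since $\Omega$ is bounded inside $(0,1)\times\mathbb R$ by Theorem~\ref{theoCMP}, the unique unbounded connected component of $\mathbb R^2\setminus U$ contains $\{t\leq 0\}\cup\{t\geq 1\}$, so the complement must also contain a bounded connected component $K$, which (being disjoint from the unbounded component) is compactly contained in $(0,1)\times\mathbb R$. Because $U$ is a \emph{connected component} of $\Omega$, no point of $\partial K$ is interior to $\Omega$, so $\partial K\subset\partial\Omega\cap((0,1)\times\mathbb R)$; Proposition~\ref{propbur2} then gives that $\Im f$ extends continuously to $\partial K$ with value zero, and Item~4 of Proposition~\ref{propbur} provides a uniform bound $|f|\leq M$ in some neighborhood of $\partial K$ inside $U$.

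The second ingredient is the sense-preserving quasi-regularity of $f$ on compact subsets of $U$: direct computation from the complex Burgers equation~\eqref{burg} gives that the real Jacobian of $(t,x)\mapsto(\Re f,\Im f)$ equals $\Im f\cdot|\partial_x f|^2\geq 0$ on $\Omega$, and on any compact subset $V$ of $U$ the Beltrami coefficient $(\ri-f)/(\ri+f)$ has $L^\infty$-norm strictly less than $1$ since $\Im f$ is bounded away from zero there. Fix a regular value $w_0\in f(U)$ with $\eta:=\Im w_0>0$ and a preimage $z_0\in f^{-1}(w_0)\cap U$. Since $\Im f$ is continuous on $U$, vanishes on $\partial K$, and takes the value $\eta$ at $z_0$, for a Sard-regular value $c\in(0,\eta)$ the level set $\{\Im f=c\}\cap U$ is a smooth $1$-manifold; one of its connected components $\gamma$ may be selected as a Jordan curve in $U$ encircling $K$ and separating $\partial K$ from $z_0$. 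Let $W'\subset U$ denote the component of $U\setminus\gamma$ containing $z_0$, whose topological boundary is $\gamma$ together with the ``outer'' portion $\partial U\setminus\partial K$.

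Applying the argument principle for sense-preserving quasi-regular maps to $f$ on $W'$ equates $\sum_{z\in f^{-1}(w_0)\cap W'}\deg(f,z)$ with the winding number of $f|_{\partial W'}$ about $w_0$. The image $f(\gamma)$ lies on the horizontal line $\{\Im w=c\}$ strictly below $w_0$, contributing zero winding around $w_0$; on $\partial U\cap\partial\Omega\cap((0,1)\times\mathbb R)$ we again have $\Im f\equiv 0$ by Proposition~\ref{propbur2}, so this portion of the outer boundary also contributes zero winding around $w_0\in\mathbb H$; the remaining portions of $\partial U$ at $t\in\{0,1\}$ are handled by exhausting $U$ from within by smoothly bordered subdomains lying in $[\varepsilon,1-\varepsilon]\times\mathbb R$, applying the argument principle on each such piece, and passing to the limit as $\varepsilon\to 0$ using the uniform bound of Item~4 of Proposition~\ref{propbur} together with the continuous boundary extension of Proposition~\ref{propbur2}. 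The total winding is therefore zero, contradicting the strictly positive contribution from $z_0\in f^{-1}(w_0)\cap W'$. The principal technical obstacle is precisely this exhaustion step on the time-slices $t\in\{0,1\}$, where $f$ may fail to extend continuously and $\Im f$ need not vanish: controlling the winding contributions uniformly in $\varepsilon$ is the most delicate part of the argument.
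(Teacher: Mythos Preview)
Your argument has a genuine gap that is not merely technical. You set up the argument principle on the \emph{outer} region $W'$, whose boundary necessarily contains $\partial U\cap(\{0,1\}\times\mathbb R)$ (by Lemma~\ref{l:mass} every component of $\Omega$ touches both time-slices). You then need the winding contribution from these pieces to vanish, and you acknowledge this as ``the principal technical obstacle''. In fact this contribution \emph{cannot} vanish: on the slices $t=\varepsilon$ and $t=1-\varepsilon$ the function $f$ approaches its weak limits $\nu,\mu$, which have positive imaginary part, so $f$ maps these pieces into $\mathbb C^+$ and will in general wind around $w_0$. Indeed, the argument principle on $W'_\varepsilon$ must balance: the count of preimages of $w_0$ in $W'_\varepsilon$ is $\geq 1$, the contributions of $\gamma$ and of $\partial U\cap((0,1)\times\mathbb R)$ are zero, hence the time-slice pieces must contribute $\geq 1$. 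There is no contradiction to extract from this side. Neither Item~4 of Proposition~\ref{propbur} (which gives a bound that blows up as $t\to0,1$) nor Proposition~\ref{propbur2} (which concerns only $\partial\Omega\cap((0,1)\times\mathbb R)$) helps here. Your construction of the level curve $\gamma$ is also underjustified: nothing prevents all components of $\{\Im f=c\}$ near $K$ from running off to $t\in\{0,1\}$ rather than closing up.

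The paper's proof avoids the time-slices entirely by looking at the \emph{inner} region instead. Take any smooth simple closed curve $\gamma\subset\Omega$ that is not null-homotopic, and let $C$ be the intersection of $\Omega$ with the bounded component of $\mathbb C\setminus\gamma$. Then $C$ is compactly contained in $(0,1)\times\mathbb R$, so Item~4 of Proposition~\ref{propbur} bounds $f$ uniformly on $C$. The boundary $\partial C$ splits as $\gamma$ (on which $f$ takes values in a compact subset of $\mathbb C^+$, say $\{\Im w\geq\varepsilon\}$) and a nonempty piece of $\partial\Omega\cap((0,1)\times\mathbb R)$ (on which $\Im f=0$). Openness of $f$ gives $\partial f(C)\subset f(\partial C)\subset\mathbb R\cup\{\Im w\geq\varepsilon\}$, so $\partial f(C)$ misses the strip $\{0<\Im w<\varepsilon\}$. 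But near the $\partial\Omega$-piece of $\partial C$ the values of $f$ have arbitrarily small positive imaginary part, so $f(C)$ meets the strip; since the strip is connected and $\partial f(C)$ avoids it, $f(C)$ contains the entire unbounded strip, contradicting the bound on $f|_C$. No winding numbers, no level curves, no exhaustion near $t\in\{0,1\}$ are needed.
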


\begin{proof}
By definition, $\Omega=\{(t,x)\in(0,1)\times\mathbb R\colon\Im[ f_t(x)]>0\}$. 
Assume towards contradiction that a smooth simple closed curve $\gamma$ in $\Omega$
is not homotopy equivalent to a point in $\Omega$. Let $C$ be the intersection of $\Omega$ with the bounded component 
of $\mathbb C\setminus\gamma$. By continuity, $f(\gamma)\subset\mathbb C^+$ is a compact set 
(hence bounded and bounded away from $\mathbb R$) and $f(\partial C\setminus\gamma)\subset\mathbb R$ is a bounded
set according to Item 4 in Proposition \ref{propbur}.
Thus, $\partial f(C)\subset f(\gamma)\cup f(\partial C\setminus\gamma)$ 
is a bounded set in $\mathbb R\cup(\mathbb C^++\ri\varepsilon)$ for some $\varepsilon>0$.
This implies that $f(C)$ cannot be bounded,  contradicting Item $4$ in Proposition \ref{propbur}.
\end{proof}

As an easy consequence of the conservation of mass, we have
\begin{lemma}\label{l:mass}
If $\Omega_0$ is a connected component of $\Omega$, then the function \!$[0,1]\!\ni\!t\mapsto\int_{\bar\Omega_0\cap(\{t\}\!\times\mathbb R)}\Im [f(t,x)]\,{\rm d}x \in(0,1]$ is constant.
\end{lemma}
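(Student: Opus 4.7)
The strategy is to apply the divergence theorem to the vector field $V=(\rho^*,u^*\rho^*)$ on the compact region $\bar\Omega_0\cap([t_1,t_2]\times\mathbb R)$ for $0<t_1<t_2<1$; compactness follows from the boundedness of $\Omega$ (Item 6 of Theorem \ref{theoCMP}). By the continuity equation \eqref{eulpr10}, $V$ is divergence-free on $\Omega_0$, and $V$ extends continuously to $0$ on $\partial\Omega_0$: the first component does so by Proposition \ref{propbur2}, and the second because $u^*$ is uniformly bounded on $[t_1,t_2]\times\mathbb R$ (Item 4 of Proposition \ref{propbur}). Consequently the flux of $V$ through the lateral boundary $\partial\Omega_0\cap((t_1,t_2)\times\mathbb R)$ vanishes, so the fluxes through the top and bottom slices agree, which is exactly $M(t_1)=M(t_2)$ where $M(t)$ denotes the integral in the statement.

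To implement the divergence theorem despite possible irregularity of $\partial\Omega_0$, I replace the sharp indicator of $\Omega_0$ by the Lipschitz cutoff
\[
\chi_\delta(t,x):=\tau_\delta(\Im f(t,x))\cdot\mathbf 1_{\{(t,x)\in\Omega_0\}},
\]
where $\tau_\delta:[0,\infty)\to[0,1]$ is smooth with $\tau_\delta\equiv 0$ on $[0,\delta]$, $\tau_\delta\equiv 1$ on $[2\delta,\infty)$, and $|\tau_\delta'|\leq 2/\delta$. Since $\Im f$ extends continuously to $0$ on $\partial\Omega$ (Proposition \ref{propbur2}) and $\Omega_0$ is bounded, $\chi_\delta$ is Lipschitz with compact support in $\Omega_0$. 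Testing the weak form of \eqref{eulpr10} against $\psi(t)\chi_\delta(t,x)$ for arbitrary $\psi\in C^\infty_c((0,1))$ yields
\[
\int_0^1\psi'(t)\int_{\mathbb R}\chi_\delta\rho^*\,dx\,dt+\int_0^1\psi(t)\int_{\mathbb R}\bigl(\partial_t\chi_\delta\cdot\rho^*+\partial_x\chi_\delta\cdot u^*\rho^*\bigr)dx\,dt=0.
\]
Using the imaginary part of the Burgers equation \eqref{burg}, namely $\partial_t\Im f+u^*\partial_x\Im f=-\Im f\cdot\partial_x u^*$, the integrand of the second integral simplifies to $-\pi^{-1}\tau_\delta'(\Im f)\,(\Im f)^2\,\partial_xu^*$, which is pointwise bounded by $(8\delta/\pi)|\partial_xu^*|$ on $\{\delta\leq\Im f\leq 2\delta\}$. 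Combined with the Hölder estimate of Item 5 of Proposition \ref{propbur} (to bound the Lebesgue measure of the level sliver) and interior real-analytic bounds on $u^*$, this forces the second term to vanish as $\delta\to 0$. The first term tends to $\pi^{-1}\int\psi'(t)M(t)\,dt$ by dominated convergence, so $M'\equiv 0$ distributionally on $(0,1)$; continuity of the map $t\mapsto\rho^*_t\,dx$ then yields $M\equiv\text{const}$ on $(0,1)$.

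For $t\in\{0,1\}$, I invoke the weak convergence $\rho^*_t\,dx\to\mu$ (resp.\ $\nu$) from Item 2 of Proposition \ref{propbur}, tested against $\chi_\delta(0,\cdot)$ and $\chi_\delta(1,\cdot)$, to match boundary values with the interior constant. Positivity $M>0$ follows because $\Omega_0\neq\varnothing$ is open: some slice is non-empty with $\rho^*>0$ strictly inside, giving positive mass at that time, and by constancy $M>0$ everywhere. The upper bound (up to the $\pi$ factor in $\Im f=\pi\rho^*$) comes from $\rho^*_t$ being a probability density. The main obstacle is the vanishing of the boundary flux integral in the limit $\delta\to 0$: while the Burgers cancellation reduces matters to bounding $\int\tau_\delta'(\Im f)(\Im f)^2|\partial_xu^*|\,dx\,dt$, this still requires combining interior regularity of $u^*$ with the Hölder vanishing of $\rho^*$ on $\partial\Omega$ to conclude.
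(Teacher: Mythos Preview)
Your overall strategy is the right one and is exactly what the paper has in mind when it says the lemma is ``an easy consequence of the conservation of mass'': localize the continuity equation \eqref{eulpr10} to $\Omega_0$ via a cutoff, and use that $\rho^*$ vanishes on $\partial\Omega_0\cap((0,1)\times\mathbb R)$ to kill the boundary flux. The paper gives no further detail, so your write-up is in fact more careful than the original.

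That said, your final step is genuinely incomplete, and the difficulty is created by your particular choice of cutoff. By taking $\chi_\delta=\tau_\delta(\Im f)$ you force the correction term to contain $\partial_xu^*$, via the identity $\partial_t\Im f+u^*\partial_x\Im f=-\Im f\,\partial_xu^*$ (which, note, is just the continuity equation again, not independent Burgers information). You then need $\delta\int_{\{\delta\le\Im f\le 2\delta\}}|\partial_xu^*|\,dx\,dt\to0$, and neither Item~5 of Proposition~\ref{propbur} (a bound on $\rho^*$, not on $\partial_xu^*$) nor ``interior real-analytic bounds'' give uniform control of $\partial_xu^*$ as the sliver approaches $\partial\Omega_0$: interior bounds deteriorate precisely there. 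As written, this is a gap.

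The fix is to change the cutoff, not the strategy. Use the \emph{spatial} distance to the boundary: set $\chi_\delta(t,x)=\tau_\delta(d(t,x))$ with $d=\mathrm{dist}((t,x),\partial\Omega_0)$, which is Lipschitz with $|\nabla d|\le1$ (Remark~\ref{curve} gives local Lipschitz regularity of $\partial\Omega_0$, and $\bar\Omega_0$ is compact). Then the correction term becomes
\[
\int\psi(t)\!\int\tau_\delta'(d)\bigl[\partial_td\cdot\rho^*+\partial_xd\cdot u^*\rho^*\bigr]\,dx\,dt,
\]
which involves only $\rho^*$ and $u^*$, never $\partial_xu^*$. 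On the sliver $\{\delta\le d\le 2\delta\}$ one has $|\tau_\delta'(d)|\le2/\delta$, $|u^*|\le C$ by Item~4 of Proposition~\ref{propbur}, and $\rho^*\to0$ uniformly (Proposition~\ref{propbur2}); the sliver has $2$-dimensional measure $O(\delta)$ since $\partial\Omega_0$ has finite length on $\supp\psi$. This gives a correction of size $o(1)$ without any derivative of $u^*$. Your argument then closes exactly as you intended.
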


Thanks to the weak continuity of $t\mapsto\Im f(t,\cdot)$, an interesting phenomenon is illustrated by
the above lemmas, and particularly the above conservation of mass: in order for $\Omega$ to fail being 
connected (and hence simply connected), it is necessary (but not sufficient) that both $\supp\mu$ and 
$\supp\nu$ have more than one connected component, and there exist strict subsets $\{C_1^\mu,
\dots,C_\ell^\mu\}$ of the set of connected components of $\supp\mu$ and $\{C_1^\nu,\dots,
C_m^\nu\}$ of that of $\supp\nu$ such that $\mu(C_1^\mu\cup\cdots\cup C_\ell^\mu)=\nu
(C_1^\nu\cup\cdots\cup C_m^\nu),$ for some $1\leq\ell,m\in\mathbb N\cup\{\infty\}$.

Let us discuss next some of the properties of the boundary of $\Omega$. As $\overline{\Omega}\subset[0,1]\times\mathbb R$, we shall not hesitate in the following 
to refer to parts of $\overline{\Omega}$ at (or nearer to) $\{1\}\times\mathbb R$ as ``upper'' and to parts  of $\overline{\Omega}$ at (or nearer to) $\{0\}\times\mathbb R$ 
as ``lower'', and similarly for ``left'' and ``right''. However, we warn the reader that our change of variable $z=x-\ri t$ actually places $\overline{\Omega}\subset
\{z\in\mathbb C\colon\Im z\in[-1,0]\}$, so, when viewed as the set on which the solution(s) of \eqref{Bel} are defined, ``upper'' and ``lower'' become misnomers. 
Nevertheless, we shall stick with these expressions, hoping the reader will keep in mind our conventions and avoid confusion. First, of course, it follows immediately from 
\cite{Biane} and Proposition \ref{propbur}, Item 6, that $\partial\Omega\cap(\{0\}\times\mathbb R)=\supp\nu,\partial\Omega\cap(\{1\}\times\mathbb R)=\supp\mu.$
As shown in \cite[Proposition 3]{Biane}, if $t\in(0,1)$, then the support of $x\mapsto\Im f(t,x)=\rho^*_t(x)$ is the closure of its interior, and the boundary (in $\mathbb R)$ of this support is given 
in terms of the Cauchy-Stieltjes transform of the distribution of  $\sf x_t=(1-t)\sf a+t\sf b+\sqrt{t(1-t)}\sf s$, where $\sf{s,a,b}$ are as in Proposition \ref{propbur}.
As it bears some significance to our paper, we give here the outline of that description. Thus, given a Borel probability measure $\lambda$ on $\mathbb R$, we denote by 
$$
G_\lambda(z)=\int_\mathbb R\frac{{\rm d}\lambda(s)}{z-s},\quad z\in\mathbb C\setminus\supp\lambda,
$$
the Cauchy-Stieltjes transform of $\lambda$. It is analytic on its domain, it sends the upper half-plane $\mathbb C^+$ into the lower half-plane $\mathbb C^-$ and vice-versa, and is 
monotonically decreasing on each interval in $\mathbb R\setminus\supp\lambda$. Moreover, if $\supp\lambda$ is compact, then $G_\lambda(z)=\sum_{n=0}^\infty m_n(\lambda)z^{-
n-1}$ for $|z|>\max_{x\in\supp\lambda}|x|$, where $m_n(\lambda)$ is the $n^{\rm th}$ moment of $\lambda$. We denote by $\gamma_r$ the centered semicircular distribution of 
variance $r$ (i.e. the distribution of $\sqrt{r}\sf{s}$) and by $\lambda_t$ the distribution of $(1-t)\sf{a}+t\sf{b}$. Thus, the distribution of $\sf{x}_t=(1-t)\sf a+t\sf b+\sqrt{t(1-t)}\sf 
s$ is $\gamma_{t(1-t)}\boxplus\lambda_t$. It follows from the expression $G_{\lambda_t\boxplus\gamma_{t(1-t)}}(z)=\tau\left((z-(1-t)\sf a-t\sf b-\sqrt{t(1-t)}\sf s)^{-1}\right)$, 
provided by Proposition \ref{propbur}, Item 6, that the correspondence $(z,t)\mapsto G_{\lambda_t\boxplus\gamma_{t(1-t)}}(z)$ is analytic as a function of two complex variables; the 
domain of analyticity includes $\{(t,z)\colon t\in(0,1),z\in\mathbb C\setminus\supp(\lambda_t\boxplus\gamma_{t(1-t)})\}$ as a strict subset. It is known from \cite[Proposition 2]{Biane} 
that there exists a self-map $\omega_t$ of $\mathbb C^+$ which satisfies the equalities $\omega_t(z)=z-t(1-t)G_{\gamma_{t(1-t)}\boxplus\lambda_t}(z)=z-t(1-t)G_{\lambda_t}
(\omega_t(z)),$ $z\in\mathbb C\setminus\supp(\gamma_{t(1-t)}\boxplus\lambda_t)$. Direct computation shows that $\omega_t$ is the compositional inverse to the right of the map 
$H_t(w)=w+t(1-t)G_{\lambda_t}(w)$. For a point $x_0$ to be an endpoint of a component of the support of $\gamma_{t(1-t)}\boxplus\lambda_t$ it is necessary that either 
$\omega_t(x_0)$ is an endpoint of a connected component of $\supp\lambda_t$ and $H'(w)>0$ on one side of $\omega_t(x_0)$, or $\omega_t(x_0)\in\mathbb R$ belongs to the 
domain of analyticity of $H_t$, $H_t'(\omega_t(x_0))=0$, and $H'(w)>0$ on one side of $\omega_t(x_0)$. (Generally, points $x\in\mathbb R\setminus\supp(\lambda_t\boxplus
\gamma_{t(1-t)})$ satisfy the condition $G'_{\lambda_t}(\omega_t(x))\in\left(\frac{-1}{t(1-t)},0\right)$, or, equivalently, $H'_t(\omega_t(x))>0$ - see \cite[Lemma 4 and Corollary 2]{Biane}.)

A careful analysis of these two conditions, using the tools and methods of \cite{Biane}, allows us to conclude regarding the boundary of $\Omega$ that

\begin{remark}\label{curve}
Each connected component of $\partial\Omega\cap((0,1)\times\mathbb R)$ is a locally analytic or a locally Lipschitz curve.
\end{remark}
We leave the details of the proof of this remark to the reader.

Another interesting fact about $\partial\Omega$ is that it cannot create horizontal waves: 
\begin{remark}
When viewed as a curve in $\{z\in\mathbb C\colon\Im z\in(-1,0)\}$, no component of $\partial\Omega$ can have a local maximum of the imaginary part of its coordinate followed by a 
local minimum or vice-versa $($we call such a succession a ``wave''$)$, and neither can it contain a horizontal segment.
\end{remark}
Indeed, let us discard first the possibility of a horizontal segment $\{t_0\}\times[m,n],t_0\in(0,1),$ in the boundary of $\Omega$: based on the above description of the boundary
of the support of the distribution of $\mathsf x_{t_0}$ and the boundedness of the variables $\mathsf{a,b}$, this is a trivial consequence of the form of 
$G_{\lambda_t\boxplus\gamma_{t(1-t)}}(z)=\tau\left((z-(1-t)\sf a-t\sf b-\sqrt{t(1-t)}\sf s)^{-1}\right)$. Thus, we may discard the 
possibility of the existence of a horizontal segment in $(\partial\Omega)\cap\{z\colon\Im z\in(-1,0)\}$. The lack of waves is a consequence of the conservation of mass (Lemma 
\ref{l:mass}) and the simple connectedness of the components of $\Omega$ (Lemma \ref{simply}): it follows from Lemma \ref{simply} that any connected component of 
$\partial\Omega$ must touch at least one of $\{0\}\times\mathbb R$, $\{1\}\times \mathbb R$. Let us say that $(t_M,x_M)\in\partial\Omega$ is such that the horizontal 
line drawn through it separates some Euclidean ball $B_\iota^M=\{(t,x)\in(0,1)\times\mathbb R\colon(t-t_M)^2+(x-x_M)^2<\iota^2\}$ into two parts such that 
$\{(t,x)\in B_\iota^M\colon t>t_M\}\cap\Omega=\varnothing,\{(t,x)\in B_\iota^M\colon t<t_M\}\cap\Omega\neq\varnothing$. Having a wave simply means that there 
exist $(0,1)\ni t_m<t_M,x_M,x_m\in\mathbb R$ such that $(t_M,x_M),(t_m,x_m)$ belong to the same component of $(\partial\Omega)\cap\{z\colon\Im z\in(-1,0)\}$ and in 
addition $\{(t,x)\in B_\iota^m\colon t>t_m\}\cap\Omega^{c}\neq\varnothing,\{(t,x)\in B_\iota^m\colon t<t_m\}\cap\Omega\subset\Omega$.
Pick $0<\iota<(t_M-t_m)/2$. Consider the subset $\Omega'=\{(t,x)\in\Omega\colon1>t>t_M-\iota\}$. By our choice of $\iota$, this forces $(t_m,x_m)\not\in\Omega'$.
Thus, by taking $\iota$ sufficiently small, we insure that the boundary of the connected component of $\Omega'$ in whose boundary $(t_M,x_M)$ lays contains only a segment
$\{t_M-\iota\}\times[p,r]$ and a Lipschitz arc uniting $p$ and $r$, touching the line $\{t_M\}\times\mathbb R$ only in $(t_M,x_M)$, and otherwise staying entirely under this line.
This is a direct violation of the conservation of mass (Lemma \ref{l:mass}).

Finally, let us state a crucial result for our purposes, the Stoilow factorization, as it applies to our context (see any of \cite[Section 5.5]{MR2472875}, \cite[Page 138]{Letho},
or \cite{MR0082545}).

\begin{proposition}\label{Stoilow}
Let $\Omega_0$ be a connected component of $\Omega$. Then there exists a homeomorphism $W\colon\Omega_0\to\mathbb C^+$ and analytic functions $F,H\colon\mathbb C^+
\to\mathbb C^+,G\colon\mathbb C^+\to\mathbb C^-$ such that $f=F\circ W,g=G\circ W,h=H\circ W$ when restricted to $\Omega_0$. Moreover, $W$ is almost everywhere
real analytic.
\end{proposition}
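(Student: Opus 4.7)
The plan is to derive the proposition from the classical (topological) Stoilow factorization theorem applied to $f$, followed by a Riemann-mapping normalization, and then to leverage the fact that $g$ and $h$ solve the same Beltrami equation \eqref{Bel} as $f$.

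First I would invoke the topological form of Stoilow's theorem (see \cite{MR0082545}, or \cite[Ch.~V]{MR2472875}): every continuous, open, light map from a simply connected planar domain factors as an analytic map composed with a homeomorphism onto a planar domain. The hypotheses are already verified in the paper: $f$ is smooth in $\Omega_0$ by Item~1 of Proposition \ref{propbur}, and items~\ref{Open} and \ref{Light} in the numbered list above establish that $f$ is open and light on $\Omega_0$; the simple connectedness of $\Omega_0$ is Lemma~\ref{simply}. This produces a homeomorphism $W_0\colon\Omega_0\to W_0(\Omega_0)\subset\mathbb C$ and an analytic $F_0\colon W_0(\Omega_0)\to\mathbb C$ with $f=F_0\circ W_0$. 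Since $f(\Omega_0)\subset\mathbb C^+$, we have $F_0(W_0(\Omega_0))\subset\mathbb C^+$.

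Next I would normalize $W_0(\Omega_0)$ to $\mathbb C^+$. This open set is simply connected (as a homeomorphic image of $\Omega_0$). It cannot equal $\mathbb C$, for otherwise $F_0$ would be an entire function valued in $\mathbb C^+$, making $1/(F_0+\ri)$ a bounded entire function and hence, by Liouville, forcing $F_0$ to be constant — this contradicts the lightness of $f$. The Riemann mapping theorem then supplies a biholomorphism $\psi\colon W_0(\Omega_0)\to\mathbb C^+$; setting $W:=\psi\circ W_0$ and $F:=F_0\circ\psi^{-1}$ gives the required data for $f$.

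The decisive step is to transport the factorization to $g$ and $h$. On any relatively compact subdomain $K\Subset\Omega_0$, Items~2 and 4 of Proposition \ref{propbur} give $\min_{\overline K}\Im f>0$ together with an upper bound on $|f|$, so $\mu(z)=(\ri-f(z))/(\ri+f(z))$ satisfies $\|\mu\|_{L^\infty(K)}\leq k<1$. Thus $f,g,h$ are classical $K$-quasiregular solutions of the same Beltrami equation on $K$. Applying the chain rule to $f=F\circ W$ in the Sobolev sense and using that $F$ is analytic with $F'$ vanishing only on a discrete set (the branching set of $f$, described in item 5 of the numbered list), I get $\partial_{\bar z}W=\mu\,\partial_zW$ almost everywhere on $\Omega_0$. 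The standard composition rule for Beltrami coefficients then yields $\partial_{\bar w}(g\circ W^{-1})=0$ and $\partial_{\bar w}(h\circ W^{-1})=0$ a.e., so by the Weyl lemma $G:=g\circ W^{-1}$ and $H:=h\circ W^{-1}$ are analytic on $\mathbb C^+$; the sign of the imaginary parts $\Im g=-t\Im f<0$ and $\Im h=(1-t)\Im f>0$ determines the target half-plane for each. Finally, the almost-everywhere real analyticity of $W$ follows because $f$ is real analytic on $\Omega$ (Item~1 of Proposition~\ref{propbur}) and, off the discrete branching set where $F'\neq 0$, one has locally $W=F^{-1}\circ f$, a composition of real analytic maps.

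The main obstacle I anticipate is the degeneracy $|\mu|=1$ on $\partial\Omega$, which prevents a direct appeal to the measurable Riemann mapping theorem to construct $W$ as a global quasiconformal solution. The route I propose sidesteps this entirely by taking the \emph{purely topological} Stoilow factorization of $f$ as the starting point and only invoking the Beltrami identity a posteriori on relatively compact subsets of $\Omega_0$ (where the coefficient $\mu$ is strictly bounded away from $1$) to identify $W$ as the universal factor for $g$ and $h$.
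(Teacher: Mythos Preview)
Your proof is correct and follows essentially the paper's approach: topological Stoilow factorization for $f$, Riemann-map normalization of the image to $\mathbb C^+$, and the local identity $W=F^{\langle-1\rangle}\circ f$ off the discrete zero set of $F'$ for the a.e.\ real analyticity. Where the paper writes tersely ``and similar for $g,h$'' (relying on the quasiregular form of Stoilow in the cited references, in which the homeomorphic factor is determined by the common dilatation $\mu$), you unpack this step by showing that $W$ itself satisfies $\partial_{\bar z}W=\mu\,\partial_zW$ and then deducing analyticity of $g\circ W^{-1}$ and $h\circ W^{-1}$ via the composition rule and Weyl's lemma --- a valid and slightly more self-contained rendering of the same argument.
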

\begin{proof}
We have established in items \ref{Open} and \ref{Light} at the beginning of the subsection that $f,g,h$ are light, open (i.e. interior) mappings. Thus, according to
any of the references listed just above our proposition, there exists a homeomorphism $W$ defined on $\Omega_0$ such that $f$ factorizes as $f=F\circ W$ for an analytic
function $F$ defined on $W(\Omega_0)$, and similar for $g,h$. Since $\Omega_0$ has been shown in Lemma \ref{simply} to be simply connected and $W$ is a homeomorphism, 
$W(\Omega_0)$ must also be simply connected. If $W(\Omega_0)=\mathbb C^+$, then the proposition is proved. If not, the Riemann mapping theorem 
guarantees the existence of a conformal map $\mathcal C\colon W(\Omega_0)\to\mathbb C^+.$ Thus, the statement of the proposition follows by replacing $W$ with 
$\mathcal C\circ W$ and $F$ with $F\circ\mathcal C^{\langle-1\rangle}$. Similar statements for $g,h$ follow the same way. The a.e. real analyticity of $W$ follows easily:
for any point $z_0$ such that, say, $F'(W(z_0))\neq0$, {\em locally} we have $W=F^{\langle-1\rangle}\circ f$. The statement follows from Item 1 of Proposition \ref{propbur}
and the discreteness of the zero set of the analytic function $F'$.
\end{proof}

\subsection{An application of the Stoilow factorization}

The great advantage of $g$ over $f$ is that one can guarantee that $g$ is a homeomorphism close to $\{0\}\times\mathbb R$. We will
make this statement precise in Proposition \ref{subord0}. While unfortunately $g$ is usually not a homeomorphism on all of $\Omega$, we will be able to write a Stoilow-like
factorization for $f$ in terms of $g$ on a large enough subdomain of $\Omega$. According to 
Item 4 in Proposition \ref{propbur}, we have $|g(t,x)-x|=|tf(t,x)|<\fK\sqrt{t}/\sqrt{1-t}$, so that $\lim_{n\to\infty}g(t_n,x_n)=x$ whenever $\Omega\ni(t_n,x_n)\to(0,x)$
as $n\to\infty$. Thus,

\begin{lemma}\label{segment}
If $\Omega_0$ is a connected component of $\Omega$, then either 
$\partial\Omega_0\cap(\{0\}\times\mathbb R)$ is one point, and 
then $\nu$ has an isolated atom at that point, or there exist 
$-\infty<a=\min\partial\Omega_0\cap(\{0\}\times\mathbb R)<\max\partial\Omega_0\cap(\{0\}\times\mathbb R)=b<+\infty$ 
such that $\mathbb R\supseteq g(\partial\Omega_0\cap([0,1)\times\mathbb R))\supseteq[a,b]$.
\end{lemma}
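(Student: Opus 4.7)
My plan is to treat the two cases of the dichotomy separately, with the boundedness of $\Omega$ from Theorem \ref{theoCMP} and the boundary continuity of $g$ from Propositions \ref{propbur} and \ref{propbur2} as the main inputs. For setup: compact support of $\mu,\nu$ makes $\overline{\Omega_0}\subset[0,1]\times\mathbb R$ compact, so any min or max on it is finite and attained. The estimate $|g(t,x)-x|=t|f(t,x)|\leq\fK\sqrt{t/(1-t)}$ from Item~4 of Proposition \ref{propbur} shows that $g$ extends continuously to $\overline{\Omega_0}\cap(\{0\}\times\mathbb R)$ with $g(0,x)=x$, while Proposition \ref{propbur2} adds continuity of $g$ up to $\partial\Omega_0\cap([0,m]\times\mathbb R)$ with real boundary values for every $m<1$. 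This already gives $g(\partial\Omega_0\cap([0,1)\times\mathbb R))\subseteq\mathbb R$, and whenever $\partial\Omega_0\cap(\{0\}\times\mathbb R)$ has at least two points, $a<b$ are finite and automatically belong to $g(\partial\Omega_0\cap(\{0\}\times\mathbb R))$.

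Case 1: suppose $\partial\Omega_0\cap(\{0\}\times\mathbb R)=\{(0,x_0)\}$. By Lemma \ref{l:mass} the mass $m_0=\int_{\overline{\Omega_0}\cap(\{t\}\times\mathbb R)}\Im f(t,x)\,\rd x>0$ is constant in $t\in(0,1)$, and the single-point hypothesis together with compactness of $\overline{\Omega_0}$ forces the slice to collapse onto $\{x_0\}$ as $t\downarrow 0$, so the measures $\bm{1}_{\overline{\Omega_0}}(t,\cdot)\Im f(t,\cdot)\,\rd x$ converge weakly to $m_0\delta_{x_0}$. Since $\pi^{-1}\Im f(t,\cdot)\,\rd x\to\rd\nu$ weakly from Item~2 of Proposition \ref{propbur}, this yields an atom $\nu(\{x_0\})\geq m_0/\pi$; isolatedness follows because distinct connected components of the open set $\Omega$ are disjoint, so any contribution to $\nu$ from other components at $t=0$ is supported at a positive distance from $x_0$.

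For Case 2 I fix $c\in(a,b)$ and pick $\varepsilon>0$ small and generic so that the line $\{1-\varepsilon\}\times\mathbb R$ meets $\partial\Omega_0$ transversely in finitely many points, which is possible for a.e.\ small $\varepsilon$ by Remark \ref{curve}. The truncation $\widetilde\Omega_0:=\Omega_0\cap((0,1-\varepsilon)\times\mathbb R)$ is then a bounded, simply connected Jordan domain by Lemma \ref{simply}, whose boundary Jordan curve passes through $(0,a)$ and $(0,b)$. Proposition \ref{Stoilow} yields a homeomorphism $\widetilde W:\widetilde\Omega_0\to\mathbb C^+$ and an analytic $\widetilde G:\mathbb C^+\to\mathbb C^-$ with $g=\widetilde G\circ\widetilde W$; Carath\'eodory's theorem promotes $\widetilde W$ to a homeomorphism $\overline{\widetilde\Omega_0}\to\overline{\mathbb C^+}$ on Riemann spheres, so $\widetilde G=g\circ\widetilde W^{-1}$ extends continuously to $\mathbb R\cup\{\infty\}$.

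On this boundary circle $\widetilde G$ assumes the real values $a$ and $b$ at the images of $(0,a)$ and $(0,b)$, so by the intermediate value theorem along the arc joining them that avoids the preimage of $\infty$ there is a point $p$ with $\widetilde G(p)=c$. The preimage $q:=\widetilde W^{-1}(p)\in\partial\widetilde\Omega_0$ satisfies $g(q)=c\in\mathbb R$, but on the artificial cut $\partial\widetilde\Omega_0\setminus\partial\Omega_0\subseteq\{1-\varepsilon\}\times\mathbb R$ we have $\Im g=-(1-\varepsilon)\Im f<0$, so necessarily $q\in\partial\Omega_0\cap([0,1-\varepsilon]\times\mathbb R)\subseteq\partial\Omega_0\cap([0,1)\times\mathbb R)$, giving $c\in g(\partial\Omega_0\cap([0,1)\times\mathbb R))$. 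The main obstacle will be to verify rigorously that $\widetilde\Omega_0$ is a Jordan domain of sufficient regularity for Carath\'eodory's theorem to apply; this rests on Remark \ref{curve} plus the generic choice of $\varepsilon$, which together ensure the trace of $\partial\Omega_0$ on the cutting line is a clean finite configuration and the resulting boundary is a genuine Jordan curve rather than something pathological.
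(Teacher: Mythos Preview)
Your approach is genuinely different from the paper's, and it has a real gap.

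The paper avoids Stoilow and Carath\'eodory entirely. It picks sequences $(t_n,x_n)\to(0,a)$ and $(t'_n,x'_n)\to(0,b)$ in $\Omega_0$, extends each horizontally to the nearest lateral boundary point of $\Omega_0$, and joins them by an interior path. The uniform bound $|g(t,x)-x|\leq\fK\sqrt{t/(1-t)}$ from Proposition~\ref{propbur}, Item~4, then pins the real cluster values of $g$ at the two boundary endpoints to $(-\infty,a+\epsilon_n]$ and $[b-\epsilon'_n,\infty)$ respectively. Combining this with $g(0,x)=x$ on the bottom and the reality of $g$ on the lateral boundary traps $[a+\epsilon_n,b-\epsilon'_n]$ inside $g(\partial\Omega_0\cap([0,1)\times\mathbb R))$; letting $n\to\infty$ finishes. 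No Jordan domain hypotheses, no boundary extension of any homeomorphism.

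Your Case~2 breaks at the sentence ``Proposition~\ref{propbur2} adds continuity of $g$ up to $\partial\Omega_0\cap([0,m]\times\mathbb R)$.'' Proposition~\ref{propbur2} does \emph{not} say this: it only asserts that the cluster set $\bC(g,\mathbf D)$ is contained in $\mathbb R$ and bounded for connected $\mathbf D\subseteq\partial\Omega\cap([0,m]\times\mathbb R)$. That is strictly weaker than a continuous extension (the cluster set at a single boundary point could be a nondegenerate interval). Consequently the formula $\widetilde G=g\circ\widetilde W^{-1}$ need not extend continuously to the images of the lateral pieces of $\partial\widetilde\Omega_0$, and the intermediate value step on the boundary circle is unjustified exactly on the arc where you need it. Note that your truncation at $t=1-\varepsilon$ does not cure this: the Beltrami coefficient $(\ri-f)/(\ri+f)$ still has modulus tending to $1$ along $\partial\Omega_0\cap((0,1-\varepsilon)\times\mathbb R)$ because $\Im f\to0$ there, so $\widetilde W$ is not globally quasiconformal on $\widetilde\Omega_0$ and Carath\'eodory-type boundary extensions are not automatic either.

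The Jordan domain issue you flag is also genuine and not resolved by Remark~\ref{curve}: that remark controls each individual component of $\partial\Omega\cap((0,1)\times\mathbb R)$, but if $\supp\nu\cap[a,b]$ has infinitely many components, so does the lower boundary of $\widetilde\Omega_0$, and it is not clear the result is a Jordan curve.
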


\begin{proof}
The first statement of the lemma is obvious in light of Lemma \ref{l:mass}. Thus, assume that $a<b$. By the definition of the topological boundary, 
there exists a sequence $\{(t_n,x_n)\}_{n\in\mathbb N}\subset\Omega_0$ such that $t_n\to0$ and $x_n\to a$ as 
$n\to\infty$. As seen just above the statement of the lemma, it follows that $|g(t_n,x_n)-a|<|x_n-a|+\fK\sqrt{t_n}/\sqrt{1-t_n}$, 
and so $\lim_{n\to\infty}g(t_n,x_n)=a$. A similar statement holds for $b$ and a sequence $(t_n',x_n')$. For any $n\in\mathbb N$, we 
may draw a path $q_n$ starting at $(t_n,x_n)$ and continuing left along $\{t_n\}\times\mathbb R$ until it hits $\partial\Omega_0$, 
and  a path $q_n'$ starting at $(t'_n,x'_n)$ and continuing right along $\{t'_n\}\times\mathbb R$ until it hits $\partial\Omega_0$. 
We may consider a simple path $p_n$ starting at  $(t_n,x_n)$ and ending at $(t_n',x_n')$, and 
completely included in $\Omega_0\setminus (q_n\sqcup q_n')$. Clearly $g(q_n\sqcup p_n\sqcup q_n')$ 
(the image via $g$ of the concatenation of the three paths) is included in $\mathbb C^-$, except for 
the images of the beginning  and of the end of $q_n\sqcup p_n\sqcup q_n'$, which are mapped inside 
$\mathbb R$. The left endpoint (beginning) of this path is, by construction, at some point 
$(\xi_n,t_n)$ for a $\xi_n<x_n$; a similar statement - with the obvious modifications - holds for 
the right endpoint (end) of $q_n\sqcup p_n\sqcup q_n'$. It follows from the above that $g$ maps the 
beginning of this path into a real number which is no larger than $a+|x_n-a|+\fK\sqrt{t_n}/\sqrt{1-t_n}$ 
and the end point into a real number which is no smaller than $b-|x_n'-b|-\fK\sqrt{t_n'}/\sqrt{1-t'_n}$. Recalling 
that $f(\partial\Omega\cap((0,1)\times\mathbb R))\subseteq\mathbb R$ and that $g(0,x)=x$ for any 
$(0,x)\in\partial\Omega$, we obtain that the segment $\left[a+|x_n-a|+\fK\sqrt{t_n}/\sqrt{1-t_n},b-|x_n'-b|-\fK\sqrt{t_n'}/\sqrt{1-t'_n}\right]$ 
is included in $g(\partial\Omega_0\cap([0,1)\times\mathbb R))$ for all $n\in\mathbb N$. 
By letting $n \rightarrow\infty$, we conclude that $g(\partial\Omega_0\cap([0,1)\times\mathbb R))\supseteq[a,b]$. 
\end{proof}

The next proposition addresses the second case in Lemma \ref{segment}.

\begin{proposition}\label{subord0}
Consider a connected component $\Omega_0$ of $\Omega$ and points $a<b$
as in Lemma \ref{segment}. Then there exist a domain ${\bf K}\subset\mathbb C^-$ such that 
\begin{itemize}
\item $[a,b]\subset\partial{\bf K};$
\item for any $w\in(a,b)$ and $0<\varepsilon<\mathrm{dist}(w,\{a,b\})/2$ there exists $v>0$ such that
$\{x-{\rm i}y\colon w-\varepsilon<x<w+\varepsilon,0<y<v\}\subset{\bf K}$;
\item ${\bf K}\subset g(\Omega_0);$
\end{itemize}
and an analytic function $\Phi\colon{\bf K}\to\mathbb C^+$
such that $\Phi\circ g=f$. $g$ maps a simply connected open set $O
\subset\Omega_0$ satisfying the conditions that $\bar\Omega_0\cap\bar{O}$ contains the connected 
component of $\partial\Omega_0\setminus\{(0,a),(0,b)\}$ containing points from the set $\{(t,x)\in
\partial\Omega_0\colon a\leq x\leq b,t=\min\{s\in[0,1)\colon(s,x)\in\bar\Omega_0\}\}$ and $\bar{O}
\cap(\{1\}\times\mathbb R)=\varnothing$, bijectively onto ${\bf K}$.
\end{proposition}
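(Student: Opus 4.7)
The plan is to apply the Stoilow factorization of Proposition \ref{Stoilow}: write $f = F \circ W$, $g = G \circ W$ on $\Omega_0$, where $W \colon \Omega_0 \to \mathbb{C}^+$ is a homeomorphism and $F \colon \mathbb{C}^+ \to \mathbb{C}^+$, $G \colon \mathbb{C}^+ \to \mathbb{C}^-$ are analytic. The natural candidate is then $\Phi = F \circ G^{-1}$, and the main task becomes identifying a subregion of $\mathbb{C}^-$ on which a single-valued branch of $G^{-1}$ exists.

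First I would establish boundary regularity. The estimate $|g(t,x)-x| = |tf(t,x)| \leq \fK\sqrt{t/(1-t)}$ (Item~4 of Proposition~\ref{propbur}) shows that $g$ extends continuously to $\partial\Omega_0 \cap (\{0\}\times[a,b])$ by $g(0,x)=x$, while Proposition \ref{propbur2} gives a real-valued continuous extension of $g$ to $\partial\Omega_0 \cap ((0,1)\times\mathbb{R})$. Since $\Omega_0$ is simply connected by Lemma \ref{simply}, Carath\'eodory's theorem extends $W$ as a homeomorphism of prime-end compactifications, sending the specified connected component $\mathcal{A}$ of $\partial\Omega_0\setminus\{(0,a),(0,b)\}$ onto an open arc $I \subset \mathbb{R}$. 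On $I$, $G$ takes real values, so by the Schwarz reflection principle it extends analytically across $I$ into $\mathbb{C}^-$. Since $g(0,x)=x$ on $\{0\}\times(a,b)$, the boundary map $G|_I$ is monotone and surjects onto $[a,b]$, hence after restricting $I$ we may assume $G|_I : I \to [a,b]$ is a homeomorphism; its critical set $C = \{s \in I : G'(s) = 0\}$ is discrete by analyticity.

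Next I would construct ${\bf K}$ as the connected component of $\{w-\ri y : a < w < b,\ 0 < y < v_0\} \cap G(\mathbb{C}^+)$ abutting $(a,b)$, where $v_0 > 0$ is chosen small enough that each non-critical $s \in I$ admits a half-disk neighborhood in $\mathbb{C}^+ \cup I$ on which $G$ is injective with image containing a rectangle $\{w-\ri y : |w-G(s)|<\varepsilon,\ 0<y<v\}$ as required. These rectangles, suitably glued across the discrete set $G(C) \subset (a,b)$, yield an open set ${\bf K} \subset g(\Omega_0)$ with $[a,b] \subset \partial{\bf K}$ and a well-defined single-valued inverse $G^{-1}\colon{\bf K} \to \mathbb{C}^+$. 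Setting $O := W^{-1}(G^{-1}({\bf K}))$ gives an open simply connected subset of $\Omega_0$ on which $g$ restricts to a bijection onto ${\bf K}$, whose closure contains $\mathcal A$ by construction and avoids $\{1\}\times\mathbb{R}$ because $v_0$ is small. The function $\Phi := F \circ G^{-1}$ is analytic on ${\bf K}$ and satisfies $\Phi \circ g = F \circ W = f$ on $O$.

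The main obstacle will be the choice of $v_0$ and the careful handling of the critical points of $G$ on $I$: we must ensure that picking a single connected component of $g(\Omega_0) \cap \{0<-\Im w<v_0\}$ produces a genuine curtain hanging from $[a,b]$ onto which $G^{-1}$ extends univalently despite the critical values $G(C)$, and that the resulting $O$ is simply connected. This reduces to verifying that near each $s_0 \in C$ the local factorization $G(z) = G(s_0) + c(z-s_0)^{2k+1} + \cdots$ with $c \in \mathbb{R}$ (forced by monotonicity of $G|_I$) allows one sheet of $G^{-1}$ to be selected consistently on both sides of $G(s_0)$, which is a purely local analytic continuation argument.
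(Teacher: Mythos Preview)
Your overall framework---Stoilow factorization and $\Phi = F \circ G^{-1}$---matches the paper's, but you are missing the structural fact that makes the construction clean, and as a result you spend effort on a phantom obstacle.

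The key observation is that $G\colon\mathbb{C}^+\to\mathbb{C}^-$ is (minus) a Nevanlinna map, so it admits a representation
\[
G(z)=p-qz+\int_{\mathbb R}\frac{1+sz}{z-s}\,\mathrm{d}\rho(s),\qquad q\ge 0,\ \rho\ \text{a finite positive measure.}
\]
After normalizing $W$ so that $W(\partial_1\Omega_0)=[a,b]$, the paper shows $(a,b)\cap\supp\rho=\varnothing$ by contradiction: if $\gamma\in(a,b)\cap\supp\rho$, then the nontangential limit of $\Im G$ at $\gamma$ lies in $[-\infty,0)$, and via $g=G\circ W$ this forces a sequence in $\Omega_0$ approaching $\partial\Omega_0$ with $t_n\to 1$, contradicting the definition of $\partial_1\Omega_0$. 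Once $\supp\rho\cap(a,b)=\varnothing$, one reads off
\[
G'(x)=-q-\int_{(-\infty,a)\cup(b,\infty)}\frac{1+s^2}{(x-s)^2}\,\mathrm{d}\rho(s)<0\quad\text{strictly on all of }(a,b).
\]
There are \emph{no} critical points on $I$, so your odd-degree local analysis and the attendant sheet-selection problem never arise.

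This also repairs the gap in your monotonicity argument. Your reason ``$g(0,x)=x$'' only controls the portion of the lower boundary $\mathcal{A}$ lying in $\{0\}\times\mathbb{R}$; it says nothing about the pieces of $\mathcal{A}$ that dip into $(0,1)\times\mathbb{R}$, where $g(t,x)=x-tf(t,x)$ is real but not obviously monotone along the arc. The Nevanlinna form gives $G'<0$ on any interval disjoint from $\supp\rho$, with no reference to the geometry of $\mathcal{A}$.

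With $G'<0$ in hand, the paper builds $\mathbf{K}$ explicitly: the estimate $-\Re G'(x+\mathrm{i}y)\ge\int\frac{(x-s)^2-y^2}{((x-s)^2+y^2)^2}(1+s^2)\,\mathrm{d}\rho(s)$ forces $-\Re G'>0$ on the open square $\mathbf{D}$ with diagonal $[a,b]$ (since $|y|<\min\{x-a,b-x\}<|x-s|$ for $s\notin(a,b)$), hence $G$ is injective on this convex set, and one takes $\mathbf{K}=G(\mathbf{D}\cap\mathbb{C}^+)$, $O=W^{-1}(\mathbf{D}\cap\mathbb{C}^+)$. The rectangle property at each $w\in(a,b)$ then follows from Koebe's distortion theorem applied to the conformal $G|_{\mathbf D}$. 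This is both simpler and quantitatively sharper than gluing local half-disk inverses along a thin strip of unspecified height $v_0$.
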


\begin{proof}
While we believe there should be a direct argument guaranteeing the injectivity of $g$ close to the ``lower'' part of the boundary, we are not aware of it, and will show this indirectly. 

We introduce the following notations, besides the ones introduced in Lemma \ref{segment}: 
\begin{itemize}
\item $-\infty<a'=\min\partial\Omega_0\cap(\{1\}\times\mathbb R)
\leq\max\partial\Omega_0\cap(\{1\}\times\mathbb R)=b'<+\infty$. 
\item $\partial_1\Omega_0$ is the part of $\partial\Omega_0$ between $(0,a)$ and $(0,b),$ and away
from $\{1\}\times\mathbb R$ (the ``lower part'' of $\partial\Omega_0$): $\partial_1\Omega_0$ is the closure of the connected component of $\partial\Omega_0
\setminus\{(0,a),(0,b)\}$ containing points from
$$
\{(t,x)\in\partial\Omega_0\colon a\leq x\leq b,t=\min\{s\in[0,1)\colon
(s,x)\in\bar\Omega_0\}\};
$$
\item $\partial_3\Omega_0$ is the ``upper part'' analogue of $\partial_1\Omega_0$ for $a',b'$: the closure of the connected 
component of $\partial\Omega_0\setminus\{(1,a'),(1,b')\}$ containing points from
$$
\{(t,x)\in\partial\Omega_0\colon a'\leq x\leq b',t=\max\{s\in(0,1]\colon(s,x)\in\bar\Omega_0\}\};
$$
\item Finally, $\partial_2\Omega_0,\partial_4\Omega_0$ are the ``left'' and ``right'' parts of $\partial\Omega_0$, that is, the closures of the two connected 
components of $\partial\Omega_0\setminus(\partial_1\Omega_0\cup\partial_3\Omega_0)$: 
$\partial_2\Omega_0$ contains points from
$$
\{(t,x)\in\partial\Omega_0\colon t\in[0,1],x=\min\{r\in\mathbb R\colon
(t,r)\in\bar\Omega_0\}\},
$$
and $\partial_4\Omega_0$ is defined the same way, but with max replacing min. 
\end{itemize}
As a consequence of Lemma \ref{segment} (or, rather, its proof), $g$ maps $\partial_1\Omega_0$ onto 
$[a,b]$, and $h$ maps $\partial_3\Omega_0$ onto $[a',b']$ (we did not exclude here the possibility that
$[a',b']$ reduces to a point). All of $f,g,h$ map $\partial_2\Omega_0
\cup\partial_4\Omega_0$ in $\mathbb R$. We do not exclude the possibility that $\partial_3\Omega_0$ is reduced to a point.

As seen in Proposition \ref{Stoilow}, there exists a homeomorphism $W\colon\Omega_0\to\mathbb C^+$ and analytic functions $F,H\colon\mathbb C^+\to\mathbb C^+,$ $G\colon 
\mathbb C^+\to\mathbb C^-$ such that $f=F\circ W,g=G\circ W,h=H\circ W$. 
Since neither of $f,g,h$ are constant, $F,-G,H$ must be non-constant self-maps of $\mathbb C^+$. $W$ being a homeomorphism,
it must send boundary to boundary, i.e. $\partial\Omega_0$ into 
$\mathbb R\cup\{\infty\}$. As seen in Lemma \ref{segment}, $g$ sends $\partial_1\Omega_0$ onto the 
segment $[a,b]$. The homeomorphism $W$ sends this same $\partial_1\Omega_0$ onto a connected 
subset of $\mathbb R\cup\{\infty\}$, that is, either an interval (possibly unbounded), or the complement 
of a bounded, open interval. As the piece $\partial_1\Omega_0$ of $\partial\Omega_0$ under consideration 
is a strict subset of $\partial\Omega_0$ (with a complement containing at least an open arc in $(0,1]\times
\mathbb R$ - see Remark \ref{curve}), $W$ cannot map it onto all of $\mathbb R\cup\{\infty\}$ (indeed, 
if that were the case, the Stoilow factorization $g=G\circ W$ would provide an analytic
function $G\colon\mathbb C^+\to\mathbb C^-$ that sends all of $\mathbb R\cup\{\infty\}$ onto 
$[a,b]$, which is absurd\footnote{To be sure, a homeomorphism can map a large boundary set to
a point: for instance, $L(x,y)=(x,(1-x)y)$ sends the square $(0,1)^2$ homeomorphically onto the 
triangle $\{(u,v)\colon0<u<1,0<v<1-u\}$, squeezes the edge $\{1\}\times[0,1]$ to the point 
$\{(1,0)\}$, and $L^{\langle-1\rangle}(u,v)=(u,\frac{v}{1-u})$. It just happens that, for the 
reasons discussed above, this isn't the case for $W$ and $\{0\}\times[a,b]$ - see \cite[Page 92]{CL}.}). 
Since it is by necessity closed, this image has a complement that is either an 
open, nonempty interval, or the union of two unbounded open intervals. By pre-composing $W$ with a 
map of the type $z\mapsto\frac{1}{d-z}$ for a $d\in\mathbb R$ outside this range, we may assume 
without loss of generality that $W$ maps $\partial_1\Omega_0$ onto a compact interval which we 
denote by $[\alpha,\beta],$ and $W^{-1}(\infty)\in\partial_3\Omega_0$. By further composing with translations and 
dilations, we may, and will, assume that $\alpha=a,\beta=b$. The same argument, with $g$ 
replaced by $h$, allows us to conclude that $W$ maps $\partial_3\Omega_0$ onto a closed connected strict 
subset of $\mathbb R\cup\{\infty\}$. 

As both $g$ and $W$ map $\partial_1\Omega_0$ onto $[a,b]$
(note that $W$ must send $a$ to $b$ and $b$ to $a$),
the Stoilow factorization $g=G\circ W$ guarantees that $G([a,b])=[a,b]$. 
A priori this must be understood in the sense of limit points at the boundary. However, we claim 
that $G$ must reflect analytically through the interval $[a,b]$, 
which it maps bijectively onto itself. Indeed, since $G$ maps 
$\mathbb C^+$ into $\mathbb C^-$, it has a Nevanlinna representation 
\begin{equation}\label{Nev}
G(z)=p-qz+\int_\mathbb R\frac{1+sz}{z-s}\,{\rm d}\rho(s),\quad z\in\mathbb C\setminus\supp\rho,
\end{equation}
for some $p\in\mathbb R,q\in[0,+\infty)$, and positive finite Borel measure $\rho$ on $\mathbb R$.
If $(a,b)\cap\supp\rho=\varnothing$, then $G$ is analytic, and takes real values, 
on $(a,b)$. In particular, if $\{(t_n,x_n)\}_n\subset\Omega_0$ is such that 
$(t_n,x_n)\to\partial\Omega_0$ and $W(t_n,x_n)\to\gamma\in(a,b)$, then $g(t_n,x_n)
=G(W(t_n,x_n))\to G(\gamma)\in(a,b)$. The function $G$ is known (and easily seen) to be strictly 
decreasing on intervals in the complement of $\supp\rho$. Thus necessarily $G([a,b])=
[a,b]$, with $b=\lim_{x\downarrow a}G(x)$. Now assume towards contradiction that $(a,
b)\cap\supp\rho\neq\varnothing$. Then there exists at least one point $\gamma\in(a,
b)$ where the nontangential limit of $G$ exists, and the nontangential limit of $\Im G$ 
belongs to $[-\infty, 0)$. As $W$ is a homeomorphism, its functional inverse $W^{\langle-1\rangle}$ is
a well-defined continuous bijective map from $\mathbb C^+$ onto $\Omega_0$. In particular,
$W^{\langle-1\rangle}(\gamma+{\rm i}(0,1])\subset\Omega_0$ is a simple path that approaches $\partial\Omega_0$.
For any sequence $\{y_n\}_{n\in\mathbb N},y_n\searrow0$, such that 
$\{(t_n,x_n)=W^{\langle-1\rangle}(\gamma+{\rm i}y_n)\}_n\subset\Omega_0$ converges (necessarily to
a point in the boundary of $\Omega_0$), we have
$$
\lim_{n\to\infty}\Im g(t_n,x_n)=\lim_{n\to\infty}\Im G(W(t_n,x_n))
=\lim_{n\to\infty}\Im G(W(W^{\langle-1\rangle}(\gamma+{\rm i}y_n)))
$$
$$
=\lim_{n\to\infty}\Im G(\gamma+{\rm i}y_n)=\lim_{\stackrel{z\to\gamma}{\sphericalangle}}\Im G(z)\in[-\infty,0).
$$
As seen just before Lemma \ref{segment}, this forces $t_n\to1$. However, points in $(a,b)$ 
are necessarily limits of sequences $W(t_n,x_n)$ with $t_n$ converging to a number in 
$[0,1-\varepsilon]$ for some $\varepsilon>0$ (see also the proof of Lemma \ref{segment}). 
This is a contradiction. Thus, $(a,b)\cap\supp\rho=\varnothing$, as claimed.

We may now write the Nevanlinna representation as
$$
G(z)=p-qz+\int_{(-\infty,a)\cup(b,\infty)}\frac{1+sz}{z-s}\,{\rm d}\rho(s),\quad 
z\in\mathbb C\setminus\supp\rho;
$$
(normally one would integrate on the closed intervals, but $G([a,b])=[a,b]\subset\mathbb R$ 
implies $\rho(\{a\})=\rho(\{b\})=0$). Its derivative is 
$$
G'(z)=-q-\int_{(-\infty,a)\cup(b,\infty)}\frac{1+s^2}{(z-s)^2}\,{\rm d}\rho(s).
$$
Thus, we have $G'(x)<0$ for any $x\in\mathbb R\setminus\supp\rho$, and in particular on $(a,b)$.
We shall next find a convenient domain in $\mathbb C$ containing $(a,b)$ on 
which we can guarantee that $-\Re G'$ is greater than zero. If $z=x+{\rm i}y$, then 
$\frac{1}{(z-s)^2}=\frac{1}{(x-s+{\rm i}y)^2}=\frac{(x-s-{\rm i}y)^2}{((x-s)^2+y^2)^2}=
\frac{(x-s)^2-y^2-2{\rm i}y(x-s)}{((x-s)^2+y^2)^2}$. Then, by recalling that $q\geq 0$, 
\begin{equation}\label{lb}
-\Re G'(x+{\rm i}y)\geq\int_{(-\infty,a)\cup(b,\infty)}
\frac{(x-s)^2-y^2}{((x-s)^2+y^2)^2}(1+s^2)\,{\rm d}\rho(s).
\end{equation}
For any $s\in(-\infty,a)$, if $|y|\leq  x-a<x-s\implies
\int_{(-\infty,a)}\frac{(x-s)^2-y^2}{((x-s)^2+y^2)^2}(1+s^2)\,{\rm d}\rho(s)\geq0$,
and for any $s\in(b,+\infty)$, if $|y|\leq b-x<s-x\implies
\int_{(b,+\infty)}\frac{(x-s)^2-y^2}{((x-s)^2+y^2)^2}(1+s^2)\,{\rm d}\rho(s)\geq 0$,
with at least one of the two integrals being strictly positive. Thus, $-\Re G'(x+{\rm i}y)>0$
on ${\bf D}=\{x+{\rm i}y\colon a\leq  x\leq b,|y|\leq\min\{x-a,b-x\}\}$, a square
with diagonal $[a,b]$.
If there exists some $0<\eta<+\infty$ such that $\rho([a-\eta,a])=0$ and/or
$\rho([b,b+\eta])=0$ then we may increase the size of ${\bf D}$ accordingly
(this is relevant particularly when $a=b$ --- see note after the proof).

Since $-\Re G'$ is strictly positive on the convex set ${\bf D}$, it 
follows that $G$ is injective on ${\bf D}$ and $G(\partial{\bf D})$ is a simple closed curve in 
$\mathbb C$, symmetric with respect to $\mathbb R$. We have $G(a)=b,G(b)=a$. Note that 
$0>\Re G'(x)=\lim_{y\to0}\frac{\Im G(x+{\rm i}y)-\Im G(x)}{y}=\lim_{y\to0}\frac{\Im G(x+{\rm i}y)}{y}$,
$x\in[a,b]$, so that $\Re G'(x)$ being bounded away from zero provides a lower bound for the vertical
thickness of $G({\bf D})$ at any given $x\in(a,b)$. More specifically, $G$ is conformal on 
${\bf D}$ so that, by Koebe's distortion theorem (see, for instance, Kari Astala, Tadeusz Iwaniec, and Gaven Martin \cite[Theorem 2.10.6]{MR2472875}), we have 
\begin{equation}\label{Koebe}
\frac{|G'(x)|}{4}\text{dist}(x,\partial{\bf D})\leq\text{dist}(G(x),\partial G({\bf D}))\leq|G'(x)|\text{dist}(x,\partial{\bf D}).
\end{equation}
The shape of our domain ${\bf D}$ guarantees that $\text{dist}(x,\partial{\bf D})=\min\{x-a,b-x\}/\sqrt{2}$,
thus allowing us to conclude with ${\bf K}=G({\bf D}\cap\mathbb C^+)\subset\mathbb C^-$.

Denote ${\bf D}^\pm={\bf D}\cap\mathbb C^\pm$. The relation $g=G\circ W$ and 
the fact that $W\colon\Omega_0\to\mathbb C^+$ is a homeomorphism provide us 
with a set $O=W^{-1}({\bf D}^+)$ as claimed in our proposition.

Recall that $g=G\circ W,f=F\circ W$. Trying to find a map $\Phi$ such that $\Phi\circ g=f$
is equivalent to finding $\Phi$ such that $\Phi\circ G\circ W=F\circ W$ on some subset of $\Omega_0$.
Since $W$ is a homeomorphism, it is enough to find $\Phi$ such that $\Phi\circ G=F$ on
some relevant domain inside $\mathbb C^+$. We simply define $\Phi\colon G({\bf D}^+)\to
\mathbb C^+$, $\Phi(z)=F( G^{\langle-1\rangle}(z))$, where the inverse is the one taking values in
${\bf D}$. This completes the proof.
\end{proof}

Proposition \ref{subord0} holds true as well when $a=b$ (i.e. under the first case of Lemma \ref{segment}), with the set $O$ containing in its boundary
parts of $\partial_2\Omega_0$ and $\partial_4\Omega_0$ adjacent to the point $a=b$: this is a direct application of the description
of $\partial\Omega_0$ provided in the arguments justifying Remark \ref{curve} together with the Schur complement formula applied in Proposition \ref{propbur}, Item 6. 
This case is not relevant for the purposes of Theorem \ref{main}, but nevertheless of some interest. However, it will be important later to notice that
Koebe's Theorem \eqref{Koebe} together with the estimate \eqref{lb} imply that the simple curve $\partial G(\mathbf D)$ creates angles of non-zero
measure (possibly more than $\pi$) at both endpoints $a$ and $b$. Moreover, if one of $a$ or $b$ is an atom for $\nu$, then
that point does not belong to $\supp\rho$, and so it belongs to the interior in $\mathbb R$ of $\mathbb R\cap\partial\mathbf K$.

\subsection{The boundary values of $f$}

We recall that, unlike quasiregular functions, ``most'' analytic functions are determined by their values at 
the frontier. In particular, a function defined on a domain whose boundary contains an interval from $\mathbb R$ 
and with values in a half-plane is determined by its (known to exist 
a.e.) nontangential limits on any set of nonzero measure (the Fatou and Riesz-Privalov Theorems - see 
\cite[Theorems 2.5 and 8.1]{CL}).  In this section, we prove Theorem \ref{main}, namely that $f(0,x)=\lim_{t\to0}\Phi(g(t,x))=
\displaystyle\lim_{\stackrel{z\to x}{\sphericalangle}}\Phi(z)$, where, as before, $\displaystyle
\lim_{\stackrel{x\to x}{\sphericalangle}}\Phi(z)$ denotes the nontangential limit of $\Phi$ at $x$. This 
holds of course $\nu^{ac}$-almost everywhere on the interval $[a,b]$ in question. In order for our result to be 
non-vacuous, we need to assume that $\nu$ has a nonzero absolutely continuous part in $[a,b]$, and,
in particular, that $a<b$.

\begin{lemma}\label{limit}
Let ${D}\subseteq\mathbb C^+$ be a rectangle such that $\partial{D}\cap\mathbb R$ is an 
interval whose interior contains zero, and consider a non-constant analytic function $\omega\colon{
D}\to\mathbb C^+$. Suppose that the nontangential limit of $\omega$ at zero exists and
belongs to $\mathbb C^+.$ Then there exists $1>\varepsilon>0$ and a
smooth path $\gamma\colon(0,\varepsilon]\to \mathbb C^+$ such that:
\begin{enumerate}
\item $\gamma(t)=t\omega(\gamma(t)),$ $t\in(0,\varepsilon];$
\item $\lim_{t\to0}\gamma(t)=0$ and $\lim_{t\to0}\gamma(t)/t\displaystyle=\lim_{\stackrel{z\to 0}{\sphericalangle}}\omega(z);$
\item $\lim_{t\to0}\omega(\gamma(t))$ exists and equals the nontangential
limit of $\omega$ at zero.
\end{enumerate}
Moreover, for $\varepsilon$ small enough, the path $\gamma$
satisfying properties 1,2, and 3 above is unique.
\end{lemma}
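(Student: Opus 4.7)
The plan is to find $\gamma(t)$ as the unique zero of $F_t(z) := z - t\omega(z)$ in a disk of radius $tr$ centered at $tw_0$, where $w_0 \in \mathbb C^+$ denotes the nontangential limit of $\omega$ at $0$; then property 1 is just the equation $F_t(\gamma(t)) = 0$. The main tool will be Rouché's theorem, applied in conjunction with the uniform convergence of $\omega$ to $w_0$ on Stolz angles. To set up, I would fix $r \in (0, \Im w_0)$ so that $\bar B(w_0, r) \subset \mathbb C^+$, and note that the cone $\Lambda_r := \{\tau w : \tau > 0,\, w \in \bar B(w_0, r)\}$ sits in some Stolz angle $\Gamma_\alpha = \{x + \ri y : |x| \leq \alpha y\}$ at $0$, and consequently in $D$ sufficiently close to $0$. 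The existence of the nontangential limit then supplies, for every $\eta > 0$, a $\delta > 0$ with $|\omega(z) - w_0| < \eta$ for $z \in \Gamma_\alpha \cap B(0, \delta)$.

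For the Rouché step, I would fix $\eta := r/2$ and choose $\varepsilon \in (0,1)$ so small that $\varepsilon(|w_0| + r) < \delta$ and $\bar B(tw_0, tr) \subset D$ for all $t \in (0, \varepsilon]$. Then every $z \in \partial B(tw_0, tr)$ lies in $\Lambda_r \cap B(0, \delta)$, so
\begin{equation*}
|F_t(z) - (z - tw_0)| = t|\omega(z) - w_0| < tr/2 < tr = |z - tw_0|.
\end{equation*}
Since $z \mapsto z - tw_0$ has a simple zero at $tw_0$ in that disk, Rouché's theorem yields a unique simple zero $\gamma(t) \in B(tw_0, tr)$ of $F_t$, which is property 1. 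Property 2 then follows in two steps: $|\gamma(t)| \leq t(|w_0| + r) \to 0$ is immediate, and to see $\gamma(t)/t \to w_0$ I would repeat the Rouché argument with an arbitrary $r' \in (0, r)$; the resulting zero in $B(tw_0, tr') \subset B(tw_0, tr)$ must coincide with $\gamma(t)$ by uniqueness in the larger disk, giving $|\gamma(t)/t - w_0| < r'$ for small $t$. Property 3 is automatic from rewriting property 1 as $\omega(\gamma(t)) = \gamma(t)/t$.

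Smoothness and uniqueness finish the proof. Since Rouché delivers $\gamma(t)$ as a simple zero, $\partial_z F_t(\gamma(t)) = 1 - t\omega'(\gamma(t)) \neq 0$, and the holomorphic implicit function theorem (complexifying $t$ in a neighborhood of $(0, \varepsilon]$) gives that $\gamma$ is real-analytic, hence smooth. For uniqueness: any candidate $\tilde\gamma$ satisfying properties 1–3 satisfies $\tilde\gamma(t)/t \to w_0$ by property 2, so $\tilde\gamma(t) \in B(tw_0, tr)$ for $t$ small enough, and the uniqueness of the Rouché zero forces $\tilde\gamma = \gamma$. The only real obstacle is making the nontangential convergence truly uniform along the moving family of disks $B(tw_0, tr)$; this is handled once these disks are swept out by a cone inside a single Stolz angle, which is the reason for choosing $r < \Im w_0$ at the outset.
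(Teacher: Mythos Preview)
Your proof is correct and takes a genuinely different route from the paper's. The paper obtains the fixed point by applying the Denjoy--Wolff theorem to the self-map $\varphi_t(z)=t\omega(z)$ of a truncated Stolz cone $\Gamma_c(\eta)$, then invokes Schwarz--Pick to get $|\varphi_t'(\gamma(t))|<1$ for the implicit function theorem. You instead locate $\gamma(t)$ as the unique Rouch\'e zero of $z\mapsto z-t\omega(z)$ in the moving disk $B(tw_0,tr)$; simplicity of that zero gives $1-t\omega'(\gamma(t))\neq 0$ directly, with no appeal to Schwarz--Pick.

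The most striking divergence is in the uniqueness argument. The paper assumes a second path $\delta$, observes both are right inverses of $\Psi(z)=z/\omega(z)$, and then runs an elaborate topological argument: if the paths are disjoint, one builds a simply connected region $\mathbb D_0$ bounded by them, applies Lindel\"of and a corollary of Iversen to force $\Psi\to0$ on $\mathbb D_0$, and derives a contradiction from the shape of $\Psi(\partial\mathbb D_0)$. Your argument is one line: property 2 forces any competitor $\tilde\gamma$ into $B(tw_0,tr)$ for small $t$, where Rouch\'e has already certified the zero is unique. This is substantially more elementary---no Denjoy--Wolff, no Lindel\"of/Iversen---and exploits the fact that Rouch\'e gives both existence \emph{and} local uniqueness simultaneously, whereas Denjoy--Wolff gives uniqueness only among fixed points \emph{inside} $\Gamma_c(\eta)$, leaving the paper to rule out other solutions by hand. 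The paper's approach does buy a dynamical interpretation (the fixed point is attracting under iteration of $\varphi_t$), but for the purposes of this lemma your Rouch\'e argument is cleaner.
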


\begin{proof}
Let $l\in\mathbb C^+$ denote the nontangential limit of $\omega$ at zero. Consider a cone
$$
\Gamma_{c}=\{z\in\mathbb C^+\colon|\Re z|<c\Im z\},
$$
choose $c=1+2|\Re l|/\Im l$ (so that $l\in\Gamma_c$), and denote 
$\Gamma_c(\eta)=\{z\in\Gamma_c\colon\Im z<\eta\}$. Since zero 
belongs to the {\it interior} of the interval which is the intersection of 
the boundary of ${D}$ with the real line, there exists an $\eta>0$ 
sufficiently small such that $\Gamma_c(\eta)\subset{D}$.

By definition of nontangential limit, there exists $\eta>0$ such
that $\omega(\Gamma_c(\eta))\subset\Gamma_c,$ and from the continuity
of $\omega$ on the closure of $\Gamma_c(\eta)$ we conclude that the set
$\omega(\Gamma_c(\eta))$ is bounded. Thus, there exists $\varepsilon>0$
such that $t\omega(\Gamma_c(\eta))\subsetneq\Gamma_c(\eta)$ for all
$t\in(0,\varepsilon].$ Fix now such a $t$. The analytic function
$\varphi_t\colon\Gamma_c(\eta)\to\Gamma_c(\eta)$ defined by
$\varphi_t(z)=t\omega(z)$ has, according to the Denjoy-Wolff Theorem
\cite{MR9197,wolff}, a unique interior fixed point, which is also attracting (observe that the point must 
indeed be interior, since zero is not a fixed point, and $t\omega(\Gamma_c(\eta))$ 
is a proper subset of $\Gamma_c(\eta)\cup\{0\}$). Denote this point by $\gamma(t).$ 
The implicit function theorem guarantees the smoothness of the
correspondence $t\mapsto\gamma(t):$ indeed, according to the Schwarz-Pick
lemma, $|\varphi_t'(\gamma(t))|<1$. This proves the first part of the lemma.

The first part of item 2 follows from the facts that $\gamma(t)\in t\omega(\Gamma_c(\eta))$
and the set $t\omega(\Gamma_c(\eta))$ tends to zero uniformly as $t\to0.$

Item 3 follows from the first part of item 2, the fact that $\gamma(t)\!\in\!\Gamma_c$ for all
$t\!\in\!(0,\varepsilon]$, and Fatou's Theorem.

The second part of item 2 follows directly from item 3 and the equality $\gamma(t)/t=\omega(\gamma(t))$.

Assume towards contradiction that there exists another path
$\delta\colon(0,\varepsilon]\to{D}$ satisfying conditions 1-3 in the
Lemma. Observe that both $\gamma$ and $\delta$ are right inverses
for the function $\Psi\colon{D}\to\mathbb C\setminus(-\infty,0]$
defined by $\Psi(z)=z/\omega(z),$ and thus they are injective.
Moreover,
$\gamma((0,\varepsilon])\cap\delta((0,\varepsilon])=\varnothing.$
Indeed, assume that $\gamma(t_1)=\delta(t_2).$ Then
$t_1=\Psi(\gamma(t_1))=\Psi(\delta(t_2))=t_2$. Denote $s=t_1=t_2.$
We have
$$
\Psi'(\gamma(s))=s^2\frac{\omega(\gamma(s))-\gamma(s)\omega'(\gamma(s))}{\gamma(s)^2}=
s^2\frac{\gamma(s)(1/s-\omega'(\gamma(s)))}{\gamma(s)^2},
$$
so the derivative of $\Psi$ in the point $\gamma(s)$ is zero if and only
if $1-s\omega'(\gamma(s))=0,$ or, equivalently, if
$\varphi_s'(\gamma(s))=1.$ But $\gamma(s)$ is the Denjoy-Wolff
point of $\varphi_s(z)=s\omega(z).$ Since, as seen above, $z\mapsto
\varphi_s(z)$ sends $\Gamma_c(\eta)$ strictly inside itself, we obtain a
contradiction with the Schwarz-Pick Lemma. We conclude
that the derivative of $\Psi$ in the point $\gamma(s)$ cannot be
zero, so that $\Psi$ must be injective on some neighborhood of
$\gamma(s),$ and hence $\gamma$ and $\delta$ must coincide on a
whole subinterval of $(0,\varepsilon]$ centered at $s$, and hence on
all $(0,s].$ This is a contradiction. So indeed
$\gamma((0,\varepsilon])\cap\delta((0,\varepsilon])=\varnothing.$

Now consider the open, connected and simply connected set
$\bD_0\subset{D}$ delimited by $\gamma,\delta,$ and a third
simple smooth curve $\beta$ included in ${D}$ which has its
endpoints at $\gamma(\varepsilon/2)$ and $\delta(\varepsilon/2),$
intersects $\gamma((0,\varepsilon])\cup\delta((0,\varepsilon])$ in no
other point, and such that zero belongs to the closure of
$\bD_0$. Observe that $\Psi(\gamma(t))=\Psi(\delta(t))=t,$ so
by a theorem of Lindel\"{o}f \cite[Theorem 2.3.1]{CL} applied to $\Psi$ on $D$, and a corollary of the 
Iversen Theorem \cite[Theorem 5.2]{CL} applied to $\Psi$ on $\bD_0$, we have $\lim_{z\to0,z\in\bD_0}\Psi(z)=0.$
Of course, $\Psi(\bD_0)$ is in its own turn an open connected
set. Since $\Psi(\gamma((0,\varepsilon/2]))=\Psi(\delta((0,\varepsilon/2]))=(0,\varepsilon/2]$,
and so, as seen above, $\lim_{z\to0,z\in\bD_0}\Psi(z)=0$,
we must have that $0\in\overline{\Psi(\bD_0)}.$ 
Since $\Psi$ is an analytic map, hence open, $\partial\Psi(\mathbb D_0)\subseteq\Psi(\partial\mathbb D_0)=\Psi(\delta\sqcup\gamma\sqcup\beta)$. $\Psi(\mathbb D_0)$ is a bounded 
open connected set, so its topological boundary is a compact set in $\mathbb C$. This compact set must thus be included in the continuous curve $[0,\varepsilon/2]\cup\Psi(\beta)$. 
Thus, $\Psi(\beta)$ must describe a curve in $\mathbb C$ which, together with $[0,\varepsilon/2]$, surrounds an open connected set (it may enter it too, but must surround it entirely). 
By construction, $\Psi$ maps the two ends of $\beta$ in $\varepsilon/2$, so $\Psi(\beta)$ is a closed curve in $\mathbb C\setminus(-\infty,0]$. But this closed curve cannot be included 
in $\mathbb C\setminus(-\infty,0]$ because $\Psi$ is an open mapping on $D$ and $\beta\subset{D}$. This is a contradiction. So indeed $\gamma$ is unique.
\end{proof}
As an aside, we record three free probability consequences of the above lemma, which have no direct application to this work.
\begin{corollary}
Assume that $\mu$ is a Borel probability measure on $\mathbb R$, $\gamma_t$ is the centered semicircular of variance $t$, and $\nu$ is a freely infinitely divisible probability
measure on $\mathbb R$ which is not a point mass.
\begin{enumerate}
\item For $\mu^{\rm ac}$-almost every $x\in\mathbb R$, we have $\lim_{t\to0}\frac{{\rm d}(\mu\boxplus\gamma_t)(x)}{{\rm d}x}=\frac{{\rm d}\mu(x)}{{\rm d}x}$;
\item For $\mu^{\rm ac}$-almost every $x\in\mathbb R$, we have $\lim_{t\to1}\frac{{\rm d}\mu^{\boxplus t}(x)}{{\rm d}x}=\frac{{\rm d}\mu(x)}{{\rm d}x}$;
\item For $\mu^{\rm ac}$-almost every $x\in\mathbb R$, we have $\lim_{t\to0}\frac{{\rm d}(\mu\boxplus\nu^{\boxplus t})(x)}{{\rm d}x}=\frac{{\rm d}\mu(x)}{{\rm d}x}$;
\end{enumerate}
\end{corollary}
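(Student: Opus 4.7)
\medskip
\noindent\textbf{Overall strategy.} All three items fit a single template: realize the Cauchy transform of the perturbed measure via a free probabilistic subordination, $G_{\mu_\star}(z)=G_\mu(\omega_\star(z))$, reduce the boundary behavior of $\omega_\star$ at a Lebesgue point $x$ of $\mu^{\rm ac}$ with $\rho_\mu(x):=\frac{d\mu^{\rm ac}}{dx}(x)>0$ to a scalar fixed-point equation of the form $\zeta=t\,\tilde\omega(\zeta)$ with $\tilde\omega\colon\mathbb C^+\to\mathbb C^+$ analytic and having nontangential limit in $\mathbb C^+$ at $0$, and then invoke Lemma~\ref{limit} to extract a smooth path $\gamma(t)\in\mathbb C^+$ along which the subordination at $x+\mathrm{i}0^+$ is controlled. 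Since $\rho_{\mu_\star}(x)=-\tfrac{1}{\pi}\Im G_{\mu_\star}(x+\mathrm{i}0^+)$, the resulting convergence $G_\mu(x+\gamma(t))\to G_\mu(x+\mathrm{i}0^+)$ yields the stated a.e.\ convergence of densities.

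\medskip
\noindent\textbf{Item 1 (prototype).} Fix such an $x$ and set $l:=-G_\mu(x+\mathrm{i}0^+)\in\mathbb C^+$. Biane's subordination furnishes $\omega_t\colon\mathbb C^+\to\mathbb C^+$ with $G_{\mu\boxplus\gamma_t}=G_\mu\circ\omega_t$ and $\omega_t(z)=z-tG_\mu(\omega_t(z))$. On a small rectangle $D\subset\mathbb C^+$ whose closure meets $\mathbb R$ in an interval containing $0$ in its interior, the function $\tilde\omega(\zeta):=-G_\mu(x+\zeta)$ is analytic, maps into $\mathbb C^+$, and has nontangential limit $l$ at $0$ by the Fatou theorem. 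Lemma~\ref{limit} produces $\varepsilon>0$ and a unique smooth path $\gamma\colon(0,\varepsilon]\to\mathbb C^+$ with $\gamma(t)=-tG_\mu(x+\gamma(t))$, $\gamma(t)\to 0$, and $G_\mu(x+\gamma(t))\to G_\mu(x+\mathrm{i}0^+)$. Evaluating at $z=x+\mathrm{i}y$ and noting that $\Im\bigl(\omega_t(x+\mathrm{i}y)-(x+\mathrm{i}y)\bigr)=-t\Im G_\mu(\omega_t(x+\mathrm{i}y))>0$, the difference $\zeta_t(y):=\omega_t(x+\mathrm{i}y)-x$ lies in $\mathbb C^+$ and stays in a fixed Stolz cone at $0$ as $y\downarrow 0$. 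Its boundary limit $\zeta_t(0^+)$ solves the same fixed-point equation as $\gamma(t)$, so the uniqueness clause of Lemma~\ref{limit} forces $\zeta_t(0^+)=\gamma(t)$ for all sufficiently small $t$; hence $G_{\mu\boxplus\gamma_t}(x+\mathrm{i}0^+)=G_\mu(x+\gamma(t))\to G_\mu(x+\mathrm{i}0^+)$, and taking imaginary parts closes item 1.

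\medskip
\noindent\textbf{Items 2 and 3.} For item 2 take $s:=t-1\to 0^+$ and use the subordination $G_{\mu^{\boxplus t}}=G_\mu\circ\omega_t$ together with the identity $K_{\mu^{\boxplus t}}=K_\mu+sR_\mu$, which rearranges (via $R_\mu(G_\mu(\omega))=\omega-F_\mu(\omega)$ with $F_\mu=1/G_\mu$) to $\omega_t(z)-z=s\bigl(F_\mu(\omega_t(z))-\omega_t(z)\bigr)$. The map $\tilde\omega(\zeta):=F_\mu(x+\zeta)-(x+\zeta)$ is an analytic self-map of $\mathbb C^+$ (using $\Im F_\mu(z)\ge \Im z$, with strict inequality for non-degenerate $\mu$) whose nontangential limit at $0$ is $1/G_\mu(x+\mathrm{i}0^+)-x\in\mathbb C^+$; applying Lemma~\ref{limit} with parameter $s$ and repeating the cone-identification argument of item~1 gives $G_{\mu^{\boxplus t}}(x+\mathrm{i}0^+)\to G_\mu(x+\mathrm{i}0^+)$ as $t\to 1^+$. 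For item 3, free infinite divisibility of $\nu$ ensures its Voiculescu transform $\varphi_\nu$ extends analytically to $\mathbb C^+$ sending it into $\mathbb C^-\cup\mathbb R$, and $\varphi_{\nu^{\boxplus t}}=t\varphi_\nu$; the Chistyakov--Götze subordination $G_{\mu\boxplus\nu^{\boxplus t}}=G_\mu\circ\omega_t$ then rearranges to $\omega_t(z)-z=-t\,\varphi_\nu(F_\mu(\omega_t(z)))$. Setting $\tilde\omega(\zeta):=-\varphi_\nu(F_\mu(x+\zeta))$ gives an analytic map into $\mathbb C^+$ with nontangential limit $-\varphi_\nu(F_\mu(x+\mathrm{i}0^+))\in\mathbb C^+$, and the same application of Lemma~\ref{limit} concludes the argument.

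\medskip
\noindent\textbf{Main obstacle.} The delicate step common to all three items is the identification of the boundary value $\zeta_t(0^+)=\omega_t(x+\mathrm{i}0^+)-x$ with the Denjoy--Wolff path $\gamma(t)$ of Lemma~\ref{limit}, rather than with some spurious real-valued solution of the same fixed-point equation. This is handled by exploiting the strict $\mathbb C^+$-self-map property of the subordination (for instance, $\Im\omega_t(x+\mathrm{i}y)>y$ in item 1), which forces $\zeta_t(y)$ to stay inside a Stolz cone at $0$ as $y\downarrow 0$, thereby triggering the uniqueness clause of Lemma~\ref{limit}. A secondary technical point in items 2--3 is to justify that $F_\mu$ and $\varphi_\nu\circ F_\mu$ admit well-defined nontangential limits at the relevant boundary points, which uses the Pick-function representation of $\varphi_\nu$ for freely infinitely divisible $\nu$ and the standard boundary behavior of $F_\mu$ at a Lebesgue point with positive density.
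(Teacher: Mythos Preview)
Your approach is essentially the same as the paper's: record the appropriate subordination/functional equation for each case and feed it into Lemma~\ref{limit}. The paper's proof is in fact much terser than yours---it simply writes down the three identities
\[
G_{\mu\boxplus\gamma_t}(z)=G_\mu\!\bigl(z-tG_{\mu\boxplus\gamma_t}(z)\bigr),\qquad
G_{\mu^{\boxplus t}}(z)=G_\mu\!\Bigl(\tfrac1tz+\bigl(1-\tfrac1t\bigr)\bigl(G_{\mu^{\boxplus t}}(z)\bigr)^{-1}\Bigr),\qquad
G_{\mu\boxplus\nu^{\boxplus t}}(z)=G_\mu\!\bigl(z-tR_\nu(G_{\mu\boxplus\nu^{\boxplus t}}(z))\bigr),
\]
points to the literature for their derivation, and leaves the application of Lemma~\ref{limit} implicit. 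Your equations are equivalent rewritings of these (via $\omega_t$, $F_\mu=1/G_\mu$, and $\varphi_\nu(z)=R_\nu(1/z)$), and your explicit identification of the map $\tilde\omega$ in each case, together with the verification that its nontangential limit lies in $\mathbb C^+$ at a Lebesgue point with positive density, is exactly the content the paper suppresses. The ``main obstacle'' you flag---matching the boundary subordination value with the Denjoy--Wolff path---is also not spelled out in the paper (it is handled the same way as in the proof of Theorem~\ref{main}, via the fixed-point uniqueness built into Lemma~\ref{limit}).
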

\begin{proof}
Each item follows from the functional equation satisfied by the Cauchy-Stieltjes transform of the corresponding measure, which we record below:
\begin{eqnarray*}
G_{\mu\boxplus\gamma_t}(z) & = & G_\mu(z-tG_{\mu\boxplus\gamma_t}(z)),\\
G_{\mu^{\boxplus t}}(z) & = & G_\mu\left(\frac1tz+\left(1-\frac1t\right)\left(G_{\mu^{\boxplus t}}(z)\right)^{-1}\right),\\
G_{\mu\boxplus\nu^{\boxplus t}}(z) & = & G_\mu\left(z-tR_\nu(G_{\mu\boxplus\nu^{\boxplus t}}(z))\right).
\end{eqnarray*}
As before, $R_\nu$ denotes Voiculescu's $R$-transform of $\nu$, which is known to extend to the upper/lower complex half-plane if and only if $\nu$ is freely infinitely divisible
(see \cite[Theorem 5.10(i)]{BV93}). The reader will find details on the proofs of these relations in \cite{BV93,Biane} as well as in \cite[Theorem 2.5]{belinschi2004atoms}.
\end{proof}
Yet another free probability consequence of Lemma \ref{limit} is recorded in \cite[Corollary 5.2]{belinschi2008remarkable}.

\begin{proof}[Proof of Theorem \ref{main}]
Pick an arbitrary connected component $\Omega_0$ of $\Omega$. With the notations from Proposition \ref{subord0},
consider a point $x\in\mathbb R$ such that $(0,x)\in\overline{\Omega_0}\setminus\{(0,a),(0,b)\}$ 
and $\displaystyle\lim_{\stackrel{z\to x}{\sphericalangle}}\Phi(z)\in\mathbb C^+.$ We let 
$\omega(z)=-\Phi(x+z)$. By Proposition \ref{subord0}, this function is defined on a small 
rectangle included in the lower half-plane and having zero at the middle of its upper edge.
According to Lemma \ref{limit}, there exists a unique path $\gamma$ in this 
rectangle such that $\gamma(t)=t\omega(\gamma(t))=-t\Phi(x+\gamma(t))$.
However, at the same time $-tf(t,x)=-t\Phi(x-tf(t,x))$. The uniqueness part of 
Lemma \ref{limit} guarantees that $\gamma(t)=-tf(t,x)$, and items 1--3
of the same lemma allow us to conclude.
\end{proof}

Now we can finally state that 
$$
f(0,x)=\lim_{t\to0}\Phi(g(t,x))=\displaystyle\lim_{\stackrel{z\to x}{\sphericalangle}}\Phi(z).
$$
This relation holds for $\nu^{\rm ac}$-almost all $x\in\mathbb R$. 
That is, for each connected component of $\Omega
$, we find a set $O$ on which the above can be 
written. From this relation and Item 6 of Proposition \ref{propbur}, it follows
immediately that
$$
z\mapsto\Phi(z)-\int_\mathbb R\frac{{\rm d}\nu(s)}{z-s}
$$
is an analytic function on the intersection of the domain of $\Phi$ with $\mathbb C^-$. 
Its nontangential limits are real a.e. on $\mathbb R\cap\partial{\bf K}$. Thus, as seen 
before, since $\Phi(z)-\int_\mathbb R1/(z-s)\,{\rm d}\nu(s)$ has real nontangential limits 
a.e. on the relevant subset of $\mathbb R$, either the Schwarz reflection principle 
applies at a given point $x$, or the cluster set of the function at $x$ is equal to 
$\overline{\mathbb C^+}$ or to $\overline{\mathbb C}$ (see \cite[Theorem 5.4]{CL}).

We focus next on the issue of proving Corollary \ref{c:uniquemu} (the uniqueness of the Brownian bridge
when provided with the {\em complete} initial data, i.e. with $f(0,\cdot)$). This will be done in the next section. For now, we 
record here for future use yet another boundary property of $\Phi$.
Specifically, the following lemma clarifies the relation between $\Phi$ and $f(0,\cdot)$ if $\nu$
contains a non-singular part.
\begin{lemma}\label{dt}
With the notations from Proposition \ref{subord0}, assume that $\supp\nu^{ac}\cap\overline{\Omega_0}\neq\varnothing$.
Then $f(0,\cdot)$ determines uniquely $\Phi$ on $g(\Omega_0)$.
\end{lemma}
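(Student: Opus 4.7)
The plan is to deduce Lemma \ref{dt} in two steps: first, show that $f(0,\cdot)$ uniquely determines $\Phi$ on $\mathbf K$ via a classical boundary uniqueness theorem, and then extend this uniqueness to $g(\Omega_0)$ via the Stoilow factorization of Proposition \ref{Stoilow}. I would rely throughout on the fact (Theorem \ref{main} together with the discussion immediately following its proof) that for $\nu^{\mathrm{ac}}$-almost every $x\in[a,b]$, the nontangential limit of $\Phi$ from within $\mathbf K$ at $x$ equals $f(0,x)$.

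For the first step, the hypothesis $\supp\nu^{\mathrm{ac}}\cap\overline{\Omega_0}\neq\varnothing$, combined with the identification $\partial_1\Omega_0\cap(\{0\}\times\mathbb R)\subset\{0\}\times[a,b]$, forces the density $\rd\nu^{\mathrm{ac}}/\rd x$ to be strictly positive on a set $E\subset(a,b)$ of positive Lebesgue measure. At Lebesgue-almost every point of $E$, $f(0,x)$ is then a well-defined complex number with strictly positive imaginary part $\pi\frac{\rd\nu^{\mathrm{ac}}}{\rd x}(x)$. Suppose $\tilde\Phi\colon\mathbf K\to\mathbb C^+$ is any other analytic function sharing the same nontangential boundary values on $E$. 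Then $\Phi-\tilde\Phi$ is analytic on $\mathbf K$ and has nontangential limit zero on $E$. By Proposition \ref{subord0}, every point of $(a,b)$ is nontangentially accessible from inside $\mathbf K$ through the half-rectangles $\{x-\ri y:w-\varepsilon<x<w+\varepsilon,\ 0<y<v\}\subset\mathbf K$, so one may conformally map such a half-rectangle to the unit disc in such a way that a portion of $E$ of positive Lebesgue measure is sent to a subset of positive arc length on the unit circle. The classical Luzin-Privalov boundary uniqueness theorem then forces $\Phi\equiv\tilde\Phi$ on that half-rectangle, and by connectedness of $\mathbf K$, on all of $\mathbf K$.

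For the second step, I would invoke the Stoilow factorization $f=F\circ W$, $g=G\circ W$ from Proposition \ref{Stoilow} and write $\Phi=F\circ G^{\langle-1\rangle}$ on $\mathbf K$, where the specific branch of $G^{\langle-1\rangle}$ is the one selected in the proof of Proposition \ref{subord0} (on which $G$ was shown to be injective). Uniqueness of $\Phi$ on $\mathbf K$ is then equivalent to uniqueness of $F$ on the nonempty open subset $G^{\langle-1\rangle}(\mathbf K)\subset\mathbb C^+$. Since $F$ is analytic on all of $\mathbb C^+$, the identity principle propagates this uniqueness to all of $\mathbb C^+$, and therefore $\Phi=F\circ G^{\langle-1\rangle}$ is uniquely determined on $g(\Omega_0)=G(\mathbb C^+)$, regarded if necessary as a multivalued analytic function with each branch pinned down through $F$.

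The main obstacle is the first step: $\mathbf K$ is not a half-plane or a disc, and one must verify it is well-behaved enough near the interior of $[a,b]$ to legitimately invoke Luzin-Privalov. Fortunately, the Koebe distortion estimate \eqref{Koebe} combined with the lower bound \eqref{lb} derived in the proof of Proposition \ref{subord0} are exactly what guarantee the nontangential accessibility of points of $(a,b)$ from within $\mathbf K$. Once this geometric fact is in hand, the rest of the proof reduces to a routine combination of the classical boundary uniqueness theorem and analytic continuation.
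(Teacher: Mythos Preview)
Your proposal is correct and follows essentially the same approach as the paper, which simply notes that $\Phi$ has nontangential limits equal to $f(0,x)$ on a set of positive Lebesgue measure (established just after Theorem~\ref{main}) and then invokes the Privalov uniqueness theorem \cite[Theorem 8.1]{CL}. Your argument is more detailed than the paper's three-sentence proof---in particular, you make explicit the local conformal reduction needed to apply Luzin--Privalov on the non-standard domain $\mathbf K$, and you address the extension from $\mathbf K$ to $g(\Omega_0)$ via the Stoilow factorization, a point the paper's proof passes over in silence.
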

\begin{proof}
We continue to use the notations from Proposition \ref{subord0} and its proof.
It has been noted just after the proof of Theorem \ref{main} that if $\supp\nu^{ac}\cap\partial\Omega_0\neq\varnothing,$
then $\displaystyle\lim_{\stackrel{z\to x}{\sphericalangle}}\Phi(z)=f(0,x)$ for $\nu^{ac}$-almost 
all $x$, that is for all $x$ in a set of nonzero Lebesgue measure. An application of \cite[Theorem 8.1]{CL}
guarantees that $\Phi$ is determined by these values.
\end{proof}

\subsection{Uniqueness of the Brownian bridge under complete initial data}

We start this section with the proof of Corollary \ref{c:uniquemu} under the extra assumption that $\nu^{\rm ac}$ puts strictly positive mass on each connected 
component of $\supp\nu$. This hypothesis will be weakened later.

\begin{proof}[Proof of Corollary \ref{c:uniquemu}, restricted case]
For simplicity of notations we write $f(t,x)=f^{\nu\rightarrow \mu}(t,x)$ and $f'(t,x)=f^{\nu\rightarrow \mu'}(t,x)$, and let
\begin{align*}
g(t,x)=x-tf(t,x),\quad g'(t,x)=x-tf'(t,x).
\end{align*}

We restrict ourselves to two connected subsets 
$\Omega_0\subset\Omega$ and $\Omega'_0\subset\Omega'$ such that $(\{0\}\times\mathbb R)\cap(\partial_1\Omega_0)\cap(\partial_1\Omega'_0)$ contains more than one point
(see notations in the proof of Proposition \ref{subord0}, which we shall use throughout this proof as well). According to Proposition \ref{propbur} Items 2 and 3,
in order to prove our corollary, it is enough to show that $f=f'$ (and thus, in particular, $\Omega_0=\Omega_0'$). By Proposition \ref{subord0}, there exist sets ${\bf K}$,
${\bf K}'\subset\mathbb C^-$ and maps $\Phi,\Phi'$ such that $\partial{\bf K}\cap\partial{\bf K}'\supseteq(\{0\}\times\mathbb R)\cap(\partial_1\Omega_0)\cap(\partial_1\Omega'_0)$, 
$\Phi\colon{\bf K}\to\mathbb C^+,\Phi'\colon{\bf K}'\to\mathbb C^+$ are analytic and satisfy $\Phi\circ g=f,\Phi'\circ g'=f'$.
According to our hypothesis, the restriction of $\nu^{\rm ac}$ to the connected component(s) of $\supp\nu$ included in the intersection 
$(\{0\}\times\mathbb R)\cap(\partial_1\Omega_0)\cap(\partial_1\Omega'_0)$ is non-zero, so that, by Theorem \ref{main}, there exists a subset of nonzero Lebesgue measure of points 
$c$ in this set such that $\lim_{t\to0}f(t,c),\lim_{t\to0}f'(t,c)$ exist, are equal, and $\lim_{t\to0}\Im f(t,c)=\lim_{t\to0}\Im f'(t,c)\in(0,+\infty).$
Lemma \ref{dt} guarantees that $\Phi=\Phi'$ on their (nonempty) joint domain, which means that they are extensions of each other to the respective domains.
Lemma \ref{limit} forces $f(t,c)=f'(t,c)$ for all points $c$ as above, and $t\in(0,1)$ sufficiently small. As seen in the proof of Lemma \ref{limit},
the existence and uniqueness of the solution $f(t,c)$ to the equation $\Phi\circ g=f$ is provided via Denjoy-Wolff Theorem applied to a properly chosen domain
(in order to prove the existence of a solution) and the implicit function theorem (in order to prove analyticity of the correspondence in $t$). 
This last result however provides, via the relation $\Phi'(x-tf(t,x))(1-t\partial_x f(t,x))=\partial_xf(t,x)$ and the analytic implicit function theorem, 
the analyticity of the correspondence $x\mapsto f(t,x)$ on a neighborhood (in $\mathbb C$) of a given $c\in \mathbb R$ as above, for fixed $t>0$ such that $|t\Phi'(x-tf(t,x))|<1$.
Thus, $f,f'$ have analytic extensions as functions of two complex variables to an open set in $\mathbb C^2$. Thanks to their equality
on an open subset of $\mathbb R^2$ showed above, we have $f=f'$ on this whole open set, and necessarily on all of their common 
domain of analyticity.

The existence and differentiability (established independently - see Proposition \ref{propbur}) of the solution $f$ guarantees that the set 
$\{(t,x)\in O\colon\Phi'(g(t,x))=-\frac1t\}$ is empty (recall from Proposition \ref{subord0} that $g(O)={\bf K}$). Indeed, otherwise
the equality $\Phi'(x-tf(t,x))(1-t\partial_x f(t,x))=\partial_xf(t,x)$ would imply $0=\frac1t$, an obvious contradiction. In particular, 
this shows that the implicit function theorem argument from the above applies on {\em all} of $O\cap O'$ in order to conclude that $f=f'$ on this set.

Proposition \ref{subord0} guarantees that the upper part of the boundary of the union of the two sets ${\bf K,K}'$ (i.e. $(\mathbb R\cap{\bf K})\cup(\mathbb R\cap\partial{\bf K}')$)
covers entirely the convex hull of the components of $\supp\nu$ included in the union $\partial\Omega_0\cup\partial\Omega'_0$.
That allows us to conclude that $\Phi=\Phi'$ extends to an open, simply connected set that contains this convex hull in its boundary.
Thus, the domain of analyticity of each of $f,f'$ extends to a simply connected subset $C\supseteq O\cup O'$ of $\Omega_0\cup\Omega_0'$
which contains in its boundary all of the components of $\supp\nu$ included in the union $\partial\Omega_0\cup\partial\Omega'_0$,
and on which, as seen above, $f=f'$. 
Given the definition of $\Omega=\{\Im f>0\},\Omega'=\{\Im f'>0\}$, it follows that $C\subseteq\Omega_0\cap\Omega_0'$ so, in particular,
$\Omega_0\cap(\{0\}\times\mathbb R)=\Omega_0'\cap(\{0\}\times\mathbb R).$

Consider an arbitrary $s\in(0,1)$. Define $g_s(t,x)=x-(t-s)f(t,x)=g(t,x)+sf(t,x)$, 
which is automatically a solution of the Beltrami equation \eqref{Bel} on $\Omega_0$. Moreover, $g_s(s,x)=x,\Im g_s(t,x)<0$
for $1>t>s$, $\Im g_s(t,x)>0$ for $0<t<s$. Lemma \ref{segment} applies to $g_s$ on $\{s\}\times\mathbb R$,
and thus there exists a set\footnote{It may be that $\{(t,x)\in\Omega_0\colon1>t>s\}$ and/or $\{(t,x)\in\Omega_0\colon s>t>0\}$
are not connected anymore -- examples can be easily found --, but our arguments apply as well to each connected component of these sets.} 
${\bf K}_s\subseteq\mathbb C^-$, a set $O_s\subseteq\{(t,x)\in\Omega_0\colon1>t>s\}$, and an analytic function $\Phi_s$ as
in Proposition \ref{subord0}, with the only difference that $\Phi_s(g_s(t,x))=f(t,x),(t,x)\in O_s$. 
In addition, the proof of Proposition \ref{subord0} applies without modification to show that there exist
${\bf L}_s\subseteq\mathbb C^+$,  $P_s\subseteq\{(t,x)\in\Omega_0\colon s>t>0\}$, and an analytic function $\Psi_s$
such that $\Psi_s(g_s(t,x))=f(t,x),(t,x)\in P_s$. The similar objects derived from $\Omega_0'$ and $f'$
will be denoted the same way, except that each will receive a $'$.

Returning now to the question of the equalities $f=f',\Omega_0=\Omega_0'$, consider an $(s,x)\in\Omega_0
\cap\Omega_0'$. We claim that $f(s,x)=f'(s,x).$ (Note again that trivially if $f=f'$ on $\Omega_0
\cap\Omega_0'$, then $\Omega_0=\Omega_0'$.) If $(s,x)\in C$, then there is nothing to prove. If $(s,x)\in\{s\}\times I$
for some interval $I\subseteq\mathbb R$ such that $\{s\}\times I\cap C\neq\varnothing,$ then we apply the 
considerations above to find sets ${\bf L}_s,{\bf L}_s'$, $P_s,P_s'$, and functions $\Psi_s,\Psi'_s$ such that 
$\Psi_s(g_s(t,x))=f(t,x),\Psi_s'(g_s'(t,x))=f'(t,x)$ for $(t,x)\in P_s\cap P_s'$. The arguments above
yield the existence of a connected set $C_s$ in $P_s\cup P_s'$ containing $\{s\}\times I$ in its
boundary such that $f=f'$ on all of $C_s$, and hence in $(s,x)$. We immediately observe that 
this guarantees the equality $f=f'$ on the whole subset below $\{s\}\times I$. More precisely,
we look at the segments $(\{r\}\times\mathbb R)\cap\Omega_0\cap\Omega_0'$ for each $r\in(0,1)$. 
If $\tilde{s}$ is such that $(\{\tilde{s}\}\times\mathbb R)\cap C\neq\varnothing$, then there are disjoint intervals
$I_1,I_2,\dots,I_k$ such that $\{\tilde{s}\}\times\mathbb R\cap\Omega_0\cap\Omega_0'=
\{\tilde{s}\}\times(I_1\cup\cdots\cup I_k)$ and a (possibly smaller) subfamily $\{J_1,\dots,J_l\}\subseteq
\{I_1,I_2,\dots,I_k\}$ such that $\{\tilde{s}\}\times J_i\cap C\neq\varnothing,1\le i\le k$. Then 
the set of points $(r,x)$ that can be connected to $\{\tilde{s}\}\times J_i$ for some $i\in\{1,\dots,k\}$
by a path in $\Omega_0$ starting at $(r,x)$ and whose first coordinate does not decrease satisfies 
the condition that $f(r,x)=f'(r,x)$ and implicitly that $(r,x)\in\Omega_0\cap\Omega_0'$.
Thus, we have succeeded in proving that the set of points on which $f=f'$ is bounded 
(in $(0,1)\times\mathbb R$) by a family of segments $\{s_j\}\times I_j$. Moreover, below
(in the sense just described) these segments, $\Omega_0$ and $\Omega_0'$ coincide.
For simplicity, re-denote this set by $C$.

Finally, assume that $(s,x)$ is above $C$, meaning that, with the notation $I$ from the above,
$(\{s\}\times I)\cap C=\varnothing$. For any segment $\{r\}\times J$ bordering $C$, we may perform 
the construction described in the first part of the proof in order to increase $C$ {\em strictly} above
the ``level'' $r$ (i.e. to find $r'>r$ and an interval $J'\subset\mathbb R$ such that points from
$\{r\}\times J$ can be united to points from $\{r'\}\times J'$ by smooth paths whose first coordinate
in $(0,1)\times\mathbb R$ does not decrease). Continuity of $f$ guarantees that this process will reach
any ``level'' which is strictly less than $1$, and in particular level $s$.

Recalling that the definition of $\Omega$ is given as the set in 
$(0,1)\times\mathbb R$ on which $\Im f>0$ allows us to conclude that $\Omega_0=\Omega_0'$, and thus 
complete the proof of this first version of our corollary. 
\end{proof}

To prove Corollary \ref{c:uniquemu} in full generality, let us make explicit how boundary values determine $\Phi$ in the absence of an absolutely continuous part for $\nu$.
According to Items 2 and 3 of Proposition \ref{propbur}, we have weak convergence of $\Im f(t,\cdot)$ and of $\Re f(t,\cdot)$ as $t\to0$.
Pick a connected component $\Omega_0$ of $\Omega$. If $(\{0\}\times\supp\nu^{\rm ac})\cap\partial\Omega_0\neq\varnothing,$ then the already proven restricted case
applies. Thus, we assume that $(\{0\}\times\supp\nu^{\rm ac})\cap\partial\Omega_0=\varnothing,$ and recall the hypothesis that each component of the closed set 
$(\{0\}\times\supp\nu)\cap\partial\Omega_0=(\{0\}\times\supp\nu^{\rm s})\cap\partial\Omega_0$ is infinite. In particular (with the notations from Proposition \ref{subord0} and
Lemma \ref{segment}) it is automatic that not only $a<b$, but also that implicitly $\nu^{\rm s}((a,b))=\nu((a,b))>0$, so $\supp\nu\cap(a,b)$ is infinite. Recall the constructions 
$\mathbf K$ and $\Phi=F\circ G^{\langle-1\rangle}$ from Proposition \ref{subord0} and its proof (whose notations we continue using). The set $\mathbf K\subseteq\mathbb C^-$ 
contains in its boundary the interval $[a,b]$, which contains the (infinite by hypothesis) set $(\{0\}\times\mathbb R)\cap\partial\Omega_0=(\{0\}\times\supp\nu)\cap\partial\Omega_0$. 
The relation $\Phi\circ g=f$ together with Lemma \ref{segment} forces the set $\supp\nu\cap[a,b]$ inside the set of singularities of $\Phi$ (the set where $\Phi$ does not reflect
meromorphically through $(a,b)$) -- as it is apparent from the definition of $\Omega$, the two sets in fact coincide. As $F$ is a Nevanlinna map (see proof of Proposition \ref{subord0}), 
it has a representation $F(u)=m+nu+\int_\mathbb R\frac{1+tu}{t-u}\,{\rm d}\lambda(t)$. This forces the bound $0<\Im u\Im F(u)<n(\Im u)^2+\int\frac{(\Im u)^2}{(\Re u-t)^2+(\Im 
u)^2}(1+t^2)\,{\rm d}\lambda(t)$. If $\Re u$ is restricted to a compact interval (in our case $[a,b]$) and $0<|\Im u|<M$, then this last quantity is bounded by a constant $C$ 
depending only on $a,b,M,n,$ and $\lambda$, not on $u$. The function $G$ has been shown in the proof of Proposition \ref{subord0} to be bijective on $[a,b]$ and $-G'(x)>0$ has 
been shown to be bounded away from zero, so that $G^{\langle-1\rangle}$ is differentiable on $(a,b)$, and the absolute value of the derivative of $G^{\langle-1\rangle}$ is uniformly 
bounded on $[a,b]$. Thus, there exists a constant $C>0$ such that $|\Phi(u)|<\frac{C}{|\Im u|},u\in\mathbf K$. We define an extension of $\Phi$ to the set $\{z\in\mathbb C\colon
\bar{z}\in \mathbf K\}$ by $z\mapsto\overline{\Phi(\bar{z})}$, a correspondence which is analytic. For any $\varepsilon>0$, one may define the Schwartz distribution 
\begin{equation}\label{Distr}
T^\varepsilon(\psi)
=(2{\rm i})^{-1}\lim_{y\nearrow0}\int_{[a+\varepsilon,b-\varepsilon]}\Phi(x+{\rm i}y)\psi(x)\,{\rm d}x-\int_{[a+\varepsilon,b-\varepsilon]}\Phi(x-{\rm i}y)\psi(x)\,{\rm d}x,
\end{equation}
for any compactly supported test function $\psi$ on $\mathbb R$. 
Since the rate of growth at the boundary of $\Phi$ is uniformly of order $\frac{1}{|\Im u|}$, it follows that $T^\varepsilon$ is a distribution of order zero,
i.e. a measure, for all $\varepsilon>0$. Since $(2{\rm i})^{-1}(\Phi(x+{\rm i}y)-\Phi(x-{\rm i}y))>0$ for all $y<0,x\in(a,b)$, the measure is positive. We write its Cauchy transform as 
$$
G_{T^\varepsilon}(u)=\frac{1}{\pi}\left\langle T_t^\varepsilon,\frac{1}{u-t}\right\rangle,\quad z\in\mathbb C\setminus[a+\varepsilon,b-\varepsilon],
$$
where $T_t^\varepsilon$ refers to the distribution $T^\varepsilon$ acting on functions in the variable $t$ (specifically, in our case, on the function $\mathbb R\ni t\mapsto\frac{1}{u-t}
\in\mathbb C$), and the large brackets refer to the duality action (see, for instance \cite{mitrovic1971plemelj}). The Plemelj formula implies that
$$
T^\varepsilon(\psi)=(2{\rm i})^{-1}\lim_{y\nearrow0}\int_{[a+\varepsilon,b-\varepsilon]}G_{T^\varepsilon}(x+{\rm i}y)\psi(x)\,{\rm d}x-\int_{[a+\varepsilon,b-\varepsilon]}
G_{T^\varepsilon}(x-{\rm i}y)\psi(x)\,{\rm d}x,
$$
so that the analytic function $\Phi-G_{T^\varepsilon}$ extends analytically to $\partial\mathbf K\cap\mathbb R$ with real values (see also \cite{CL}).
 Moreover,
$$
\lim_{y\to0}\int_{[a+\varepsilon,b-\varepsilon]}G_{T^\varepsilon}(x+{\rm i}y)\psi(x)\,{\rm d}x+\int_{[a+\varepsilon,b-\varepsilon]}G_{T^\varepsilon}(x-{\rm i}y)\psi(x)\,{\rm d}x
=\frac{\rm 2}{\pi}\left\langle T^\varepsilon_t*{\rm vp}\frac1t,\psi\right\rangle,
$$
where $T^\varepsilon_t*{\rm vp}\frac1t$ denotes the convolution of the distribution $T^\varepsilon$ with the principal value. This extension allows us to write the formula 
\begin{equation}\label{78}
\lim_{t\to0}f(t,x)-G_{T^\varepsilon}(g(t,x))=\lim_{t\to0}\Phi(g(t,x))-G_{T^\varepsilon}(g(t,x))=\lim_{z\to x}\Phi(z)-G_{T^\varepsilon}(z).
\end{equation}
(Recall that $\Im g(t,x)\le0$.) That is, the difference between $f$ and $G_{T^\varepsilon}\circ g$ extends continuously to $t=0$ as a function, not as a distribution
(the above equalities immediately imply that $\lim_{t\to0}\Im f(t,\cdot)=\lim_{t\to0}\Im G_{T^\varepsilon}(g(t,\cdot))$ in the sense of distributions).

Since this holds for all small $\varepsilon>0$ and we have assumed that each connected component of $\supp\nu$ is infinite, it follows via the identity principle for analytic functions
that for some $\varepsilon >0$ small enough, the analytic function $\Phi(z)-G_{T^\varepsilon}(z)$, $z\in G(\mathbf D)$, is uniquely determined by its values on $\supp\nu$.
(When $z$ is real, $\Phi(z)-G_{T^\varepsilon}(z)$ is understood in the sense of analytic continuation.) 


{\bf Aside:} The reader may be legitimately concerned by the possibility that $T^\varepsilon$ is not well-defined as a Schwartz distribution, as $\Phi$ is {\em not} defined on all of 
$\mathbb C^-$. One may easily argue that this possibility does not occur by considering the restriction of $\Phi$ to a rectangle in $\mathbb C^-$ with upper edge $(a+\varepsilon,b-
\varepsilon)$, mapping this rectangle conformally onto $\mathbb C^-$ and then arguing that the distribution $T^\varepsilon$ is the push-forward of part of the Nevanlinna 
representation measure corresponding to the composition of $\Phi$ with the above-mentioned conformal mapping (or, more precisely, its inverse). However, thanks to the expression 
$\Phi=F\circ G^{\langle-1\rangle}$, we may be more precise as to the identity of $T^\varepsilon$. Indeed, 
$$
\int_{[a+\varepsilon,b-\varepsilon]}\!\!\!\!\Phi(x+{\rm i}y)\psi(x)\,{\rm d}x=\!\int_{[a+\varepsilon,b-\varepsilon]}\!\!\!\!F(G^{\langle-1\rangle}(x+{\rm i}y))\psi(x)\,{\rm d}x
=\!\int_{G^{\langle-1\rangle}([a+\varepsilon,b-\varepsilon]+{\rm i}y)}\!\!\!\!\!F(v)\psi(G(v)-{\rm i}y)G'(v)\,{\rm d}v,
$$
with the smooth change of variable (and path of integration) $G(v)=x+{\rm i}y$ (since $G$ is bijective, the path is simple and smooth). One may suppose without loss of generality 
that $\psi$ is analytic on a neighborhood of $[a+\varepsilon,b-\varepsilon]$ in $\mathbb C$, so that the above is a path integral of analytic functions in $\mathbb C$. 
For a small $\varepsilon>0$ fixed, one may replace the path $G^{\langle-1\rangle}([a+\varepsilon,b-\varepsilon]+{\rm i}y)$ with a straight path starting from the endpoint 
of this path that is closest to $\mathbb R$ and ending under the opposite endpoint, and then closing with a vertical line. By an arbitrarily small change of $\varepsilon$ we may 
assume without loss of generality that $F$ has a finite nontangential limit at $G^{\langle-1\rangle}(a+\varepsilon)$ and $G^{\langle-1\rangle}(b-\varepsilon)$, so that the integral 
of $F(v)\psi(G(v)-{\rm i}y)G'(v)$ along the vertical path tends to zero as $y\to0$. Thus, we may replace the right hand side in \eqref{Distr} with
$$
T^\varepsilon(\psi)\!=\!(2{\rm i})^{-1}\!\!
\lim_{y\to0}\int_{[G^{\!\langle\!-\!1\rangle\!}(a+\varepsilon),G^{\!\langle\!-\!1\rangle\!}(b-\varepsilon)]-{\rm i}y}\!\!\!\!\!\!\!\!\!\!\!\!\!\!\!F(v)\psi(G(v)-{\rm i}y)G'(v)\,{\rm d}v-
\!\!\int_{[G^{\!\langle\!-\!1\rangle\!}(a+\varepsilon),G^{\!\langle\!-\!1\rangle\!}(b-\varepsilon)]+{\rm i}y}\!\!\!\!\!\!\!\!\!\!\!\!\!\!\!F(v)\psi(G(v)+{\rm i}y)G'(v)\,{\rm d}v.
$$
Since the integral is on a finite-length path and $G,G^{\langle-1\rangle}$ are analytic diffeomorphisms, the Nevanlinna representation of $F$ guarantees that $T^\varepsilon$ 
is indeed a finite measure for all $\varepsilon>0$ in an infinite set which has zero as a limit point. Of course, this in particular informs us that the positive measure $\lambda$ in the 
Nevanlinna representation of $F$ puts nonzero mass, and is singular with respect to the Lebesgue measure, on $[a,b]=[G^{\langle-1\rangle}(b),G^{\langle-1\rangle}(a)]$.


It is now of course quite clear that $T^\varepsilon=\nu|_{[a+\varepsilon,b-\varepsilon]}$.

Using the above results, we may now prove  Corollary \ref{c:uniquemu} in its full generality.

\begin{proof}[Proof of Corollary \ref{c:uniquemu}, full generality]
We use the notations from the proof of the restricted version of  Corollary \ref{c:uniquemu} and from Proposition \ref{subord0}. 
Choose connected components $\Omega_0$ of $\Omega$ and $\Omega_0'$ of $\Omega'$ so that $\partial_1\Omega_0\cap\partial_1\Omega_0'\cap(\{0\}\times\mathbb R)$
contains more than one point. According to the hypothesis, this forces $\partial_1\Omega_0\cap\partial_1\Omega_0'\cap(\{0\}\times\supp\nu$) to be infinite.
We have shown in Proposition \ref{subord0} that there exist sets $\mathbf{K,K}'\subset\mathbb C^-$ and functions $\Phi,\Phi'$ defined on them such that
$\partial\mathbf K\cap\partial\mathbf K'$ contains the same component(s) of $\supp\nu$ that are present in $\partial_1\Omega_0\cap\partial_1\Omega_0'$.
The set $\partial\mathbf K\cap\partial\mathbf K'\cap\supp\nu$ must contain a component of $\supp\nu$, which by hypothesis is an infinite set. 
Let us pick such a component and call it $[w,r]$ $(w<r)$. We also pick an $(r-w)/9>\varepsilon>0$. As established above, there exist Schwartz distributions 
$T^\varepsilon$, ${T^\varepsilon}'$ corresponding to $\Phi,\Phi'$, respectively, defined via \eqref{Distr}. As established just after \eqref{78}, we have
\begin{align*}
&\nu|_{[w+\varepsilon,r-\varepsilon]}=\lim_{t\to0}\Im G_{T^\varepsilon}(g(t,\cdot))=\lim_{t\to0}\Im\Phi(g(t,\cdot))=\lim_{t\to0}\Im f(t,\cdot)\\
&\quad\quad\quad\quad\quad\quad\quad\quad\quad=\lim_{t\to0}\Im f'(t,\cdot)=\lim_{t\to0}\Im\Phi'(g'(t,\cdot))=\lim_{t\to0}\Im G_{{T^\varepsilon}'}(g'(t,\cdot))\neq0,
\end{align*}
in the sense of distributions on $[w+\varepsilon,r-\varepsilon]$. Thus, $T^\varepsilon$ and ${T^\varepsilon}'$ coincide on $[w+\varepsilon,r-\varepsilon]$.
In particular, this shows that $\Phi-\Phi'$ has an analytic extension (with real values) to $(w+\varepsilon,r-\varepsilon)$. However, we have assumed that
$f(0,\cdot)=f'(0,\cdot)$ and we have shown that $f(0,\cdot)-G_{T^\varepsilon}(\cdot)$ is real and analytic on 
a neighborhood of $[w+\varepsilon,r-\varepsilon]$, with a similar statement for $f'$ and ${T^\varepsilon}'$. Since ${T^\varepsilon}={T^\varepsilon}'$
implies $G_{T^\varepsilon}=G_{{T^\varepsilon}'}$, the infiniteness of the support of $\nu|_{[w+\varepsilon,r-\varepsilon]}$ guarantees via the identity principle for
analytic functions that $\Phi=\Phi'$ on their common domain, and thus they are extensions of each other. From here on, the proof runs identically with the proof
of the restricted case.
\end{proof}

We record next a fact about the free Brownian bridge which might be of some independent interest.

\begin{corollary}\label{maxi}
The function $f$ reaches its supremum on $\partial\Omega$.
Moreover, $\Im [f(t, x)]$ reaches its supremum on 
$\{0,1\}\times\mathbb R$, $-\Im[ g(t,x)]$ on $\{1\}\times\mathbb R$,
and $\Im [h(t,x)]$ on $\{0\}\times\mathbb R$.
\end{corollary}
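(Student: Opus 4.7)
The plan is to reduce everything to the maximum principle for open mappings, applied to each of $f,g,h$ in turn. The key input is already available: in Items 1--3 preceding Remark \ref{curve}, the functions $f$, $g$, $h$ are shown to be open mappings from $\Omega$ into $\mathbb C$ (as non-constant solutions of the Beltrami equation \eqref{Bel}, locally $K$-quasiregular on relatively compact subsets of $\Omega$). Openness is decisive: if $\phi\colon\Omega\to\mathbb C$ is open, then for any $(t_0,x_0)\in\Omega$ and any neighborhood $V\Subset\Omega$ the image $\phi(V)$ is open in $\mathbb C$ around $\phi(t_0,x_0)$, so each of $|\phi|,\Re\phi,\Im\phi$ admits nearby values strictly larger than its value at $(t_0,x_0)$. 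In particular none of these quantities can attain a local maximum at an interior point of $\Omega$, so any supremum on $\overline\Omega$ must be attained on $\partial\Omega$. Applying this to $\phi=f$ immediately gives the first sentence.

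To locate the boundary more precisely, I will use the continuous extension of $\Im f$ stated in Proposition \ref{propbur2}: for every $(t,x)\in\partial\Omega\cap((0,1)\times\mathbb R)$, $\Im[f(t,x)]=\pi\rho^*_t(x)=0$. Since on $\Omega$ we have $\Im f=\pi\rho^*>0$ by the very definition of $\Omega$, the supremum of $\Im f$ on $\overline\Omega$ is strictly positive; combined with the open-mapping conclusion, it cannot be attained on $\partial\Omega\cap((0,1)\times\mathbb R)$, and therefore must lie on $(\{0,1\}\times\mathbb R)\cap\overline\Omega=\supp\nu\cup\supp\mu$.

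For $g$ and $h$ the argument is the same, using the algebraic identities
\begin{equation*}
-\Im[g(t,x)] \;=\; t\,\Im[f(t,x)] \;=\; \pi t\rho^*_t(x)\ge 0, \qquad \Im[h(t,x)] \;=\; (1-t)\,\Im[f(t,x)] \;=\; \pi(1-t)\rho^*_t(x)\ge 0.
\end{equation*}
The quantity $-\Im g$ vanishes on $\{0\}\times\mathbb R$ (because of the factor $t$) and on $\partial\Omega\cap((0,1)\times\mathbb R)$ (because $\Im f=0$ there), while it is strictly positive on $\Omega$. By Step~1 applied to $g$, its supremum on $\overline\Omega$ lies on $\partial\Omega$, and these vanishing identities exclude every piece of $\partial\Omega$ other than $\{1\}\times\mathbb R$. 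Analogously $\Im h$ vanishes on $\{1\}\times\mathbb R$ and on $\partial\Omega\cap((0,1)\times\mathbb R)$, so its supremum must be attained on $\{0\}\times\mathbb R$.

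The only mildly delicate point is that $f$ (and hence $g,h$) need not be bounded up to $\{0,1\}\times\mathbb R$, so a priori some of these suprema could be $+\infty$; but Item~4 of Proposition \ref{propbur} provides the a priori bound $|f(t,x)|\le\fK/\sqrt{t(1-t)}$, which makes $|g|$ bounded near $t=0$ and $|h|$ bounded near $t=1$, ensuring that ``reaches its supremum'' is meaningful and that the closure operation on $\overline\Omega$ does not distort the argument. No step is expected to be an obstacle: once openness from Proposition \ref{propbur2} and the boundary behaviour $\Im f|_{\partial\Omega\cap((0,1)\times\mathbb R)}=0$ are invoked, the four claims follow by inspection of the signs.
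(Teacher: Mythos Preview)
Your proposal is correct and follows essentially the same approach as the paper: both arguments use the openness of $f$ (as a non-constant solution of the Beltrami equation) to exclude interior maxima, then invoke $\Im f=0$ on $\partial\Omega\cap((0,1)\times\mathbb R)$ to force the supremum of $\Im f$ onto $\{0,1\}\times\mathbb R$, with the claims for $g$ and $h$ following from the algebraic identities $-\Im g=t\Im f$, $\Im h=(1-t)\Im f$. Your version is slightly more explicit about the boundedness near $t=0,1$ via Item~4 of Proposition~\ref{propbur}, which the paper leaves implicit.
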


\begin{proof}
Let us recall that $f$ satisfies Beltrami's equation \eqref{Bel} and
thus, $f$ is an open mapping. In particular, as an open mapping, it
cannot have a local maximum inside $\Omega$. We shall argue that on 
each simply connected component $\Omega_0$ of $\Omega$, the 
imaginary part $\Im[ f]$ can reach its maximum only on $\partial\Omega_0$. 
Since on $\partial \Omega\cap\{(0,1)\times\mathbb R\}$ we know that 
$\Im [f]$ is zero, it remains that this maximum is reached at a point of 
either $\{0\}\times\mathbb R$ or $\{1\}\times\mathbb R$. 

Thus, assume towards contradiction that there exists a component
$\Omega_0$ of $\Omega$ (simply connected by Lemma \ref{simply}),
a point $(t_0,x_0)\in\Omega_0$ and a neighbourhood $V_0\subseteq
\Omega_0$ of it so that $\Im[ f(t, x)]\leq\Im [f(t_0,x_0)]$ for all $(t,x)\in V_0$
(denote for simplicity $c=\Im f(t_0,x_0)\in(0,+\infty)$). By shrinking
$V_0$ if necessary, we may assume that $\overline{V_0}\subset\Omega_0.$
This means that 
$$
f({V_0})\subset\{z\in\mathbb C^+|\Im [z]\leq c\},
$$
and in addition that $f({V_0})\cap(\mathbb R+\ri c)\neq\varnothing,$ 
as it contains the point $f(t_0, x_0)$. But any neighbourhood
of $f(t_0,x_0)$ contains elements from $\{z\in\mathbb C^+
\colon\Im z> c\},$ so the point $f(t_0, x_0)$ is in 
the boundary of the set $f(V_0)$ while $(t_0,x_0)$ belongs to the
open set $V_0$. This contradicts the openness of $f$ at $(t_0,x_0)$.
The last two statements are obvious consequences of the previous.
\end{proof}

\appendix
\section{Proofs from Section \ref{s:L1spherical} }\label{a:proofL1}

\begin{proof}[Proof of Proposition \ref{p:continuity}]
The first point is proven in \cite[Lemma 5.1]{GZ3} in the case where $\int x^2 d\nu(x)+\int x^2 d\mu(x)\leq  \fK$ and  $\int x^2 d\nu'(x)+\int x^2 d\mu'(x)\leq  \fK$. However it is straightforward to extend this estimate to our setting up to remark that we approximate $\nu$  in $I(\nu,\mu)$ by $\tilde \nu=\nu(\bm1_{|x|\geq \delta^{-1/2}})\delta_0+\nu \bm1_{|x|< \delta^{-1/2}}$. We can construct diagonal matrices $A_N=\diag\{a_1, a_2,\cdots, a_N\}, \tilde A_N=\diag\{\tilde a_1,\tilde a_2,\cdots, \tilde a_N\}, B_N=\diag\{b_1, b_2,\cdots, b_N\}$ with empirical measures converging to $\nu, \tilde \nu, \mu$ respectively. We can construct them such that $\sum_{i}|a_i-\tilde a_i|\leq 2N\int_{|x|\geq \delta^{-1/2}} |x|\rd \nu $ and $|b_i|\leq \fK$.
Then we approximate $I(\nu, \mu), I(\tilde\nu, \mu)$ by spherical integrals $I_N(A_N, B_N), I_N(\tilde A_N, B_N)$. Since 
\begin{align}
|N\Tr(A_N U B_N U^*)-N\Tr(\tilde A_N U B_N U^*)|
\leq N\sum_{ij}|a_i-\tilde a_i||b_j||U_{ij}|^2\leq 2N^2\fK  \nu(|x|\bm1_{|x|\geq \delta^{-1/2}}),
\end{align}
we conclude that $I(\tilde\nu, \mu)$ approximates $I(\nu, \mu)$ up to an error of order $\fK \nu(|x|\bm1_{|x|\geq \delta^{-1/2}})$. This finishes the first point in Proposition \ref{p:continuity}. For the second estimate, 
we have 
\begin{align*}
&\phantom{{}={}}\left|\int T_{\nu}T_{\mu} \rd x-\int T_{\nu'}T_{\mu'} \rd x\right|\\
&\leq\int |T_{\nu}-T_{\nu'}||T_{\mu}| \rd x
+\int |T_{\nu'}| |T_{\mu} -T_{\mu'}| \rd x\\
&\leq \int \fK|T_{\nu} -T_{\nu'}| \rd x+\int |T_{\mu}-T_{\mu'}|\delta^{-1/2} \rd x+\int_{|T_{\nu'}|\geq \delta^{-1/2}} 2\fK |T_{\nu'} |\rd x\\
&\leq \fK \rd_W(\nu, \nu')+\delta^{-1/2} \rd_W(\mu, \mu')+2\fK \oo_\delta(1)=C_\fK\oo_\delta(1),
\end{align*}
where in the third line we used that $|T_\mu|, |T_{\mu'}|\leq \fK$, and in the last line we used the definition of Wasserstein distance \eqref{e:wd2}.
\end{proof}

\begin{proof}[Proof of Proposition \ref{p:spbound}]
Let $A_{N}=\diag(a_1,a_2,\cdots, a_N), B_{N}=\diag(b_1, b_2,\cdots, b_N)$ be two sequences of deterministic self-adjoint matrices, with $a_1\geq a_2\geq \cdots \geq a_N$, and $b_1\geq b_2\geq \cdots\geq b_N$, such that they are $N$-quantiles of the measures $\nu$ and $\mu$ respectively. The upper bound in \eqref{e:spbound} follows directly from the convexity of the spherical integral,
\begin{align*}
I(\nu,\mu)
&=\lim_{N\rightarrow\infty}\frac{1}{\beta N^2}\log\int e^{\frac{\beta N}{2} \Tr(A_NUB_NU^{*})} \rd U\\
&\leq  \limsup_{N\rightarrow\infty}\frac{1}{2 N}\sum_{i=1}^N a_i b_i
=\frac{1}{2}\int T_{\nu}T_{\mu}\rd x\,.
\end{align*}
For the lower bound, we denote $ \bB_{\varepsilon} $ the set of unitary matrices  when $\beta=2$, or orthogonal matrices when $\beta=1$ , such that
\begin{align*}
\bB_\varepsilon=\{U: |U_{ii}-1|\leq \varepsilon \text{ for } 1\leq i\leq N\}.
\end{align*}
On the set $\bB_\varepsilon$, we have for all $i$, $|U_{ii}-1|\leq \varepsilon$ and $\sum_j|U_{ij}-\delta_{ij}|^2\leq 2\varepsilon$. It follows that  
\begin{align}\begin{split}\label{e:AUBUbound}
\Tr(A_N U B_N U^*)
&=\sum_{i,j}a_i b_j |U_{ij}|^2
=\sum_i a_i b_i+\sum_{i}a_i b_i (|U_{ii}|^2-1)+\sum_{i\neq j}a_i b_j |U_{ij}|^2\\
&\geq \sum_i a_i b_i-\fK(2\varepsilon+\varepsilon^2)\sum_{i}|a_i|-2\fK\varepsilon \sum_i|a_i|
\geq \sum_i a_i b_i-5\fK\varepsilon \sum_i|a_i|,
\end{split}\end{align}
provided $\varepsilon<1$.
 Moreover,  notice that $U$ is normal with complex eigenvalues $\{z_1,\ldots,z_N\}$ so that
$$B_\varepsilon=\cap_{1\leq i\leq N} \left\{ |z_i-1| \leq\varepsilon \right\} =\left\{\max_{\|v\|_2=1 }\left| \langle v, (U-I)v\rangle \right| \leq \varepsilon \right\}\subset  \bB_\varepsilon.$$
The joint law of the eigenvalues is well known to be given by a Coulomb gas law from which classical large deviation estimates, see \cite{AGZ, BAZ},
show that  there exists a constant $C(\varepsilon)>0$ such that
$B_{\varepsilon}$ holds with probability at least $e^{-C(\varepsilon)N^2}$.
As a consequence, we also have $\bP(\bB_\varepsilon)\geq e^{-C(\varepsilon)N^2}$.  The lower bound in \eqref{e:spbound} follows 
\begin{align*}
I(\nu,\mu)&=\lim_{N\rightarrow\infty}\frac{1}{\beta N^2}\log\int e^{\frac{\beta N}{2}\Tr(A_NUB_NU^{*})} \rd U\\
&\geq \frac{1}{2}\int T_{\nu}T_{\mu}\rd x
+\liminf_{N\rightarrow\infty}\frac{1}{\beta N^2}\log \bP(\bB_\varepsilon)
-\OO(\varepsilon) \nu(|x|) \\
&\geq \frac{1}{2}\int T_{\nu}T_{\mu}\rd x
-\OO(\varepsilon) \nu(|x|)-C(\varepsilon)\,.
\end{align*}
\end{proof}
\begin{proof}[Proof of Proposition \ref{p:spbound2}]
Let $A_{N}=\diag(a_1,a_2,\cdots, a_N), B_{N}=\diag(b_1, b_2,\cdots, b_N)$ be two sequences of deterministic self-adjoint matrices, with $a_1\geq a_2\geq \cdots \geq a_N$, and $b_1\geq b_2\geq \cdots\geq b_N$, such that they are $N$-quantiles of the measures $\nu$ and $\mu$ respectively.
We denote the truncated diagonal matrix $A_N^\varepsilon=\diag(a_1\bm1(|a_1|\leq 1/\varepsilon),a_2\bm1(|a_2|\leq 1/\varepsilon),\cdots, a_N\bm1(|a_N|\leq 1/\varepsilon))$, which is obtained by removing large entries of $A_N$.
The upper bound in \eqref{e:spbound} follows directly from the spherical integral,
\begin{align}\begin{split}\label{e:Iupp}
I(\nu,\mu)
&=\lim_{N\rightarrow\infty}\frac{1}{\beta N^2}\log\int e^{\frac{\beta N}{2} (\Tr((A_N-A_N^{\varepsilon})UB_NU^{*})+\Tr(A_N^{\varepsilon}UB_NU^{*}))} \rd U\\
&\leq\lim_{N\rightarrow\infty}\frac{1}{\beta N^2}\log\int e^{\sum_{i: |a_i|>1/\varepsilon}a_ib_i+\Tr(A_N^{\varepsilon}UB_NU^{*}))} \rd U\\
&=I(\nu^\varepsilon, \mu)+\frac{1}{2}\int_{|T_\nu|> 1/\varepsilon} T_{\nu}T_{\mu}\rd x,
\end{split}\end{align}
where we used that the empirical eigenvalue density of $A_N^\varepsilon$ converges to $\nu^\varepsilon$.

In the following we prove the lower bound, which is more involved. Let $N_1=|\{i: a_i>1/\varepsilon\}|$, $N_2=|\{i: |a_i|\leq 1/\varepsilon\}|$ and $N_3=|\{i: a_i<-1/\varepsilon\}|$. Since by our assumption that $\nu(|x|)\leq \fK$, it follows that $N_1, N_3\leq \varepsilon \fK N$. We rewrite $A_N, U$ as block matrices
\begin{align*}
A_N=\left[
\begin{array}{ccc}
A_1& 0 & 0\\
0 & A_2 & 0\\
0 & 0 & A_3
\end{array}
\right],\quad 
U=\left[
\begin{array}{c}
U_1\\
U_2\\
U_3
\end{array}
\right],\quad U_i\in \bR^{N_i\times N}\text{ for } i=1,2,3.
\end{align*}
With these new notations, we rewrite the exponent as
\begin{align*}
\Tr(A_NUB_NU^*)
=\Tr(A_1 U_1 B_NU_1^*)+\Tr(A_2 U_2 B_NU_2^*)+\Tr(A_3 U_3 B_NU_3^*).
\end{align*}
We denote $\mathbb B_{\delta}$  the set of unitary matrices when $\beta=2$, or orthogonal matrices when $\beta=1$ 
\begin{align*}
\bB_\delta=\{U:|U_{ii}-1|\leq \delta \text{ for } 1\leq i\leq N_1, N-N_3< i\leq N\}.
\end{align*}
Notice that $P=N_1+N_3\leq 2\varepsilon\fK N$. We show that there exists a constant $C(\delta)<\infty$ such that $\bP(\bB_\delta)\geq e^{-C(\delta)\varepsilon\fK N^2}$. { We prove the case for orthogonal matrices, the unitary case can be proven in the same way. 
We can represent the joint distribution of the vectors  $\{U_i, i\leq N_1, i>N-N_3\}$ as the array of vectors obtained by applying Gram-Schmidt orthonormalization procedure to independent Gaussian vectors $(g_1,g_2,\cdots, g_P)$ with $P=N_1+N_3$. 
We denote the event 
\begin{align}
B_{\delta}=\left\{g_{ii}\geq \sqrt{2N/\delta}, 1\leq i\leq N_1; g_{i(N-P+i)}\geq \sqrt{2N/\delta}, N_1+1\leq i\leq P\right\}.
\end{align}
On this event the entries of $g_1, g_2, \cdots, g_P$ are still independent Gaussian random variables. Moreover, we have
$\bP(B_\delta)\geq e^{-C(\delta)P N}$. The Gram-Schmidt orthonormalization procedure sends $g_i$ to 
\begin{align}
U_i= \frac{g_i-P_{i-1}g_i}{\|g_i-P_{i-1}g_i\|_2},
\end{align}
where $P_{i-1}$ is the projection on the span of $g_{1},g_2,\cdots, g_{i-1}$. In the following we show that conditioning on $B_\delta$, with high probability (larger than $1-e^{-CN\ln N}$) :\begin{enumerate}
\item $\|g_i\|^2=g_{ii}^2+N+\OO(\sqrt N\log N)$, for $1\leq i \leq N_1$; $\|g_i\|^2=g_{i(N-P+i)}^2+N+\OO(\sqrt N\log N)$, for $N_1+1\leq i\leq P$.
\item  $\|P_{i-1}g_i\|_2=g_{ii}(\sqrt{i-1}+\OO(\log N))/\sqrt {N-i+1}+\sqrt{i-1}+\OO(\log N)$, for $1\leq i \leq N_1$; $\|P_{i-1}g_i\|_2=g_{i(N-P+i)}(\sqrt{i-1}+\OO(\log N))/\sqrt {N-i+1}+\sqrt{i-1}+\OO(\log N)$, for $N_1+1\leq i\leq P$.
\end{enumerate}
The first item follows easily from the concentration of $\chi^2$ distributions. For the second item we prove the case that $1\leq i\leq N_1$, the case for $N_2+1\leq i\leq P$ follows from the same argument. By the triangle inequality, we have
\begin{align}\label{e:pp}
\|P_{i-1}g_i\|_2\leq g_{ii}\|P_{i-1}e_i\|_2+\|P_{i-1}(g_i-g_{ii}e_i)\|_2.
\end{align}
For the first projection on the right hand side of \eqref{e:pp}, 
we can upper bound it by replacing the projection to the span of $g_1, g_2, \cdots, g_{i-1}, e_1, e_2, \cdots, e_{i-1}$. Since $e_i$ is orthogonal to $e_1, e_2, \cdots, e_{i-1}$, for the projection to the span of $g_1, g_2, \cdots, g_{i-1}, e_1, e_2, \cdots, e_{i-1}$, we can ignore the $1,2,\cdots (i-1)$-th coordinates. More precisely, we denote $\tilde g_1, \tilde g_2,\cdots, \tilde g_{i-1}\in \bR^{N-i+1}$ from restricting $g_1, g_2, \cdots, g_{i-1}$ to the $i, i+1,\cdots, N$-th coordinate. In this way $\tilde g_1, \tilde g_2,\cdots, \tilde g_{i-1}$ are independent standard Gaussian vectors in $\bR^{N-i+1}$, and the first term on the righthand side of \eqref{e:pp} is bounded by projecting $e_i$ to $\tilde g_1, \tilde g_2,\cdots, \tilde g_{i-1}$. The length of the projection is  $(\sqrt{i-1}+\OO(\log N))/\sqrt {N-i+1}$ with high probability.
For the second projection on the right hand side of \eqref{e:pp}, we can replace the projection to the span of $g_1, g_2, \cdots, g_{i-1}$ ignoring the $i$-th coordinate. In this way $g_i-g_{ii}e_i$ is a standard Gaussian vector. The length of its projection to a $(i-1)$-dim subspace is $\sqrt{i-1}+\OO(\log N)$ with high probability. The second item follows. Combining the arguments above, with high probability
\begin{align}
U_{ii}
&= \frac{g_{ii}-(P_{i-1}g_i)_{i}}{\|g_i-P_{i-1}g_i\|_2}=\frac{g_{ii}(1-\OO(\sqrt \varepsilon))+\OO(\sqrt{\varepsilon N})}{\sqrt{(1-\OO(\varepsilon))g_{ii}^2+N+\OO(\sqrt N \log N)}}=\frac{1+\OO(\sqrt \varepsilon)}{\sqrt{1+N/g_{ii}^2}}.
\end{align}
We recall that from our construction of the set $B_\delta$, $g_{ii}^2\geq 2N/\delta$.
It follows that $|U_{ii}-1|\leq \delta$, provided that $\varepsilon$ is small enough. By a union bound, conditioning on the event $B_\delta$, with high probability it holds that $|U_{ii}-1|\leq \delta \text{ for } 1\leq i\leq N_1, N-N_3< i\leq N$. Therefore, $\bB_\delta$ holds with probability at least $e^{-C(\delta)PN}=e^{-C(\delta)\varepsilon \fK N^2}$.
}
%

On the set $\bB_\delta$, we have $|U_{ii}-1|\leq \delta$ and $\sum_j|U_{ij}-\delta_{ij}|^2\leq 2\delta$ for $1\leq i\leq N_1, N-N_3< i\leq N$. 
Similarly to \eqref{e:AUBUbound}, on the set $\bB_\delta$, we have
\begin{align}\begin{split}\label{e:lowb0}
&\Tr(A_1 U_1 B_NU_1^*)\geq \sum_{i\leq N_1}a_ib_i-5\fK^2\delta,\mbox{ and }\quad \Tr(A_3 U_3 B_NU_3^*)\geq \sum_{i> N-N_3}a_ib_i-5\fK^2 \delta\,.
\end{split}\end{align}
Since $U_2$ is unitary/orthogonal invariant, with \eqref{e:lowb0}, we have a lower bound for the spherical integral
\begin{align}\begin{split}\label{e:lowbb}
&\phantom{{}={}}\frac{1}{\beta N^2}\log\int e^{\frac{\beta N}{2}\Tr(A_NUB_NU^{*})} \rd U\\
&\geq \frac{1}{\beta N^2}\log\int_{\bB_\delta} e^{\frac{\beta N}{2}(\Tr(A_1 U_1 B_NU_1^*)+\Tr(A_2 U_2 B_NU_2^*)+\Tr(A_3 U_3 B_NU_3^*))} \rd U\\
&\geq \frac{1}{2N}\sum_{i\leq N_1\atop \text{or }i>N-N_3}a_ib_i+\frac{1}{\beta N^2}\log\int_{\bB_\delta} e^{\frac{\beta N}{2}\Tr(A_2WU_2B_NU_2^{*}W^*)} \rd W\rd U_2
-2\fK^2(3\delta+\delta^2),
\end{split}\end{align}
where $W$ is an $N_2\times N_2$ unitary/orthogonal matrix following Haar measure.
By our construction, the spectral measure of $A_2$ converges to $\nu\bm1(|x|\leq 1/\varepsilon)/\int_{|x|\leq1/\varepsilon} \rd \nu$, and  
\begin{align*}
\rd_W\left(\frac{\nu\bm1(|x|\leq 1/\varepsilon)}{\int_{|x|\leq1/\varepsilon} \rd \nu}, \nu^\varepsilon\right)=\OO(\varepsilon \fK).
\end{align*}
Thanks to Cauchy's Interlacing Theorem, the eigenvalues of $U_2B_NU_2^{*}$ and $B_N$ are interlaced. Moreover, we have that $N_2\geq N-2\varepsilon \fK N$. The spectral measure of $(N/N_2)U_2B_NU_2^*$ is close to the spectral measure of $\mu$ in Wasserstein distance as defined in \eqref{e:wd}, 
\begin{align*}
\rd_W\left(\mu_{(N/N_2)U_2B_NU_2^*}, \mu\right)=\OO(\varepsilon \fK).
\end{align*}
For the integral on the right hand side of \eqref{e:lowbb}, we can first integrate out $W$, and use Proposition \ref{p:continuity},
\begin{align}\begin{split}\label{e:intbound}
&\phantom{{}={}}\frac{1}{\beta N^2}\log\int_{\bB_\delta} e^{\frac{\beta N}{2}\Tr(A_2WU_2B_NU_2^{*}W^*)} \rd W\rd U_2\\
&=
\frac{1}{\beta N^2}\log\int_{\bB_\delta} e^{\frac{\beta N_2}{2}\Tr(A_2W((N/N_2)U_2B_NU_2^{*})W^*)} \rd W\rd U_2\\
&\geq 
\frac{1}{\beta N^2}\log\int_{\bB_\delta} e^{\beta N_2^2(I(\nu^\varepsilon, \mu)+C_\fK\oo_\varepsilon(1)+\oo_N(1))}\rd U_2\\
&=I(\nu^\varepsilon, \mu)+C_\fK\oo_\varepsilon(1)+\oo_N(1)+\frac{1}{\beta N^2}\log \bP(\bB_\delta)\\
&\geq I(\nu^\varepsilon, \mu)+C_\fK\oo_\varepsilon(1)+\oo_N(1)-C(\delta)\fK\varepsilon 
\end{split}\end{align}
Therefore \eqref{e:lowbb} and \eqref{e:intbound} together implies that
\begin{align}\begin{split}\label{e:lowpp}
I(\nu,\mu)
&=\lim_{N\rightarrow \infty}\frac{1}{\beta N^2}\log\int e^{\frac{\beta N}{2}\Tr(A_NUB_NU^{*})} \rd U\\
&\geq \frac{1}{2}\int_{|T_\nu|>1/\varepsilon}T_\nu T_\mu\rd x+I(\nu^\varepsilon, \mu)-C_\fK\oo_\varepsilon(1),
\end{split}\end{align}
provided we take $\varepsilon$ much smaller than $\delta$.
The estimates \eqref{e:Iupp} and \eqref{e:lowpp} together conclude the proof of Proposition \ref{p:spbound2}.
\end{proof}

\bibliographystyle{plain}
\bibliography{bibAl-1}

\end{document}